\documentclass[a4paper,reqno,10pt]{amsart}
\usepackage{a4wide}
\usepackage{amsmath,amssymb,amsfonts,amsthm,mathtools}
\usepackage{amscd}
\usepackage{tikz-cd}
\usetikzlibrary{arrows.meta,positioning}
\usepackage{enumitem}
\usepackage{microtype}
\usepackage[colorlinks=true,linkcolor=blue,citecolor=blue,urlcolor=blue]{hyperref}
\setlist[enumerate]{leftmargin=*}
\setlist[enumerate,1]{label={\upshape(\arabic*)}}
\setlist[enumerate,2]{label={\upshape(\alph*)}}
\newtheorem{theorem}{Theorem}[section]
\newtheorem{proposition}[theorem]{Proposition}
\newtheorem{lemma}[theorem]{Lemma}
\newtheorem{corollary}[theorem]{Corollary}
\newtheorem*{theoremA}{Theorem A}
\newtheorem*{theoremB}{Theorem B}
\theoremstyle{definition}
\newtheorem{definition}[theorem]{Definition}
\newtheorem{setting}[theorem]{Setting}
\newtheorem{construction}[theorem]{Construction}
\newtheorem{example}[theorem]{Example}
\newtheorem{remark}[theorem]{Remark}
\newtheorem*{organization}{Organization}
\newtheorem*{conventions}{Conventions and notation}
\newtheorem*{formalization}{Formalization}
\newtheorem*{aiuse}{Use of AI}
\numberwithin{equation}{section}
\DeclareMathOperator{\moduleCategory}{\mathsf{mod}}
\renewcommand{\mod}{\moduleCategory}
\DeclareMathOperator{\ind}{ind}
\DeclareMathOperator{\add}{add}
\DeclareMathOperator{\Hom}{Hom}
\DeclareMathOperator{\Ext}{Ext}
\DeclareMathOperator{\End}{End}
\DeclareMathOperator{\Irr}{Irr}
\DeclareMathOperator{\rad}{rad}
\DeclareMathOperator{\soc}{soc}

\newcommand{\op}{\mathrm{op}}
\newcommand{\cC}{\mathcal C}
\newcommand{\cD}{\mathcal D}

\newcommand{\one}{\mathbf 1}
\newcommand{\indicator}{\mathbf 1}

\newcommand{\TSp}{T\text{-}\mathsf{Sp}}
\newcommand{\Dk}{D_k}
\newcommand{\indeg}{\operatorname{indeg}}
\newcommand{\outdeg}{\operatorname{outdeg}}
\newcommand{\uhom}{\underline{\Hom}}
\newcommand{\ohom}{\overline{\Hom}}

\title{Magnitude of module categories and special biserial algebras}
\author[H.\ Enomoto]{Haruhisa Enomoto}
\address{Parakeet Inc., Japan}
\email{haruhisa.enomoto.math@gmail.com}
\subjclass[2020]{16G10, 16G70}
\keywords{magnitude, representation-finite algebra, Auslander--Reiten quiver,
\(\tau\)-category, special biserial algebra, poset representations}
\date{\today}
\begin{document}
\begin{abstract}
B{\o}rve, Horiatakis, and Kalck defined the magnitude of the module
category of a representation-finite algebra as the sum of the entries of
the inverse of the matrix of Hom dimensions between indecomposable
modules. They showed it is an Euler characteristic, the alternating
count of vertices, arrows, and meshes of the Auslander--Reiten quiver,
and conjectured that it is at least the number of simple modules, with
equality precisely for special biserial algebras. We prove this
conjecture over an algebraically closed field. For a
representation-directed algebra, we relate the magnitudes of the module
category, the subcategory of modules without a given simple composition
factor, and the ideal quotient by it. By the theory of
hammocks of Ringel and Vossieck, this quotient is equivalent to a
category of representations of a finite poset, whose magnitude we
compute. The general case reduces to the representation-directed case through algebras obtained
by grading a standard form.
\end{abstract}
\maketitle
\setcounter{tocdepth}{1}
\tableofcontents

\section{Introduction}\label{sec:introduction}
Magnitude is an invariant of enriched categories introduced by Leinster
\cite[Section~1.3]{Lei13}; for finite categories it generalizes the Euler characteristic
of the classifying space \cite{Lei08}. Chuang, King, and Leinster studied the
magnitude of finite-dimensional algebras \cite{CKL}. B{\o}rve, Horiatakis,
and Kalck recently defined the magnitude of the module category of a
representation-finite algebra \(A\) as the magnitude of its Auslander
algebra \cite{BHK}. Over an algebraically closed field \(k\), it is
\[
 \chi(\mod A)=\one^{\mathsf T}H_A^{-1}\one,\qquad
 H_A=\bigl(\dim_k\Hom_A(X,Y)\bigr)_{X,Y\in\ind A},
\]
the sum of the entries of the inverse of the invertible Hom matrix between the
indecomposable modules. They showed that it is the Euler characteristic of
the Auslander--Reiten quiver \(\Gamma_A\): the alternating sum of the
numbers of vertices, arrows, and meshes, which correspond to cells of
dimensions \(0\), \(1\), and \(2\) \cite[Lemma~1.9]{BHK}:
\begin{equation}\label{eq:intro-count}
 \chi(\mod A)=|\ind A|-a(A)+\bigl(|\ind A|-|A|\bigr).
\end{equation}
Here \(a(A)\) is the number of arrows of \(\Gamma_A\), and \(|A|\) is the
number of simple \(A\)-modules up to isomorphism, so that
\(|\ind A|-|A|\) counts the meshes, one for each nonprojective
indecomposable.

B{\o}rve, Horiatakis, and Kalck computed this invariant for several classes
of algebras. It equals \(|A|\) for representation-finite special biserial
algebras \cite[Theorem~3.5]{BHK}, and \(h_Q-1\) for the path algebra of a
Dynkin quiver \(Q\) with Coxeter number \(h_Q\) \cite[Theorem~4.3]{BHK}.
They conjectured that \(\chi(\mod A)\geq|A|\) for every
representation-finite bound path algebra over a field, with equality
precisely for special biserial algebras \cite[Conjecture~A]{BHK}. They
proved this for representation-finite hereditary algebras (a consequence
of \cite[Theorem~4.3]{BHK}), radical square zero algebras
\cite[Corollary~4.6]{BHK}, and self-injective algebras
\cite[Theorem~5.8 and Corollary~5.9]{BHK}. We prove the conjecture over an
algebraically closed field.

\begin{theoremA}[= Theorem~\ref{thm:main}]\label{thm:intro-main}
Let \(A\) be a finite-dimensional representation-finite algebra over an
algebraically closed field. Then the following hold.
\begin{enumerate}
\item \(\chi(\mod A)\geq |A|\).
\item \(\chi(\mod A)=|A|\) if and only if \(A\) is special biserial.
\end{enumerate}
\end{theoremA}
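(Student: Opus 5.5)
The abstract already indicates the strategy, and I would follow it. First reduce to the representation-directed case. For a representation-finite algebra over an algebraically closed field, standardness lets us replace \(A\) by its standard form, and \(\chi(\mod A)\), \(|A|\) and special biseriality are all invariant under this replacement. The reason is that \eqref{eq:intro-count} depends only on the shape of \(\Gamma_A\), which \(A\) and its standard form share, and special biseriality is detected by the Auslander--Reiten quiver. Next, grade the standard form \(k(\Gamma_A)/(\text{mesh})\) and take a suitable covering or "unrolled" algebra \(B\) which is representation-directed. This \(B\) should be chosen so that \(\Gamma_B\) is a finite piece of the universal cover of \(\Gamma_A\) whose vertex, arrow and mesh counts relate to those of \(\Gamma_A\) by an exact formula: either equal counts, or a multiple plus correction terms that I would verify cancel in the Euler characteristic. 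We also need \(B\) to be special biserial if and only if \(A\) is, which should follow because being special biserial is a local condition on the quiver with relations and is preserved by Galois coverings.

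For representation-directed \(A\), I would argue by induction on \(|A|\). Choose a simple \(S\) that is a sink (or source) of the ordering, say \(S=S_e\) corresponding to a minimal vertex. Let \(\cC\) be the full subcategory of modules without \(S\) as a composition factor, so \(\cC=\mod A/AeA\) and \(|A/AeA|=|A|-1\). Let \(\cD=\mod A/[\cC]\) be the ideal quotient. The key additivity statement I would prove is
\[
\chi(\mod A)=\chi(\cC)+\chi(\cD)-c,
\]
for an explicit constant \(c\), presumably \(0\) or the contribution of the identity. I would prove it from the block-triangular shape of the Hom matrix \(H_A\). Ordering \(\ind A\) with the modules of \(\cC\) first, directedness together with the vanishing of maps through \(\cC\) gives a Schur-complement decomposition of \(H_A^{-1}\), in which \(\one^{\mathsf T}H_A^{-1}\one\) splits into a \(\cC\)-part and a quotient part. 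The indecomposables not in \(\cC\) are precisely those with \(\Hom(P_e,X)\neq0\), the hammock of \(P_e\). By Ringel--Vossieck, \(\cD\) is equivalent to the category of representations of a finite poset \(\mathcal P\); for a sincere or general hammock this is \(\mathcal P\)-spaces (the notation \(\TSp\) suggests this).

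Third, compute \(\chi\) of the category of representations of a finite representation-finite poset. I expect the answer to be at least \(1\), with equality exactly when \(\mathcal P\) is a disjoint union of at most two chains. This is Kleiner's list: the width-\(\le 2\) posets correspond to the "string" situation. I would compute it again via an Euler characteristic count on the Auslander--Reiten quiver of the poset category, using its mesh structure. Combining with induction gives \(\chi(\mod A)\ge(|A|-1)+1\). Equality forces equality in both pieces. On the algebra side, the condition that every hammock poset has width \(\le2\) for every removable vertex should translate to the special biserial conditions: at most two arrows in and out at each vertex, plus the relation conditions. Here I must check that failure of special biseriality at some vertex can be detected at a vertex removable in the induction, possibly by using both sinks and sources.

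I expect the main obstacle to be the poset computation together with the characterization of equality. Verifying \(\chi\ge1\) for all of Kleiner's finite-type posets requires a uniform argument rather than a case check. The likeliest route is a further induction removing a maximal element of the poset, mirroring the algebra-level additivity formula. The second delicate point is showing that the covering reduction preserves \(\chi\) exactly.
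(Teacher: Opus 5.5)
Your outline follows the paper's: delete one simple of a directed algebra, identify the ideal quotient with \(\TSp\), and unroll a graded standard form. However, the identity driving your induction is false, and the equality case lacks its two key steps.

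\textbf{The directed-case identity.} The Hom matrix is not block triangular for the partition \(\ind B\sqcup(\ind A\setminus\ind B)\), because maps go both ways. In Example~\ref{ex:fork-identity}, \(\Hom_A(S_2,P_3)\ne0\) and \(\Hom_A(I_2,S_1)\ne0\). No constant correction works either. For the \(D_4\) algebra of Example~\ref{ex:d4-deletion}, deleting the sink \(0\) gives \(\chi(\mod A)-\chi(\mod B)-\chi(\cD)=5-3-2=0\), while deleting the source \(3\) gives \(5-3-1=1\).

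The correct identity (Theorem~\ref{thm:serre-magnitude}) is \(\chi(\mod A)-\chi(\mod B)-\chi(\cD)=c-r-\delta\). Here \(c\) counts the arrows of \(\Gamma_B\) that are not arrows of \(\Gamma_A\), and \(r\) counts the almost split sequences of \(\mod B\) that are not almost split in \(\mod A\). The term \(\delta\) counts the surplus middle summands outside \(\mod B\) of the sequences with exactly one end term in \(\mod B\). Your step \(\chi(\mod A)\ge(|A|-1)+1\) therefore needs two inputs you do not have:
\begin{enumerate}
\item \(\delta=0\), which comes from \(\dim_k Xe=1\) at the \(\tau\)-projectives and \(\tau\)-injectives of \(\cD\) (Proposition~\ref{prop:boundary-one});
\item \(r\le c\), which comes from restricting almost split sequences to \(\mod B\) by \(i^!\) (Hoshino) and using that directedness forbids \(\Ext^1_A(E,W)\) and \(\Ext^1_A(W,E)\) from both being nonzero (Theorem~\ref{prop:compensation}).
\end{enumerate}
You also give no argument for \(\chi(\TSp)\ge1\). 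The paper shows that all paths from \(K_\varnothing\) to a given vertex have the same length. This height function gives \(\chi(\TSp)=1+L-|T|\), and \(L\ge|T|\) follows from a maximal chain of upward-closed subsets.

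\textbf{The equality case.} In the directed case, what equality yields is thinness. The equality \(\sigma(A)=\sigma(B)\) forces \(\chi(\cD)=1\), hence \(\dim_k Xe\le1\) for every indecomposable \(X\). The deletion inequality holds for \emph{every} primitive idempotent, so you may delete each vertex first and conclude that all indecomposables are thin. Then \(A\) is biserial by Pogorza{\l}y--Skowro\'nski and special biserial by Skowro\'nski--Waschb\"usch. Translating hammock posets into the relations of Definition~\ref{def:special-biserial} is not carried out. Moreover, deleting only sinks or sources gives thinness at later vertices only for modules over the smaller algebras, not over \(A\).

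\textbf{The reduction to the directed case.} The counts of a finite directed piece do not match those of \(\Gamma_A\) exactly. Take the \(D_4\) algebra above with \(S=A\) and its path-length grading. Then \(S_m\) is a product of \(m\) hereditary algebras of type \(D_4\) and four copies of \(k\), so \(\sigma(S_m)=m\) while \((m+1)\sigma(S)=m+1\). In general only \(\sigma(S_m)=(m+1)\sigma(S)+O(1)\) holds (Theorem~\ref{prop:interval-count}).

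This gives the inequality after dividing by \(m+1\) and letting \(m\to\infty\). But from \(\sigma(A)=0\) it yields only \(\sigma(S_m)=O(1)\). The missing idea is Lemma~\ref{lem:separated-intervals}. Deleting the degrees between \(q\) separated copies of \([0,n]\) inside \([0,M]\) leaves \(\prod_{j=1}^{q}S_n\). Monotonicity of \(\sigma\) under primitive deletion and additivity on products then give \(q\,\sigma(S_n)\le\sigma(S_M)=O(1)\) for all \(q\), hence \(\sigma(S_n)=0\).

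Finally, \(S_m\) is a truncation of the \(\mathbb Z\)-covering, not a Galois covering of \(S\). So your covering argument does not carry biseriality back to \(S\). The paper uses instead the criterion \(\beta\le2\) of Theorem~\ref{thm:biserial-ar}. It relies on two facts: almost split sequences of \(S\) lift to \(S_m\) for large \(m\) (Proposition~\ref{prop:ar-transfer}), and \(\Gamma_S\cong\Gamma_A\).
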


By \eqref{eq:intro-count}, Theorem~A is a statement about the shape of
Auslander--Reiten quivers: the number of arrows is at most twice the
number of nonprojective indecomposables, with equality precisely for
special biserial algebras. We call the difference
\[
 \sigma(A)=\chi(\mod A)-|A|=2\bigl(|\ind A|-|A|\bigr)-a(A)
\]
the \emph{magnitude surplus}.

\medskip\noindent\textbf{Strategy of the proof.}
The proof first treats representation-directed algebras by removing one
simple module at a time. For a primitive idempotent \(e\) of \(A\), we
call the algebra \(A/AeA\) the \emph{primitive deletion} of \(e\). For a basic, representation-finite, and representation-directed
algebra \(A\), we show that
\[
 \sigma(A/AeA)\leq\sigma(A),
\]
with equality only if \(\dim_k Xe\leq1\) for every indecomposable
\(A\)-module \(X\) (Proposition~\ref{thm:directed-deletion}).

This comes from a general identity (Theorem~\ref{thm:serre-magnitude}).
For an idempotent \(e\) of \(A\), put \(B=A/AeA\), and let
\(\cD=(\mod A)/[\mod B]\) be the \emph{ideal quotient} by the maps
factoring through \(\mod B\). Like \(\chi(\mod A)\), the magnitude of \(\cD\) is
the Euler characteristic of its translation quiver \(\Gamma_{\cD}\)
(Proposition~\ref{thm:categorical-magnitude} and
Definition~\ref{def:boundary}), and
\[
 \chi(\mod A)-\chi(\mod B)-\chi(\cD)=c-r-\delta,
\]
where \(c\) is the sum, over pairs of indecomposable \(B\)-modules, of
the difference between the numbers of arrows between them in \(\Gamma_B\)
and in \(\Gamma_A\), \(r\) is the number of almost split sequences of
\(\mod B\) that are not almost split in \(\mod A\), and \(\delta\geq0\)
counts, for each almost split sequence of \(\mod A\) with exactly one end
term in \(\mod B\), all but one of its middle summands outside
\(\mod B\). When \(A\) is representation-finite
and representation-directed and \(e\) is primitive, \(\delta=0\) and
\(r\leq c\); since \(|A|=|B|+1\), it then suffices to show
\(\chi(\cD)\geq1\). In this case
Ringel and Vossieck identified \(\cD\) with the category \(\TSp\) of
representations of a finite poset \(T\) by subspaces
\cite[Section~10, Theorem, p.~243]{RV}: such a representation is a vector
space \(W\), its \emph{total space}, with subspaces \(W_t\) for \(t\in T\)
such that \(W_s\subseteq W_t\) whenever \(s\leq t\). We compute the magnitude of these
categories.

\begin{theoremB}[= Theorem~\ref{thm:poset-magnitude}]
Let \(T\) be a finite poset such that \(\TSp\) is representation-finite.
In the Auslander--Reiten quiver of \(\TSp\), all paths from the
representation \(k\) with all subspaces zero to the representation \(k\)
with all subspaces \(k\) have the same length \(L\), and
\[
 \chi(\TSp)=1+L-|T|\geq1.
\]
Equality holds if and only if \(T\) has width at most two, if and only if
every indecomposable has one-dimensional total space.
\end{theoremB}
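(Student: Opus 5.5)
The plan is to work in the translation quiver \(\Gamma=\Gamma_{\TSp}\) and to use that \(\chi(\TSp)\) is its Euler characteristic (Proposition~\ref{thm:categorical-magnitude}, Definition~\ref{def:boundary}). Write \(P_\omega\) for the representation \(k\) with all subspaces zero and \(I_\omega\) for the representation \(k\) with all subspaces \(k\). By Ringel--Vossieck, \(\TSp\) is the hammock category \((\mod A)/[\mod B]\) of a representation-directed algebra. So \(\Gamma\) is a finite directed translation quiver with unique source \(P_\omega\) and unique sink \(I_\omega\), and the total-space dimension \(d(X)=\dim W\) is the hammock function. The first step is to record the two properties of \(d\) that I need.
\begin{enumerate}
\item \(d\) is additive on every mesh \(\tau X\to\bigoplus E_i\to X\) of \(\Gamma\): \(d(\tau X)+d(X)=\sum d(E_i)\).
\item \(d\) fails to be additive only at the boundary vertices, namely at \(P_\omega\) and at the vertices where a mesh of \(\mod A\) was cut. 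I expect these to be the \(|T|\) projectives \(P_t\) (with \(W=k\) and \(W_s=k\) exactly for \(s\ge t\)) together with their injective counterparts.
\end{enumerate}
The first property is exactly the hammock-function property. Directedness then gives a well-defined length function \(\ell\) from \(P_\omega\). The constancy of path lengths to \(I_\omega\) should follow by induction along \(\Gamma\), using that every mesh is ``balanced'': every arrow raises \(\ell\) by \(1\) and \(\tau\) lowers it by \(2\). The value \(L=\ell(I_\omega)\) is then defined.

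The main step is the count \(\chi=|\Gamma_0|-|\Gamma_1|+\#\{\text{meshes}\}=1+L-|T|\). I will rewrite the Euler characteristic vertex by vertex as
\[
 \chi=\sum_X\bigl(1-\indeg X+[X\text{ is the end of a mesh}]\bigr).
\]
For every non-boundary vertex the summand is \(2-\indeg X\). I will then sweep \(\Gamma\) level by level in \(\ell\), forming the partial sums over \(\{\ell\le n\}\). The aim is an invariant of the form ``partial Euler characteristic equals \(1+n-\#\{\text{projectives } P_t \text{ already reached}\}\) plus a correction term''. The correction should be measured by \(d\) summed over the current slice, and I will control its change from one level to the next using mesh additivity of \(d\). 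At the final level the slice is \(\{I_\omega\}\) with \(d=1\), and every \(P_t\) has been passed. I expect this to give \(1+L-|T|\), with \(L\) the number of levels traversed. This telescoping, and in particular keeping track of the boundary corrections at the \(P_t\) and at the cut meshes, is the obstacle I anticipate. My first attempt will be the case of \(T\) a chain, where \(\Gamma\) is of type \(A\) and everything is explicit, and I will generalise from there.

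For the inequality and the equality cases, I will compare \(L\) with \(|T|\). A path of length \(L\) from \(P_\omega\) to \(I_\omega\) can be chosen to pass through irreducible maps that successively enlarge one-dimensional representations by one element of \(T\) at a time, adding up elements along a linear extension. This suggests \(L\ge|T|\), with slack produced exactly by indecomposables with \(d\ge2\): each such vertex on the mesh-additive count forces extra levels. For the chain of equivalences:
\begin{enumerate}
\item Width at most two implies that every indecomposable has one-dimensional total space; this is the classical Kleiner-type description of representations of posets of width \(\le2\).
\item Conversely, if \(T\) contains an antichain \(\{a,b,c\}\), then the representation of \(\{a,b,c\}\) with \(W=k^2\) and three distinct lines, extended to \(T\) appropriately, is an indecomposable with \(d=2\).
\end{enumerate}
Finally, the telescoping identity will show that \(L=|T|\) exactly when \(d\equiv1\), which gives \(\chi\ge1\) with the stated equality criteria.
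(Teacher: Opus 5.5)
\textbf{Gap 1: the identity $\chi(\TSp)=1+L-|T|$ is not proved.} You leave the telescoping invariant unidentified, and the correction you expect, a sum of $d$ over the current level, is not the right one. Take the hammock of Example~\ref{ex:d4-subspaces}. Its levels $0,\dots,4$ have $v_n=1,3,1,3,1$ vertices, and the partial Euler characteristics are $1,1,0,3,2$. Your main term $1+n-\#\{\text{non-source }\tau\text{-projectives of height}\le n\}$ is $1,-1,0,1,2$. The differences are $0,2,0,2,0$, which equal $v_n-1$, whereas the $d$-sums minus one are $0,2,1,2,0$.

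The identity is purely combinatorial. What it needs, and what you never state, are the boundary degree facts:
\begin{itemize}
\item $K_\varnothing$ has no incoming arrow, and every $K_{\{s:t\le s\}}$ has exactly one;
\item dually, $K_T$ has no outgoing arrow, and every other vertex without inverse translate has exactly one.
\end{itemize}
Combined with $\indeg X=\outdeg\tau^+X$ and $|\ind_1^+\TSp|=|\ind_1^-\TSp|=|T|+1$, these give $a(\TSp)=2|\ind\TSp|-L-2$. The paper (Lemma~\ref{lem:height-arrows}) gets this by writing $a=\sum_X h(X)(\indeg X-\outdeg X)$ and reindexing by $\tau^+$. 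Your sweep would also work with these facts: if $e_n$ is the number of arrows from level $n$ to level $n+1$, they give $e_0=v_1$ and $e_{n+1}-e_n=v_{n+2}-v_n$. Hence $e_n=v_n+v_{n+1}-1$, which is exactly the telescoping you want.

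The same fact is what makes path lengths constant. Saying ``every arrow raises $\ell$ by $1$'' assumes the conclusion. The induction of Lemma~\ref{lem:rv-grading} works only because a vertex without translate has a single immediate predecessor. Your remark that $d$ fails to be additive at the $K_{\{s:t\le s\}}$ hides this. In fact $d(X)=d(\theta^+X)=1$ holds there, and that is what forces the single incoming arrow (compare Corollary~\ref{cor:boundary-arrows}).

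You also need to check that $\TSp$ is a strict $\tau$-category before invoking Proposition~\ref{thm:categorical-magnitude}. The paper does this using exactness of the almost split sequences on all indexed subspaces. Realizing $\TSp$ as a hammock of a directed algebra is the converse of what \cite[Section~10]{RV} provides (Proposition~\ref{prop:poset-equivalence}), and you do not need it.

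\textbf{Gap 2: the equality characterization.} Since the correct identity does not involve $d$, it cannot show that $L=|T|$ exactly when $d\equiv1$. Two implications are available:
\begin{itemize}
\item $d\equiv1\Rightarrow\chi=1$ follows from additivity of $d$, since every vertex with a translate then has exactly two middle summands;
\item width at most two $\Rightarrow d\equiv1$ follows by Lemma~\ref{lem:two-chains}.
\end{itemize}
The substantive step is $L=|T|\Rightarrow$ width at most two, and you offer only a heuristic for it. The paper takes it from \cite[Section~8]{RV}, and Appendix~\ref{app:inequality} proves it directly. If $L=|T|$, then $h(K_U)=|U|$, so one-element inclusions of upward-closed sets are irreducible. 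An antichain $t_1,t_2,t_3$ then yields two commutative squares with common source $K_U$. These force $\tau^+K_{U\cup\{t_1,t_2\}}\cong K_U\cong\tau^+K_{U\cup\{t_1,t_3\}}$, contradicting injectivity of $\tau^+$.

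Your argument for $L\ge|T|$ also needs repair. A path of irreducible maps of length $L$ need not pass through the $K_U$ one element at a time. In Example~\ref{ex:d4-subspaces} every such path goes through $Z$, and $K_{\{1\}}\to K_{\{1,2\}}$ factors through $Z$. The correct argument is that every nonzero nonisomorphism $K_U\to K_V$ with $U\subsetneq V$ strictly raises height, so a maximal chain of upward-closed subsets gives $L\ge|T|$.
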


The inequality \(L\geq|T|\) and the equivalence of \(L=|T|\) with the two
conditions of Theorem~B are due to Ringel and Vossieck
\cite[Section~8, p.~237]{RV}; the new part of Theorem~B is the formula for
\(\chi(\TSp)\). For a basic, representation-finite, and
representation-directed algebra \(A\),
deleting simple modules one at a time then gives \(\sigma(A)\geq0\). If
\(\sigma(A)=0\), then \(\sigma(A/AeA)=\sigma(A)\) for every primitive
idempotent \(e\), so \(\dim_k Xe\leq1\) for all \(e\) and all
indecomposable \(X\). Thus every indecomposable \(A\)-module
is thin, that is, has each simple module at most once as a composition
factor. Hence \(A\) is biserial by
Proposition~\ref{prop:thin-biserial}, due to Pogorza{\l}y and
Skowro\'nski, and special biserial by Theorem~\ref{thm:biserial-special},
due to Skowro\'nski and Waschb\"usch.

For a general \(A\), we pass to a standard form \(S\) of \(A\)
\cite{BG,BrG}: an algebra with the same Auslander--Reiten quiver, whose
indecomposable modules and maps between them are given by paths in this
quiver modulo the mesh relations. Path length grades the Hom spaces
between indecomposable \(S\)-modules, and hence \(S\). The graded \(S\)-modules in degrees
\(0,\ldots,m\) are the modules over a representation-directed algebra
\(S_m\), which we call an interval algebra. One has
\(\sigma(S_m)=(m+1)\sigma(A)+O(1)\) as \(m\to\infty\)
(Theorem~\ref{prop:interval-count}, with \(\sigma(S)=\sigma(A)\)), and
\(\sigma(S_m)\geq0\) by Corollary~\ref{cor:directed}; hence \(\sigma(A)\geq0\)
(Section~\ref{sec:general}).

Now suppose \(\sigma(A)=0\). The interval algebra \(S_M\) is block upper
triangular, with one diagonal block for each degree in \([0,M]\). For
every \(q\geq1\) and suitable \(M\), deleting the degrees in the gaps
between \(q\) well separated copies of \([0,m]\), that is, factoring out
their diagonal idempotents, leaves a product of \(q\) copies of \(S_m\). The surplus is additive on products and does not
increase under deletion, so \(q\,\sigma(S_m)\leq\sigma(S_M)\), and
\(\sigma(S_M)=O(1)\) as \(q\) grows because \(\sigma(A)=0\).
Hence \(\sigma(S_m)=0\) for every \(m\)
(Lemma~\ref{lem:separated-intervals}), and every \(S_m\) is biserial by
Corollary~\ref{cor:directed}. By results of Auslander and Reiten and of
Skowro\'nski and Waschb\"usch, a representation-finite algebra is
biserial if and only if the middle term of every almost split sequence
has at most two nonprojective indecomposable summands
(Theorem~\ref{thm:biserial-ar}).
The almost split sequences of \(\mod S\) lift to almost split sequences
over \(S_m\) for large \(m\), with the same nonprojective middle
summands, and \(A\) and \(S\) have isomorphic Auslander--Reiten quivers.
Hence \(A\) is biserial, and special biserial by
Theorem~\ref{thm:biserial-special} (Section~\ref{sec:equality}).
Figure~\ref{fig:proof-overview} summarizes these steps.

\begin{figure}[htbp]
\centering
\begin{tikzpicture}[
  x=0.98cm, font=\small,
  every node/.style={align=center},
  input/.style={draw, rounded corners=2pt, text width=6.2cm,
    minimum height=1.25cm, inner sep=5pt},
  implication/.style={-{Latex}, semithick}]
\draw (0,0) rectangle (15,-9.5);
\node[font=\small\bfseries] at (7.5,-0.4) {Directed case};
\node at (7.5,-0.9)
  {\(A\) basic, representation-finite, and representation-directed;
   \(e\) primitive; \(B=A/AeA\)};
\node[input] (poset) at (3.9,-1.85)
  {Poset representations and magnitude\\
   \(\cD=(\mod A)/[\mod B]\simeq\TSp\), \(\chi(\cD)\geq1\)\\
   Proposition~\ref{prop:poset-equivalence}; Corollary~\ref{prop:intrinsic}};
\node[input] (sequences) at (11.1,-1.85)
  {Comparison of almost split sequences\\
   \(\delta=0\) (Propositions~\ref{prop:boundary-one} and~\ref{thm:directed-deletion})\\
   \(r\leq c\) (Corollary~\ref{cor:directed-compensation})};
\node (deletion) at (7.5,-3.65)
  {\(\sigma(A)\geq\sigma(B)\)\quad (Theorem~\ref{thm:serre-magnitude}
   and Proposition~\ref{thm:directed-deletion})};
\draw[semithick] (poset.south) -- (3.9,-2.8) -- (11.1,-2.8)
  -- (sequences.south);
\draw[implication] (7.5,-2.8) -- (deletion.north);
\draw (0,-4.2) -- (15,-4.2);
\node[font=\small\bfseries] at (7.5,-4.6) {General \(A\)};
\node at (7.5,-5.05)
  {Directed graded interval algebras \(S_m\)
   (Proposition~\ref{prop:graded-classification})};
\node[font=\small\bfseries] at (3.9,-5.6) {Lower bound};
\node[font=\small\bfseries] at (11.1,-5.6)
  {Zero surplus};
\node (nonnegative) at (3.9,-6.55)
  {\(\sigma(S_m)\geq0\) for all \(m\)\\
   by successive primitive deletions\\
   (Corollary~\ref{cor:directed})};
\node (limit) at (3.9,-8.2)
  {\(\displaystyle\sigma(A)=\sigma(S)=\lim_{m\to\infty}
     \frac{\sigma(S_m)}{m+1}\geq0\)};
\draw[implication] (nonnegative.south) --
  node[right] {Theorem~\ref{prop:interval-count}} (limit.north);
\node (equality) at (11.1,-6.2) {\(\sigma(A)=0\)};
\node (intervals) at (11.1,-7.4)
  {\(\sigma(S_m)=0\) for all \(m\)};
\node (biserial) at (11.1,-8.9) {\(A\) is special biserial};
\draw[implication] (equality.south) --
  node[right] {Proposition~\ref{prop:interval-equality}} (intervals.north);
\draw[implication] (intervals.south) --
  node[right] {Proposition~\ref{prop:equality-forward}} (biserial.north);
\end{tikzpicture}
\caption{Primitive deletion and the two parts of Theorem~A.
 Here \(\sigma(A)=\chi(\mod A)-|A|\), \(\TSp\) denotes
 representations of a finite poset \(T\) by subspaces, \(S\) is a
 standard form of \(A\) with \(\sigma(S)=\sigma(A)\)
 (Theorem~\ref{thm:standard} and Lemma~\ref{lem:standard-grading}), and \(\mod S_m\) is equivalent to the category
 of graded \(S\)-modules supported in degrees \(0,\ldots,m\)
 (Proposition~\ref{prop:interval-modules}).}
\label{fig:proof-overview}
\end{figure}

\begin{organization}
Section~\ref{sec:ar} recalls almost split sequences and Iyama's
\(\tau\)-categories, and extends the proof of \eqref{eq:intro-count} in
\cite{BHK} to strict \(\tau\)-categories. Section~\ref{sec:quotient} proves the
identity of Theorem~\ref{thm:serre-magnitude} for the ideal quotient by a
Serre subcategory and a
sufficient condition for the inequality \(r\leq c\). Section~\ref{sec:deletion} proves
Theorem~B and the inequality \(\sigma(A/AeA)\leq\sigma(A)\) for a primitive
idempotent \(e\). Section~\ref{sec:general} defines interval algebras
and proves Theorem~A(1), and Section~\ref{sec:equality} proves
Theorem~A(2).
Appendix~\ref{app:structural} gives direct proofs of the three results on
the ideal quotient in Section~\ref{sec:deletion} that use \cite{RV}, so that Theorem~A does
not depend on \cite{RV}; Theorem~B for an arbitrary poset does.
\end{organization}

\begin{conventions}
Unless specified otherwise, \(k\) is algebraically closed and algebras are
finite-dimensional \(k\)-algebras. Modules are finite-dimensional right modules, and
\(\mod A\) denotes their category. Composition is written
\(gf\) for first \(f\), then \(g\). We use the same order for paths:
\(\beta\alpha\) is the path \(\alpha\) followed by \(\beta\). For a finite
quiver \(Q\), right modules over \(A=(kQ)^{\op}\), or over its quotient by
a specified ideal, are identified
with representations of \(Q\): the space at a
vertex \(i\) is \(Me_i\), and an arrow \(\alpha:i\to j\) acts as a map
\(Me_i\to Me_j\). We write \(P_i\), \(I_i\), and \(S_i\) for the
indecomposable projective, injective, and simple modules at the vertex
\(i\). For a module \(M\), \(|M|\) is the number
of isomorphism classes of indecomposable direct summands of \(M\); thus
\(|A|=|A_A|\) is the number of isomorphism classes of simple modules.
We write \(\Dk=\Hom_k(-,k)\), and
\(\add M\) for direct summands of finite direct sums of copies of \(M\).
For a Krull--Schmidt category \(\cC\), \(\ind\cC\) is a chosen set
of indecomposable representatives, also regarded as the full subcategory
on those objects. We write \(\cC(X,Y)=\Hom_{\cC}(X,Y)\) and abbreviate
\(\ind(\mod A)\) to \(\ind A\). A category is representation-finite
if it has finitely many indecomposable isomorphism classes.
The categorical radical is \(\rad_{\cC}\), and
\(\Irr_{\cC}(X,Y)=\rad_{\cC}(X,Y)/\rad_{\cC}^2(X,Y)\).
The \emph{irreducible quiver} of \(\cC\) has vertices \(\ind\cC\)
and \(\dim_k\Irr_{\cC}(X,Y)\) arrows from \(X\) to \(Y\).
We write \(\indeg_{\cC}(X)\) and \(\outdeg_{\cC}(X)\) for the
numbers of incoming and outgoing arrows, including multiplicities.
The \emph{Auslander--Reiten quiver} \(\Gamma_A\) is this
quiver of \(\mod A\), equipped with Auslander--Reiten translation and
with its projective and injective vertices marked.
Translations are displayed by dotted arrows
\(X\dashrightarrow\tau_A X\); they do not count as arrows in paths
or arrow counts.
An algebra is \emph{representation-directed} if there is no cycle of
nonzero nonisomorphisms between indecomposable modules. We also use
\emph{directed} in this sense for an algebra and for a category.
Special biserial and biserial algebras are recalled in
Definitions~\ref{def:special-biserial} and~\ref{def:biserial}; a non-basic
algebra is special biserial if its basic algebra is.
\end{conventions}

\begin{formalization}
A Lean~4 formalization of Theorem~A is available at
\url{https://github.com/haruhisa-enomoto/magnitude-conjecture}.
It follows the proof in this paper, with the arguments of
Appendix~\ref{app:structural}.
\end{formalization}

\begin{aiuse}
The research was done under the author's direction using GPT-5.6-Sol,
Claude Fable~5, Claude Opus~5, and GPT-6-Astra. GPT-5.6-Sol and Claude Fable~5 developed
the initial proof, and GPT-5.6-Sol produced earlier manuscript drafts.
GPT-6-Astra subsequently developed the simplified proof presented here and
wrote a new manuscript based on it, keeping selected arguments and examples
from the earlier drafts. It also produced the Lean formalization.
Claude Opus~5 extensively reviewed the proofs and
successive manuscript drafts. The author directed the research and revisions and
set the structure and much of the wording of the present text. The author
is responsible for the mathematical content and the final manuscript.
\end{aiuse}

\section{Preliminaries on \texorpdfstring{\(\tau\)}{tau}-categories and magnitude}\label{sec:ar}
We recall almost split sequences and their generalization to additive
categories. Each such sequence gives the beginning of a projective
resolution of a simple functor. Following the proof of
\eqref{eq:intro-count} in \cite[Lemma~1.9]{BHK}, these resolutions give an
analogue of \eqref{eq:intro-count} (Proposition~\ref{thm:categorical-magnitude})
for strict \(\tau\)-categories (Definition~\ref{def:strict-tau}), in which
the resolutions have length two. Module categories and the ideal quotients
used below are strict \(\tau\)-categories.

\subsection{Almost split maps and \texorpdfstring{\(\tau\)}{tau}-sequences}\label{subsec:tau-seq}
We recall almost split maps and sequences, then define the
\(\tau\)-sequences that replace them in ideal quotients.

Let \(\cC\) be a Krull--Schmidt \(k\)-linear category, for example
\(\mod A\). A map \(f:V\to X\) in \(\cC\) to an indecomposable \(X\) is
\emph{right almost split}
if it is not a retraction and every non-retraction to \(X\) factors through it.
It is \emph{right minimal} if every \(u\in\End(V)\) with \(fu=f\)
is invertible. Left almost split maps and left minimality are defined dually,
using sections.

\begin{definition}\label{def:ar}
Let \(A\) be an algebra. For a nonprojective indecomposable \(A\)-module
\(X\), write its almost split sequence as
\[
 0\longrightarrow\tau_A X\xrightarrow{\nu_X^+}\theta_A X
 \xrightarrow{\mu_X^+}X\longrightarrow0.
\]
At a projective \(X\), put \(\tau_A X=0\) and \(\theta_A X=\rad X\), and
let \(\mu_X^+\) be the inclusion.
Dually, write the sequence starting at a noninjective \(X\) as
\[
 0\longrightarrow X\xrightarrow{\mu_X^-}\theta_A^-X
 \xrightarrow{\nu_X^-}\tau_A^-X\longrightarrow0.
\]
At an injective \(X\), put \(\tau_A^-X=0\) and \(\theta_A^-X=X/\soc X\),
and let \(\mu_X^-\) be the quotient map.
Extend \(\tau_A,\tau_A^-,\theta_A,\theta_A^-\) additively to arbitrary
modules, as assignments on isomorphism classes.
\end{definition}

The following standard facts relate almost split maps to the arrows of
\(\Gamma_A\). For an indecomposable \(Y\), write \([V:Y]_\oplus\)
for its multiplicity as a direct summand of \(V\).

\begin{proposition}[{\cite[V, Section~1; VII, Proposition~1.3]{ARS}}]\label{prop:ar-multiplicity}
Let \(X,Y\) be indecomposable \(A\)-modules. Then \(\mu_X^+\) is minimal
right almost split and \(\mu_X^-\) is minimal left almost split, and
\begin{equation}\label{eq:ar-multiplicity}
 [\theta_A X:Y]_\oplus=\dim_k\Irr_A(Y,X),\qquad
 [\theta_A^-X:Y]_\oplus=\dim_k\Irr_A(X,Y).
\end{equation}
The map \(\tau_A\) is a bijection from nonprojective to noninjective
indecomposables, with inverse \(\tau_A^-\).
\end{proposition}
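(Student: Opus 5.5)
This is the standard Auslander--Reiten material cited from \cite{ARS}, and I would prove the three parts in the following order.

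\textbf{Minimality.} First I show that \(\mu_X^+\) is minimal right almost split. For nonprojective \(X\), the almost split sequence is nonsplit with indecomposable end terms. Every non-retraction \(g:W\to X\) factors through \(\mu_X^+\), by pulling the sequence back along \(g\) and using that \(\tau_A X\) has local endomorphism ring. Right minimality holds because if \(\mu_X^+u=\mu_X^+\), then \(u\) induces an endomorphism of the sequence. That endomorphism is the identity on \(X\), and its restriction to \(\tau_A X\) cannot be radical, since otherwise the sequence would split. Hence \(u\) is invertible by the five lemma. For projective \(X\), any non-retraction \(W\to X\) has image in the unique maximal submodule \(\rad X\), so it factors through the inclusion \(\rad X\to X\). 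That inclusion is injective, hence right minimal. The statements for \(\mu_X^-\) are dual, using \(D_k\) or repeating the argument with \(X/\soc X\).

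\textbf{Multiplicities.} Next I derive \eqref{eq:ar-multiplicity} from minimality. Let \(f=\mu_X^+:V\to X\) be minimal right almost split and write \(V=\bigoplus Y_i^{n_i}\). Composition with \(f\) gives a surjection
\[
\Hom_A(Y,V)\to\rad_A(Y,X).
\]
It maps \(\rad_A(Y,V)\) into \(\rad^2_A(Y,X)\), and conversely every element of \(\rad^2_A(Y,X)\) factors through \(f\) via a radical map, because each factor is a non-retraction. Hence the map \(\Hom_A(Y,V)/\rad_A(Y,V)\cong(k^{n_Y})\to\Irr_A(Y,X)\) is surjective, where the identification uses that \(k\) is algebraically closed, so \(\End(Y)/\rad\cong k\).

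\textbf{Injectivity of the induced map.} This is the main obstacle. Suppose a combination of the components \(Y\to V\) composes into \(\rad^2\). Then I can change the corresponding summand by a radical correction and obtain an endomorphism \(u\) of \(V\) with \(fu=f\) that is not invertible, contradicting right minimality. Concretely, the approach is to show that each component \(f_j:Y\to X\) of \(f\) is irreducible, and that the classes of the \(n_Y\) copies are linearly independent modulo \(\rad^2\). So a relation \(\sum c_jf_j\in\rad^2\) would let me replace one summand map and contradict minimality. The dual formula follows in the same way.

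\textbf{Bijectivity of \(\tau_A\).} Finally, \(\tau_A\) is a bijection from nonprojective to noninjective indecomposables, with inverse \(\tau_A^-\). I get this from the uniqueness of almost split sequences: the sequence ending at \(X\) is also the one starting at \(\tau_A X\), which is noninjective because the sequence does not split. Hence \(\tau_A^-\tau_A X\cong X\), and symmetrically \(\tau_A\tau_A^-Y\cong Y\).
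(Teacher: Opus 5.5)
Your argument is correct. The paper gives no proof of this proposition; it quotes it from \cite{ARS}, and what you outline is the standard textbook argument: right almost splitness and minimality from the sequence, multiplicities from minimality, and bijectivity of \(\tau_A\) from uniqueness of almost split sequences.

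The only statement of this type that the paper proves is Lemma~\ref{lem:tau-multiplicity}, the analogue for \(\tau\)-categories. There the step you single out as the crux is handled differently: the injectivity of \(\Hom_A(Y,\theta_AX)/\rad_A(Y,\theta_AX)\to\Irr_A(Y,X)\). By exactness of \eqref{eq:right-presentation}, any \(h:Y\to\theta_AX\) with \(\mu_X^+h=0\) factors through \(\nu_X^+\), which is radical. So the kernel of composition with \(\mu_X^+\) already lies in the radical. You instead obtain injectivity from right minimality, turning a non-radical \(h\) with \(\mu_X^+h\in\rad_A^2(Y,X)\) into a noninvertible \(u\) with \(\mu_X^+u=\mu_X^+\). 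Your route works for any minimal right almost split map, without knowing its kernel. The paper's route is a one-line consequence of the same presentation that yields the resolutions in Lemma~\ref{lem:strict-resolution}.

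One small correction to your first step. That every non-retraction into \(X\) factors through \(\mu_X^+\) is part of what it means for the sequence to be almost split. It does not follow from the sequence being nonsplit with indecomposable end terms. For instance, in Example~\ref{ex:a3} the sequence \(0\to S_3\to M_{13}\to M_{12}\to0\) is of that kind. Yet the inclusion \(S_2\to M_{12}\) does not factor through \(M_{13}\), since \(\Hom_A(S_2,M_{13})=0\). If you want to derive the factorization from left almost splitness of \(\nu_X^+\) by a pullback, the local ring that does the work is \(\End_A(X)\), not \(\End_A(\tau_AX)\).
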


We will compute Ext groups from Hom groups by the following form of
Auslander--Reiten duality.
\begin{proposition}[{\cite[IV, Theorem~2.13]{ASS}}]\label{prop:ar-duality}
For modules \(M,N\), there are natural isomorphisms
\begin{equation}\label{eq:ar-duality}
 \Dk\Ext_A^1(M,N)\cong\uhom_A(\tau_A^-N,M)
 \cong\ohom_A(N,\tau_A M).
\end{equation}
Here \(\uhom\) is Hom modulo maps through projectives, and \(\ohom\)
is Hom modulo maps through injectives. The formulas include the zero cases
when \(N\) is injective or \(M\) is projective.
\end{proposition}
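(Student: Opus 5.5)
The plan is to derive both isomorphisms from the Auslander--Reiten formula via projective and injective presentations, which is the standard route in \cite[IV]{ASS}; the zero cases will fall out of the same argument. Start with \(\Dk\Ext_A^1(M,N)\cong\ohom_A(N,\tau_A M)\). Take a minimal projective presentation \(P_1\xrightarrow{p}P_0\to M\to0\) and define \(\tau_A M=\Dk\operatorname{Tr}M\) through the Nakayama functor \(\nu=\Dk\Hom_A(-,A)\). This gives an exact sequence \(0\to\tau_AM\to\nu P_1\xrightarrow{\nu p}\nu P_0\). Applying \(\Hom_A(N,-)\) to it, and using \(\Hom_A(N,\nu P)\cong\Dk\Hom_A(P,N)\), identifies \(\Hom_A(N,\tau_AM)\) with the kernel of \(\Dk\Hom_A(P_1,N)\to\Dk\Hom_A(P_0,N)\).

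Dually, apply \(\Hom_A(-,N)\) to the presentation. The cokernel of \(\Hom_A(P_0,N)\to\Hom_A(P_1,N)\) surjects onto \(\Ext^1_A(M,N)\) once we pass to \(\Omega M\). Comparing the two exact sequences, the maps \(N\to\tau_AM\) that factor through injectives (equivalently, through \(\nu\)-images of projectives) correspond exactly to the discrepancy between that cokernel and \(\Ext^1\). Dualizing then yields the natural isomorphism \(\Dk\Ext^1_A(M,N)\cong\ohom_A(N,\tau_AM)\).

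The first isomorphism, \(\Dk\Ext_A^1(M,N)\cong\uhom_A(\tau_A^-N,M)\), is the same argument in the opposite category. Applying \(\Dk\) turns \(\mod A\) into \(\mod A^{\op}\), exchanges projectives and injectives, and replaces \(\tau_A\) by \(\tau_A^-\).

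For the zero cases, note that \(\tau_AM=0\) exactly when \(M\) is projective, and \(\tau_A^-N=0\) exactly when \(N\) is injective. In each case the Hom side vanishes, and so does \(\Ext^1\). These conventions agree with Definition~\ref{def:ar}, so no separate argument is needed.

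The main obstacle is the bookkeeping in the comparison step. One has to show that the image of \(\Hom_A(N,\nu P_1)\) in \(\Hom_A(N,\tau_AM)\)-terms is exactly the space of maps through injectives, and that naturality holds in both variables. Since the result is cited, in the paper I would simply invoke \cite[IV, Theorem~2.13]{ASS} rather than reprove it.
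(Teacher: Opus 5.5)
The paper gives no proof of this statement; it simply cites \cite[IV, Theorem~2.13]{ASS}, as your last sentence proposes. Read as a proof, however, your sketch misstates one step and leaves the only nontrivial step unproved. Put \(K=\operatorname{Im}p=\Omega M\), \(L=\ker(P_1\to K)\), and \(C(N)=\operatorname{Coker}\bigl(\Hom_A(P_0,N)\to\Hom_A(P_1,N)\bigr)\). The image of \(\Hom_A(P_0,N)\) lies in \(\Hom_A(K,N)\subseteq\Hom_A(P_1,N)\), so there is an exact sequence, natural in \(N\),
\[
 0\longrightarrow\Ext_A^1(M,N)\longrightarrow C(N)\longrightarrow G(N)\longrightarrow0,
 \qquad G(N)=\operatorname{Im}\bigl(\Hom_A(P_1,N)\to\Hom_A(L,N)\bigr).
\]
Thus \(\Ext_A^1(M,N)\) is a subspace of \(C(N)\), not a quotient. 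Your first paragraph then yields a natural surjection \(\Hom_A(N,\tau_AM)\cong\Dk C(N)\twoheadrightarrow\Dk\Ext_A^1(M,N)\) with kernel \(\Dk G(N)\). The whole content of the formula is that this kernel consists exactly of the maps \(N\to\tau_AM\) factoring through injectives, so that the quotient is \(\ohom_A(N,\tau_AM)\). The phrase ``correspond exactly to the discrepancy'' asserts this without argument. Your later reformulation also points the wrong way: there is no natural map from \(\Hom_A(N,\nu P_1)\) to \(\Hom_A(N,\tau_AM)\), only the inclusion in the other direction. The injective that matters is an injective envelope of \(N\).

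The missing step is short once phrased functorially. Suppose \(f=ba\) with \(a:N\to J\), \(b:J\to\tau_AM\), and \(J\) injective. By naturality in \(N\), the image of \(f\) in \(\Dk\Ext_A^1(M,N)\) is obtained from the image of \(b\) in \(\Dk\Ext_A^1(M,J)=0\), so \(f\) lies in the kernel. Conversely, \(G\) is a subfunctor of \(\Hom_A(L,-)\), so it preserves monomorphisms. For an injective envelope \(\iota:N\to J\), the map \(\Dk G(\iota)\), which is \(g\mapsto g\iota\) restricted to the kernels, is therefore surjective. The kernel at \(J\) is all of \(\Hom_A(J,\tau_AM)\) because \(\Ext_A^1(M,J)=0\). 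Hence every \(f\) in the kernel at \(N\) equals \(g\iota\) for some \(g:J\to\tau_AM\), and so factors through \(J\). With this step supplied, your reduction of the first isomorphism to \(A^{\op}\) via \(\Dk\) and your treatment of the zero cases go through. Naturality in \(M\), which requires \(\tau_A\) as a functor on stable categories, is never needed in the paper. Every application uses only the isomorphism, or just vanishing or nonvanishing, for fixed \(M\) and \(N\).
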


The ideal quotients studied in Section~\ref{sec:quotient} are in general
not abelian. In them, almost split sequences are replaced by Iyama's
\(\tau\)-sequences, which we define through functors on the category.
\begin{definition}\label{def:simple-functors}
Let \(\cC\) be a Hom-finite Krull--Schmidt \(k\)-linear category. Write
\(\mod\cC\) for the abelian category of contravariant \(k\)-linear functors
from \(\cC\) to finite-dimensional vector spaces. For an object \(X\), put
\(\mathsf P_X=\cC(-,X)\), a projective object of \(\mod\cC\) by the Yoneda
lemma, and for a map \(f:L\to M\) write
\(\mathsf P_f=f\circ-:\mathsf P_L\to\mathsf P_M\). For an indecomposable
\(X\), put
\[
 \mathsf S_X=\mathsf P_X/\rad_{\cC}(-,X).
\]
It is simple, with \(\mathsf S_X(Y)=0\) for indecomposables \(Y\not\cong X\),
and \(\mathsf S_X(X)=\End_{\cC}(X)/\rad_{\cC}(X,X)=k\), since this is a
finite-dimensional division algebra over the algebraically closed field
\(k\).
Dually, for covariant functors put \(\mathsf P^X=\cC(X,-)\),
\(\mathsf P^f=-\circ f:\mathsf P^M\to\mathsf P^L\), and
\(\mathsf S^X=\mathsf P^X/\rad_{\cC}(X,-)\).
\end{definition}

\begin{definition}\label{def:weak-kernel}
A map \(f:L\to M\) in \(\cC\) is a \emph{weak kernel} of \(g:M\to N\) if
\(\mathsf P_L\xrightarrow{\mathsf P_f}\mathsf P_M\xrightarrow{\mathsf P_g}\mathsf P_N\)
is exact, that is, \(gf=0\) and every map \(h:W\to M\) with \(gh=0\)
factors through \(f\). It is minimal if \(fu=f\) forces \(u\in\End(L)\)
to be invertible. Weak cokernels and their minimality are defined dually,
using \(\mathsf P^N\to\mathsf P^M\to\mathsf P^L\).
\end{definition}

\begin{definition}[{\cite[Section~2.1]{IyamaI}}; in the formulation of {\cite[Section~1.3]{IyamaIV}}]\label{def:tau-category}
Let \(\cC\) be as in Definition~\ref{def:simple-functors}. A complex
\(L\xrightarrow{f}M\xrightarrow{g}N\) with \(f,g\) radical is a
\emph{right \(\tau\)-sequence} if \(f\) is a minimal weak kernel of \(g\)
and the sequences
\[
 \mathsf P_M\xrightarrow{\mathsf P_g}\rad_{\cC}(-,N)\longrightarrow0,
 \qquad
 \mathsf P^M\xrightarrow{\mathsf P^f}\rad_{\cC}(L,-)\longrightarrow0
\]
are exact. A \emph{left \(\tau\)-sequence} satisfies the same conditions,
with \(g\) a minimal weak cokernel of \(f\) instead. The category \(\cC\)
is a \emph{\(\tau\)-category} if every indecomposable is the last term of
a right \(\tau\)-sequence and the first term of a left
\(\tau\)-sequence.
\end{definition}
The terms \(L\) and \(M\) of a right \(\tau\)-sequence may be zero, and
dually for left \(\tau\)-sequences.
The two exactness conditions say that every radical map into \(N\)
factors through \(g\), and every radical map out of \(L\) factors through
\(f\). For a right \(\tau\)-sequence ending at an indecomposable \(N\), the
first condition and the weak kernel property give an exact sequence
\begin{equation}\label{eq:right-presentation}
 \mathsf P_L\xrightarrow{\mathsf P_f}\mathsf P_M\xrightarrow{\mathsf P_g}
 \mathsf P_N\longrightarrow\mathsf S_N\longrightarrow0.
\end{equation}
Since \(f\) and \(g\) are radical and \(f\) is a minimal weak kernel, the sequence \eqref{eq:right-presentation} is the
beginning of a minimal projective resolution of \(\mathsf S_N\). Dually, a
left \(\tau\)-sequence starting at an indecomposable \(L\) gives the
beginning of a minimal projective resolution
\(\mathsf P^N\to\mathsf P^M\to\mathsf P^L\to\mathsf S^L\to0\).

Let \(\cC\) be a \(\tau\)-category. For an indecomposable \(X\), the right
\(\tau\)-sequence ending at \(X\) and the left \(\tau\)-sequence starting
at \(X\) are unique up to isomorphism of complexes
\cite[Section~2.1]{IyamaI}. Following
\cite[Section~1.3]{IyamaIV}, we write them as
\[
 \tau_{\cC}^+X\xrightarrow{\nu_X^+}\theta_{\cC}^+X\xrightarrow{\mu_X^+}X,
 \qquad
 X\xrightarrow{\mu_X^-}\theta_{\cC}^-X\xrightarrow{\nu_X^-}\tau_{\cC}^-X.
\]
We call \(X\) \emph{\(\tau\)-projective} if \(\tau_{\cC}^+X=0\), and
\emph{\(\tau\)-injective} if \(\tau_{\cC}^-X=0\). Following
\cite[Section~2.1(3)]{IyamaI}, we denote the sets of \(\tau\)-projective
and \(\tau\)-injective objects of \(\ind\cC\) by \(\ind_1^+\cC\) and
\(\ind_1^-\cC\). We use the following properties, proved by Iyama.
\begin{proposition}[{\cite[Sections~2.1 and~2.3]{IyamaI}}]\label{rem:tau-properties}
Let \(\cC\) be a \(\tau\)-category, and let \(X\) be an indecomposable.
\begin{enumerate}
\item The map \(\mu_X^+\) is minimal right almost split, and
\(\tau_{\cC}^+X\) is zero or indecomposable. Dually, \(\mu_X^-\) is
minimal left almost split, and \(\tau_{\cC}^-X\) is zero or
indecomposable.
\item If \(\tau_{\cC}^+X\ne0\), then
\(\tau_{\cC}^+X\xrightarrow{\nu_X^+}\theta_{\cC}^+X\xrightarrow{\mu_X^+}X\)
is also the left \(\tau\)-sequence starting at \(\tau_{\cC}^+X\). In
particular, \(\tau_{\cC}^-\tau_{\cC}^+X\cong X\) and
\(\theta_{\cC}^-\tau_{\cC}^+X\cong\theta_{\cC}^+X\). The dual statement
holds if \(\tau_{\cC}^-X\ne0\).
\item The maps \(\tau_{\cC}^+\) and \(\tau_{\cC}^-\) are mutually inverse
bijections between \(\ind\cC\setminus\ind_1^+\cC\) and
\(\ind\cC\setminus\ind_1^-\cC\).
\end{enumerate}
\end{proposition}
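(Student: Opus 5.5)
We derive all three parts directly from Definition~\ref{def:tau-category}, using that the right \(\tau\)-sequence at \(X\) gives the beginning \eqref{eq:right-presentation} of a minimal projective resolution of \(\mathsf S_X\), together with the dual statement for left \(\tau\)-sequences.

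\textbf{Part (1).} The map \(\mu_X^+\) is radical, so it is not a retraction. The exactness of \(\mathsf P_{\theta^+X}\to\rad_{\cC}(-,X)\to0\) says that every radical map into \(X\) factors through \(\mu_X^+\). Since \(X\) is indecomposable with local endomorphism ring, a map to \(X\) from an indecomposable is a non-retraction exactly when it is radical, and this extends additively. Hence \(\mu_X^+\) is right almost split.

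For minimality, \(\mathsf P_{\theta^+X}\to\rad(-,X)\) is a projective cover. This holds because \(\nu_X^+\) is radical, so the kernel \(\operatorname{Im}\mathsf P_{\nu}\) lies in the radical of \(\mathsf P_{\theta^+X}\). A map between projectives in \(\mod\cC\) that is a projective cover is right minimal, and this translates back to \(\mu^+_X\) via Yoneda.

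\textbf{Part (1), indecomposability of \(\tau^+X\).} Write \(L=\tau^+X=\bigoplus L_i\). The second exactness condition says that \(\mathsf P^{M}\to\rad(L,-)\) is surjective. I would show that each summand \(L_i\) gives a left \(\tau\)-sequence-like complex \(L_i\to M\to X\). The point is that a minimal weak kernel of \(\mu^+\) that is a direct sum cannot have two nonzero summands. Suppose \(L_1,L_2\neq0\). Apply the covariant condition to the radical map \(L\to L_2\) given by zero on \(L_2\)? That choice fails, so I would argue with simple functors instead. From the resolution, \(\mathsf S_X\) has second syzygy \(\operatorname{Im}\mathsf P_\nu\). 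Dually, the condition on \(f\) gives that \(\operatorname{Ext}\)-type data \(\mathsf S^{L_i}\) is related to \(X\): the covariant sequence \(\mathsf P^X\to\mathsf P^M\to\mathsf P^L\to\mathsf S^L\)-ish is exact at \(\mathsf P^L\) modulo the radical. This forces \(\mathsf P^L/\operatorname{Im}\mathsf P^f=\bigoplus\mathsf S^{L_i}\) to be the top of \(\operatorname{Hom}(L,-)\). Evaluating at \(X\), the composite \(\mu\nu=0\) and the weak kernel property, used via \(\mathsf P_L(L)\), show that \(\operatorname{End}\) of the summands inject into a one-dimensional space \(\mathsf S_X\)-dual. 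I expect this indecomposability step to be the main obstacle. What I would try first is Iyama's argument: show \(\dim\) of the top of \(\mathsf P_L\) at the kernel equals \(\dim\mathsf S_X(X)=1\), using the covariant exactness to build a nondegenerate pairing between \(\mathsf S^L\) and \(\mathsf S_X\).

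\textbf{Part (2).} Given \(\tau^+X\neq0\), we must check that \(g=\mu^+_X\) is a minimal weak cokernel of \(f=\nu^+_X\); the two exactness conditions are already symmetric. The weak cokernel property is this: take \(h:M\to W\) with \(hf=0\). I would show \(h\) factors through \(g\) by using that \(\mathsf P^N\to\mathsf P^M\to\rad(L,-)\) is exact, derived from \(L\) indecomposable and the pairing established above. Minimality follows as in part (1). Uniqueness of left \(\tau\)-sequences then gives \(\tau^-\tau^+X\cong X\) and \(\theta^-\tau^+X\cong\theta^+X\).

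\textbf{Part (3).} This follows from part (2) and its dual: \(\tau^+\) sends non-\(\tau\)-projectives to non-\(\tau\)-injectives, since \(\tau^-\tau^+X\cong X\neq0\), and the two maps compose to the identity in both directions.
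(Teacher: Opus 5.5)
The paper gives no proof of this statement; it cites Iyama. Your arguments that \(\mu_X^+\) is right almost split and right minimal are correct, and (3) does follow formally from (2) and its dual. The genuine gap is in the two central steps: the indecomposability of \(\tau_{\cC}^+X\), and the weak cokernel property in (2). You try to extract both from the right \(\tau\)-sequence at \(X\) alone, via a pairing between \(\mathsf S^L\) and \(\mathsf S_X\). Nowhere do you use that \(\cC\) is a \(\tau\)-category, i.e.\ that left \(\tau\)-sequences exist. Without that hypothesis both conclusions are false, so no such pairing argument can succeed.

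\emph{First counterexample.} Take the additive closure of the path category of \(L_1\xrightarrow{a_1}M_1\xrightarrow{b_1}N\xleftarrow{b_2}M_2\xleftarrow{a_2}L_2\) modulo \(b_1a_1=0=b_2a_2\). The complex \(L_1\oplus L_2\xrightarrow{a_1\oplus a_2}M_1\oplus M_2\xrightarrow{(b_1\ b_2)}N\) satisfies every condition of Definition~\ref{def:tau-category} for a right \(\tau\)-sequence, yet its first term is decomposable. There is no left \(\tau\)-sequence at \(L_1\) in this category.

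\emph{Second counterexample.} Take arrows \(a:L\to M\), \(b:M\to N\), \(b':M\to N'\) with relations \(ba=0=b'a\). Then \(L\xrightarrow{a}M\xrightarrow{b}N\) is a right \(\tau\)-sequence with \(L\) indecomposable. But \(b'\) is killed by \(a\) and does not factor through \(b\), so indecomposability of \(L\) does not yield the weak cokernel property either.

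The missing idea is to compare with the left \(\tau\)-sequence at an indecomposable summand of \(\tau_{\cC}^+X\). Write \(L=\tau_{\cC}^+X=L_1\oplus L_2\) with \(L_1\) indecomposable, and let \(L_1\xrightarrow{f'}M'\xrightarrow{g'}N'\) be the left \(\tau\)-sequence at \(L_1\). It exists because \(\cC\) is a \(\tau\)-category, and \(f'\) is minimal left almost split by your argument for (1).
\begin{enumerate}
\item Write \(\nu_X^+=(f_1,f_2)\). Then \(f_1=af'\) for some \(a\). The covariant condition applied to the radical map \((f',0):L\to M'\) gives \(b\) with \(bf_1=f'\) and \(bf_2=0\). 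Left minimality of \(f'\) makes \(ba\) invertible, so \(M=M'\oplus M''\) with \(\nu_X^+=f'\oplus f_2''\).
\item Next, \(\mu_X^+|_{M'}=cg'\) for some \(c:N'\to X\). If \(c\) were radical, it would factor through \(\mu_X^+\). The weak kernel property would then give a split monomorphism \(s:L_1\to L\) with \(\nu_X^+s=0\), contradicting minimality of \(\nu_X^+\). Hence \(c\) has a section \(t\).
\item Lift the radical map \(t\mu_X^+|_{M''}\) through \(g'\) and apply the weak kernel property. This shows that \(1_{M''}\) factors through the radical map \(f_2''\), so \(M''=0\), and then \(L_2=0\) by minimality.
\item A similar argument with the summand \(\ker c\) of \(N'\) shows that \(c\) is an isomorphism.
\end{enumerate}
Thus the right \(\tau\)-sequence at \(X\) is isomorphic to the left \(\tau\)-sequence at \(L_1\). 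This gives the indecomposability in (1) and all of (2) at once.
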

\begin{example}\label{ex:module-tau}
The category \(\mod A\) is a \(\tau\)-category: its right and left
\(\tau\)-sequences are the
almost split sequences with the zeros at both ends omitted, together with
\(0\to\rad P\to P\) at projectives and
\(I\to I/\soc I\to0\) at injectives.
The almost split factorization properties give the two exactness
conditions in Definition~\ref{def:tau-category}; the ordinary kernels and cokernels
give the required minimal weak kernels and cokernels. Thus
\(\tau_{\mod A}^+=\tau_A\), \(\tau_{\mod A}^-=\tau_A^-\),
\(\theta_{\mod A}^+=\theta_A\), and \(\theta_{\mod A}^-=\theta_A^-\)
in the notation of Definition~\ref{def:ar}, and the \(\tau\)-projectives
and \(\tau\)-injectives are the projectives and injectives. Each
\(\nu_X^+\) is monic, and each \(\nu_X^-\) is epic. These facts are also noted in
\cite[Section~1.3]{IyamaIV}.
\end{example}

As for modules \cite[VII, Proposition~1.3]{ARS}, the middle terms of
\(\tau\)-sequences record the irreducible maps; for \(\tau\)-categories
this is stated in \cite[Section~9.1]{IyamaI}.
\begin{lemma}\label{lem:tau-multiplicity}
Let \(\cC\) be a Hom-finite \(k\)-linear \(\tau\)-category, and let
\(X\) be an indecomposable. For every indecomposable \(Y\),
\[
 [\theta_{\cC}^+X:Y]_\oplus=\dim_k\Irr_{\cC}(Y,X),\qquad
 [\theta_{\cC}^-X:Y]_\oplus=\dim_k\Irr_{\cC}(X,Y).
\]
\end{lemma}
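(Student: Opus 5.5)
We prove the first formula; the second is dual, with covariant functors and left \(\tau\)-sequences. The plan is to compute \(\dim_k\Irr_{\cC}(Y,X)\) as a multiplicity in the minimal projective presentation \eqref{eq:right-presentation} of \(\mathsf S_X\). Write \(\theta=\theta_{\cC}^+X\) and \(g=\mu_X^+\). By the first exactness condition in Definition~\ref{def:tau-category}, \(\mathsf P_\theta\to\rad_{\cC}(-,X)\) is surjective. Evaluating at \(Y\), every radical map \(Y\to X\) factors as \(g\circ h\) with \(h:Y\to\theta\).

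The key step is to show that the induced map
\[
\mathsf P_\theta(Y)/\rad_{\cC}(Y,\theta)\longrightarrow\rad_{\cC}(Y,X)/\rad^2_{\cC}(Y,X)
\]
is an isomorphism, and to do this for each indecomposable summand. The left side has dimension \([\theta:Y]_\oplus\), since \(\End(Y)/\rad=k\) by Definition~\ref{def:simple-functors}.

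\textbf{Surjectivity.} Every radical map into \(X\) factors through \(g\), and \(g\) is radical. Hence if \(h\) is radical, then \(gh\in\rad^2\). So the map \(h\mapsto gh\) descends to the quotient by \(\rad_{\cC}(Y,\theta)\), and it is onto because every radical map \(Y\to X\) is of the form \(gh\).

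\textbf{Injectivity.} This is the main point. Decompose \(\theta=\bigoplus_i Y_i\) and \(g=(g_i)\). Suppose \(h\colon Y\to\theta\) has components \(h_i\) with scalar parts \(c_i\) on the summands \(Y_i\cong Y\), so that \(h\equiv\sum c_i\iota_i\) modulo radical. Suppose also that \(\sum c_i g_i\in\rad^2(Y,X)\) with not all \(c_i=0\). After a change of basis among the summands isomorphic to \(Y\), which is an automorphism of \(\theta\), we may assume that a single component \(g_1\) lies in \(\rad^2\). Then \(g_1=\sum_j g_j\, u_j\) for radical maps \(u_j\colon Y_1\to Y_j\). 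Here we use that \(\rad^2(Y_1,X)\) consists of composites of radical maps, and each radical map into \(X\) factors through \(g\).

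Now the endomorphism \(u'=\mathrm{id}_\theta-(\text{the map } Y_1\to\theta \text{ given by } u\text{, placed in column }1)\) satisfies \(g\circ(\text{modified map})=g\) after restricting suitably. Concretely, define \(v\in\End(\theta)\) equal to the identity except that the first column is \(u\). Then \(gv=g\). Since \(u_{1}\colon Y_1\to Y_1\) is radical and the other entries of the first column \(u_j\) are radical, \(v\) is not invertible: its first column lies in the radical. This contradicts the right minimality of \(g\) from Proposition~\ref{rem:tau-properties}(1). Hence the map is injective.

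The obstacle is exactly this injectivity step. It needs the right minimality of \(\mu_X^+\), which Proposition~\ref{rem:tau-properties}(1) supplies, together with the fact that the residue fields are \(k\). Hom-finiteness and the Krull--Schmidt property ensure that the radical of \(\End(\theta)\) detects non-invertibility. Dually, the second formula follows from the left almost split map \(\mu_X^-\) and its left minimality.
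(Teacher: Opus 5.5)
Your proof is correct, but at the crucial injectivity step it takes a different route from the paper.

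\textbf{Your route.} You use the right minimality of \(\mu_X^+\) from Proposition~\ref{rem:tau-properties}(1). After an automorphism of \(\theta_{\cC}^+X\) mixing the summands isomorphic to \(Y\), you arrange \(g_1\in\rad_{\cC}^2\), write \(g_1=gu\) with \(u\) radical, and obtain a noninvertible \(v\) with \(gv=g\). This is the classical argument for module categories. It applies to any minimal right almost split map in a Hom-finite Krull--Schmidt category with residue fields \(k\), and it never uses the first term \(\tau_{\cC}^+X\).

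\textbf{The paper's route.} The paper instead uses exactness of \eqref{eq:right-presentation} at \(\mathsf P_{\theta_{\cC}^+X}\). The kernel of \(\mathsf P_{\mu_X^+}\) is the image of \(\mathsf P_{\nu_X^+}\), which lies in \(\rad_{\cC}(-,\theta_{\cC}^+X)\) simply because \(\nu_X^+\) is radical by the definition of a right \(\tau\)-sequence. Both arguments also use the fact that \(\mu_X^+\) maps \(\rad_{\cC}(-,\theta_{\cC}^+X)\) onto \(\rad_{\cC}^2(-,X)\). Combining the two facts gives an isomorphism of functors \(\mathsf P_{\theta_{\cC}^+X}/\rad_{\cC}(-,\theta_{\cC}^+X)\cong\rad_{\cC}(-,X)/\rad_{\cC}^2(-,X)\) at once. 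This needs no decomposition, no change of basis, and no appeal to Iyama's minimality result.

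\textbf{Cleanup.} Three small points in your write-up:
\begin{enumerate}
\item Drop the garbled sentence about \(u'\) in favour of your concrete definition of \(v\).
\item Note that \(g\alpha\) is still right minimal for an automorphism \(\alpha\) of \(\theta_{\cC}^+X\).
\item Justify noninvertibility of \(v\). Since \(v\iota_1=u\), invertibility would give \(1_{Y_1}=p_1v^{-1}u\in\rad_{\cC}(Y_1,Y_1)\), where \(p_1\) is the projection onto \(Y_1\).
\end{enumerate}
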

\begin{proof}
We prove the first formula; the second follows by applying it to the
opposite category.
By \eqref{eq:right-presentation}, \(\mathsf P_{\mu_X^+}\) maps
\(\mathsf P_{\theta_{\cC}^+X}\) onto \(\rad_{\cC}(-,X)\), and its kernel is the image
of \(\mathsf P_{\nu_X^+}\), which lies in \(\rad_{\cC}(-,\theta_{\cC}^+X)\) because
\(\nu_X^+\) is radical. The image of \(\rad_{\cC}(-,\theta_{\cC}^+X)\) is
\(\rad_{\cC}^2(-,X)\): it lies there because \(\mu_X^+\) is radical, and
conversely a composite \(ab\) of radical maps \(b:Y\to W\) and
\(a:W\to X\) has the form \(\mu_X^+(cb)\), where \(a=\mu_X^+c\). Hence
\(\mathsf P_{\mu_X^+}\) induces an isomorphism
\[
 \mathsf P_{\theta_{\cC}^+X}/\rad_{\cC}(-,\theta_{\cC}^+X)\cong
 \rad_{\cC}(-,X)/\rad_{\cC}^2(-,X).
\]
The left side is the direct sum of the functors \(\mathsf S_V\) over the
indecomposable summands \(V\) of \(\theta_{\cC}^+X\), with multiplicity. Evaluate
at \(Y\). Since \(\mathsf S_V(Y)=0\) for \(V\not\cong Y\) and
\(\mathsf S_Y(Y)=\End_{\cC}(Y)/\rad_{\cC}(Y,Y)=k\), the left side has
dimension \([\theta_{\cC}^+X:Y]_\oplus\), and the right side is
\(\Irr_{\cC}(Y,X)\).
\end{proof}

We will repeatedly express radical maps as sums of composites of
irreducible maps. For module categories this is
\cite[V, Theorem~7.8]{ARS}; see also \cite[IV, Corollary~5.6]{ASS}.
\begin{lemma}\label{lem:path-expansion}
Let \(\cC\) be a Hom-finite Krull--Schmidt \(k\)-linear category with finitely
many indecomposable isomorphism classes. Choose maps representing a basis
of each \(\Irr_{\cC}(X,Y)\). Then the radical of \(\cC\) is nilpotent, and every
radical map between indecomposables is a linear combination of nonempty
paths in the chosen maps.
\end{lemma}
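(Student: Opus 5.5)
The plan has two parts: nilpotence of the radical first, then the path expansion by induction on radical depth.

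\emph{Nilpotence.} Let \(X_1,\dots,X_n\) represent the indecomposables, and put \(R=\bigoplus_{i,j}\rad_{\cC}(X_i,X_j)\). This is the Jacobson radical of the finite-dimensional algebra \(E=\End_{\cC}(\bigoplus_i X_i)\). Indeed, \(E\) is semiperfect because each \(\End_{\cC}(X_i)\) is local, and the radical of the additive category restricts to the Jacobson radical on the endomorphism algebra of an object. Since \(E\) is finite-dimensional, \(R\) is nilpotent, say \(R^N=0\). The powers \(\rad^m_{\cC}\) between indecomposables are exactly the components of \(R^m\): a composite of \(m\) radical maps through arbitrary objects splits, by Krull--Schmidt, into sums of composites through indecomposables. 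Hence \(\rad_{\cC}^N=0\), first between indecomposables and then everywhere by additivity.

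\emph{Expansion.} I will show, by descending induction on \(m\) from \(N\) down to \(1\), that every map in \(\rad^m_{\cC}(X,Y)\) between indecomposables is a linear combination of nonempty paths in the chosen maps, modulo \(\rad^{m'}\) for every \(m'\geq m\). Taking \(m'=N\) removes the error term, and the statement follows at \(m=1\). The key observation is that any \(h\in\rad(X,Y)\) can be written as \(h=\sum_j \lambda_j a_j+h'\), where the \(a_j\) are the chosen basis maps of \(\Irr_{\cC}(X,Y)\) and \(h'\in\rad^2(X,Y)\). The map \(h'\) is a finite sum of composites \(uv\) with \(v\in\rad(X,Z)\), \(u\in\rad(Z,Y)\), and \(Z\) indecomposable, again by splitting the middle object via Krull--Schmidt.

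It is cleaner to prove directly that every \(h\in\rad^m(X,Y)\) satisfies
\[
 h\equiv\text{(combination of paths of lengths between \(m\) and \(p-1\))}\pmod{\rad^{p}}
\]
for all \(p>m\), by induction on \(p\). For the step, take the error term in \(\rad^{p}\) and write it as a sum of products of \(p\) radical maps between indecomposables. Reduce each factor modulo \(\rad^2\) to a combination of chosen irreducible maps, and note that the products of the remaining error terms land in \(\rad^{p+1}\). This yields paths of length \(p\) plus an error in \(\rad^{p+1}\). At \(p=N\) the error vanishes.

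The only delicate point is the identification of the categorical radical between indecomposables with the Jacobson radical of \(E\), together with the splitting of composites through decomposable objects into composites through indecomposables. Both are standard consequences of Hom-finiteness and Krull--Schmidt, and I expect no real obstacle. Finitely many indecomposables are used exactly so that \(E\) is finite-dimensional.
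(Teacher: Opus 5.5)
Your proposal is correct and follows the paper's proof. You identify the radical between indecomposables with the blocks of the Jacobson radical of the finite-dimensional algebra \(\End_{\cC}(\bigoplus_i X_i)\) to get \(\rad_{\cC}^N=0\), then repeatedly reduce each factor modulo \(\rad_{\cC}^2\) until the error lies in \(\rad_{\cC}^N\); your first sketch of a descending induction is redundant, and the induction on \(p\) that follows it is exactly the paper's argument.
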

\begin{proof}
Put \(M=\bigoplus_{X\in\ind\cC}X\) and \(\Lambda=\End_{\cC}(M)\).
The spaces \(\rad_{\cC}(X,Y)\) are the corresponding matrix blocks of
\(\rad\Lambda\). Since \(\Lambda\) is finite-dimensional, choose \(N\)
with \((\rad\Lambda)^N=0\); then \(\rad_{\cC}^N=0\).
Modulo \(\rad_{\cC}^2\), every radical map
is a linear combination of the chosen representatives. Repeating this
expansion in each factor expresses it modulo \(\rad_{\cC}^{n+1}\)
as a sum of paths of lengths at most \(n\). Taking \(n=N-1\) proves the claim.
\end{proof}

\subsection{Magnitude in a strict \texorpdfstring{\(\tau\)}{tau}-category}\label{subsec:strict-magnitude}
The following definition extends magnitude from module categories to
Krull--Schmidt categories. Proposition~\ref{thm:categorical-magnitude}
below computes it for ideal quotients and categories of poset
representations; we first describe the categories to which it applies.
\begin{definition}\label{def:magnitude}
Let \(\cC\) be a Hom-finite Krull--Schmidt \(k\)-linear category
with finitely many indecomposable isomorphism classes. Its \emph{Hom
matrix} and \emph{magnitude} are
\[
 H_{\cC}=(\dim_k\cC(X,Y))_{X,Y\in\ind\cC},\qquad
 \chi(\cC)=\one^{\mathsf T}H_{\cC}^{-1}\one,
\]
the latter whenever \(H_{\cC}\) is invertible.
\end{definition}

\begin{definition}\label{def:strict-tau}
Following \cite[Section~1.3]{IyamaIV}, a \(\tau\)-category \(\cC\)
is \emph{strict} if \(\nu_X^+\) is monic and \(\nu_X^-\) is epic for every
indecomposable \(X\).
\end{definition}
By Example~\ref{ex:module-tau}, \(\mod A\) is a strict \(\tau\)-category.
In a strict \(\tau\)-category, the presentations
\eqref{eq:right-presentation} become resolutions; cf.
\cite[Section~2.1]{IyamaI} and the proof of
\cite[Theorem~4.3]{IyamaIV}.
\begin{lemma}\label{lem:strict-resolution}
Let \(\cC\) be a strict \(\tau\)-category and \(X\) an indecomposable.
Then the sequences
\[
\begin{gathered}
 0\longrightarrow\mathsf P_{\tau_{\cC}^+X}\longrightarrow
 \mathsf P_{\theta_{\cC}^+X}\longrightarrow\mathsf P_X\longrightarrow
 \mathsf S_X\longrightarrow0,\\
 0\longrightarrow\mathsf P^{\tau_{\cC}^-X}\longrightarrow
 \mathsf P^{\theta_{\cC}^-X}\longrightarrow\mathsf P^X\longrightarrow
 \mathsf S^X\longrightarrow0,
\end{gathered}
\]
induced by the right and left \(\tau\)-sequences at \(X\), are exact.
\end{lemma}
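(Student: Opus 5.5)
We prove exactness of the first sequence; the second is the same argument applied to the opposite category \(\cC^{\op}\). Here \(\cC^{\op}\) is again a strict \(\tau\)-category, with the roles of left and right \(\tau\)-sequences exchanged and monic and epic swapped.

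By \eqref{eq:right-presentation}, the sequence
\[
 \mathsf P_{\tau_{\cC}^+X}\xrightarrow{\mathsf P_{\nu_X^+}}\mathsf P_{\theta_{\cC}^+X}\xrightarrow{\mathsf P_{\mu_X^+}}\mathsf P_X\longrightarrow\mathsf S_X\longrightarrow0
\]
is already exact. It comes from the first exactness condition in Definition~\ref{def:tau-category} together with the weak kernel property of \(\nu_X^+\). So the only remaining point is exactness at the left end, namely that \(\mathsf P_{\nu_X^+}\) is injective.

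Evaluated at an object \(W\), the map \(\mathsf P_{\nu_X^+}\) is
\[
 \cC(W,\tau_{\cC}^+X)\to\cC(W,\theta_{\cC}^+X),\qquad h\mapsto \nu_X^+h.
\]
Injectivity of this map for every \(W\) is exactly the statement that \(\nu_X^+\) is a monomorphism in \(\cC\). This holds by the strictness assumption of Definition~\ref{def:strict-tau}.

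If \(\tau_{\cC}^+X=0\), there is nothing to check, since \(\mathsf P_0=0\). Dually, \(\mathsf P^{\nu_X^-}\) is the map \(h\mapsto h\nu_X^-\), and it is injective because \(\nu_X^-\) is epic. Combined with the dual presentation noted after \eqref{eq:right-presentation}, this gives exactness of the second sequence.

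I do not expect a real obstacle here. The one thing to confirm is that exactness in \(\mod\cC\) is checked objectwise, so that "monic in \(\cC\)" translates verbatim into injectivity of the induced natural transformation of representable functors.
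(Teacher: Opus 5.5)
Your proposal is correct and is essentially the paper's argument. Exactness everywhere except at \(\mathsf P_{\tau_{\cC}^+X}\) comes from the presentation \eqref{eq:right-presentation}, and injectivity of \(\mathsf P_{\nu_X^+}\) is exactly the monicity of \(\nu_X^+\) guaranteed by strictness; the second sequence is handled dually.
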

\begin{proof}
By \eqref{eq:right-presentation}, the first sequence is exact except
possibly at \(\mathsf P_{\tau_{\cC}^+X}\), and there it is exact because
\(\nu_X^+\) is monic. The second sequence is dual.
\end{proof}

\begin{proposition}\label{thm:categorical-magnitude}
Let \(\cC\) be a Hom-finite strict \(\tau\)-category with finitely
many indecomposable isomorphism classes.
Let \(a(\cC)=\sum_{X,Y}\dim_k\Irr_{\cC}(X,Y)\).
Then \(H_{\cC}\) is invertible and
\begin{equation}\label{eq:categorical-magnitude}
 \chi(\cC)=2|\ind\cC|-a(\cC)-|\ind_1^+\cC|.
\end{equation}
\end{proposition}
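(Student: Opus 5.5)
We follow the proof of \cite[Lemma~1.9]{BHK}, replacing almost split sequences by the resolutions of Lemma~\ref{lem:strict-resolution}. Let \(n=|\ind\cC|\), and work in the Grothendieck group \(K_0(\mod\cC)\). It is free on the classes \([\mathsf S_X]\), \(X\in\ind\cC\), since every finitely presented functor here has finite length: \(\cC\) is Hom-finite with finitely many indecomposables, so each \(\mathsf P_X\) has finite length. The classes \([\mathsf P_Y]\) expand as
\[
 [\mathsf P_Y]=\sum_X\dim_k\cC(X,Y)\,[\mathsf S_X],
\]
because \(\mathsf S_X(X)=k\) and \(\mathsf S_X\) vanishes at other indecomposables. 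Thus \(H_{\cC}\), up to transpose, is the matrix expressing the \([\mathsf P_Y]\) in the basis \([\mathsf S_X]\).

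Lemma~\ref{lem:strict-resolution} gives, for each indecomposable \(X\), a finite projective resolution of \(\mathsf S_X\). Hence
\[
 [\mathsf S_X]=[\mathsf P_X]-[\mathsf P_{\theta_{\cC}^+X}]+[\mathsf P_{\tau_{\cC}^+X}]
\]
in \(K_0(\mod\cC)\). So the \([\mathsf P_Y]\) also span \(K_0\), and writing \(C\) for the integer matrix of these coefficients we obtain \(H_{\cC}C=I\) (with the appropriate transposition convention). Therefore \(H_{\cC}\) is invertible and \(H_{\cC}^{-1}=C\), up to transpose. Concretely,
\[
 C_{Y,X}=\delta_{XY}-[\theta_{\cC}^+X:Y]_\oplus+[\tau_{\cC}^+X:Y]_\oplus .
\]
One should check that a one-sided inverse of a square matrix suffices for invertibility. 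It does, since \(H_{\cC}\) is square of size \(n\).

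Summing all entries gives
\[
 \one^{\mathsf T}C\one=\sum_X\Bigl(1-|\theta_{\cC}^+X|_{\mathrm{mult}}+\operatorname{rank}(\tau_{\cC}^+X)\Bigr).
\]
Here \(|\theta_{\cC}^+X|_{\mathrm{mult}}\) is the number of indecomposable summands of \(\theta_{\cC}^+X\) counted with multiplicity, and \(\operatorname{rank}(\tau_{\cC}^+X)\) is the number of indecomposable summands of \(\tau_{\cC}^+X\). We evaluate the three terms as follows.
\begin{itemize}
\item The first term sums to \(n\).
\item By Lemma~\ref{lem:tau-multiplicity}, the number of summands of \(\theta_{\cC}^+X\) with multiplicity equals \(\sum_Y\dim_k\Irr_{\cC}(Y,X)\). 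Summing over \(X\) gives \(a(\cC)\).
\item By Proposition~\ref{rem:tau-properties}(1), \(\tau_{\cC}^+X\) is zero or indecomposable. It is zero exactly when \(X\in\ind_1^+\cC\), so the last term sums to \(n-|\ind_1^+\cC|\).
\end{itemize}
Adding these gives \(\chi(\cC)=2n-a(\cC)-|\ind_1^+\cC|\), which is \eqref{eq:categorical-magnitude}.

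The only delicate point is the bookkeeping for \(K_0(\mod\cC)\): that it is free on the simples, and that the transpose conventions match so that \(\one^{\mathsf T}H^{-1}\one\) equals the total sum of \(C\). The total sum is transpose-invariant, so the second issue is harmless. For the first, finite length of \(\mathsf P_X\) follows from Hom-finiteness together with finitely many indecomposables, since \(\mathsf P_X\) is then finite-dimensional in total.
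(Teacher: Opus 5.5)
Your proof is correct and is essentially the paper's argument. The paper evaluates the resolutions of Lemma~\ref{lem:strict-resolution} at each indecomposable \(Y\) to obtain \(H_{\cC}M=I\) directly, which is what your Grothendieck-group bookkeeping encodes, since taking the coefficient of \([\mathsf S_Y]\) amounts to evaluating at \(Y\) and taking dimensions. It then sums the entries using Lemma~\ref{lem:tau-multiplicity} and the indecomposability of nonzero translates, exactly as you do.
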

\begin{proof}
The argument is that of \cite[Lemma~1.9]{BHK}, which applies
\cite[Theorem~1.1]{CKL} to these resolutions; the inverse of \(H_{\cC}\)
below also appears in the proof of \cite[Theorem~4.3]{IyamaIV}.
Evaluate the first exact sequence of Lemma~\ref{lem:strict-resolution}
at each indecomposable \(Y\). Let \(M\) be the matrix whose \(X\)-column is
the multiplicity vector of \([X]-[\theta_{\cC}^+X]+[\tau_{\cC}^+X]\), where
\([Z]\) records the multiplicities of the indecomposable summands of \(Z\).
The row of \(H_{\cC}\) indexed by \(Y\) lists the dimensions
\(\dim_k\cC(Y,Z)\), and \(\dim_k\mathsf S_X(Y)\) is \(1\) if \(X=Y\) and
\(0\) otherwise, so the evaluated sequence gives
\[
 (H_{\cC}M)_{Y,X}=\dim_k\cC(Y,X)-\dim_k\cC(Y,\theta_{\cC}^+X)
 +\dim_k\cC(Y,\tau_{\cC}^+X)=\indicator_{\{X=Y\}}.
\]
Thus \(M\) is a right inverse of \(H_{\cC}\), and therefore its inverse.
The total multiplicity of the middle terms is \(a(\cC)\), by
Lemma~\ref{lem:tau-multiplicity}. There are
\(|\ind\cC|-|\ind_1^+\cC|\) nonzero translates, each
indecomposable. Summing all entries gives \eqref{eq:categorical-magnitude}.
\end{proof}
In other words, \(\chi(\cC)\) is the number of vertices of the irreducible
quiver of \(\cC\), minus the number of its arrows, plus the number of
vertices with a nonzero translate.

\subsection{Magnitude of module categories}
Since the \(\tau\)-projectives of \(\mod A\) correspond to the simple
modules, Proposition~\ref{thm:categorical-magnitude} recovers the formula
\eqref{eq:intro-count} of B{\o}rve, Horiatakis, and Kalck. We record it together with the
surplus studied in the rest of the paper. Fix a representation-finite
algebra \(A\).

\begin{definition}\label{def:surplus}
Write \(a(A)=a(\mod A)\), and put \(\sigma(A)=\chi(\mod A)-|A|\). We
call \(\sigma(A)\) the \emph{magnitude surplus}.
\end{definition}

\begin{lemma}[{\cite[Lemma~1.9]{BHK}}]\label{lem:count}
The matrix \(H_A=(\dim_k\Hom_A(X,Y))_{X,Y\in\ind A}\) is invertible, and
\[
 \chi(\mod A)=2|\ind A|-|A|-a(A).
\]
Consequently,
\begin{equation}\label{eq:count}
 \sigma(A)=2(|\ind A|-|A|)-a(A).
\end{equation}
\end{lemma}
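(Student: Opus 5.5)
We deduce the lemma from Proposition~\ref{thm:categorical-magnitude} applied to \(\cC=\mod A\). The three points to check are the hypotheses, the value of \(a(\cC)\), and the count \(|\ind_1^+\cC|\).

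For the hypotheses, \(\mod A\) is Hom-finite and Krull--Schmidt. It has finitely many indecomposable isomorphism classes because \(A\) is representation-finite. By Example~\ref{ex:module-tau} it is a \(\tau\)-category, and it is strict, since each \(\nu_X^+\) is monic and each \(\nu_X^-\) is epic. Proposition~\ref{thm:categorical-magnitude} therefore applies. It gives invertibility of \(H_{\mod A}=H_A\) and the formula
\[
\chi(\mod A)=2|\ind A|-a(\mod A)-|\ind_1^+(\mod A)|.
\]
By Definition~\ref{def:surplus}, \(a(A)\) is exactly \(a(\mod A)\).

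Next we identify the \(\tau\)-projectives. By Example~\ref{ex:module-tau}, \(\tau^+_{\mod A}=\tau_A\). In Definition~\ref{def:ar}, \(\tau_A X=0\) is imposed at projective \(X\), while for nonprojective \(X\) the term \(\tau_A X\) is the nonzero left end of the almost split sequence. Hence \(\ind_1^+(\mod A)\) is the set of indecomposable projectives. Over any finite-dimensional algebra, indecomposable projectives up to isomorphism are in bijection with simple modules via \(P\mapsto P/\rad P\). So \(|\ind_1^+(\mod A)|=|A|\), which gives
\[
\chi(\mod A)=2|\ind A|-|A|-a(A).
\]

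Finally, subtracting \(|A|\) yields \(\sigma(A)=2|\ind A|-2|A|-a(A)=2(|\ind A|-|A|)-a(A)\), which is \eqref{eq:count}. No step here is a real obstacle. The only point needing care is the identification of the \(\tau\)-projectives with the projectives, and Example~\ref{ex:module-tau} already records it.
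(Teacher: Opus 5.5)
Your proof is correct and is essentially the paper's own route. The paper also obtains this lemma by applying Proposition~\ref{thm:categorical-magnitude} to the strict \(\tau\)-category \(\mod A\) of Example~\ref{ex:module-tau}, using that its \(\tau\)-projectives are the indecomposable projectives, which are counted by \(|A|\).
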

\begin{remark}
B{\o}rve, Horiatakis, and Kalck also give a refinement of
\eqref{eq:intro-count}, in which the meshes of \(\Gamma_A\) are counted
according to the number of indecomposable middle terms of the
corresponding almost split sequences \cite[Lemma~1.13]{BHK}.
\end{remark}

The following lemma reduces statements to basic algebras in
Sections~\ref{sec:deletion}--\ref{sec:equality} and computes the surplus
of products of interval algebras in Section~\ref{sec:equality}; cf.
\cite[Remark~1.7(2)]{BHK}.
\begin{lemma}\label{lem:invariance}
Magnitude, \(|A|\), and \(\sigma(A)\) are Morita invariant and additive
on products of algebras.
\end{lemma}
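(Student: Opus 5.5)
We use the combinatorial formulas of Lemma~\ref{lem:count}: \(\chi(\mod A)=2|\ind A|-|A|-a(A)\) and \(\sigma(A)=2(|\ind A|-|A|)-a(A)\). Each term on the right is an invariant of the category \(\mod A\) as a \(k\)-linear category. Once this is checked, both Morita invariance and additivity reduce to elementary counting.

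\emph{Morita invariance.} If \(A\) and \(A'\) are Morita equivalent, fix an equivalence \(F:\mod A\to\mod A'\).
\begin{enumerate}
\item \(F\) is a \(k\)-linear equivalence, so it restricts to a bijection \(\ind A\to\ind A'\).
\item It preserves \(\dim_k\Hom\), so \(H_A\) and \(H_{A'}\) agree up to simultaneous permutation of rows and columns. Hence \(\one^{\mathsf T}H^{-1}\one\) is unchanged.
\item It preserves the radical and its square, hence \(\dim_k\Irr\), and therefore \(a(A)=a(A')\).
\item It preserves projectives and simple objects, so \(|A|=|A'|\). Equivalently, the \(\tau\)-projectives of Example~\ref{ex:module-tau} correspond, and \(|A|=|\ind_1^+\mod A|\).
\end{enumerate}
Thus \(\chi\), \(|A|\), and hence \(\sigma\) agree for \(A\) and \(A'\).

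\emph{Additivity.} For \(A=A_1\times A_2\), use the decomposition \(\mod A\simeq\mod A_1\times\mod A_2\), where a module \(M\) splits as \(Me_1\oplus Me_2\) for the central idempotents \(e_1,e_2\).
\begin{enumerate}
\item Every indecomposable \(A\)-module is an indecomposable module over exactly one factor, so \(\ind A=\ind A_1\sqcup\ind A_2\).
\item Hom spaces between modules over different factors vanish. Hence \(H_A\) is block diagonal with blocks \(H_{A_1}\) and \(H_{A_2}\), its inverse is block diagonal too, and \(\one^{\mathsf T}H_A^{-1}\one=\chi(\mod A_1)+\chi(\mod A_2)\).
\item The same vanishing shows that radical maps, their squares, and therefore \(\Irr\) decompose, so \(a(A)=a(A_1)+a(A_2)\).
\item Simple modules likewise split, giving \(|A|=|A_1|+|A_2|\).
\end{enumerate}
Additivity of \(\sigma\) follows by subtraction.

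The one step needing care is that the block-diagonal computation and the Morita argument use only \(\dim\Hom\). This is consistent with the definition of magnitude, since Definition~\ref{def:magnitude} is phrased in terms of \(H_{\cC}\) alone, so no further input is required. For products with more than two factors we proceed by induction. No step is a real obstacle; the only point to verify is that the radical of a product category is the product of the radicals. This holds because a map between modules over different factors is zero, and hence automatically radical.
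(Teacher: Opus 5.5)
Your proof is correct and follows the paper's argument: a \(k\)-linear Morita equivalence preserves indecomposables, Hom dimensions, and simple classes, and for a product the Hom matrix is block diagonal while the simple classes form a disjoint union. The arrow counts \(a(A)\) and the formula of Lemma~\ref{lem:count} are not needed, since your direct computation with \(H_A\) already gives the invariance and additivity of \(\chi\), and \(\sigma\) then follows by subtracting \(|A|\).
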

\begin{proof}
A \(k\)-linear Morita equivalence preserves simple classes, indecomposable
classes, and Hom dimensions. For a product of algebras, the Hom matrix is
block diagonal, so its inverse is block diagonal and its entry sum is the
sum for the factors. The simple classes are the disjoint union of those of
the factors. Subtracting their numbers proves the assertion for \(\sigma\).
\end{proof}

\section{Magnitude of Serre subcategories and ideal quotients}\label{sec:quotient}
An ideal quotient removes objects by setting their identity maps equal to
zero. We first describe what this operation does to indecomposables and
irreducible maps in an additive category. For a Serre subcategory of a
module category, we then show that the ideal quotient is a strict
\(\tau\)-category, so that
Proposition~\ref{thm:categorical-magnitude} computes its magnitude, and
compare the almost split sequences of the subcategory with those of the
module category. Theorem~\ref{thm:serre-magnitude} relates \(\mod A\), the subcategory, and
the ideal quotient through two counts \(c\) and \(r\) from this comparison
and a correction term \(\delta\). Theorem~\ref{prop:compensation} then
gives a sufficient condition for the inequality \(r\leq c\).

\subsection{Indecomposables and irreducible maps in ideal quotients}
We recall ideal quotients and describe their indecomposables and
irreducible maps.
\begin{definition}\label{def:ideal-quotient}
Let \(\cC\) be an additive category and \(\mathcal U\) a full
additive subcategory closed under direct summands. The \emph{ideal quotient}
\(\cC/[\mathcal U]\) has the same objects as \(\cC\), and
\[
 (\cC/[\mathcal U])(X,Y)=\cC(X,Y)/[\mathcal U](X,Y),
\]
where \([\mathcal U](X,Y)\) consists of the maps factoring through an
object of \(\mathcal U\). Write \(\pi\) for the quotient functor.
\end{definition}

\begin{proposition}\label{lem:surviving-arrows}
Suppose that \(\cC\) is Hom-finite, \(k\)-linear, and Krull--Schmidt.
Then \(\cC/[\mathcal U]\) is Krull--Schmidt. Its indecomposable
isomorphism classes are indexed by \(\ind\cC\setminus\ind\mathcal U\).
For representatives \(X,Y\) in this complement, \(\pi\) induces
\[
 \Irr_{\cC}(X,Y)\cong\Irr_{\cC/[\mathcal U]}(\pi X,\pi Y).
\]
\end{proposition}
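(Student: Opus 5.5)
The plan is to use local endomorphism rings throughout. Since \(\cC\) is Hom-finite and Krull--Schmidt, every indecomposable \(X\) has local endomorphism ring \(\End_{\cC}(X)\). If \(X\in\ind\mathcal U\), then \(1_X\) factors through \(\mathcal U\), so \(\pi X\cong0\). If \(X\notin\ind\mathcal U\), then \([\mathcal U](X,X)\) is a proper ideal of \(\End_{\cC}(X)\). Indeed, if \(1_X\) factored as \(X\to U\to X\) with \(U\in\mathcal U\), then \(X\) would be a summand of \(U\), and since \(\mathcal U\) is closed under summands we would get \(X\in\mathcal U\). Hence \(\End(\pi X)=\End_{\cC}(X)/[\mathcal U](X,X)\) is a nonzero quotient of a local ring, so it is local. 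Every object of the quotient is a finite sum of \(\pi X\) with \(X\in\ind\cC\); discarding the summands in \(\mathcal U\) writes it as a sum of objects with local endomorphism rings, so \(\cC/[\mathcal U]\) is Krull--Schmidt. For \(X,Y\notin\mathcal U\) with \(\pi X\cong\pi Y\), choose \(f:X\to Y\) and \(g:Y\to X\) with \(gf-1_X\in[\mathcal U]\). Then \(gf\) is not in the maximal ideal of \(\End(X)\), because \([\mathcal U](X,X)\) is contained in that ideal. So \(gf\) is invertible, and symmetrically \(fg\) is invertible, hence \(X\cong Y\). This gives the indexing of indecomposables by \(\ind\cC\setminus\ind\mathcal U\).

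For irreducible maps, the key point is to compare radicals. For indecomposables \(X,Y\notin\mathcal U\), I will show that \(\rad_{\cC/[\mathcal U]}(\pi X,\pi Y)=\pi\rad_{\cC}(X,Y)\) and that \([\mathcal U](X,Y)\subseteq\rad_{\cC}(X,Y)\). The inclusion holds because a map \(X\to U\to Y\) that is not radical would be an isomorphism between indecomposables, and then \(X\) would be a summand of \(U\). The equality for the radical follows from the locality argument above: a map between indecomposables is radical exactly when it is a nonisomorphism, and \(\pi f\) is invertible if and only if \(f\) is invertible modulo \([\mathcal U]\), which is contained in the radical. Next comes the square of the radical. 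The inclusion \(\pi\rad^2_{\cC}\subseteq\rad^2_{\cC/[\mathcal U]}\) is clear, after decomposing middle objects and noting that summands in \(\mathcal U\) give zero. For the reverse inclusion, a composite of radical maps through some \(\pi W\) lifts to a composite through \(W\) of maps that are radical on the summands not in \(\mathcal U\), up to terms through \(\mathcal U\). Hence \(\rad^2_{\cC/[\mathcal U]}(\pi X,\pi Y)=(\rad^2_{\cC}(X,Y)+[\mathcal U](X,Y))/[\mathcal U](X,Y)\).

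Combining these, \(\Irr_{\cC/[\mathcal U]}(\pi X,\pi Y)\cong\rad_{\cC}(X,Y)/(\rad^2_{\cC}(X,Y)+[\mathcal U](X,Y))\). The main obstacle is to show that \([\mathcal U](X,Y)\subseteq\rad^2_{\cC}(X,Y)\), which would give the claimed isomorphism with \(\Irr_{\cC}(X,Y)\). Take \(h=ba\) with \(a:X\to U\) and \(b:U\to Y\), and decompose \(U\) into indecomposables \(U_i\in\mathcal U\). Each component \(a_i:X\to U_i\) is radical because \(X\not\cong U_i\), and each \(b_i:U_i\to Y\) is radical because \(U_i\not\cong Y\). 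Therefore \(h=\sum_i b_ia_i\in\rad^2_{\cC}\). The same decomposition also handles the lifting step in the previous paragraph, so I expect this step to go through without difficulty.
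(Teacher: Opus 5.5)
Your proposal is correct and follows the paper's proof. It uses the same locality argument for the indecomposables of \(\cC/[\mathcal U]\) and the same key inclusion \([\mathcal U](X,Y)\subseteq\rad_{\cC}^2(X,Y)\), obtained by splitting the middle object into indecomposables of \(\mathcal U\). The only difference is that you verify the cases \(n=1,2\) of \(\rad_{\cC/[\mathcal U]}^n=(\rad_{\cC}^n+[\mathcal U])/[\mathcal U]\) by hand, lifting radical maps between indecomposables outside \(\mathcal U\) to radical maps, whereas the paper cites Iyama for this formula.
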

\begin{proof}
For an indecomposable \(X\notin\mathcal U\), the ideal
\([\mathcal U](X,X)\) is proper: otherwise \(X\) would be a summand of
an object of \(\mathcal U\). It is therefore contained in the radical
of the local ring \(\End_{\cC}(X)\). Its quotient is local, so
\(\pi X\) remains indecomposable. If \(\pi X\cong\pi Y\), lift inverse
maps to \(f:X\to Y\) and \(g:Y\to X\). Both \(gf\) and \(fg\) are
identities modulo radical ideals, hence invertible. Thus \(X\cong Y\).
Every object is a finite sum of these objects with local endomorphism
rings, proving the Krull--Schmidt assertion.

For the irreducible maps, Iyama shows for an ideal quotient of an
additive category that
\(\rad_{\cC/[\mathcal U]}^n=(\rad_{\cC}^n+[\mathcal U])/[\mathcal U]\) for
\(n\geq1\) \cite[Section~1.3(5)]{IyamaII}. Let
\(X,Y\in\ind\cC\setminus\ind\mathcal U\). A map \(X\to Y\) which factors
through an indecomposable of \(\mathcal U\) is a composite of two radical
maps, so \([\mathcal U](X,Y)\subseteq\rad_{\cC}^2(X,Y)\). The case
\(n=1,2\) of Iyama's formula therefore gives the asserted isomorphism.
\end{proof}

\subsection{Ideal quotients by Serre subcategories}
We now specialize to a Serre subcategory of a module category.
\begin{setting}\label{set:serre}
Let \(A\) be a finite-dimensional algebra, and \(e\)
an idempotent of \(A\). Put
\[
 B=A/AeA,\qquad \cD=(\mod A)/[\mod B],
\]
where \(\mod B\) is regarded as the full subcategory of \(\mod A\) of
modules annihilated by \(AeA\). Since \(ZAeA=ZeA\) for a module \(Z\),
these are the modules \(Z\) with \(Ze=0\). Since evaluation at \(e\) is
exact, \(\mod B\) is a Serre subcategory, that is, closed under
submodules, quotients, and extensions. Write
\(i:\mod B\hookrightarrow\mod A\) for the inclusion and
\(\pi:\mod A\to\cD\) for the ideal quotient. The
inclusion has the right and left adjoints
\[
 i^!Z=\{z\in Z:zAeA=0\}\cong\Hom_A(B,Z),\qquad
 i^*Z=Z/ZeA\cong Z\otimes_AB,
\]
the largest submodule and the largest quotient of \(Z\) in \(\mod B\).
By extension closure, \(Z/i^!Z\) has no nonzero submodule in \(\mod B\), and
dually \(\ker(Z\to i^*Z)\) has no nonzero quotient in \(\mod B\).
\end{setting}
Every Serre subcategory of \(\mod A\) arises in this way: choose \(e\) as
the sum of primitive idempotents corresponding to the simple modules
outside it. For a module map \(f:X\to Y\), write \(f_e:Xe\to Ye\) for its
restriction. Ringel and Vossieck show that a map of \(A\)-modules factors
through \(\mod B\) exactly when its restriction \(f_e\) is zero
\cite[Section~10, Lemma~1]{RV}; this is the faithfulness in the following
lemma.

\begin{lemma}\label{lem:quotient-evaluation}
Evaluation at \(e\) induces a faithful functor
\(F:\cD\to\mod eAe\), \(\pi X\mapsto Xe\). In particular, a map in \(\cD\)
is monic, respectively epic, if its image under \(F\) is.
\end{lemma}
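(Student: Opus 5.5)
First I would check that the functor is well defined. Evaluation at \(e\) sends a module map \(f:X\to Y\) to \(f_e:Xe\to Ye\), which is a map of right \(eAe\)-modules, so \(X\mapsto Xe\) is a \(k\)-linear functor \(\mod A\to\mod eAe\). If \(f\) factors as \(X\to Z\to Y\) with \(Z\in\mod B\), then \(Ze=0\). Hence \(f_e\) factors through the zero space and vanishes. So the functor kills the ideal \([\mod B]\) and descends to \(F:\cD\to\mod eAe\).

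The substance is faithfulness: if \(f_e=0\), then \(f\) factors through \(\mod B\). This is \cite[Section~10, Lemma~1]{RV}, but I would give a direct argument. Suppose \(f_e=0\). Then \(f(XeA)=f(Xe)A=0\), so \(f\) factors through the projection \(X\to X/XeA=i^*X\), which lies in \(\mod B\). (Alternatively, \(\operatorname{im}f\) is annihilated by \(e\), since \(f(x)e=f(xe)=f_e(xe)=0\), so \(f\) factors through \(\operatorname{im}f\in\mod B\).) Either way \(f\in[\mod B](X,Y)\). Thus the kernel of \(\Hom_A(X,Y)\to\Hom_{eAe}(Xe,Ye)\) is exactly \([\mod B](X,Y)\), and \(F\) is faithful.

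For the final assertion I would use the general fact that a faithful functor reflects monomorphisms and epimorphisms. Suppose \(F(\bar f)\) is monic and \(\bar f\bar g=\bar f\bar h\) in \(\cD\). Then \(F(\bar f)F(\bar g)=F(\bar f)F(\bar h)\), so \(F(\bar g)=F(\bar h)\), and faithfulness gives \(\bar g=\bar h\). The epic case is dual.

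No step is a real obstacle. The only point needing care is seeing that \(f_e=0\) forces \(f\) to vanish on \(XeA\), equivalently that its image is killed by \(e\). This follows from \(A\)-linearity, since \(f(xea)=f(xe)a\).
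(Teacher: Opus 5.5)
Your proposal is correct and follows the paper's proof essentially verbatim: maps factoring through \(\mod B\) vanish at \(e\), a map with \(f_e=0\) kills \(XeA\) and so factors through \(X/XeA\in\mod B\), and faithful functors reflect monomorphisms and epimorphisms. The image-based variant you mention and your explicit cancellation check are harmless elaborations of the same argument.
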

\begin{proof}
A map \(f:X\to Y\) with \(f_e=0\) kills \(XeA\), so it factors through
\(X/XeA\in\mod B\). Conversely, a map factoring through \(\mod B\) has zero
evaluation at \(e\). Faithful functors reflect monomorphisms and
epimorphisms.
\end{proof}

\begin{remark}\label{ex:a2-quotients}
The ideal quotient \(\cD\) differs from the abelian Serre quotient
\((\mod A)/(\mod B)\), in which every map with kernel and cokernel in \(\mod B\)
becomes invertible. Let \(A=(k(1\to2))^{\op}\) and \(e=e_2\), so that
\(\mod B=\add S_1\). The inclusion \(S_2\to P_1\) has cokernel \(S_1\), so it
becomes invertible in \((\mod A)/(\mod B)\). Under \(F\) it becomes the
identity of \(k\), so it is monic and epic in \(\cD\). It is not invertible
in \(\cD\), because \(\Hom_A(P_1,S_2)=0\).
\end{remark}

To find the \(\tau\)-sequences of \(\cD\), we use Iyama's description of
\(\tau\)-sequences in an ideal quotient.
\begin{theorem}[{\cite[Section~1.4(2)]{IyamaII}}]\label{thm:iyama-quotient}
Let \(\cC\) be a \(\tau\)-category and \(\mathcal U\) a full
additive subcategory closed under direct summands, with quotient functor
\(\pi:\cC\to\cC/[\mathcal U]\). Then
\(\cC/[\mathcal U]\) is a \(\tau\)-category. Let
\(X\in\ind\cC\setminus\ind\mathcal U\). If
\(\pi(\theta_{\cC}^+X)\ne0\), the right \(\tau\)-sequence ending at
\(\pi X\) is
\[
 \pi(\tau_{\cC}^+X)\xrightarrow{\pi\nu_X^+}\pi(\theta_{\cC}^+X)
 \xrightarrow{\pi\mu_X^+}\pi X;
\]
otherwise it is \(0\to0\to\pi X\).
\end{theorem}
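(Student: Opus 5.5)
We treat the two cases of the statement separately. Throughout, we use Iyama's formula \(\rad_{\cC/[\mathcal U]}^n=(\rad_{\cC}^n+[\mathcal U])/[\mathcal U]\) for \(n\geq1\), quoted in the proof of Proposition~\ref{lem:surviving-arrows}. For \(n=1\) it says that every radical map of \(\cC/[\mathcal U]\) lifts to a radical map of \(\cC\). We also use the following observation. If \(X\notin\mathcal U\) is indecomposable, then any map \(U\to X\) or \(X\to U\) with \(U\in\mathcal U\) is radical, since otherwise \(X\) would be a summand of \(U\). The fact that \(\cC/[\mathcal U]\) is Krull--Schmidt with indecomposables \(\pi X\), \(X\in\ind\cC\setminus\ind\mathcal U\), is Proposition~\ref{lem:surviving-arrows}. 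By the dual argument applied to left \(\tau\)-sequences, it then suffices to verify the description of the right \(\tau\)-sequences.

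\emph{Case \(\pi(\theta_{\cC}^+X)=0\).} Then \(\theta_{\cC}^+X\in\mathcal U\). Every radical map \(W\to X\) factors through \(\mu_X^+\), hence through \(\mathcal U\). By the lifting statement, \(\rad_{\cC/[\mathcal U]}(-,\pi X)=0\). Then \(0\to0\to\pi X\) satisfies the conditions of Definition~\ref{def:tau-category}. Indeed, the zero map is a minimal weak kernel of \(0\to\pi X\), and both exactness conditions hold trivially.

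\emph{Case \(\pi(\theta_{\cC}^+X)\ne0\).} Write \(\nu=\nu_X^+\) and \(\mu=\mu_X^+\). The maps \(\pi\nu\) and \(\pi\mu\) are radical because \(\pi\) preserves radicals. The two exactness conditions follow by lifting. A radical map to \(\pi X\) lifts to a radical map to \(X\), which factors through \(\mu\). Likewise, a radical map out of \(\pi\tau_{\cC}^+X\) lifts to a radical map out of \(\tau_{\cC}^+X\), which factors through \(\nu\).

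For the weak kernel property, let \(h:W\to\theta_{\cC}^+X\) satisfy \(\pi(\mu h)=0\), so that \(\mu h=ba\) with \(a:W\to U\), \(b:U\to X\), and \(U\in\mathcal U\). The map \(b\) is radical by the observation above, so \(b=\mu c\). Then \(\mu(h-ca)=0\), and the weak kernel property in \(\cC\) gives \(h-ca=\nu d\). Hence \(\pi h=\pi\nu\,\pi d\).

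It remains to check minimality. If \(\tau_{\cC}^+X\in\mathcal U\), there is nothing to show. Otherwise \(\End(\pi\tau_{\cC}^+X)\) is local. Suppose \(\pi\nu\,u=\pi\nu\) with \(u\) non-invertible. Then \(1-u\) is invertible, so \(\pi\nu=0\). Thus minimality reduces to showing \(\pi\nu\ne0\), and this is the step I expect to be the main point.

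To prove it, suppose \(\nu=ba\) with \(a:\tau_{\cC}^+X\to U\), \(b:U\to\theta_{\cC}^+X\), and \(U\in\mathcal U\). By Proposition~\ref{rem:tau-properties}(1),(2), the map \(\nu=\mu_{\tau X}^-\) is minimal left almost split. The map \(a\) is radical because \(\tau_{\cC}^+X\notin\mathcal U\), so \(a=c\nu\) for some \(c\). This gives \(\nu=(bc)\nu\), and hence \(\nu=(bc)^n\nu\) for all \(n\). Now apply the Fitting decomposition of the endomorphism \(bc\) of \(\theta_{\cC}^+X\). It splits \(\theta_{\cC}^+X=V_1\oplus V_2\), with \(bc\) invertible on \(V_1\) and nilpotent on \(V_2\). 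The component of \(\nu\) in \(V_2\) is then zero, which contradicts left minimality of \(\nu\) unless \(V_2=0\). Therefore \(bc\) is invertible. Then \(\theta_{\cC}^+X\) is a summand of \(U\in\mathcal U\), so \(\pi(\theta_{\cC}^+X)=0\), a contradiction.
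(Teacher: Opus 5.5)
The paper gives no proof of this statement; it imports it from Iyama \cite[Section~1.4(2)]{IyamaII}. Your argument is therefore a self-contained replacement, and it is correct.

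You verify the definition of a right \(\tau\)-sequence directly in \(\cC/[\mathcal U]\). The inputs are:
\begin{itemize}
\item the case \(n=1\) of the radical formula, so that radical maps of the quotient lift to radical maps of \(\cC\);
\item the observation that maps between an indecomposable outside \(\mathcal U\) and an object of \(\mathcal U\) are radical;
\item Proposition~\ref{rem:tau-properties}(1),(2), applied to \(\cC\) itself.
\end{itemize}
Your lifting arguments for the two exactness conditions and for the weak kernel property are right. You also correctly isolate the one nonformal point: minimality reduces to \(\pi\nu_X^+\neq0\) when \(\tau_{\cC}^+X\notin\mathcal U\) and \(\pi(\theta_{\cC}^+X)\neq0\). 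That reduction uses locality of \(\End(\pi\tau_{\cC}^+X)\), which tacitly requires \(\tau_{\cC}^+X\) to be indecomposable; this is Proposition~\ref{rem:tau-properties}(1).

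The last step can be shortened. Once you have \((bc)\nu=\nu\) with \(bc\in\End(\theta_{\cC}^+X)\), left minimality of \(\nu=\mu^-_{\tau_{\cC}^+X}\) gives directly that \(bc\) is invertible, so the Fitting decomposition is unnecessary. Then \(1_{\theta_{\cC}^+X}=b\,c\,(bc)^{-1}\) factors through \(U\). That is precisely the statement \(\pi(\theta_{\cC}^+X)=0\), so you need not pass through direct summands or closure of \(\mathcal U\) under summands.

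The two routes trade off as follows. Citing Iyama buys the paper brevity. Your route makes this step depend only on Proposition~\ref{rem:tau-properties} and the elementary case \(n=1\) of the radical formula, in the same spirit as the direct proofs of Appendix~\ref{app:structural}.
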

Applied to the opposite categories, the theorem gives the dual description
of left \(\tau\)-sequences.

\begin{proposition}\label{prop:quotient-sequences}
The category \(\cD\) in Setting~\ref{set:serre} is a strict
\(\tau\)-category. For \(X\in\ind A\setminus\ind B\), the right
\(\tau\)-sequence ending at \(\pi X\) and the left \(\tau\)-sequence
starting at \(\pi X\) are
\[
 \pi(\tau_A X)\longrightarrow\pi(\theta_A X)\longrightarrow\pi X,
 \qquad
 \pi X\longrightarrow\pi(\theta_A^-X)\longrightarrow\pi(\tau_A^-X).
\]
In particular, \(\tau_{\cD}^+(\pi X)\cong\pi(\tau_A X)\) and
\(\tau_{\cD}^-(\pi X)\cong\pi(\tau_A^-X)\).
\end{proposition}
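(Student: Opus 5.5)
Since \(\mod A\) is a \(\tau\)-category (Example~\ref{ex:module-tau}) and \(\mod B\) is a full additive subcategory closed under summands, Theorem~\ref{thm:iyama-quotient} applies with \(\cC=\mod A\) and \(\mathcal U=\mod B\). It shows that \(\cD\) is a \(\tau\)-category. It also shows that the right \(\tau\)-sequence ending at \(\pi X\) is \(\pi(\tau_AX)\to\pi(\theta_AX)\to\pi X\), except when \(\pi(\theta_AX)=0\), where it is \(0\to0\to\pi X\). The left \(\tau\)-sequences are handled dually by applying the theorem to the opposite category. Two things remain: to show the exceptional case agrees with the displayed formula, and to prove strictness.

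For the exceptional case, suppose \(\theta_AX\in\mod B\). If \(X\) is projective, then \(\tau_AX=0\) and the formula already reads \(0\to\pi(\rad X)\to\pi X\) with \(\pi(\rad X)=0\). If \(X\) is nonprojective, we have the almost split sequence \(0\to\tau_AX\to\theta_AX\to X\to0\) with \(\tau_AX\) a submodule of \(\theta_AX\). Since \(\mod B\) is closed under submodules, \(\tau_AX\in\mod B\), so \(\pi(\tau_AX)=0\) and the formula holds. The left case is dual, using closure under quotients. The assertion \(\tau_\cD^{\pm}(\pi X)\cong\pi(\tau_A^{\pm}X)\) then follows by reading off the end terms; here \(\pi\) of an indecomposable is zero or indecomposable by Proposition~\ref{lem:surviving-arrows}.

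For strictness, I would use Lemma~\ref{lem:quotient-evaluation}: it suffices that \(F(\pi\nu_X^+)=(\nu_X^+)_e\) is injective and \((\nu_X^-)_e\) is surjective. Evaluation at \(e\) is exact, \(\nu_X^+\) is monic and \(\nu_X^-\) is epic (Example~\ref{ex:module-tau}), so \((\nu_X^+)_e\) is injective and \((\nu_X^-)_e\) is surjective. Faithful functors reflect monos and epis, so \(\pi\nu_X^+\) is monic and \(\pi\nu_X^-\) is epic in \(\cD\). In the degenerate cases the map has zero source or zero target and is trivially monic or epic.

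The main obstacle I expect is confirming that Iyama's theorem yields exactly the images of the \(\mod A\)-sequences without re-minimization, since Theorem~\ref{thm:iyama-quotient} already asserts this. The subtle point is just the case analysis above, where \(\pi(\theta_AX)=0\) must force \(\pi(\tau_AX)=0\); that is where the Serre property is used.
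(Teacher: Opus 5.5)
Your proposal is correct and follows the paper's proof. You apply Iyama's theorem to \(\mod A\) with \(\mathcal U=\mod B\), reconcile the degenerate case, and obtain strictness from exactness of \((-)e\) and faithfulness of \(F\) (Lemma~\ref{lem:quotient-evaluation}). The only difference is in the degenerate case \(\pi(\theta_AX)=0\) for nonprojective \(X\): the paper notes it cannot occur, because \((\theta_AX)e\to Xe\ne0\) is onto, whereas you show it is harmless via closure under submodules. Both work, and your case is in fact vacuous, since \(X\) would then be a quotient of \(\theta_AX\) and so lie in \(\mod B\).
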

\begin{proof}
Apply Theorem~\ref{thm:iyama-quotient} to the \(\tau\)-category
\(\mod A\) of Example~\ref{ex:module-tau} and \(\mathcal U=\mod B\).
If \(X\) is nonprojective, exactness of \((-)e\) gives
\((\theta_A X)e\twoheadrightarrow Xe\ne0\); hence
\(\pi(\theta_A X)\ne0\), and the right \(\tau\)-sequence of \(\pi X\) is
the image of the almost split sequence ending at \(X\). If \(X\) is
projective, then \(\tau_A X=0\) and \(\theta_A X=\rad X\), and both
cases of the theorem give the sequence \(0\to\pi(\rad X)\to\pi X\). This proves the
description of right \(\tau\)-sequences; that of left \(\tau\)-sequences
is dual.
The first maps \(\pi\nu_X^+\) of these right \(\tau\)-sequences become
monic under \(F\), by exactness of \((-)e\), and the last maps
\(\pi\nu_X^-\) of the left \(\tau\)-sequences become epic.
Lemma~\ref{lem:quotient-evaluation} proves strictness.
\end{proof}
When \(A\) is representation-finite, the strictness of \(\cD\) is also
a case of \cite[Theorem~6.1]{IyamaII}, since \(i^!\) and \(i^*\) have
monic counit and epic unit.

\subsection{Restricting almost split sequences to Serre subcategories}\label{subsec:restriction}
By Proposition~\ref{prop:quotient-sequences}, the translate
\(\tau_{\cD}^+(\pi X)\) is \(\pi(\tau_A X)\), which is zero exactly when
\(\tau_A X\) lies in \(\mod B\). The translate of a
\(B\)-module in \(\mod B\) is obtained differently: by the following
result of Hoshino, it is the largest submodule of its translate in
\(\mod A\) that belongs to \(\mod B\).

\begin{proposition}[{\cite[Section~1, Lemmas~2(2) and~3(2)]{Hos}}]\label{lem:relative-ar}
Let \(Y\) be a nonprojective indecomposable of \(\mod B\), and write its
almost split sequence in \(\mod A\) as
\(0\to\tau_A Y\to\theta_A Y\to Y\to0\). Then
\begin{equation}\label{eq:relative-ar}
 0\longrightarrow i^!(\tau_A Y)\longrightarrow i^!(\theta_A Y)
 \longrightarrow Y\longrightarrow0
\end{equation}
is its almost split sequence in \(\mod B\). In particular,
\(\tau_BY\cong i^!(\tau_A Y)\). Dually, applying \(i^*\) to the
ambient almost split sequence starting at a noninjective \(X\in\ind B\)
gives its sequence in \(\mod B\), with
\(\tau_B^-X\cong i^*(\tau_A^-X)\).
\end{proposition}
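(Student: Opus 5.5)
We verify directly that the sequence \eqref{eq:relative-ar} is exact in \(\mod B\), nonsplit, and satisfies the almost split factorization property in \(\mod B\); uniqueness of almost split sequences then gives \(\tau_BY\cong i^!(\tau_AY)\). The dual statement follows by applying the argument to \(A^{\op}\) and \(B^{\op}\), which interchanges \(i^!\) and \(i^*\) under \(\Dk\).

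\emph{Exactness.} The functor \(i^!\) is a right adjoint, so it is left exact. Applied to \(0\to\tau_AY\to\theta_AY\xrightarrow{\mu}Y\to0\), it gives an exact sequence \(0\to i^!(\tau_AY)\to i^!(\theta_AY)\to i^!Y=Y\), since \(Y\in\mod B\). It remains to show that \(\mu\) restricted to \(i^!(\theta_AY)\) is onto \(Y\). For this we use that \(Y\) is nonprojective in \(\mod B\): let \(p:P\to Y\) be a projective cover in \(\mod B\). Then \(p\) is not a retraction, so it is a non-retraction in \(\mod A\) as well, since a section \(Y\to P\) in \(\mod A\) would also be one in \(\mod B\). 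Hence \(p\) factors as \(p=\mu h\) with \(h:P\to\theta_AY\). Because \(P\in\mod B\), the image of \(h\) is a submodule lying in \(\mod B\), so \(h\) lands in \(i^!(\theta_AY)\). Thus \(\mu|_{i^!(\theta_AY)}\) is surjective. The same argument shows more generally that every map \(W\to Y\) with \(W\in\mod B\) which is not a retraction factors through \(i^!(\theta_AY)\to Y\). This is precisely the right almost split property in \(\mod B\).

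\emph{Nonsplit and almost split.} The sequence does not split, since otherwise \(Y\) would be a summand of \(i^!(\theta_AY)\) mapping by a retraction, contradicting that \(\mu\) is right almost split in \(\mod A\). The left term \(i^!(\tau_AY)\) must then be shown to be indecomposable, or we appeal to the standard criterion \cite[V]{ARS}: a nonsplit short exact sequence \(0\to L\to M\to N\to0\) with \(N\) indecomposable and \(M\to N\) right almost split is almost split, and in particular \(L\) is then indecomposable. We use this criterion inside the abelian category \(\mod B\).

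\emph{Main obstacle.} We expect the subtle point to be this last step. The criterion requires right almost splitness with respect to all maps in \(\mod B\), and this is what the factorization argument supplies. If one prefers to avoid citing the criterion, one can instead show left almost splitness of \(i^!(\tau_AY)\to i^!(\theta_AY)\) directly. Given a non-section \(u:i^!(\tau_AY)\to W\) in \(\mod B\), one would try to extend it to \(\tau_AY\) using injective envelopes in \(\mod B\) together with the fact that \(\tau_AY/i^!(\tau_AY)\) has no nonzero submodule in \(\mod B\). This route is messier, so we would rely on the criterion.
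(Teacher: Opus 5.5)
\textbf{Gap in the final step.} Your argument takes a different route from the paper, which proves nothing directly and cites Hoshino's lemmas (essentially due to Auslander--Smal{\o}) for the torsion class and torsion-free class \(\mod B\). Your exactness, surjectivity, nonsplitness and right almost split arguments are all correct. The last step, however, rests on a misstated criterion.

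For a nonsplit exact sequence \(0\to L\xrightarrow{f}M\xrightarrow{g}N\to0\), the correct criteria are these \cite[V, Section~1]{ARS}:
\begin{enumerate}
\item \(g\) right almost split, together with \(L\) indecomposable (local endomorphism ring) or \(g\) right minimal;
\item \(N\) indecomposable, together with \(f\) left almost split.
\end{enumerate}
The pairing you use (\(g\) right almost split with \(N\) indecomposable) is false. Add the split sequence \(0\to Z\xrightarrow{1}Z\to0\to0\) to any almost split sequence \(0\to\tau N\to E\xrightarrow{g}N\to0\). The result is nonsplit, \((g,0)\) is right almost split, and \(N\) is indecomposable, but the left term \(\tau N\oplus Z\) is decomposable.

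The missing input is also not automatic here. Right minimality of \(\mu\) in \(\mod A\) does not formally pass to \(i^!(\theta_AY)\to Y\). A direct summand of \(i^!(\theta_AY)\) contained in \(i^!(\tau_AY)\) need not be a summand of \(\theta_AY\).

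\textbf{How to close it.} Your ``messier route'' is actually short.
\begin{enumerate}
\item Put \(K=i^!(\tau_AY)\) with inclusion \(\iota:K\to\tau_AY\), and let \(\xi'\) be your sequence. The inclusion of sequences shows that \(\iota_*\xi'\) is the ambient almost split sequence \(\xi_A\).
\item Take a non-section \(u:K\to W\) with \(W\in\mod B\). Form the pushout of \(\iota\) and \(u\), with maps \(\bar u:\tau_AY\to P_u\) and \(\bar\iota:W\to P_u\).
\item \(\bar u\) is not a section. Suppose \(\rho\bar u=1\). Then \(\rho\bar\iota\) lands in \(K\) because \(W\in\mod B\), say \(\rho\bar\iota=\iota s\). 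Now \(\iota=\rho\bar u\iota=\rho\bar\iota u=\iota su\), so \(su=1_K\), contradicting that \(u\) is not a section.
\item Hence \(\bar u\) factors through the left almost split map \(\tau_AY\to\theta_AY\). Therefore \(\bar\iota_*(u_*\xi')=\bar u_*\xi_A=0\).
\item The map \(\bar\iota\) is monic with cokernel \(\tau_AY/K\). Since \(\tau_AY/K\) has no nonzero submodule in \(\mod B\), we get \(\Hom_A(Y,\tau_AY/K)=0\). So \(\bar\iota_*\) is injective on \(\Ext_A^1(Y,-)\), and \(u_*\xi'=0\). That is, \(u\) factors through \(K\to i^!(\theta_AY)\).
\end{enumerate}
This gives left almost splitness of the first map in \(\mod B\). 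Combined with your right almost split property, the nonsplit sequence is almost split in \(\mod B\). Your dualization then handles \(i^*\).

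Once completed, your route gives a self-contained proof of this case of Hoshino's lemma. The key input is that \(Z/i^!Z\) has no nonzero submodule in \(\mod B\), as recorded in Setting~\ref{set:serre}.
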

\begin{proof}
The lemmas of Hoshino concern a torsion pair in a module category, and
they are essentially due to Auslander and Smal{\o} \cite{AS}. Lemma~2(2) of
\cite{Hos} states the following. Let \(\mathcal T\) be a torsion class with
torsion radical \(t\), and let \(Y\in\mathcal T\) be indecomposable and not
Ext-projective in \(\mathcal T\), that is, \(\Ext_A^1(Y,\mathcal T)\ne0\). Then applying \(t\) to the almost split
sequence of \(\mod A\) ending at \(Y\) gives an almost split sequence in
\(\mathcal T\). Lemma~3(2) is the dual statement for a torsion-free class and
its torsion-free quotient functor.

The subcategory \(\mod B\) is closed under quotients and extensions, so it
is a torsion class, with torsion radical \(i^!\). It is closed under
submodules and extensions, so it is also the torsion-free class of another
torsion pair, with torsion-free quotient functor \(i^*\). Since \(\mod B\)
is extension-closed, \(\Ext_B^1=\Ext_A^1\) on \(\mod B\), so its Ext-projective and Ext-injective objects are the
projective and injective \(B\)-modules. The two lemmas therefore give the
two assertions.
\end{proof}

\begin{example}\label{ex:fork-restriction}
Let \(A=(k(1\to2\leftarrow3))^{\op}\) and \(e=e_3\), so that \(\mod B\)
consists of the representations vanishing at \(3\). The ambient sequence ending at \(S_1\) is
\[
 0\longrightarrow P_3\longrightarrow I_2\longrightarrow S_1\longrightarrow0.
\]
Applying \(i^!\) gives
\[
 0\longrightarrow S_2\longrightarrow P_1\longrightarrow S_1\longrightarrow0,
\]
the almost split sequence in \(\mod B\simeq\mod\bigl((k(1\to2))^{\op}\bigr)\).
In particular \(\tau_A S_1=P_3\), whereas \(\tau_BS_1=S_2\). The map
\(P_1\to S_1\) is irreducible in \(\mod B\), but factors through \(I_2\)
in \(\mod A\).
\end{example}

\subsection{Magnitudes of a Serre subcategory and its ideal quotient}\label{subsec:new-counts}
We now compare the irreducible maps and the almost split sequences of
\(\mod B\) with those of \(\mod A\), and use the comparison to relate
the magnitudes of \(\mod A\), \(\mod B\), and \(\cD\).
Assume from now on in this section that \(B\) is representation-finite.

\begin{definition}\label{def:new-counts}
For \(V,W\in\ind B\), put
\[
 c_{VW}=\dim_k\Irr_B(V,W)-\dim_k\Irr_A(V,W),\qquad
 c=\sum_{V,W\in\ind B}c_{VW}.
\]
Let \(r\) be the number of isomorphism classes of almost split sequences
in \(\mod B\) that are not almost split in \(\mod A\).
\end{definition}

\begin{lemma}\label{lem:new-arrows}
The identity on \(\rad_B(V,W)=\rad_A(V,W)\) induces a surjection
\(\Irr_B(V,W)\twoheadrightarrow\Irr_A(V,W)\).
Its kernel is \(\rad_A^2(V,W)/\rad_B^2(V,W)\) and has dimension
\(c_{VW}\geq0\).
\end{lemma}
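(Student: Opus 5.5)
The plan has three steps: identify the two radicals, compare the two squared radicals, and then conclude by linear algebra.

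\emph{Radicals agree.} Since \(\mod B\) is a full subcategory of \(\mod A\), the Hom spaces coincide: \(\Hom_B(V,W)=\Hom_A(V,W)\). For indecomposables \(V,W\), the radical \(\rad(V,W)\) consists of the nonisomorphisms when \(V\cong W\), and of all maps otherwise. This description is intrinsic to \(V\) and \(W\), so \(\rad_B(V,W)=\rad_A(V,W)\). The same identification holds for arbitrary objects of \(\mod B\), since both radicals are computed blockwise on indecomposable summands.

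\emph{Squared radicals are nested.} Next I show \(\rad_B^2(V,W)\subseteq\rad_A^2(V,W)\). An element of \(\rad_B^2(V,W)\) is a sum of composites \(ab\) with \(b\in\rad_B(V,U)\) and \(a\in\rad_B(U,W)\) for some \(U\in\mod B\). Such \(a\) and \(b\) are also radical maps in \(\mod A\), because \(U\) is an object of \(\mod A\) and the radicals agree. Hence \(ab\in\rad_A^2(V,W)\). Consequently the identity on \(\rad_B(V,W)=\rad_A(V,W)\) descends to a well-defined surjection
\[
\Irr_B(V,W)=\rad_B(V,W)/\rad_B^2(V,W)\twoheadrightarrow \rad_A(V,W)/\rad_A^2(V,W)=\Irr_A(V,W).
\]

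\emph{Kernel and dimension.} The kernel of this map is \(\rad_A^2(V,W)/\rad_B^2(V,W)\). All spaces are finite-dimensional because \(\mod A\) is Hom-finite, so the dimension of the kernel is \(\dim_k\Irr_B(V,W)-\dim_k\Irr_A(V,W)=c_{VW}\). This is nonnegative because the map is surjective.

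The argument is routine. The only point requiring care is the inclusion of squared radicals, which goes in this direction precisely because \(\mod B\) has fewer intermediate objects \(U\) through which maps may factor than \(\mod A\) does.
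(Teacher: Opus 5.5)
Your proposal is correct and follows essentially the same route as the paper: fullness gives equal radicals, every factorization through \(\mod B\) is one through \(\mod A\) so \(\rad_B^2(V,W)\subseteq\rad_A^2(V,W)\), and passing to the quotients gives the surjection with the stated kernel. Your dimension count for \(c_{VW}\geq0\) is also the same as the paper's.
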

\begin{proof}
Fullness gives equality of the radicals. Every factorization through
\(\mod B\) is also one through \(\mod A\), so
\(\rad_B^2(V,W)\subseteq\rad_A^2(V,W)\). Taking the two quotients
proves the assertion.
\end{proof}
Thus \(\Gamma_B\) has \(c_{VW}\) more arrows from \(V\) to \(W\) than
\(\Gamma_A\) has.

The following identity expresses
\(\chi(\mod A)-\chi(\mod B)-\chi(\cD)\) through \(c\), \(r\), and the middle terms of the almost split sequences of
\(\mod A\) with exactly one end term in \(\mod B\).

\begin{theorem}\label{thm:serre-magnitude}
Assume \(A\) is representation-finite. For each almost split sequence
of \(\mod A\) with exactly one end term in \(\mod B\), let \(m_{\mathrm{out}}\) be the
number of its indecomposable middle summands outside \(\mod B\), counting
multiplicities, and put
\[
 \delta=\sum_{\text{such sequences}}(m_{\mathrm{out}}-1).
\]
Then \(\delta\geq0\), and
\begin{equation}\label{eq:serre-magnitude}
 \chi(\mod A)-\chi(\mod B)-\chi(\cD)=c-r-\delta.
\end{equation}
\end{theorem}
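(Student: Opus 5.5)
Every term will be expressed through Proposition~\ref{thm:categorical-magnitude}. By Lemma~\ref{lem:count} and that proposition applied to \(\cD\) (strict by Proposition~\ref{prop:quotient-sequences}),
\[
 \chi(\mod A)=2|\ind A|-a(A)-|A|,\qquad \chi(\mod B)=2|\ind B|-a(B)-|B|,\qquad \chi(\cD)=2|\ind\cD|-a(\cD)-|\ind_1^+\cD|.
\]
Here \(|\ind\cD|=|\ind A|-|\ind B|\) by Proposition~\ref{lem:surviving-arrows}, so the vertex terms cancel. The proof then reduces to two bookkeeping comparisons: one of arrows and one of \(\tau\)-projectives.

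For arrows, split \(a(A)\) according to whether the endpoints lie in \(\ind B\). Arrows with both ends outside \(\mod B\) are exactly the arrows of \(\cD\), by Proposition~\ref{lem:surviving-arrows}. Arrows with both ends in \(\ind B\) number \(a(B)-c\), by Lemma~\ref{lem:new-arrows}. Write \(a_{\mathrm{mix}}\) for the arrows joining \(\ind B\) to its complement, in either direction. Then
\[
 \chi(\mod A)-\chi(\mod B)-\chi(\cD)=c-a_{\mathrm{mix}}-|A|+|B|+|\ind_1^+\cD|,
\]
and it remains to show \(a_{\mathrm{mix}}-|\ind_1^+\cD|+|A|-|B|=r+\delta\).

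For translates, a vertex \(\pi X\) is \(\tau\)-projective in \(\cD\) if and only if \(\tau_AX\in\mod B\), by Proposition~\ref{prop:quotient-sequences}; this covers \(X\) projective or \(\tau_AX\in\ind B\). Projective \(A\)-modules not in \(\mod B\) number \(|A|-|B|\), since the projective \(A\)-modules lying in \(\mod B\) are precisely the \(P_i\) with \(P_ie=0\), and these are projective over \(B\); I expect a small check here via \(|A|-|B|=|e|\) and \(P_ie\neq 0\) for \(i\in e\). So \(|\ind_1^+\cD|-(|A|-|B|)\) counts almost split sequences of type I: \(\tau_AX\in\mod B\) and \(X\notin\mod B\).

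Now count \(a_{\mathrm{mix}}\) by meshes. An arrow \(V\to X\) with \(V\in\ind B\) and \(X\notin\ind B\) is, by Lemma~\ref{lem:tau-multiplicity}, a summand of \(\theta_AX\). A similar count applies to arrows from the complement into \(\ind B\). I will organize the count by attributing each mixed arrow to the almost split sequence of \(\mod A\) at its \(\mod B\)-endpoint: arrows into a \(B\)-module \(Y\) are counted in \(\theta_AY\), and arrows out of \(V\in\ind B\) in \(\theta_A^-V\). Boundary (non-mesh) arrows at projectives and injectives need separate care.

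The main obstacle is matching this count exactly with the sequences of type I, its dual type II (\(Y\in\mod B\), \(\tau_AY\notin\mod B\)), the sequences with both ends in \(\mod B\) but some middle summand outside, and the quantity \(r\). For type I and II sequences the number of middle summands outside \(\mod B\) is \(m_{\mathrm{out}}\ge1\). This holds because exactness of \((-)e\) forces \((\theta_AX)e\neq0\) when one end is outside, which gives \(\delta\ge0\). Each such sequence then contributes \(m_{\mathrm{out}}=1+(m_{\mathrm{out}}-1)\), the \(1\) cancelling the type I translate count and its type II analogue. For sequences with both ends in \(\mod B\), a middle summand lies outside \(\mod B\) exactly when the sequence is not almost split in \(\mod B\), by Proposition~\ref{lem:relative-ar}, since \(i^!\) then changes it. The mixed arrows at these sequences should then telescope against \(r\). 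I would verify this by a careful double count over all nonprojective \(Y\in\ind B\), using \(\tau_BY=i^!\tau_AY\) to identify which sequences are counted in \(r\).
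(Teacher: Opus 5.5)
Your reduction to \(a_{\mathrm{mix}}-|\ind_1^+\cD|+|A|-|B|=r+\delta\) is correct. So is attributing each mixed arrow to the almost split sequence at its \(\mod B\)-endpoint. The boundary cases you worry about do not occur: an irreducible map out of an injective is epic and one into a projective is monic, so Serre closure would put the other endpoint in \(\mod B\).

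The gap is in the two counts that follow. First, it is false that exactly \(|A|-|B|\) indecomposable projective \(A\)-modules lie outside \(\mod B\), because the projective cover \(P_i\) of a simple \(B\)-module can still have \(P_ie\ne0\). In Example~\ref{ex:a3} with \(e=e_2\), both \(P_1=M_{13}\) and \(P_2=M_{23}\) lie outside \(\mod B\), while \(|A|-|B|=1\). So \(|\ind_1^+\cD|-(|A|-|B|)=2\), although only the sequence ending at \(S_2\) is of your type~I. In general the excess is the number \(q\) of indecomposable projective \(B\)-modules that are not projective over \(A\).

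Second, your mechanism for \(r\) does not exist. An almost split sequence of \(\mod A\) with both end terms in \(\mod B\) lies entirely in \(\mod B\) by extension closure. It therefore has no middle summand outside \(\mod B\) and contributes no mixed arrows. With your identification you would need the number \(t_{\mathrm{II}}\) of type~II sequences to equal \(r\). This fails in the same example: \(t_{\mathrm{II}}=1\), from \(0\to S_2\to M_{12}\to S_1\to0\), while \(r=0\) because \(B\cong k\times k\).

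The missing step is to locate the sequences counted by \(r\) among the type~II sequences. Let \(Y\in\ind B\) be nonprojective over \(A\) with \(\tau_AY\notin\mod B\). Then either \(Y\) is projective over \(B\), or the almost split sequence of \(\mod B\) ending at \(Y\) is not almost split in \(\mod A\), since otherwise \(\tau_AY\cong\tau_BY\in\mod B\). Conversely, extension closure shows that \(\tau_AY\notin\mod B\) both for the end term \(Y\) of every sequence counted by \(r\) and for every indecomposable projective \(B\)-module \(Y\) that is not projective over \(A\). Hence \(t_{\mathrm{II}}=r+q\). This exactly cancels the error \(q\) in your projective count, since \(|\ind_1^+\cD|-(|A|-|B|)=t_{\mathrm{I}}+q\), and your identity follows.

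The paper bypasses this case analysis. It counts all \(|\ind A|-|A|\) almost split sequences of \(\mod A\) by where their end terms lie: \(|\ind B|-|B|-r\) have both ends in \(\mod B\), and \(|\ind\cD|-|\ind_1^+\cD|\) have both ends outside, by Proposition~\ref{prop:quotient-sequences}. This gives \(t=|\ind_1^+\cD|-(|A|-|B|)+r\) for the number \(t=t_{\mathrm{I}}+t_{\mathrm{II}}\) of sequences with exactly one end term in \(\mod B\), without ever comparing projectives.
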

\begin{proof}
Let \(0\to\tau_A Y\to\theta_A Y\to Y\to0\) be a sequence in the sum.
Since \(Ye\ne0\) or \((\tau_A Y)e\ne0\), applying the exact functor
\((-)e\) gives \((\theta_A Y)e\ne0\). Thus \(\theta_A Y\) has an
indecomposable summand outside \(\mod B\), and
\(m_{\mathrm{out}}\geq1\).
By Proposition~\ref{lem:surviving-arrows},
\(|\ind A|=|\ind\cD|+|\ind B|\). Let \(t\) be the number of sequences in
the sum.

We count the \(|\ind A|-|A|\) almost split sequences of \(\mod A\) by their
end terms. A sequence with both end terms in \(\mod B\) lies in \(\mod B\), by
extension closure, and remains almost split there. These are the
almost split sequences of \(\mod B\) not counted by \(r\), and there are
\(|\ind B|-|B|-r\) of them. By
Proposition~\ref{prop:quotient-sequences}, the sequences with both
end terms outside \(\mod B\) correspond to the objects of \(\cD\) with
nonzero translate, and there are \(|\ind\cD|-|\ind_1^+\cD|\) of them. Hence
\[
 |\ind A|-|A|=(|\ind\cD|-|\ind_1^+\cD|)+(|\ind B|-|B|-r)+t,
\]
and substituting \(|\ind A|=|\ind\cD|+|\ind B|\) gives
\begin{equation}\label{eq:count-t}
 t=|\ind_1^+\cD|-(|A|-|B|)+r.
\end{equation}

Let \(z\) be the number of arrows of \(\Gamma_A\) with exactly one
endpoint in \(\mod B\), counted with multiplicity. Let \(V\to W\) be such
an arrow with \(V\in\mod B\). An arrow out of an injective \(A\)-module in \(\mod B\) is epic, so
its target also lies in \(\mod B\); hence \(V\) is not injective. The almost
split sequence starting at \(V\) has \(W\) as a middle summand, so its last
term is not in \(\mod B\), by extension closure. Thus this sequence is counted
by \(t\), and \(W\) is one of the middle summands outside \(\mod B\) counted
by \(m_{\mathrm{out}}\) for this sequence. Dually, such an arrow \(W\to V\) with \(V\in\mod B\) ends at a
nonprojective \(V\) and gives a middle summand outside \(\mod B\) of the
sequence ending at \(V\). Conversely, every middle summand outside \(\mod B\)
of a sequence counted by \(t\) gives an arrow to or from its end term in
\(\mod B\). Since such a sequence has only one end term in \(\mod B\), each
such arrow arises from exactly one sequence, the one starting or ending at
its endpoint in \(\mod B\). We therefore obtain a bijection between the
arrows of \(\Gamma_A\) with exactly one endpoint in \(\mod B\) and the
pairs of a sequence counted by \(t\) and one of its middle summands
outside \(\mod B\). By Proposition~\ref{prop:ar-multiplicity} it
respects multiplicities, so \(z=\sum m_{\mathrm{out}}\), and hence
\begin{equation}\label{eq:delta-z}
 \delta=z-t.
\end{equation}
The arrows of \(\Gamma_A\) between objects outside \(\mod B\) are the
arrows of \(\cD\) by Proposition~\ref{lem:surviving-arrows}; those between
objects of \(\mod B\) number \(a(B)-c\) by Definition~\ref{def:new-counts};
the remaining \(z\) arrows have exactly one endpoint in \(\mod B\). Hence
\begin{equation}\label{eq:arrows-split}
 a(A)=a(\cD)+z+a(B)-c.
\end{equation}
Lemma~\ref{lem:count} and Proposition~\ref{thm:categorical-magnitude} give
\begin{equation}\label{eq:three-magnitudes}
\begin{gathered}
 \chi(\mod A)=2|\ind A|-|A|-a(A),\qquad
 \chi(\mod B)=2|\ind B|-|B|-a(B),\\
 \chi(\cD)=2|\ind\cD|-a(\cD)-|\ind_1^+\cD|.
\end{gathered}
\end{equation}
Subtract the last two formulas in \eqref{eq:three-magnitudes} from the
first, and substitute \(|\ind A|=|\ind\cD|+|\ind B|\) and
\eqref{eq:arrows-split}. Then \eqref{eq:count-t} and \eqref{eq:delta-z}
give
\[
 \chi(\mod A)-\chi(\mod B)-\chi(\cD)=|\ind_1^+\cD|-(|A|-|B|)+c-z
 =(t-r)+c-z=c-r-\delta.
\]
\end{proof}
For a primitive idempotent of a representation-directed algebra, we show
in Proposition~\ref{thm:directed-deletion} that \(\delta=0\), using
Proposition~\ref{prop:boundary-one}. We continue
Example~\ref{ex:fork-restriction} and compute every term of
Theorem~\ref{thm:serre-magnitude}.
\begin{example}\label{ex:fork-identity}
Keep \(A=(k(1\to2\leftarrow3))^{\op}\) and \(e=e_3\). The
Auslander--Reiten quiver of \(A\) is
\[
\begin{tikzcd}[column sep=1.8em,row sep=1.6em]
 &P_1\arrow[dr]&&S_3\arrow[ll,dotted,"\tau_A"']\\
 S_2\arrow[ur]\arrow[dr]&&I_2\arrow[ur]\arrow[dr]
   \arrow[ll,dotted,"\tau_A"']&\\
 &P_3\arrow[ur]&&S_1\arrow[ll,dotted,"\tau_A"']
\end{tikzcd}
\]
where \(S_2=P_2\), \(S_1=I_1\), and \(S_3=I_3\). It has six vertices and
six arrows, so \(\chi(\mod A)=12-3-6=3\) by Lemma~\ref{lem:count}. The
category \(\mod B\) has the three indecomposables \(S_2\), \(P_1\),
\(S_1\) and the two arrows \(S_2\to P_1\to S_1\), so
\(\chi(\mod B)=6-2-2=2\).

The indecomposables of \(\cD\) are \(P_3\), \(I_2\), and \(S_3\), with the
two arrows \(P_3\to I_2\to S_3\). By
Proposition~\ref{prop:quotient-sequences}, \(\tau_{\cD}^+I_2\) and
\(\tau_{\cD}^+S_3\) are the images of \(S_2\) and \(P_1\), which are zero
in \(\cD\), and \(P_3\) is projective. Thus all three objects are
\(\tau\)-projective, although \(I_2\) and \(S_3\) are not projective over
\(A\), and Proposition~\ref{thm:categorical-magnitude} gives
\(\chi(\cD)=6-2-3=1\).

Of the two arrows of \(\Gamma_B\), the arrow \(S_2\to P_1\) is an arrow of
\(\Gamma_A\), and \(P_1\to S_1\) is not; hence \(c=1\). The only almost
split sequence of \(\mod B\) is not almost split in \(\mod A\), so
\(r=1\). The almost split sequences of \(\mod A\) with exactly one
end term in \(\mod B\) end at \(I_2\), \(S_1\), and \(S_3\). Each has exactly
one middle summand outside \(\mod B\), namely \(P_3\), \(I_2\), and
\(I_2\), so \(\delta=0\). Both sides of \eqref{eq:serre-magnitude} are
zero.
\end{example}

\subsection{Almost split sequences not preserved by the inclusion}\label{subsec:extension-supports}
In the identity \eqref{eq:serre-magnitude}, the difference \(c-r\) is
nonnegative under a condition on the extensions between \(\mod B\) and
the simple modules outside it.
Let \(E\) be a direct sum of one representative of each simple module
outside \(\mod B\). Define functors on \(\mod B\) by
\[
 \mathsf H_E(W)=\Ext_A^1(E,W),\qquad \mathsf G_E(W)=\Ext_A^1(W,E).
\]

\begin{lemma}\label{lem:extension-supports}
Let \(P_E\twoheadrightarrow E\) and \(E\hookrightarrow I_E\) be the
projective cover and injective envelope, and let
\(\xi:0\to X\to\theta_B Y\to Y\to0\) be an almost split sequence in
\(\mod B\).
Put \(V=\tau_A Y\) and \(Q=V/i^!V\).
\begin{enumerate}
\item \(\mathsf H_E\cong\Hom_B(i^*\rad P_E,-)\) and
\(\mathsf G_E\cong\Hom_B(-,i^!(I_E/E))\).
\item \(\Ext_A^1(E,V)=0\), and there is an exact sequence
\begin{equation}\label{eq:extension-support-sequence}
 0\longrightarrow\Dk \mathsf G_E(Y)\longrightarrow\Hom_A(E,Q)
 \longrightarrow \mathsf H_E(X)\longrightarrow0.
\end{equation}
\item The sequence \(\xi\) is not almost split in \(\mod A\) if and only
if \(\mathsf H_E(X)\ne0\) or \(\mathsf G_E(Y)\ne0\).
\end{enumerate}
\end{lemma}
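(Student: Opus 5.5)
For (1), I will use the defining sequences of \(P_E\) and \(I_E\). Apply \(\Hom_A(-,W)\) to \(0\to\rad P_E\to P_E\to E\to0\) for \(W\in\mod B\). Since \(\Hom_A(P_E,W)\cong We=0\) and \(\Ext^1_A(P_E,W)=0\), the long exact sequence gives \(\Ext_A^1(E,W)\cong\Hom_A(\rad P_E,W)\). By adjunction this is \(\Hom_B(i^*\rad P_E,W)\). Naturality in \(W\) then gives \(\mathsf H_E\cong\Hom_B(i^*\rad P_E,-)\). Dually, \(0\to E\to I_E\to I_E/E\to0\) together with \(\Hom_A(W,I_E)=0\) gives \(\mathsf G_E(W)\cong\Hom_A(W,I_E/E)\cong\Hom_B(W,i^!(I_E/E))\).

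For (2), the vanishing comes from Auslander--Reiten duality (Proposition~\ref{prop:ar-duality}): \(\Dk\Ext_A^1(E,\tau_AY)\cong\uhom_A(Y,E)\), and \(\Hom_A(Y,E)=0\) since \(Y\in\mod B\) and \(E\) has no composition factor in \(\mod B\). For the exact sequence I will apply \(\Hom_A(E,-)\) to \(0\to i^!V\to V\to Q\to0\). By Proposition~\ref{lem:relative-ar}, \(i^!V=\tau_BY=X\). Since \(\Hom_A(E,V)=0\) (because \(\soc\)-considerations give \(\Hom_A(E,V)\cong\Hom_A(E,V/i^!V)\) only partially), I should instead argue directly. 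First, \(\Hom_A(E,X)=0\). The long exact sequence is then
\(0\to\Hom_A(E,V)\to\Hom_A(E,Q)\to\Ext_A^1(E,X)\to\Ext_A^1(E,V)=0\),
so the cokernel is \(\mathsf H_E(X)\). It remains to identify \(\Hom_A(E,V)\) with \(\Dk\mathsf G_E(Y)\). By Proposition~\ref{prop:ar-duality}, \(\Dk\Ext_A^1(Y,E)\cong\ohom_A(E,\tau_AY)\). A map \(E\to V\) factoring through an injective factors through \(I_E\), hence through \(I_E\to V\). I expect to show that such maps vanish, using that \(E\) is semisimple and the map extends over the envelope; this is the delicate point. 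Here is the first approach I will try. Since \(V\) is indecomposable and noninjective, a nonzero composite \(E\to I_E\to V\) would have to be nonzero on \(\soc I_E=E\). Applying the almost split property of \(V\to\theta_AY\), however, any map \(I_E\to V\) composes to zero in \(\Hom(E,\theta_AY)\)-terms... If this fails, I will instead identify \(\ohom_A(E,V)\) via the alternative form \(\uhom_A(\tau_A^-V,E)=\uhom_A(Y,E)\), which would need care. Concretely, I want \(\Hom_A(E,V)=\ohom_A(E,V)\); this holds if every map \(E\to V\) through an injective is zero. A map \(E\to I\) is a map into \(\soc I\), and the factorization \(I\to V\) restricted there lands in \(\soc V\). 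I will check this carefully, since it is the main obstacle.

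For (3), I will use the criterion that \(\xi\) remains almost split in \(\mod A\) exactly when \(\tau_AY\cong X\), that is, when \(V\in\mod B\), equivalently \(Q=0\). If \(Q\ne0\), then \(Q\) has no nonzero submodule in \(\mod B\) by Setting~\ref{set:serre}, so its socle contains a simple outside \(\mod B\), and \(\Hom_A(E,Q)\ne0\). Conversely, \(Q=0\) gives \(\Hom_A(E,Q)=0\). By the exact sequence \eqref{eq:extension-support-sequence}, \(\Hom_A(E,Q)\ne0\) if and only if \(\mathsf G_E(Y)\ne0\) or \(\mathsf H_E(X)\ne0\), which proves (3).
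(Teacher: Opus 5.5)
You follow the paper's argument in parts (1) and (3), in the vanishing \(\Ext_A^1(E,V)=0\), and in the long exact sequence \(0\to\Hom_A(E,V)\to\Hom_A(E,Q)\to\mathsf H_E(X)\to0\).

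\textbf{The gap.} The missing step is the one you flag yourself and never close. You need \(\Dk\mathsf G_E(Y)\cong\ohom_A(E,V)\) to equal \(\Hom_A(E,V)\), that is, that no nonzero map \(E\to V\) factors through an injective. Without it you have not obtained \eqref{eq:extension-support-sequence}, and your proof of (3) rests on that sequence.

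Neither of your suggested routes supplies this step. The almost split property of \(V\to\theta_AY\) plays no role here. Your fallback is misidentified: by \eqref{eq:ar-duality}, \(\uhom_A(\tau_A^-V,E)=\uhom_A(Y,E)\) is dual to \(\Ext_A^1(E,V)\), which you already showed vanishes. The other form of \(\Dk\mathsf G_E(Y)\) is \(\uhom_A(\tau_A^-E,Y)\). Your parenthetical claim \(\Hom_A(E,V)=0\) is also false in general: this space is \(\Dk\mathsf G_E(Y)\), which can be nonzero.

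\textbf{The fix.} The missing argument is short.
Suppose a nonzero \(f:E\to V\) factors through an injective \(J\). Choose a simple summand \(S\) of \(E\) with \(f|_S\ne0\).
Since \(S\hookrightarrow I(S)\) is monic and \(J\) is injective, the map \(S\to J\) extends over \(I(S)\). So \(f|_S\) factors as \(S\hookrightarrow I(S)\xrightarrow{g}V\).
Then \(g\) is nonzero on the essential simple socle \(S=\soc I(S)\), so \(\ker g=0\). As \(I(S)\) is injective, \(g\) is a split monomorphism.
Since \(V\) is indecomposable, \(V\cong I(S)\) would be injective. This contradicts that \(V=\tau_AY\) is noninjective (Proposition~\ref{prop:ar-multiplicity}).

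Hence \(\ohom_A(E,V)=\Hom_A(E,V)\), your sequence is exactly \eqref{eq:extension-support-sequence}, and the rest of your proposal goes through as written.
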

\begin{proof}
(1) For \(W\in\mod B\), the top of \(P_E\) and socle of \(I_E\) give
\(\Hom_A(P_E,W)=0=\Hom_A(W,I_E)\). Applying \(\Hom_A(-,W)\) to
\(0\to\rad P_E\to P_E\to E\to0\) gives
\(\Ext_A^1(E,W)\cong\Hom_A(\rad P_E,W)\cong\Hom_B(i^*\rad P_E,W)\), by the
adjunction for \(i^*\). Dually, applying \(\Hom_A(W,-)\) to
\(0\to E\to I_E\to I_E/E\to0\) and using the adjunction for \(i^!\) gives
the second representation.

(2) By Proposition~\ref{lem:relative-ar}, identify \(X=i^!V\).
The first isomorphism in \eqref{eq:ar-duality}, with \(M=E\) and \(N=V\),
gives
\[
 \Ext_A^1(E,V)=\Dk\uhom_A(Y,E)=0,
\]
since \(\Hom_A(Y,E)=0\) for \(Y\in\mod B\).
The second, with \(M=Y\) and \(N=E\), gives
\(\Dk \mathsf G_E(Y)=\ohom_A(E,V)=\Hom_A(E,V)\).
For the equality \(\ohom_A(E,V)=\Hom_A(E,V)\), let a nonzero map
\(S\to V\) from a simple summand \(S\) of \(E\) factor through an
injective module. It then factors through the injective envelope
\(S\hookrightarrow I(S)\). The resulting map \(I(S)\to V\) is monic on the
essential simple socle \(S\),
hence monic and split. This contradicts the fact that the
indecomposable \(V\) is not injective. Applying \(\Hom_A(E,-)\) to
\(0\to X\to V\to Q\to0\), and using \(\Hom_A(E,X)=0\), proves
\eqref{eq:extension-support-sequence}.

(3) The sequence \(\xi\) is almost split in \(\mod A\) precisely when
\(V\in\mod B\), equivalently when \(Q=0\). Indeed, if \(\xi\) is almost
split in \(\mod A\), then \(V\cong X\in\mod B\). Conversely, if
\(V\in\mod B\), then the ambient sequence lies in \(\mod B\) by extension
closure, and it coincides with \(\xi\) by
Proposition~\ref{lem:relative-ar}. Every simple subobject of a nonzero \(Q\)
is outside \(\mod B\), by Setting~\ref{set:serre}.
Thus \(Q\ne0\) is equivalent to \(\Hom_A(E,Q)\ne0\), and (2) proves the
criterion.
\end{proof}

\begin{theorem}\label{prop:compensation}
Suppose that
\begin{equation}\label{eq:disjoint-supports}
 \mathsf H_E(W)=0\quad\text{or}\quad \mathsf G_E(W)=0
 \qquad\text{for every }W\in\ind B.
\end{equation}
Then \(r\leq c\). More precisely, the almost split sequences of
\(\mod B\) that are not almost split in \(\mod A\) can be assigned distinct
ordered pairs \((V,W)\) with \(c_{VW}>0\).
\end{theorem}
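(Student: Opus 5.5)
Let \(\xi:0\to X\to\theta_BY\to Y\to0\) be an almost split sequence of \(\mod B\) that is not almost split in \(\mod A\). By Lemma~\ref{lem:extension-supports}(3), \(\mathsf H_E(X)\ne0\) or \(\mathsf G_E(Y)\ne0\). The plan is to produce, in each case, an explicit nonzero element of some \(\rad_A^2(V,W)/\rad_B^2(V,W)\). By Lemma~\ref{lem:new-arrows} this gives a pair with \(c_{VW}>0\). The pair will be chosen so that the assignment is injective.

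\emph{Case \(\mathsf H_E(X)\ne0\).} Assign \(\xi\) the pair \((X,W)\), where \(W\) is an indecomposable summand of \(\theta_BY\) such that the component \(X\to W\) of the left almost split map \(\mu^-_X\) in \(\mod B\) lies in \(\rad_A^2\). In \(\mod A\), the map \(X\to\theta_BY\) is the restriction of \(\tau_AY\to\theta_AY\) along \(X=i^!\tau_AY\hookrightarrow\tau_AY\). Since \(X\) is a proper submodule of the indecomposable \(V=\tau_AY\) (because \(Q\ne0\)), the inclusion \(X\to V\) is a radical map in \(\mod A\). Hence every component \(X\to W\) factors as \(X\hookrightarrow V\to\theta_AY\to W\), and it lies in \(\rad_A^2(X,W)\) once \(V\to\theta_AY\to W\) is radical, which it is. Each such component is irreducible in \(\mod B\), so it is nonzero in \(\Irr_B(X,W)\) but zero in \(\Irr_A(X,W)\), giving \(c_{XW}>0\).

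It remains to see that \(\mathsf H_E(X)\ne0\) is what forces \(X\ne V\); this is already provided by the exact sequence \eqref{eq:extension-support-sequence}. The step that needs care is that the summand \(W\) exists and lies in \(\mod B\), and this holds because \(\theta_BY\ne0\).

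\emph{Case \(\mathsf G_E(Y)\ne0\).} Dually, use \(\theta_BY\to Y=i^*(\tau_A^-X)\) and the radical quotient map \(\tau^-_AX\twoheadrightarrow Y\). This yields pairs \((W,Y)\) with \(c_{WY}>0\).

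\emph{Injectivity.} This is where hypothesis \eqref{eq:disjoint-supports} enters, and it is the main obstacle. Sequences of the first kind get pairs whose first entry \(X=\tau_B Y\) determines \(\xi\); sequences of the second kind get pairs whose second entry determines \(\xi\). A collision requires a pair \((V',W')\) that is of the form \((\tau_B Y_1,W)\) for \(\xi_1\) and \((W'',Y_2)\) for \(\xi_2\), with \(\xi_1\ne\xi_2\). We then have \(V'=\tau_BY_1\) with \(\mathsf H_E(V')\ne0\), and \(W'=Y_2\) with \(\mathsf G_E(W')\ne0\).

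To exclude such collisions, I would refine the choice. Whenever both \(\mathsf H_E(X)\ne0\) and \(\mathsf G_E(Y)\ne0\), hypothesis \eqref{eq:disjoint-supports} applied to \(W\) gives a rule for which case to use for each \(W\). Specifically, in the first case choose \(W\) with \(\mathsf G_E(W)=0\), and in the second case choose \(W\) with \(\mathsf H_E(W)=0\). A first-kind pair then has \(\mathsf H_E\ne0\) on its first entry and \(\mathsf G_E=0\) on its second entry. A second-kind pair has \(\mathsf G_E\ne0\) on its second entry. So the two kinds never coincide.

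The difficulty is showing that such a \(W\) can be chosen. I would argue this via part~(1) of Lemma~\ref{lem:extension-supports}: a nonzero \(\mathsf H_E(X)=\Hom_B(i^*\rad P_E,X)\) propagates along irreducible maps out of \(X\). If propagation fails, then vanishing of \(\mathsf H_E\) on the middle terms should be usable to pick the other case instead, and making this choice consistently is the delicate step.
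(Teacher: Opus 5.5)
Your proposal has a genuine gap: the Case~1 pairs \((X,W)\) can have \(c_{XW}=0\), so the construction fails. The injectivity step is also left unfinished.

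\textbf{The construction.} You claim that every component \(X\to W\) of \(\mu^-_X\) factors as \(X\hookrightarrow V\to\theta_AY\to W\). This needs a map \(\theta_AY\to W\) that restricts to the projection \(i^!(\theta_AY)=\theta_BY\to W\). Write \(\theta_AY=M_{\mathrm{in}}\oplus M_{\mathrm{out}}\), with the summands of \(M_{\mathrm{in}}\) in \(\mod B\) and none of those of \(M_{\mathrm{out}}\). Then \(\theta_BY=M_{\mathrm{in}}\oplus i^!M_{\mathrm{out}}\). The extension exists for summands of \(M_{\mathrm{in}}\), but in general not for summands of the submodule \(i^!M_{\mathrm{out}}\), and \(M_{\mathrm{in}}\) may be zero.

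Example~\ref{ex:fork-identity} is a counterexample. For \(0\to S_2\to P_1\to S_1\to0\) one has \(\mathsf H_E(S_2)\ne0\), \(V=P_3\), \(\theta_AS_1=I_2\) and \(M_{\mathrm{in}}=0\). The only pair your rule can produce is \((S_2,P_1)\). But \(S_2\to P_1\) is an arrow of \(\Gamma_A\) (indeed \(\Hom_A(I_2,P_1)=0\)), so \(c_{S_2P_1}=0\); the valid pair is \((P_1,S_1)\), cf.\ Example~\ref{ex:fork-compensation}. The sequence ending at \(K_{\{1,2\}}\) in Example~\ref{ex:d4-boundary} fails in the same way, and your Case~2 fails dually.

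The fix is to use the other end of \(\xi\). Proposition~\ref{prop:ar-multiplicity}, applied in \(\mod B\) and in \(\mod A\), gives \([i^!M_{\mathrm{out}}:W]_\oplus=c_{WY}\). So in Case~1 every summand \(W\) of \(i^!M_{\mathrm{out}}\) yields a pair \((W,Y)\) with \(c_{WY}>0\). For distinctness you want such a \(W\) with \(\mathsf H_E(W)\ne0\). Since \(\Hom_A(E,Y)=0\), there is an injection \(\mathsf H_E(X)\hookrightarrow\mathsf H_E(\theta_BY)\). Its component into \(\mathsf H_E(M_{\mathrm{in}})\) factors through \(\Ext_A^1(E,\tau_AY)=0\) (Lemma~\ref{lem:extension-supports}(2)), because \(X\to M_{\mathrm{in}}\) is the restriction of \(\tau_AY\to\theta_AY\). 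Hence some summand \(W\) of \(i^!M_{\mathrm{out}}\) has \(\mathsf H_E(W)\ne0\). Dually, in Case~2 write \(\theta_A^-X=M'_{\mathrm{in}}\oplus M'_{\mathrm{out}}\) and take \((X,W)\) with \(W\) a summand of \(i^*M'_{\mathrm{out}}\) and \(\mathsf G_E(W)\ne0\).

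\textbf{Injectivity.} The refinement you propose is unnecessary; the missing fact is the following. An irreducible map \(V\to W\) in \(\mod B\) is monic or epic. By Lemma~\ref{lem:extension-supports}(1), \(\mathsf H_E\) is covariantly and \(\mathsf G_E\) contravariantly representable on \(\mod B\). So a monic irreducible map gives \(\mathsf H_E(V)\hookrightarrow\mathsf H_E(W)\), and an epic one gives \(\mathsf G_E(W)\hookrightarrow\mathsf G_E(V)\). Combining this with \eqref{eq:disjoint-supports} at \(W\), respectively at \(V\), gives
\[
 \Irr_B(V,W)\ne0\ \Longrightarrow\ \mathsf H_E(V)=0\ \text{or}\ \mathsf G_E(W)=0.
\]
Since \(c_{V'W'}>0\) forces \(\Irr_B(V',W')\ne0\), the cross-case collision you isolated, with \(\mathsf H_E(V')\ne0\ne\mathsf G_E(W')\), cannot occur. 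Within each case, the pairs are distinguished by their target \(Y\), respectively their source \(X\). Applied to the arrow into \(Y\) from a middle summand \(W\) with \(\mathsf H_E(W)\ne0\), the same implication shows that \(\mathsf H_E(X)\) and \(\mathsf G_E(Y)\) are never both nonzero.
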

\begin{proof}
An irreducible map \(V\to W\) in the module category \(\mod B\) is
monic or epic. In the monic case, representability of \(\mathsf H_E\)
(Lemma~\ref{lem:extension-supports}(1)) gives an
injection \(\mathsf H_E(V)\hookrightarrow \mathsf H_E(W)\); in the epic case,
representability of \(\mathsf G_E\) gives an injection \(\mathsf G_E(W)\hookrightarrow \mathsf G_E(V)\).
Hence, if \(\Irr_B(V,W)\ne0\), \(\mathsf H_E(V)\ne0\), and
\(\mathsf G_E(W)\ne0\), then \(\mathsf H_E(W)\ne0\ne\mathsf G_E(W)\) in the
monic case and \(\mathsf H_E(V)\ne0\ne\mathsf G_E(V)\) in the epic case.
Thus \eqref{eq:disjoint-supports} implies
\begin{equation}\label{eq:forbidden-arrow}
 \Irr_B(V,W)\ne0\quad\Longrightarrow\quad
 \mathsf H_E(V)=0\ \text{or}\ \mathsf G_E(W)=0.
\end{equation}

Consider an almost split sequence \(0\to X\to\theta_B Y\to Y\to0\) of
\(\mod B\) that is not almost split in \(\mod A\).
Since \(\Hom_A(E,Y)=0\), its long exact sequence gives
\(\mathsf H_E(X)\hookrightarrow \mathsf H_E(\theta_B Y)\). If \(\mathsf H_E(X)\ne0\), some middle summand
\(W\) has \(\mathsf H_E(W)\ne0\), and its arrow to \(Y\), together with
\eqref{eq:forbidden-arrow}, forces \(\mathsf G_E(Y)=0\).
By Lemma~\ref{lem:extension-supports}(3), at least one of \(\mathsf H_E(X)\)
and \(\mathsf G_E(Y)\) is nonzero, so exactly one is. We assign to the
sequence an ordered pair of indecomposable \(B\)-modules, in two cases.

\textbf{Case 1: \(\mathsf H_E(X)\ne0\).}
Write the ambient middle term ending at \(Y\) as
\(\theta_A Y=M_{\mathrm{in}}\oplus M_{\mathrm{out}}\), where the summands of
\(M_{\mathrm{in}}\) lie in \(\mod B\) and none of those of
\(M_{\mathrm{out}}\) do. Proposition~\ref{lem:relative-ar} gives
\begin{equation}\label{eq:restricted-middle}
 \theta_B Y=M_{\mathrm{in}}\oplus i^!M_{\mathrm{out}}.
\end{equation}
Let \(W\in\ind B\). By Proposition~\ref{prop:ar-multiplicity} in
\(\mod B\) and in \(\mod A\),
\([\theta_B Y:W]_\oplus=\dim_k\Irr_B(W,Y)\) and
\([M_{\mathrm{in}}:W]_\oplus=[\theta_A Y:W]_\oplus=\dim_k\Irr_A(W,Y)\).
Subtracting and using \eqref{eq:restricted-middle} gives
\begin{equation}\label{eq:restricted-multiplicity}
 [i^!M_{\mathrm{out}}:W]_\oplus=c_{WY}.
\end{equation}
The component \(X\to M_{\mathrm{in}}\) of the first map of \(\xi\) is the
restriction of \(\nu_Y^+:\tau_A Y\to\theta_A Y\) to \(X=i^!(\tau_A Y)\), followed by the
projection to \(M_{\mathrm{in}}\). Hence the map
\(\mathsf H_E(X)\to\mathsf H_E(M_{\mathrm{in}})\) factors through
\(\Ext_A^1(E,\tau_A Y)=0\), by Lemma~\ref{lem:extension-supports}(2).
The injection into \(\mathsf H_E(\theta_B Y)\) therefore provides a
summand \(W\) of \(i^!M_{\mathrm{out}}\) with \(\mathsf H_E(W)\ne0\).
Assign to the sequence one such pair \((W,Y)\), for which \(c_{WY}>0\).

\textbf{Case 2: \(\mathsf G_E(Y)\ne0\).}
The dual argument applies to the sequence starting at \(X\). Write \(\theta_A^-X=M'_{\mathrm{in}}\oplus M'_{\mathrm{out}}\) as
above. Since \(\xi\) is also the almost split sequence of \(\mod B\) starting at
\(X\), the dual part of Proposition~\ref{lem:relative-ar} gives
\(\theta_B Y=M'_{\mathrm{in}}\oplus i^*M'_{\mathrm{out}}\) and
\([i^*M'_{\mathrm{out}}:W]_\oplus=c_{XW}\). The map
\(\mathsf G_E(Y)\to\mathsf G_E(M'_{\mathrm{in}})\) factors through
\(\Ext_A^1(\tau_A^-X,E)\), which vanishes by the second isomorphism in
\eqref{eq:ar-duality} because \(\Hom_A(E,X)=0\). This assigns a pair
\((X,W)\) with \(c_{XW}>0\) and \(\mathsf G_E(W)\ne0\).

The pairs of Case~1 have distinct targets \(Y\), and those of Case~2
distinct sources \(X\), by uniqueness of almost split sequences. A pair
\((V',W')\) chosen in both cases would have
\(\mathsf H_E(V')\ne0\) and \(\mathsf G_E(W')\ne0\), contrary to
\eqref{eq:forbidden-arrow}.
Thus all assigned pairs are distinct, and their number is at most
\(\sum_{V,W}c_{VW}=c\).
\end{proof}

\begin{corollary}\label{cor:directed-compensation}
If \(A\) is representation-finite and representation-directed and
\(e\) is a primitive idempotent, then \(r\leq c\).
\end{corollary}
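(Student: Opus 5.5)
We will verify condition \eqref{eq:disjoint-supports} of Theorem~\ref{prop:compensation}; the corollary then follows at once. Here \(e\) is primitive, so \(E=S\) is a single simple module, say at the vertex \(i\). We must show that no indecomposable \(W\in\ind B\) satisfies both \(\Ext_A^1(S,W)\ne0\) and \(\Ext_A^1(W,S)\ne0\).

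Suppose such a \(W\) exists. A nonzero element of \(\Ext_A^1(S,W)\) gives a nonsplit extension \(0\to W\to M\to S\to0\). Some indecomposable summand \(M_1\) of \(M\) has nonzero components \(W\to M_1\) and \(M_1\to S\). Indeed, if every summand receiving a nonzero map from \(W\) mapped to zero in \(S\), the map \(M\to S\) would vanish on the image of \(W\), which is its kernel. That would be absurd unless the sequence splits, and a cleaner route avoids this bookkeeping. We use the standard consequence of Auslander--Reiten duality (Proposition~\ref{prop:ar-duality}): \(\Ext_A^1(S,W)\ne0\) gives \(\ohom_A(W,\tau_A S)\ne0\), so there is a nonzero map \(W\to\tau_A S\), with \(S\) nonprojective. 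Then \(\tau_A S\to\theta_A S\to S\) is a chain of nonzero radical maps in the AR quiver, that is, a path \(W\rightsquigarrow\tau_AS\rightsquigarrow S\) of nonzero nonisomorphisms between indecomposables. Dually, \(\Ext_A^1(W,S)\ne0\) gives \(\uhom_A(\tau_A^-S,W)\ne0\), hence a path \(S\rightsquigarrow\tau_A^-S\rightsquigarrow W\).

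Concatenating the two paths gives a cycle \(W\to\cdots\to S\to\cdots\to W\) of nonzero nonisomorphisms between indecomposable modules. This cycle passes through \(S\), which is not isomorphic to \(W\) since \(W\in\mod B\) and \(S\notin\mod B\). This contradicts representation-directedness. To make "path of nonzero nonisomorphisms" precise, we only need that each step is a nonzero map between nonisomorphic indecomposables, so no composite needs to be nonzero; the definition of directedness in the conventions forbids exactly such cycles. The steps \(\tau_AS\to\theta_AS\to S\) are realized by choosing an indecomposable summand \(U\) of \(\theta_AS\) with nonzero components \(\tau_AS\to U\to S\), which exist because the maps in an almost split sequence are nonzero on each summand.

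The main point to check is the step from nonvanishing Ext to a nonzero Hom into \(\tau_AS\), respectively out of \(\tau_A^-S\). This is immediate from \eqref{eq:ar-duality}, since \(\ohom\) and \(\uhom\) are quotients of \(\Hom\). We must also ensure that the degenerate cases are excluded: \(S\) projective forces \(\Ext_A^1(S,-)=0\), and \(S\) injective forces \(\Ext_A^1(-,S)=0\), so both translates exist whenever both Ext groups are nonzero. With \eqref{eq:disjoint-supports} established, Theorem~\ref{prop:compensation} gives \(r\leq c\).
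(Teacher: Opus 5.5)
Your argument is essentially the paper's: Auslander--Reiten duality turns the two nonvanishing Ext groups into nonzero maps into \(\tau_AS\) and out of \(\tau_A^-S\), these are continued through the almost split sequences at \(S\), and concatenating them gives a cycle that contradicts directedness. For \(\Ext_A^1(W,S)\) you use the \(\tau_A^-\) form at \(S\), where the paper uses the \(\tau_A\) form at \(W\); the only fix needed is that the steps \(W\to\tau_AS\) and \(\tau_A^-S\to W\) can be isomorphisms when \(W\cong\tau_AS\) or \(W\cong\tau_A^-S\), so drop such steps, as the paper does, instead of claiming that every step is between nonisomorphic indecomposables.
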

\begin{proof}
Here \(E\) is a single simple module. If \(\Ext_A^1(M,N)\ne0\) for
indecomposables, the second isomorphism in \eqref{eq:ar-duality} gives a
nonzero map \(N\to\tau_A M\). Following it by the two maps through a middle
summand \(Z\) of the almost split sequence ending at \(M\) gives nonzero
maps \(N\to\tau_A M\to Z\to M\) between indecomposables. Removing an isomorphism step if present leaves nonzero
nonisomorphisms. For \(W\in\ind B\), nonzero \(\Ext_A^1(E,W)\) and
\(\Ext_A^1(W,E)\) would therefore give a cycle. Condition~\eqref{eq:disjoint-supports}
holds, and Theorem~\ref{prop:compensation} applies.
\end{proof}

In Example~\ref{ex:fork-identity}, the assignment of
Theorem~\ref{prop:compensation} can be computed directly.
\begin{example}\label{ex:fork-compensation}
Keep \(A=(k(1\to2\leftarrow3))^{\op}\) and \(e=e_3\), so that
\(E=S_3\). Since \(S_3\) is injective, \(\mathsf G_E=0\), and
\eqref{eq:disjoint-supports} holds. Since \(\rad P_3=S_2\), we have
\(\mathsf H_E(S_2)=\Ext_A^1(S_3,S_2)\ne0\). Thus the almost split
sequence \(0\to S_2\to P_1\to S_1\to0\) of \(\mod B\) has
\(\mathsf H_E(X)\ne0\) for \(X=S_2\). In the proof of
Theorem~\ref{prop:compensation}, \(\theta_A S_1=I_2\) has no summand in
\(\mod B\), and \(i^!I_2=P_1\). The proof therefore assigns to this
sequence the pair \((P_1,S_1)\), with \(c_{P_1S_1}=1\).
\end{example}

\section{Poset representations and deletion of a simple module}\label{sec:deletion}
We compute the magnitude of a representation-finite category of poset
representations. The equivalence of Ringel and Vossieck
(Proposition~\ref{prop:poset-equivalence}) then applies this result to the
ideal quotient associated with a primitive idempotent in a directed
algebra. Combined with Theorem~\ref{thm:serre-magnitude} and
Corollary~\ref{cor:directed-compensation}, it proves
Proposition~\ref{thm:directed-deletion}, which compares \(\sigma(A)\) with
\(\sigma(A/AeA)\).

Three results on the quotient use the work of Ringel and Vossieck.
Proposition~\ref{prop:boundary-one}, on \(\dim_k Xe\) at the
\(\tau\)-projectives and \(\tau\)-injectives \(X\) of \(\cD\), follows from
their bound on \(\dim_k Xe\) for indecomposable \(A\)-modules. They proved the equivalence
\(\cD\simeq\TSp\) (Proposition~\ref{prop:poset-equivalence}). The
inequality \(\chi(\cD)\geq1\) (Corollary~\ref{prop:intrinsic}) follows
from this equivalence and Theorem~\ref{thm:poset-magnitude}, whose proof
uses their results on \(T\)-spaces.
Appendix~\ref{app:structural} proves all three directly, so
Proposition~\ref{thm:directed-deletion} does not depend on \cite{RV}.

\subsection{Magnitude of poset representations}\label{subsec:poset}
We first define representations of a poset by subspaces and compute the
magnitude of the category they form.
\begin{definition}\label{def:t-space}
For a finite poset \(T\), a \emph{representation of \(T\) by subspaces},
or \(T\)-space, is a finite-dimensional vector space \(W\), its
\emph{total space}, with subspaces \(W_t\), its \emph{indexed subspaces},
such that \(W_s\subseteq W_t\) whenever \(s\leq t\).
A morphism \((W;W_t)\to(W';W'_t)\) is a linear map \(f:W\to W'\)
such that \(f(W_t)\subseteq W'_t\) for every \(t\in T\). Write
\(\TSp\) for this category. A subset \(U\subseteq T\) is
\emph{upward-closed} if \(s\in U\) and \(s\leq t\) imply \(t\in U\).
For such \(U\), let \(K_U\) have total space \(k\), with subspace
\((K_U)_t=k\) for \(t\in U\) and \((K_U)_t=0\) otherwise.
\end{definition}
In the proof of Theorem~\ref{thm:poset-magnitude} below, we construct a
function \(h\) on the indecomposable \(T\)-spaces that increases by one
along arrows and satisfies \(h(X)=h(\tau^+X)+2\) for nonzero translates.
The following lemma computes the number of arrows, and hence the
magnitude, from such a function.
\begin{lemma}\label{lem:height-arrows}
Let \(\cC\) be a Hom-finite strict \(\tau\)-category with finitely
many indecomposable isomorphism classes. Suppose there are
indecomposables \(X_0,X_1\) and a function
\(h:\ind\cC\to\mathbb Z\) with the following properties.
\begin{enumerate}
\item The object \(X_0\) is \(\tau\)-projective and has no incoming
arrow, and every other \(\tau\)-projective has exactly one incoming arrow,
counted with multiplicity. Dually, \(X_1\) is \(\tau\)-injective and has
no outgoing arrow, and every other \(\tau\)-injective has exactly one
outgoing arrow.
\item One has \(h(X_0)=0\), \(h(Y)=h(X)+1\) for every arrow
\(X\to Y\), and \(h(X)=h(\tau_{\cC}^+X)+2\) whenever
\(\tau_{\cC}^+X\ne0\).
\end{enumerate}
Then, with \(L=h(X_1)\),
\[
 a(\cC)=2|\ind\cC|-L-2,\qquad \chi(\cC)=1+L-(|\ind_1^+\cC|-1).
\]
\end{lemma}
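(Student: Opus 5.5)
The idea is to count arrows by summing the height differences \(h(\text{target})-h(\text{source})\). Each arrow contributes exactly \(1\) by (2), so
\[
 a(\cC)=\sum_{X\to Y}\bigl(h(Y)-h(X)\bigr)=\sum_{X\in\ind\cC}h(X)\bigl(\indeg(X)-\outdeg(X)\bigr),
\]
where degrees are counted with multiplicity as in the conventions. It then suffices to compute \(\indeg(X)-\outdeg(X)\) for each indecomposable \(X\), using the \(\tau\)-sequences together with Lemma~\ref{lem:tau-multiplicity}.

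For each \(X\), Lemma~\ref{lem:tau-multiplicity} gives \(\indeg(X)=|\theta_{\cC}^+X|_{\mathrm{tot}}\) and \(\outdeg(X)=|\theta_{\cC}^-X|_{\mathrm{tot}}\) (total multiplicities). Suppose \(\tau_{\cC}^+X\neq0\). By Proposition~\ref{rem:tau-properties}(2), \(\theta_{\cC}^+X\cong\theta_{\cC}^-(\tau_{\cC}^+X)\), so \(\indeg(X)=\outdeg(\tau_{\cC}^+X)\).

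The most convenient way to organize this is to pair the arrows into \(X\) with the arrows out of \(\tau^+X\). Split
\[
 a(\cC)=\sum_X \indeg(X)=\sum_{X\notin\ind_1^+}\indeg(X)+\sum_{X\in\ind_1^+}\indeg(X).
\]
The second sum is \(|\ind_1^+\cC|-1\) by hypothesis (1). The first sum equals \(\sum_{Z\notin\ind_1^-}\outdeg(Z)\), using the bijection of Proposition~\ref{rem:tau-properties}(3). That in turn equals \(a(\cC)-\sum_{Z\in\ind_1^-}\outdeg(Z)=a(\cC)-(|\ind_1^-\cC|-1)\). This only yields \(|\ind_1^+|=|\ind_1^-|\), which is not yet enough, so the height function is genuinely needed.

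Use the weighted version. For \(X\) with \(\tau^+X\neq0\), the arrows into \(X\) come from \(\theta^+X\), whose summands have height \(h(X)-1\). The arrows out of \(\tau^+X\) go to the same summands, at height \(h(\tau^+X)+1=h(X)-1\). Now compute \(S=\sum_{\text{arrows }V\to W}(h(V)+h(W))\) in two ways:
\begin{itemize}
 \item grouping each arrow by its target gives \(\sum_X \indeg(X)(2h(X)-1)\);
 \item grouping by its source gives \(\sum_Z\outdeg(Z)(2h(Z)+1)\).
\end{itemize}
Equating the two, substitute \(\indeg(X)=\outdeg(\tau^+X)\) and \(h(X)=h(\tau^+X)+2\) for non-\(\tau\)-projective \(X\). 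The contributions from \(X\notin\ind_1^+\) and \(Z=\tau^+X\notin\ind_1^-\) then cancel up to \(\sum_{Z\notin\ind_1^-}\outdeg(Z)\cdot 2\).

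What remains involves only the boundary terms. The \(\tau\)-projectives other than \(X_0\) each have one incoming arrow, from a vertex of height \(h-1\), and \(X_0\) contributes \(0\). The \(\tau\)-injectives other than \(X_1\) each have one outgoing arrow. Solving the resulting identity should give \(2a(\cC)\) in terms of \(\sum_{P\in\ind_1^+}h(P)\) and \(\sum_{I\in\ind_1^-}h(I)\).

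The step I expect to be the main obstacle is eliminating these height sums over the boundary. I would try to show that the map sending a \(\tau\)-projective \(P\neq X_0\) to the source of its unique incoming arrow, and similarly on the injective side, combines with \(\tau\)-orbits into chains from \(X_0\) to \(X_1\). Along each chain the height bookkeeping telescopes to \(L+2\) per unit, which would give \(a(\cC)=2|\ind\cC|-L-2\).

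A cleaner alternative, which I would try first, is to use the \(\mathbb Z\)-valued function \(v(X)=2|\{X\}|\) and count vertices minus arrows by height level. For each \(n\), let \(V_n\) be the number of vertices at height \(n\) and \(A_n\) the number of arrows from height \(n\) to \(n+1\). The mesh relations give
\[
 A_n+A_{n-1}=(\text{in}+\text{out at level }n).
\]
Hypothesis (1) then forces the level-wise Euler count \(2V_n-A_{n-1}-A_n\) to equal \(1\) at levels \(n=0\) and \(n=L\), and \(0\) at intermediate levels after the boundary corrections. Summing over all levels gives the arrow formula.

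Finally, the magnitude formula follows from \(\chi(\cC)=2|\ind\cC|-a(\cC)-|\ind_1^+\cC|\) in Proposition~\ref{thm:categorical-magnitude}. Substituting \(a(\cC)=2|\ind\cC|-L-2\) gives \(\chi(\cC)=L+2-|\ind_1^+\cC|=1+L-(|\ind_1^+\cC|-1)\).
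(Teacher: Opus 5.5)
\textbf{There is a genuine gap: you never eliminate the boundary height sums.}

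Your setup is right. The identity \(a(\cC)=\sum_X h(X)\bigl(\indeg_{\cC}(X)-\outdeg_{\cC}(X)\bigr)\), the pairing \(\indeg_{\cC}(X)=\outdeg_{\cC}(\tau_{\cC}^+X)\), and the boundary conditions in (1) together give
\[
 a(\cC)=h_--h_+-L+2\bigl(|\ind_1^-\cC|-1\bigr),
\]
where \(h_+\) and \(h_-\) are the sums of \(h\) over \(\ind_1^+\cC\) and \(\ind_1^-\cC\). Your ``weighted version'' reduces to this; computing \(S\) two ways only reproduces the first identity. But you stop exactly where the proof needs one more relation, and you never determine \(h_--h_+\). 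The missing step is
\[
 h_--h_+=2\bigl(|\ind\cC|-|\ind_1^+\cC|\bigr).
\]
To get it, sum \(h(X)-h(\tau_{\cC}^+X)=2\) over all \(X\notin\ind_1^+\cC\). Reindex the subtracted sum by the bijection \(\tau_{\cC}^+:\ind\cC\setminus\ind_1^+\cC\to\ind\cC\setminus\ind_1^-\cC\) of Proposition~\ref{rem:tau-properties}(3). The two sums are then the total height minus \(h_+\) and the total height minus \(h_-\). Equivalently, each \(\tau\)-orbit runs from a \(\tau\)-projective to a \(\tau\)-injective, with height rising by \(2\) at each step. With \(|\ind_1^-\cC|=|\ind_1^+\cC|\), this gives \(a(\cC)=2|\ind\cC|-L-2\). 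You do not need chains of arrows from \(X_0\) to \(X_1\), and your guess that such chains telescope to \(L+2\) per unit has no support.

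\textbf{Your ``cleaner alternative'' is wrong as stated.} The relation \(A_{n-1}+A_n=(\text{in}+\text{out at level }n)\) holds by definition; it is not a consequence of mesh relations. Suppose \(2V_n-A_{n-1}-A_n\) were \(1\) at levels \(0\) and \(L\) and \(0\) elsewhere. Summing over \(n\) would give \(2|\ind\cC|-2a(\cC)=2\), that is, \(a(\cC)=|\ind\cC|-1\). This contradicts the formula you are proving. For example, \(\Gamma_{\cD}\) in Example~\ref{ex:a3} has four vertices and four arrows, and in Example~\ref{ex:d4-subspaces} it has nine vertices and twelve arrows.

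Your final deduction of \(\chi(\cC)\) from the arrow count via Proposition~\ref{thm:categorical-magnitude} is correct.
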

\begin{proof}
By Proposition~\ref{rem:tau-properties}(3), \(\tau_{\cC}^+\) and
\(\tau_{\cC}^-\) are inverse bijections between
\(\ind\cC\setminus\ind_1^+\cC\) and
\(\ind\cC\setminus\ind_1^-\cC\), so
\(|\ind_1^-\cC|=|\ind_1^+\cC|\). For \(X\notin\ind_1^+\cC\), the right
\(\tau\)-sequence at \(X\) is the left \(\tau\)-sequence at
\(\tau_{\cC}^+X\), by part~(2) of the same proposition. Lemma~\ref{lem:tau-multiplicity}
therefore gives
\begin{equation}\label{eq:paired-degrees}
 \indeg_{\cC}(X)=\outdeg_{\cC}(\tau_{\cC}^+X).
\end{equation}
Let \(h_+\) and \(h_-\) be the sums of \(h\) over
\(\ind_1^+\cC\) and \(\ind_1^-\cC\). Summing
\(h(X)-h(\tau_{\cC}^+X)=2\) over \(X\notin\ind_1^+\cC\)
gives \(h_--h_+=2(|\ind\cC|-|\ind_1^+\cC|)\).

Each arrow raises \(h\) by one, so
\(a(\cC)=\sum_X h(X)\bigl(\indeg_{\cC}(X)-\outdeg_{\cC}(X)\bigr)\).
In the incoming sum, the \(\tau\)-projectives contribute \(h_+\), by~(1)
and \(h(X_0)=0\). For the other \(X\), write
\(h(X)=h(\tau_{\cC}^+X)+2\), use \eqref{eq:paired-degrees}, and
reindex by \(Y=\tau_{\cC}^+X\), which runs over
\(\ind\cC\setminus\ind_1^-\cC\). In the outgoing sum, the
\(\tau\)-injectives contribute \(h_--L\), by~(1). Also by~(1), the
\(\tau\)-injectives other than \(X_1\) have one outgoing arrow each, and
\(|\ind_1^-\cC|=|\ind_1^+\cC|\), so
\(\sum_{Y\notin\ind_1^-\cC}\outdeg_{\cC}(Y)=a(\cC)-(|\ind_1^+\cC|-1)\). Hence
\[
\begin{aligned}
 a(\cC)
 &=h_++\sum_{Y\notin\ind_1^-\cC}(h(Y)+2)\outdeg_{\cC}(Y)
   -\sum_{Y\notin\ind_1^-\cC}h(Y)\outdeg_{\cC}(Y)-(h_--L)\\
 &=2\bigl(a(\cC)-(|\ind_1^+\cC|-1)\bigr)+h_+-h_-+L\\
 &=2\bigl(a(\cC)-|\ind_1^+\cC|+1\bigr)-2(|\ind\cC|-|\ind_1^+\cC|)+L,
\end{aligned}
\]
and solving gives
\(a(\cC)=2|\ind\cC|-L-2\). Proposition~\ref{thm:categorical-magnitude} gives
\(\chi(\cC)=2|\ind\cC|-a(\cC)-|\ind_1^+\cC|=1+L-(|\ind_1^+\cC|-1)\).
\end{proof}

We recall translation quivers, used in the next lemma, and their mesh
categories, used in Subsection~\ref{subsec:graded}.
\begin{definition}\label{def:mesh}
A \emph{translation quiver} is a locally
finite quiver without loops together with an injective map \(\tau\), the
translation, from a subset of the vertex set to the vertex set, such that for every
vertex \(X\) in its domain and every vertex \(Y\), the number of arrows
\(Y\to X\) equals the number of arrows \(\tau X\to Y\).
The \(k\)-linear path category of a translation quiver has its vertices as
objects; its morphisms are linear combinations of oriented paths, including
the stationary identity paths, and composition is concatenation.
A \emph{mesh presentation}
is a choice, at each vertex \(X\) with a translate and each vertex \(Y\), of a
bijection between the arrows \(\tau X\to Y\) and the arrows \(Y\to X\); we
write the paired arrows as \(b_{Y,j}:\tau X\to Y\) and
\(a_{Y,j}:Y\to X\). The \emph{mesh relation}
at \(X\) is
\[
 \sum_{Y,j}a_{Y,j}b_{Y,j}=0.
\]
The \emph{mesh category} is the path category modulo these relations.
\end{definition}

A function \(h\) as in Lemma~\ref{lem:height-arrows} is obtained from path
lengths by the following lemma of Ringel and Vossieck.
\begin{lemma}[{\cite[Section~7, Lemma, pp.~234--235]{RV}}]\label{lem:rv-grading}
Let \(\Gamma\) be a translation quiver with a unique source, in which only
finitely many vertices have no translate. Suppose that each vertex without translate
has at most one immediate predecessor, that is, a vertex with an arrow to
it. Then all paths in \(\Gamma\) from the
source to a fixed vertex have the same length.
\end{lemma}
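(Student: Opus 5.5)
The plan is to define a height function by path length and prove, by induction along a topological order, that it is well defined. Fix the unique source \(X_0\).

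\textbf{Step 1 (well-foundedness).} I would first argue that every vertex reachable from \(X_0\) can be ordered so that induction is possible. In the intended applications \(\Gamma\) is finite and directed. In general I would use the hypotheses as follows: since \(X_0\) is the unique source, every vertex other than \(X_0\) has an immediate predecessor. I would define \(\ell(Y)\) as the set of lengths of paths \(X_0\to Y\) and aim to show by induction that it is a singleton. Here I would induct on the minimal length of a path from \(X_0\), or, if the quiver has cycles, on the maximal length; I expect to need directedness implicitly and will state it if needed. I expect this step to be the main obstacle, namely organising the induction so that it terminates.

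\textbf{Step 2 (vertices without translate).} Let \(Y\ne X_0\) have no translate. By hypothesis \(Y\) has a unique immediate predecessor \(Z\), possibly joined to \(Y\) by several arrows. Every path \(X_0\to Y\) ends with an arrow \(Z\to Y\), so its length is one more than the length of a path \(X_0\to Z\). By induction, all such lengths are equal to \(h(Z)+1\).

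\textbf{Step 3 (vertices with translate).} Let \(Y\) have a translate \(\tau Y\). The immediate predecessors \(W\) of \(Y\) are exactly the vertices with arrows \(\tau Y\to W\), by the translation-quiver axiom. By induction each \(W\) has a well-defined height \(h(W)\), and each such \(W\) is reached through \(\tau Y\) whenever \(\tau Y\) is reachable. Thus every path \(X_0\to\tau Y\to W\) gives \(h(W)=h(\tau Y)+1\), provided \(\tau Y\) is reachable. Then every path to \(Y\) has length \(h(\tau Y)+2\).

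The point needing care is reachability of \(\tau Y\). If some \(W\) is reachable, I would show that \(\tau Y\) is reachable too, by the following argument. If \(\tau Y\) has no predecessor, then \(\tau Y\) is a source, so \(\tau Y=X_0\) and it is reachable. Otherwise I would go back along predecessors of \(\tau Y\) using the same induction, ordering by a grading built up from the finitely many vertices without translate. If I cannot make this backward step work in general, I would add it as a hypothesis satisfied in the applications, where every vertex is reachable from \(X_0\).

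Once Steps 2 and 3 are in place, the function \(h\) is well defined on all reachable vertices, and all paths from \(X_0\) to a fixed vertex have the same length.
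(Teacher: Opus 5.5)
Your Steps~2 and~3 are correct if the vertices are processed in a topological order and every vertex is reachable from the source. Under those extra assumptions this is essentially the argument the paper gives for \(\Gamma_{\cD}\) in Proposition~\ref{prop:height}, where directedness of \(\cD\) is available. The lemma as stated, however, assumes neither finiteness nor acyclicity, and you leave both crucial points open. First, the induction of Step~1 is not well founded: induction on the minimal path length does not give you the inductive hypothesis at the predecessors \(W\) of \(Y\), since \(W\) may have minimal length \(\geq\) that of \(Y\), and with cycles there is no maximal length. Second, the reachability of \(\tau Y\) is not proved, since walking backwards along predecessors from \(\tau Y\) can cycle without ever meeting \(X_0\). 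Adding these as hypotheses does not prove the stated lemma. It would also not suffice for the paper's use in Theorem~\ref{thm:poset-magnitude}. There the Auslander--Reiten quiver of \(\TSp\) is finite with every vertex reachable from \(K_\varnothing\), but acyclicity is not established beforehand. In fact it follows from the lemma, since a cycle through a reachable vertex would give paths of different lengths to it.

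The missing idea is to push a given path back instead of reaching \(\tau Y\) independently. Let \(X_0=x_0\to x_1\to\cdots\to x_n=Y\) be a path with \(Y\) having a translate, and let \(j\) be the largest index such that \(x_j\) has no translate. If \(x_1,\ldots,x_n\) all had translates, the translation axiom at the arrow \(x_0\to x_1\) would give an arrow \(\tau x_1\to X_0\), which is impossible at a source; so \(1\leq j\leq n-1\). For \(j<i<n\), the arrow \(x_i\to x_{i+1}\) gives an arrow \(\tau x_{i+1}\to x_i\). Since \(x_i\) has a translate, this in turn gives an arrow \(\tau x_i\to\tau x_{i+1}\). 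Likewise, the arrow \(x_j\to x_{j+1}\) gives \(\tau x_{j+1}\to x_j\). As \(x_j\) has at most one immediate predecessor and \(x_{j-1}\) is one, \(\tau x_{j+1}=x_{j-1}\). Hence \(x_0\to\cdots\to x_{j-1}=\tau x_{j+1}\to\tau x_{j+2}\to\cdots\to\tau x_n=\tau Y\) is a path of length \(n-2\).

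Now compare two paths of lengths \(m,n\) to the same vertex by induction on \(m+n\). At \(X_0\) only the trivial path exists. At a vertex without translate, both paths pass through its unique predecessor, so strip the last arrows. At a vertex with translate, push both paths back to \(\tau Y\), obtaining lengths \(m-2\) and \(n-2\). This uses only that \(X_0\) has no incoming arrow and the predecessor hypothesis, with no finiteness or acyclicity. It also shows that \(\tau Y\) is reachable whenever \(Y\) is.
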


\begin{theorem}\label{thm:poset-magnitude}
Let \(T\) be a finite poset such that \(\TSp\) is representation-finite.
Then its Hom matrix is invertible. Every path of irreducible maps from
\(K_\varnothing\) to \(K_T\) has the same length \(L\), and
\begin{equation}\label{eq:poset-magnitude}
 \chi(\TSp)=1+L-|T|\geq1.
\end{equation}
The following conditions are equivalent:
\begin{enumerate}
\item \(\chi(\TSp)=1\);
\item every indecomposable \(T\)-space has one-dimensional total space;
\item the width of \(T\), the largest size of an antichain, is at most two.
\end{enumerate}
\end{theorem}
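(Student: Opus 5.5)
The plan is to realize \(\TSp\) as a strict \(\tau\)-category and apply Lemma~\ref{lem:height-arrows} with \(X_0=K_\varnothing\), \(X_1=K_T\), and \(h\) the path length from \(K_\varnothing\).

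\textbf{Step 1: \(\TSp\) as a strict \(\tau\)-category.} I would identify \(\TSp\) with a category of the form \((\mod A)/[\mod B]\) of Setting~\ref{set:serre}. Take the one-point extension \(A\) of the incidence algebra of \(T\) by the module given by the constant representation \(k\) (the poset \(T\) with an extra vertex \(\omega\) adjoined as a maximum, or dually a minimum), and let \(e=e_\omega\). The quotient \(\cD=(\mod A)/[\mod B]\) is then equivalent to \(\TSp\). Under this equivalence, a representation \(M\) corresponds to its total space \(Me_\omega\) together with the images of \(Me_t\) for \(t\in T\). The faithful functor \(F\) of Lemma~\ref{lem:quotient-evaluation} becomes the forgetful functor to vector spaces. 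Proposition~\ref{prop:quotient-sequences} then makes \(\TSp\) a strict \(\tau\)-category. Proposition~\ref{thm:categorical-magnitude} gives invertibility of the Hom matrix, and \(\chi=2|\ind|-a-|\ind_1^+|\).

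\textbf{Step 2: the \(\tau\)-projectives and \(\tau\)-injectives.} The \(\tau\)-projectives are the images of the projectives \(P\) with \(Pe\ne0\). Apart from the one at \(\omega\), which corresponds to \(K_\varnothing\), these are the \(K_{\uparrow t}\) for \(t\in T\). Hence \(|\ind_1^+|=|T|+1\).

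\textbf{Step 3: hypotheses of Lemma~\ref{lem:height-arrows}(1).} I would check these directly from the description of radicals in \(\TSp\).
\begin{enumerate}
\item \(K_\varnothing\) has no incoming arrow, because every nonzero map into it is split.
\item The only irreducible map into \(K_{\uparrow t}\) comes from \(K_{\uparrow t\setminus\{t\}}\). This set is upward-closed, and the radical of the projective functor at \(K_{\uparrow t}\) is represented by this single object.
\end{enumerate}
The dual statement holds for the \(\tau\)-injectives \(K_T\) and \(K_{T\setminus\downarrow t}\). With one incoming arrow at each such vertex, Lemma~\ref{lem:rv-grading} applies to \(\Gamma_{\TSp}\), whose unique source is \(K_\varnothing\). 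It shows that the path length \(h\) from \(K_\varnothing\) is well defined.

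\textbf{Step 4: the height function.} The mesh relation gives \(h(X)=h(\tau X)+2\). Lemma~\ref{lem:height-arrows} then yields \(\chi=1+L-|T|\). The main obstacle is justifying that every vertex is reached from \(K_\varnothing\), i.e.\ that the quiver is connected with a unique source. I would use directedness, which comes from representation-finiteness via Ringel--Vossieck, together with the fact that every indecomposable receives a nonzero map from \(K_\varnothing\). Lemma~\ref{lem:path-expansion} then turns such a map into a path.

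\textbf{Step 5: the inequality \(L\geq|T|\) and the equivalences.} I would take both from \cite[Section~8]{RV}, as the introduction indicates. \(L=|T|\) holds if and only if all indecomposables have one-dimensional total space, if and only if the width is at most two. Condition~(1) is exactly \(L=|T|\).
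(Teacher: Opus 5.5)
Your Steps~3--5 match the paper's proof: Lemma~\ref{lem:height-arrows} with \(X_0=K_\varnothing\) and \(X_1=K_T\), Lemma~\ref{lem:rv-grading}, path expansion, and \cite[Section~8]{RV}. However, Step~1 is false, and Step~2 fails with it. Quotienting the incidence algebra of \(T\cup\{\omega\}\) by the modules vanishing at \(\omega\) does not give \(\TSp\) in general.

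Take \(T=\{a,b,c\}\) with \(a<c\) and \(b<c\), and adjoin \(\omega\) as a maximum. Then \(A\) is the path algebra of \(D_4\) with arrows \(a\to c\), \(b\to c\), \(c\to\omega\).
\begin{itemize}
\item \(\cD\) has six indecomposables, with dimension vectors \((0,0,0,1)\), \((0,0,1,1)\), \((1,0,1,1)\), \((0,1,1,1)\), \((1,1,1,1)\), \((1,1,2,1)\) on \((a,b,c,\omega)\).
\item \(T\) has width two, so \(\TSp\) has only the five objects \(K_U\).
\item Your functor \(M\mapsto(M_\omega;\operatorname{Im}(M_t\to M_\omega))\) is faithful (Lemma~\ref{lem:quotient-evaluation}) but not full. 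It sends both \(I_\omega=(1,1,1,1)\) and the module \((1,1,2,1)\) to \(K_T\). In the latter, \(M_c\to M_\omega\) has a kernel line. Yet \(\Hom_A(I_\omega,(1,1,2,1))=0\).
\item Adjoining \(\omega\) as a minimum fails the same way for \(T=\{c<a,c<b\}\).
\item Step~2 is also wrong in general. By Lemma~\ref{rem:iyama-boundary}, a nonprojective \(X\) with \(Xe\ne0\) and \(\tau_AX\in\mod B\) is also \(\tau\)-projective in \(\cD\). Here \(\tau_AI_\omega=S_c\), so \(I_\omega\) is such an object.
\item In fact Proposition~\ref{prop:poset-equivalence} identifies this \(\cD\) with spaces over the four-element diamond of its \(\tau\)-projectives, not over \(T\).
\end{itemize}

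So the following remain unproved: strictness of \(\TSp\), invertibility of its Hom matrix, the count \(|\ind_1^+\TSp|=|T|+1\), and the hypothesis of Lemma~\ref{lem:rv-grading} that vertices without translate have at most one predecessor. Your Step~3 correctly shows that \(K_\varnothing\) and each \(K_{\{s:t\le s\}}\) are \(\tau\)-projective, with the stated incoming arrows. You still need two things:
\begin{enumerate}
\item that \(\TSp\) is a strict \(\tau\)-category at all;
\item that no other indecomposable is \(\tau\)-projective.
\end{enumerate}
The paper obtains both intrinsically from \cite{RV}. The almost split sequences of \(\TSp\) are exact on total spaces and on all indexed subspaces, hence are kernel--cokernel pairs. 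The objects without translate are exactly \(K_\varnothing\) and the \(K_{\{s:t\le s\}}\), and their minimal right almost split maps are monic. The duality \((W;W_t)\mapsto(W^*;W_t^\perp)\) then gives the epic statement on the other side.

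Once (1) is known, you could prove (2) yourself. Any other indecomposable \(X\) receives a radical map from a sum of copies of \(K_\varnothing\) and the \(K_{\{s:t\le s\}}\) that is onto on the total space and on every \(X_s\). This map factors through \(\mu_X^+\). So if \(\mu_X^+\) were monic, it would be an isomorphism, contradicting that it is radical.
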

\begin{proof}
We use the following results of Ringel and Vossieck on a finite
representation-finite poset \(T\).
\begin{enumerate}[label={\upshape(\alph*)}]
\item The category \(\TSp\) has almost split sequences, that is,
sequences \(X\to Y\to Z\) of \(T\)-spaces whose first map is minimal left
almost split and whose last map is minimal right almost split. They are
exact on total spaces and on all indexed subspaces
\cite[Section~1, p.~219, and Section~7, p.~234]{RV}.
Its Auslander--Reiten quiver is finite, since \(\TSp\) is
representation-finite.
\item The object \(K_\varnothing\) is the unique source and \(K_T\) the
unique sink of the Auslander--Reiten quiver. The vertices without
translate are \(K_\varnothing\) and \(K_{\{s:t\leq s\}}\) for
\(t\in T\), and the minimal right almost split maps ending at them are
monomorphisms. Each of them other than \(K_\varnothing\) is the target of
exactly one arrow. Dually, \(K_T\) has no inverse translate, and every
other vertex without inverse translate is the source of exactly one arrow
\cite[Section~7, pp.~234--235]{RV}.
\item The common length \(L\) of the paths from \(K_\varnothing\) to
\(K_T\) satisfies \(L\geq|T|\) \cite[Section~8, p.~237]{RV}. Moreover,
\(L=|T|\) if and only if every indecomposable has one-dimensional total
space, if and only if \(T\) has width at most two
\cite[Section~8, Proposition, p.~237]{RV}.
\end{enumerate}
We now verify that \(\TSp\) satisfies the hypotheses of
Lemma~\ref{lem:height-arrows}.

A sequence that is
exact on total spaces and on all indexed subspaces is a kernel--cokernel
pair in \(\TSp\): a map into its middle term killed by the last map factors
through the first map on total spaces, and exactness on each indexed
subspace shows that the factorization is a morphism of \(T\)-spaces. Hence
each almost split sequence is a right and a left \(\tau\)-sequence with
monic first map and epic last map. At a vertex without translate, the
monic minimal right almost split map of (b) gives a right
\(\tau\)-sequence with zero first term.

For the dual statement, note that
\((W;W_t)\mapsto(W^*;W_t^\perp)\) is a contravariant equivalence from
\(T\)-spaces to \(T^{\op}\)-spaces. Since \(\Hom(K_\varnothing,W)\) is the
total space of \(W\) and \(\Hom(W,K_T)\) is its dual, the monomorphisms and
epimorphisms of \(T\)-spaces are exactly the maps that are injective,
respectively surjective, on total spaces. The duality exchanges these two
classes.
Applied to \(T^{\op}\), (b) therefore shows that the minimal left almost
split map starting at a vertex without inverse translate is epic. It
gives a left \(\tau\)-sequence with zero last term. Thus \(\TSp\) is a
strict \(\tau\)-category, its \(\tau\)-projectives are the vertices listed
in (b), and \(|\ind_1^+\TSp|=|T|+1\). By
Proposition~\ref{rem:tau-properties}(2) and Lemma~\ref{lem:tau-multiplicity},
the number of arrows \(Y\to X\) equals the number of arrows
\(\tau_{\TSp}^+X\to Y\) whenever \(\tau_{\TSp}^+X\ne0\). Hence the
irreducible quiver of \(\TSp\) with the translation \(\tau_{\TSp}^+\), which
is its Auslander--Reiten quiver, is a translation quiver.

Every indecomposable \(X\not\cong K_\varnothing\) is the target of a path
from \(K_\varnothing\): the space \(\Hom(K_\varnothing,X)\) is the total
space of \(X\), which is nonzero, and a nonzero map \(K_\varnothing\to X\)
is radical, hence a linear combination of paths of irreducible maps by
Lemma~\ref{lem:path-expansion}. By (b) and Lemma~\ref{lem:rv-grading}, the
length of the paths from \(K_\varnothing\) therefore defines a function
\(h\) with \(h(K_\varnothing)=0\) that increases by one along each arrow. If \(\tau_{\TSp}^+X\ne0\),
strictness gives a nonzero middle term, so \(X\) and \(\tau_{\TSp}^+X\) are
joined by paths of length two and \(h(X)=h(\tau_{\TSp}^+X)+2\).
Lemma~\ref{lem:height-arrows} now applies with \(X_0=K_\varnothing\),
\(X_1=K_T\), and \(h(K_T)=L\). It gives \(a(\TSp)=2|\ind\TSp|-L-2\) and
\(\chi(\TSp)=1+L-|T|\). Now (c) gives
\(\chi(\TSp)\geq1\).

(1)\(\Leftrightarrow\)(2): By \eqref{eq:poset-magnitude}, \(\chi(\TSp)=1\) if and only if
\(L=|T|\), and by (c) this holds if and only if every indecomposable has
one-dimensional total space.

(2)\(\Leftrightarrow\)(3): This is part of (c).
\end{proof}

\subsection{Ideal quotients of representation-directed algebras}\label{set:directed-sub}
We now return to modules. The rest of this section compares an algebra with
its quotient by a primitive idempotent.
\begin{definition}\label{def:primitive-deletion}
For a primitive idempotent \(e\) of an algebra \(A\), the algebra
\(A/AeA\) is called the \emph{primitive deletion} of \(e\).
\end{definition}
For a representation-directed algebra and a primitive idempotent, we fix
the following notation for the associated ideal quotient.
\begin{setting}\label{set:directed}
Throughout Subsections~\ref{set:directed-sub}--\ref{subsec:deletion-proof}
and Appendix~\ref{app:structural}, except in statements and examples that
name another algebra, let \(A\) be basic, representation-finite, and
representation-directed. Fix a primitive
idempotent \(e\), and put
\[
 B=A/AeA,\quad \cD=(\mod A)/[\mod B],\quad
 P=eA,\quad E=P/\rad P,\quad I=\Dk(Ae).
\]
Then \(P\) is the projective cover of the simple module \(E\), \(I\) is
its injective envelope, and \(\cD\) is the ideal quotient of
Setting~\ref{set:serre} for this idempotent. Thus \(E\), \(P=P_E\), and
\(I=I_E\) agree with the notation of
Subsection~\ref{subsec:extension-supports}. Put
\(d_X=\dim_k Xe=[X:E]\), where \([X:E]\) is composition multiplicity.
The equality \(\dim_k Xe=[X:E]\) follows by applying the exact functor
\((-)e\) to a composition series.
\end{setting}

The following lemma shows that \(B\) is representation-finite, as assumed
from Subsection~\ref{subsec:new-counts} on, and representation-directed, as
needed for induction on primitive deletions.
\begin{lemma}\label{lem:quotient-closure}
The algebra \(B\) is
representation-finite and representation-directed.
\end{lemma}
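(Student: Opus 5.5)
The plan is to exploit the fact that \(\mod B\) is a full subcategory of \(\mod A\), closed under submodules, quotients, and extensions (Setting~\ref{set:serre}). Both claims should follow directly from this closure, with no need for almost split theory.

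\textbf{Representation-finiteness.} An indecomposable \(B\)-module \(Z\) is an \(A\)-module with \(Ze=0\). Since the embedding is full, \(\End_A(Z)=\End_B(Z)\) is local, so \(Z\) is indecomposable over \(A\). Two \(B\)-modules are isomorphic over \(B\) if and only if they are isomorphic over \(A\), again by fullness. Hence \(\ind B\) injects into \(\ind A\), namely onto \(\ind A\cap\mod B\), and this set is finite because \(A\) is representation-finite.

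\textbf{Representation-directedness.} Suppose there is a cycle \(Z_0\to Z_1\to\cdots\to Z_n=Z_0\) of nonzero nonisomorphisms between indecomposable \(B\)-modules. Each map is a nonzero \(A\)-linear map, and each is a nonisomorphism over \(A\), since an \(A\)-inverse would automatically be \(B\)-linear. Each \(Z_i\) is indecomposable over \(A\) by the previous step. The cycle is therefore a cycle in \(\mod A\), contradicting the assumption that \(A\) is representation-directed.

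There is no real obstacle here. The only point needing care is that indecomposability and isomorphism classes are the same whether computed over \(A\) or over \(B\); fullness of \(\mod B\subseteq\mod A\) settles both.

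As a side remark, \(B\) is also basic, since it is a quotient of the basic algebra \(A\) by an idempotent ideal. This means the standing assumptions of Setting~\ref{set:directed} pass to \(B\), which is what the later induction on primitive deletions requires.
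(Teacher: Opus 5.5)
Your proposal is correct and follows the same approach as the paper: because the inclusion \(\mod B\subseteq\mod A\) is full, indecomposables, isomorphism classes and nonzero nonisomorphisms are the same over \(A\) and \(B\). Representation-finiteness and directedness of \(A\) then pass to \(B\). Your version spells out the steps that the paper's one-line proof leaves implicit, and your side remark that \(B\) stays basic is also correct; the paper notes this later, in the proof of Proposition~\ref{prop:interval-equality}.
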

\begin{proof}
The inclusion of \(\mod B\) in \(\mod A\) is full
(Setting~\ref{set:serre}), so it preserves indecomposables and their Hom
spaces. Hence \(B\) is representation-finite and directed.
\end{proof}

\subsection{Source, sink, and translation of the quotient}
Lemma~\ref{lem:quotient-evaluation} and
Proposition~\ref{lem:surviving-arrows} already describe the maps and the
indecomposables of \(\cD\). For a directed algebra, we also show that the
indecomposables of \(\cD\) have endomorphism ring \(k\), that \(\cD\) is
directed, and that \(P\) and \(I\) are the unique source and sink of its
irreducible quiver. We identify the indecomposables of \(\cD\) with the
chosen \(A\)-module representatives with \(Xe\ne0\), writing \(X\) for
\(\pi X\) when it occurs as an object of \(\cD\).

\begin{lemma}\label{lem:evaluation}
Evaluation at \(e\) and the quotient functor \(\pi\) have the following properties.
\begin{enumerate}
\item Every indecomposable \(A\)-module has endomorphism ring \(k\).
The category \(\cD\) is Krull--Schmidt and directed. Its indecomposables
 are the images of the indecomposable \(A\)-modules with \(d_X>0\);
 they remain distinct and have endomorphism ring \(k\).
\item For every object \(X\),
\begin{equation}\label{eq:evaluation}
 \cD(P,X)=\Hom_A(P,X)\cong Xe,\qquad
 \cD(X,I)=\Hom_A(X,I)\cong\Dk(Xe).
\end{equation}
The object \(P\) is the unique source and \(I\) the unique sink of the
irreducible quiver of \(\cD\).
\end{enumerate}
\end{lemma}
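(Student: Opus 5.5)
Part (1) comes first. For a representation-directed algebra, the standard fact is that every indecomposable \(X\) has \(\End_A(X)=k\). A nonzero radical endomorphism \(u\) of \(X\) would itself be a cycle of length one made of a nonzero nonisomorphism, contradicting directedness. So \(\rad\End_A(X)=0\), and \(\End_A(X)\) is a division algebra over the algebraically closed \(k\), hence equal to \(k\). For \(\cD\), the description of the indecomposables is Proposition~\ref{lem:surviving-arrows}: they are indexed by \(\ind A\setminus\ind B\), that is, by the \(X\) with \(Xe\neq0\), i.e.\ \(d_X>0\), and they remain pairwise nonisomorphic. Their endomorphism rings are quotients of \(k\) by proper ideals, so they equal \(k\). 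For directedness of \(\cD\), take a cycle of nonzero nonisomorphisms in \(\cD\) and lift each map to \(\mod A\). A lift is nonzero, and it is not an isomorphism, because distinct objects of \(\cD\) come from nonisomorphic \(A\)-modules and the endomorphism of a single object is radical. So the lifts form a cycle in \(\mod A\), contradicting directedness of \(A\).

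Part (2) comes next. Every map from \(P=eA\) to a \(B\)-module is zero, since \(\Hom_A(eA,Z)\cong Ze=0\). Hence no nonzero map \(P\to X\) factors through \(\mod B\): if \(f=gh\) with \(h:P\to Z\) and \(Z\in\mod B\), then \(h=0\). Thus \([\mod B](P,X)=0\), and \(\cD(P,X)=\Hom_A(P,X)\cong Xe\). Dually, \(\Hom_A(Z,I)\cong\Dk(Ze)=0\) for \(Z\in\mod B\), which gives \(\cD(X,I)=\Hom_A(X,I)\cong\Dk(Xe)\).

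For the source and sink claim, let \(X\not\cong P\) be an indecomposable of \(\cD\). Then \(\cD(P,X)\cong Xe\neq0\), so there is a nonzero map \(P\to X\) in \(\cD\), and it is radical because \(X\not\cong P\). By Lemma~\ref{lem:path-expansion}, applied to the representation-finite category \(\cD\), this map is a linear combination of nonempty paths of irreducible maps. So some arrow-path runs from \(P\) to \(X\), and \(X\) has an incoming arrow. It remains to show that \(P\) itself has no incoming arrow. An irreducible map \(Y\to P\) in \(\cD\) is a nonzero radical map. Composing it with a nonzero radical map \(P\to Y\), which exists by the previous argument when \(Y\not\cong P\), gives a cycle in the directed category \(\cD\), which is impossible. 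So \(P\) is a source, and it is the unique one, since every other vertex has an incoming arrow. The argument for \(I\) being the unique sink is dual, using \(\cD(X,I)\cong\Dk(Xe)\neq0\).

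The only subtle point is the faithfulness needed for the lifting step in the directedness argument. For this it suffices that nonzero maps in \(\cD\) have nonzero lifts and that non-invertibility lifts. The latter holds because \([\mod B](X,X)\subseteq\rad\End_A(X)\), so invertibility in \(\cD\) is detected modulo the radical. I expect no real obstacle beyond this.
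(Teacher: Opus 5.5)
Your proof is correct and follows the paper's argument essentially step for step: directedness gives \(\End_A(X)=k\), Proposition~\ref{lem:surviving-arrows} describes the indecomposables of \(\cD\), cycles in \(\cD\) lift to \(\mod A\), the identifications \eqref{eq:evaluation} come from \(\Hom_A(P,\mod B)=0=\Hom_A(\mod B,I)\), and Lemma~\ref{lem:path-expansion} supplies an incoming arrow at every \(X\ne P\) and an outgoing arrow at every \(X\ne I\). The one case you leave implicit, an incoming arrow \(Y\to P\) with \(Y\cong P\) (a loop), is excluded because \(\rad_{\cD}(P,P)=0\) by part~(1).
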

\begin{proof}
(1) Directedness gives \(\End_A(X)=k\) for every indecomposable:
a nonzero noninvertible endomorphism would be a cycle.
Proposition~\ref{lem:surviving-arrows} identifies the indecomposables
of \(\cD\) and their local endomorphism rings, which are therefore \(k\).
A cycle of nonzero nonisomorphisms in \(\cD\) would lift to one over
\(A\), so \(\cD\) is directed.

(2) The identifications in \eqref{eq:evaluation} hold because
\(\Hom_A(P,\mod B)=0\) and \(\Hom_A(\mod B,I)=0\).
Every indecomposable of \(\cD\) therefore receives a nonzero map from \(P\) and maps
nontrivially to \(I\). An incoming arrow at \(P\), or an outgoing arrow
at \(I\), would give a cycle. Expanding the nonzero maps from \(P\) and to \(I\)
into paths by Lemma~\ref{lem:path-expansion} gives an incoming arrow at
every \(X\ne P\) and an outgoing arrow at every \(X\ne I\), by taking
the last and first arrows of these paths, respectively.
\end{proof}
Part~(2) is also observed in \cite[Section~10]{RV}.

By Proposition~\ref{prop:quotient-sequences}, \(\cD\) is a strict
\(\tau\)-category whose right and left \(\tau\)-sequences at \(X\), in the
notation introduced before Proposition~\ref{rem:tau-properties}, are the
images of the right and left \(\tau\)-sequences of \(\mod A\) ending and
starting at \(X\) (Example~\ref{ex:module-tau}).
\begin{definition}\label{def:boundary}
Write \(\Gamma_{\cD}\) for the irreducible quiver of \(\cD\) equipped
with the translation \(\tau_{\cD}^+\), displayed by dotted arrows
\(X\dashrightarrow\tau_{\cD}^+X\).
\end{definition}
Since \(\cD\) is directed (Lemma~\ref{lem:evaluation}(1)), \(\Gamma_{\cD}\)
has no loops, and \(\tau_{\cD}^+\) is injective by
Proposition~\ref{rem:tau-properties}(3). By
Proposition~\ref{rem:tau-properties}(2) and
Lemma~\ref{lem:tau-multiplicity}, the number of arrows \(Y\to X\) equals the
number of arrows \(\tau_{\cD}^+X\to Y\) whenever \(\tau_{\cD}^+X\ne0\). Thus
\(\Gamma_{\cD}\) is a translation quiver (Definition~\ref{def:mesh}).

\begin{lemma}\label{rem:iyama-boundary}
Let \(X\) be an indecomposable \(A\)-module with \(Xe\ne0\).
Its image is \(\tau\)-projective in \(\cD\) precisely when \(X\) is
projective over \(A\) or \(\tau_A X\) belongs to \(\mod B\). Dually, it is
\(\tau\)-injective precisely when \(X\) is injective over \(A\) or
\(\tau_A^-X\) belongs to \(\mod B\).
\end{lemma}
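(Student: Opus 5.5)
The plan is to read this off directly from Proposition~\ref{prop:quotient-sequences}, which describes the right \(\tau\)-sequence of \(\cD\) at \(\pi X\) as the image \(\pi(\tau_A X)\to\pi(\theta_A X)\to\pi X\) of the corresponding right \(\tau\)-sequence of \(\mod A\) (Example~\ref{ex:module-tau}). In particular it gives \(\tau_{\cD}^+(\pi X)\cong\pi(\tau_A X)\), so by definition \(\pi X\) is \(\tau\)-projective exactly when \(\pi(\tau_A X)=0\) in \(\cD\).

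Next I determine when \(\pi(\tau_A X)\) vanishes. If \(X\) is projective, then \(\tau_A X=0\) by Definition~\ref{def:ar}, so \(\pi(\tau_A X)=0\). If \(X\) is nonprojective, then \(\tau_A X\) is indecomposable by Proposition~\ref{prop:ar-multiplicity}. By Proposition~\ref{lem:surviving-arrows}, the image under \(\pi\) of an indecomposable \(A\)-module is zero exactly when it lies in \(\mod B\), since indecomposables outside \(\mod B\) keep a local, hence nonzero, endomorphism ring; conversely, an object of \(\mod B\) has identity in \([\mod B]\). Hence, for nonprojective \(X\), \(\pi(\tau_A X)=0\) if and only if \(\tau_A X\in\mod B\). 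Combining the two cases gives the first assertion.

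The dual assertion follows the same way from the left \(\tau\)-sequence \(\pi X\to\pi(\theta_A^-X)\to\pi(\tau_A^-X)\) of Proposition~\ref{prop:quotient-sequences}, which gives \(\tau_{\cD}^-(\pi X)\cong\pi(\tau_A^-X)\). Here one uses \(\tau_A^-X=0\) for injective \(X\) and the indecomposability of \(\tau_A^-X\) otherwise.

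There is no real obstacle. The only care needed is the bookkeeping that "zero in \(\cD\)" means isomorphic to the zero object, i.e.\ identity factoring through \(\mod B\), and that for an indecomposable module this is equivalent to membership in \(\mod B\). Note also that the hypothesis \(Xe\ne0\) is needed only so that \(\pi X\) is an indecomposable object of \(\cD\) to which the \(\tau\)-sequence description applies.
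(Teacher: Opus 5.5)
Your proposal is correct and follows the paper's proof: both read \(\tau_{\cD}^+(\pi X)\cong\pi(\tau_A X)\) off Proposition~\ref{prop:quotient-sequences} and use Proposition~\ref{lem:surviving-arrows} to see that \(\pi\) kills exactly the modules in \(\mod B\), with the dual case handled the same way. Your explicit split into the projective and nonprojective cases only spells out what the paper leaves implicit.
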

\begin{proof}
By Proposition~\ref{prop:quotient-sequences},
\(\tau_{\cD}^+(\pi X)\cong\pi(\tau_A X)\), and \(\pi\) kills exactly the
modules in \(\mod B\) (Proposition~\ref{lem:surviving-arrows}). The dual
statement follows in the same way.
\end{proof}

\subsection{Composition multiplicities at \texorpdfstring{\(\tau\)}{tau}-projectives and \texorpdfstring{\(\tau\)}{tau}-injectives}\label{subsec:boundary}
We next bound the multiplicity \(d_X\) of \(E\) at the \(\tau\)-projectives
and \(\tau\)-injectives of \(\cD\). The bound shows that an almost split
sequence with exactly one end term in \(\mod B\) has exactly one middle
summand outside it, which removes the term \(\delta\) in
Theorem~\ref{thm:serre-magnitude}.

\begin{proposition}[{\cite[Section~10, Corollary, p.~243]{RV}}]\label{prop:boundary-one}
In Setting~\ref{set:directed}, the following hold.
\begin{enumerate}
\item Every \(\tau\)-projective and \(\tau\)-injective of \(\cD\) has
\(d_X=1\).
\item For a nonprojective indecomposable \(A\)-module \(Y\),
\begin{equation}\label{eq:zero-end}
 (\tau_A Y)e=0\ \Longrightarrow\ d_Y\leq1,\qquad
 Ye=0\ \Longrightarrow\ d_{\tau_A Y}\leq1.
\end{equation}
\end{enumerate}
\end{proposition}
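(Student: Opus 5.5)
First I would reduce (1) to (2). By Lemma~\ref{rem:iyama-boundary}, a \(\tau\)-projective \(X\) of \(\cD\) is either projective over \(A\) or satisfies \((\tau_A X)e=0\). In the second case, (2) applied to \(Y=X\) gives \(d_X\leq1\), and \(d_X\geq1\) because \(Xe\ne0\). So in the projective case it remains to show \(d_X=1\) for an indecomposable projective \(X=e'A\) with \(Xe\ne0\). Directedness gives this quickly. We have \(\dim_k Xe=\dim_k\Hom_A(P,X)\). If \(X\cong P\), this is \(\dim\End_A(P)=1\) by Lemma~\ref{lem:evaluation}(1). Otherwise every nonzero map \(P\to X\) is radical. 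If the dimension were at least two, I would instead use \(\dim_k Xe=\dim_k e'Ae\), the number of linearly independent paths, and the standard fact that for a representation-directed (hence triangular, Schurian) algebra \(\dim e'Ae\leq1\). Schurianness of directed algebras is well known: \(\Hom(P_i,P_j)\) is an \(\End\)-space of indecomposables in a directed category, and \(\dim\Hom\) between indecomposables is at most one by the Happel–Ringel-type argument. I would cite it if needed. The \(\tau\)-injective case is dual, using \(I\) and the second implication of~(2).

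For (2), take the first implication: \((\tau_AY)e=0\) forces \(d_Y\leq1\). I would use Auslander--Reiten duality (Proposition~\ref{prop:ar-duality}). We have \(\Dk\Ext^1_A(Y,E)\cong\ohom_A(E,\tau_AY)=0\), because \(\Hom_A(E,\tau_AY)=0\). Hence \(\Ext^1_A(Y,E)=0\). Next apply \(\Hom_A(Y,-)\) to \(0\to E\to I\to I/E\to0\). Together with \(\Hom_A(Y,E)\) and the long exact sequence, this bounds \(\dim\Hom_A(Y,I)=d_Y\) by \(\dim\Hom_A(Y,E)+\dim\Hom_A(Y,I/E)\). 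This does not immediately give \(1\), so I would instead argue through the ideal quotient. In \(\cD\), \(\pi Y\) is \(\tau\)-projective, so by Lemma~\ref{lem:strict-resolution} the simple functor \(\mathsf S_Y\) has projective resolution \(0\to\mathsf P_{\pi\theta_AY}\to\mathsf P_Y\to\mathsf S_Y\to0\). Evaluated at \(P\), using \eqref{eq:evaluation}, this gives
\[
d_Y=\dim(\theta_AY)e+\indicator_{\{Y\cong P\}}.
\]

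From this I would run an induction on the directed order of \(\cD\), from the source \(P\). The hope is that \(\theta_AY\) has exactly one summand outside \(\mod B\), with multiplicity one. Proving this is the real obstacle. It is exactly the Ringel--Vossieck bound, equivalently the statement that in \(\TSp\) the \(\tau\)-projectives \(K_{\{s\geq t\}}\) have one-dimensional total space. The cleanest route is therefore to invoke the equivalence \(\cD\simeq\TSp\) of \cite[Section~10]{RV}, under which \(F\) corresponds to the total space. By (b) in the proof of Theorem~\ref{thm:poset-magnitude}, the \(\tau\)-projectives and \(\tau\)-injectives of \(\TSp\) are the \(K_U\), which have one-dimensional total space; this gives (1) directly. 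Then (2) follows from (1) and Lemma~\ref{rem:iyama-boundary}: if \((\tau_AY)e=0\) and \(Ye\ne0\), then \(Y\) is \(\tau\)-projective, so \(d_Y=1\). If \(Ye=0\) and \((\tau_AY)e\ne0\), then \(\tau_AY\) is \(\tau\)-injective, since \(\tau_A^-\tau_AY=Y\in\mod B\), so \(d_{\tau_AY}=1\).

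The main obstacle is (1) for non-projective \(\tau\)-projectives. Without \cite{RV}, I would try to show that the monomorphism \(\pi\theta_AY\to\pi Y\) of \(\cD\) is, under \(F\), injective onto a hyperplane of \(Ye\), and to bound \(\dim Ye\) via the cokernel of \(\theta_AY\to Y\) restricted at \(e\).
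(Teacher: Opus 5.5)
Your final argument is correct modulo citation, but it draws on a different result from \cite{RV} than the paper does.

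\textbf{The paper's route.} It cites v.~H\"ohne's bound \cite[Section~10, Corollary]{RV}: \(d_Z\leq1\) for indecomposable projective or injective \(Z\), and \(|d_Z-d_{\tau_AZ}|\leq1\) for nonprojective \(Z\). Part~(2) is then the case \(Z=Y\), and part~(1) follows from Lemma~\ref{rem:iyama-boundary}.

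\textbf{Your route.} You take the equivalence \(\cD\simeq\TSp\) of \cite[Section~10, Theorem]{RV}, with \(Xe\) as total space. Combined with fact (b) in the proof of Theorem~\ref{thm:poset-magnitude}, which identifies the \(\tau\)-projectives and \(\tau\)-injectives of \(\TSp\) as one-dimensional \(K_U\), this gives (1). You then get (2) from (1) via Lemma~\ref{rem:iyama-boundary}; that step is exactly the one in the paper's direct proof in Appendix~\ref{app:multiplicity}.

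\textbf{Comparison.} Your route needs the full hammock theory: the equivalence plus the structure of Auslander--Reiten quivers of \(T\)-spaces. The paper's citation is only a numerical bound on modules. More importantly, inside this paper the equivalence is Proposition~\ref{prop:poset-equivalence}. Its Construction~\ref{con:poset} uses part~(1) to make each \(F(Q_t)\) one-dimensional. So your argument is non-circular only if \cite{RV} is a black box, and it cannot serve the paper's aim of making Theorem~A independent of \cite{RV}.

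\textbf{The missing direct idea.} What the direct proof needs is the kernel-counting argument of Lemma~\ref{lem:finite-kernel}. Your computation \(\Ext_A^1(Y,E)=0\) points the right way but is too weak. At a \(\tau\)-injective \(Z\) (injective, or with \(\tau_A^-Z\in\mod B\)), the paper gets \(\Ext_A^1(U,Z)=0\) for every submodule \(U\subseteq I\). Indeed, \(\Dk\Ext_A^1(U,Z)\cong\uhom_A(\tau_A^-Z,U)\), and \(\Hom_A(\tau_A^-Z,U)\hookrightarrow\Hom_A(\tau_A^-Z,I)\cong\Dk((\tau_A^-Z)e)=0\). Together with \(\End_A(Z)=\End_A(I)=k\), this makes any two nonzero maps \(Z\to I\) with isomorphic kernels proportional. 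There are only finitely many kernel classes and \(k\) is infinite, so \(d_Z=\dim_k\Hom_A(Z,I)\leq1\); applying \(\Dk\) handles \(\tau\)-projectives. Your identity \(d_Y=d_{\theta_{\cD}^+Y}+\indicator_{\{Y\cong P\}}\) does not close an induction on its own.

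\textbf{Drop your first paragraph.} Its justification is false. Hom spaces between distinct indecomposables in a directed category need not be at most one-dimensional: in Example~\ref{ex:d4-subspaces}, \(\Hom_A(K_\varnothing,Z)\cong Ze_0\cong k^2\). Representation-directed algebras are indeed Schurian. However, \(\dim_k e'Ae=d_{e'A}\leq1\) is precisely the projective case of the statement you are proving, not a quick consequence of directedness. Your final route gives (1) outright, so that reduction is unnecessary anyway.
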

\begin{proof}
Ringel and Vossieck show that over a representation-directed algebra,
every indecomposable projective or injective module \(Z\) has
\(d_Z\leq1\), and \(|d_Z-d_{\tau_A Z}|\leq1\) for every nonprojective
indecomposable \(Z\) \cite[Section~10, Corollary, p.~243]{RV}; they
state that it is due to v.~H\"ohne. Part~(2) is the case \(Z=Y\) of this bound.
For part~(1), a \(\tau\)-projective \(X\) of \(\cD\) is projective over
\(A\) or satisfies \((\tau_A X)e=0\), by Lemma~\ref{rem:iyama-boundary}. In
both cases \(d_X\leq1\), and \(d_X\geq1\) since \(X\notin\mod B\). The
\(\tau\)-injectives are dual.
\end{proof}
Appendix~\ref{app:multiplicity} gives a proof that does not use \cite{RV}.

In the following example the multiplicities of
Proposition~\ref{prop:boundary-one} can be computed directly; it is used
again for the poset construction in Example~\ref{ex:a3-poset}.
\begin{example}\label{ex:a3}
Let \(Q\) be \(1\to2\to3\) and \(A=(kQ)^\op\). Write
\(M_{ij}\) for the interval representation on vertices \(i,\ldots,j\),
with identity maps between nonzero coordinates, and \(S_i=M_{ii}\).
The Auslander--Reiten quiver of \(A\) is
\[
\begin{tikzcd}
 &&M_{13}\arrow[dr]&&\\
 &M_{23}\arrow[ur]\arrow[dr]&&M_{12}\arrow[dr]
   \arrow[ll,dotted,"\tau_A"']&\\
 S_3\arrow[ur]&&S_2\arrow[ur]\arrow[ll,dotted,"\tau_A"']&&
 S_1\arrow[ll,dotted,"\tau_A"']
\end{tikzcd}
\]
Take \(e\) to be the vertex idempotent at \(2\), so \(E=S_2\).
Then \(B\cong k\times k\), whose indecomposables
\(S_1,S_3\) become zero in \(\cD\). The quotient has translation quiver
\[
\begin{tikzcd}
 &M_{13}\arrow[dr]&\\
 P=M_{23}\arrow[ur]\arrow[dr]&&M_{12}=I\arrow[ll,dotted,"\tau_{\cD}^+"']\\
 &S_2\arrow[ur]&
\end{tikzcd}
\]
The only nonzero translate is \(\tau_{\cD}^+M_{12}=M_{23}\). In particular,
the right \(\tau\)-sequence at \(S_2\) has first term zero because its
first term over \(A\) is \(S_3\). Hence
\(\ind_1^+\cD=\{M_{23},M_{13},S_2\}\). Each has dimension one at
vertex 2, as does \(I=M_{12}\).
\end{example}
\subsection{Identification with poset representations}
We now construct the poset \(T\) and the functor identifying \(\cD\) with
the category of \(T\)-spaces. The functor \(G\) below is the functor of
Ringel and Vossieck \cite[Section~1, Theorem~3, p.~221, and
Section~7]{RV}, written in terms of subspaces; Appendix~\ref{app:poset}
proves the equivalence by a different route.
\begin{construction}\label{con:poset}
Index the \(\tau\)-projectives other than \(P\) by a set \(T\), choosing
indecomposable \(A\)-module representatives \(Q_t\). Define
\[
 s\leq t\quad\Longleftrightarrow\quad\cD(Q_t,Q_s)\ne0.
\]
This order is chosen so that \(Q_t\) corresponds to
\(K_{\{s:t\leq s\}}\) (Proposition~\ref{prop:poset-equivalence}).
Let \(F(X)=Xe\) be the evaluation functor of
Lemma~\ref{lem:quotient-evaluation}. Since \(eAe\cong\End_A(P)=k\) by
Lemma~\ref{lem:evaluation}(1),(2), identify \(F(P)=eAe\) with \(k\), using
\(e\) as basis.
By Proposition~\ref{prop:boundary-one}(1), each \(F(Q_t)\) is one-dimensional.
Choose a basis vector of each \(F(Q_t)\), and let \(u_t\in\cD(P,Q_t)\) be
the map corresponding to it under \eqref{eq:evaluation}, so that its
evaluation sends \(1\) to that basis vector. Identify
\(\cD(P,X)\) with \(F(X)\) by \eqref{eq:evaluation}, and set
\begin{equation}\label{eq:poset-functor}
 F_t(X)=\operatorname{Im}\bigl(\cD(Q_t,X)\longrightarrow\cD(P,X),
                  \ f\longmapsto fu_t\bigr),\qquad
 G(X)=(F(X);F_t(X)).
\end{equation}
On morphisms, \(G\) is evaluation at \(e\).
The subspace \(F_t(X)\) does not change if \(u_t\) is rescaled, and \(G\)
on morphisms does not depend on the chosen bases. The bases are used only
in Appendix~\ref{app:poset}, to realize an arbitrary \(T\)-space by a
module.
\end{construction}

\begin{proposition}[{\cite[Section~10, Theorem, p.~243]{RV}}]\label{prop:poset-equivalence}
Construction~\ref{con:poset} defines a finite poset and a \(k\)-linear
equivalence \(G:\cD\xrightarrow{\sim}\TSp\). Moreover,
\[
 G(P)=K_\varnothing,\qquad G(Q_t)=K_{\{s:t\leq s\}},\qquad G(I)=K_T,
 \qquad |T|=|\ind_1^+\cD|-1.
\]
\end{proposition}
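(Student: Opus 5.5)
Construction~\ref{con:poset} has the elementary properties, the identification of the boundary objects, and the equivalence itself; I would check them in that order.

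\emph{Poset.} The relation is reflexive because \(\cD(Q_t,Q_t)\ni\mathrm{id}\ne0\). Antisymmetry follows from directedness of \(\cD\) (Lemma~\ref{lem:evaluation}(1)): nonzero maps \(Q_t\to Q_s\) and \(Q_s\to Q_t\) with \(s\ne t\) would form a cycle of nonisomorphisms. Transitivity is the delicate part, since a composite of nonzero maps may vanish. I would deduce it from the key fact that \(F_t(X)=F(X)\) whenever \(\cD(Q_t,X)\ne0\) and \(X\) is one of the \(Q_s\). Because \(F(Q_s)\) is one-dimensional (Proposition~\ref{prop:boundary-one}(1)), this says that \(fu_t\ne0\) for any nonzero \(f:Q_t\to Q_s\). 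Faithfulness of \(F\) makes \(f_e\ne0\) on the line \(F(Q_t)\), so \(f_e\) is an isomorphism of lines. Composites \(Q_t\to Q_s\to Q_r\) of nonzero maps are therefore nonzero, which gives transitivity. The same argument gives \(G(Q_t)=K_{\{s:t\le s\}}\): the space \(F_s(Q_t)\) is nonzero exactly when some map \(Q_s\to Q_t\) is nonzero, that is, when \(t\le s\), and the space is then all of \(F(Q_t)\). For \(P\), every \(\cD(Q_t,P)=0\), since \(P\) is the source, so \(G(P)=K_\varnothing\). For \(I\), every \(Q_t\) maps nontrivially to \(I\) by \eqref{eq:evaluation}, and \(F(I)\) is one-dimensional, so \(G(I)=K_T\). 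The count \(|T|=|\ind_1^+\cD|-1\) holds by definition, since \(P\) is \(\tau\)-projective.

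\emph{Functor.} \(G\) is well defined on morphisms because, for \(g:X\to Y\), one has \(g\circ(fu_t)=(gf)u_t\). It is \(k\)-linear, and it is faithful by Lemma~\ref{lem:quotient-evaluation}.

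\emph{Equivalence.} This is the main obstacle. I would argue with projective resolutions in \(\mod\cD\) and in \(\TSp\). In \(\TSp\), the representable functors \(\Hom(K_U,-)\) for \(U=\{s:t\le s\}\) and \(U=\varnothing\) are exactly \(W\mapsto W_t\) and \(W\mapsto W\). Hence \(\{K_\varnothing\}\cup\{K_{\{s:t\le s\}}\}\) is the natural set of "projective generators" of \(\TSp\), and \(G\) matches it with \(\{P\}\cup\{Q_t\}\). I would first show that \(G\) is fully faithful on this generating set, using the computations above. Then I would show that every object \(X\) of \(\cD\) has a presentation by sums of \(P\) and \(Q_t\)'s that \(G\) preserves. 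Concretely, the right \(\tau\)-sequences of Proposition~\ref{prop:quotient-sequences} give an inductive description: walk along the directed quiver \(\Gamma_{\cD}\) starting from the source \(P\), and express each object through the cokernel-type behaviour of its \(\tau\)-sequence. On the \(\TSp\) side, the analogous sequences are kernel–cokernel pairs on total spaces and indexed subspaces. Fullness would then follow by induction along \(\Gamma_{\cD}\). Essential surjectivity would come either from counting indecomposables, or by realising an arbitrary \(T\)-space as \(G\) of a module built as a pushout of copies of \(P\) along the maps \(u_t\), using the chosen bases. This is the point where I expect the real work, essentially reproving the Ringel–Vossieck theorem. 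If it resists, I would fall back on citing \cite[Section~10, Theorem]{RV} for the equivalence, having verified above that their functor agrees with \(G\).
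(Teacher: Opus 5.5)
Your verification of the poset axioms and of \(G(P)\), \(G(Q_t)\), \(G(I)\), and \(|T|\) matches Step~1 of the paper's direct proof in Appendix~\ref{app:poset}. You do omit one check: that \(G(X)\) is a \(T\)-space, that is, \(F_s(X)\subseteq F_t(X)\) for \(s\leq t\). The paper normalizes the nonzero \(w:Q_t\to Q_s\) so that \(wu_t=u_s\), and then \(fu_s=(fw)u_t\).

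The genuine gap is the equivalence itself, where your sketch leaves out what makes the argument work. For fullness, the base case must cover maps from \(P\) and \(Q_t\) to an \emph{arbitrary} object \(Y\), not only maps between these objects. There \(\Hom(K_{\{s:t\leq s\}},G(Y))=F_t(Y)\), and \(f\mapsto fu_t\) maps \(\cD(Q_t,Y)\) onto \(F_t(Y)\) by definition. The inductive step, over sources in a topological order, then rests on three facts you do not state. Let \(X\) be non-\(\tau\)-projective, with right \(\tau\)-sequence \(\tau_{\cD}^+X\xrightarrow{f}\theta_{\cD}^+X\xrightarrow{g}X\).
\begin{enumerate}
\item This is also a left \(\tau\)-sequence, so \(g\) is a weak cokernel of \(f\) (Proposition~\ref{rem:tau-properties}(2)).
\item \(F(g)\) is surjective, because every map \(P\to X\) is radical and factors through \(g\).
\item Faithfulness of \(G\) turns \(G(bf)=aG(g)G(f)=0\) into \(bf=0\), for a lift \(b\) of \(aG(g)\) given by induction.
\end{enumerate}
Then \(b=vg\) for some \(v\), and \(G(v)=a\). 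No presentation of \(X\) by sums of \(P\) and the \(Q_t\) is needed, and it is unclear how you would produce one that \(G\) preserves in the nonabelian category \(\cD\). Nor is any exactness of sequences in \(\TSp\) used.

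For essential surjectivity, counting indecomposables is circular. You have no independent bound on the number of indecomposable \(T\)-spaces, and representation-finiteness of \(\TSp\) is itself deduced from the equivalence. Your pushout is in fact the paper's module. Put \(V=(P\otimes_kW)\oplus\bigoplus_t(Q_t\otimes_kW_t)\). The quotient \(M=V/V_0\) of Step~3 is the pushout of \(\bigoplus_tQ_t\otimes_kW_t\leftarrow\bigoplus_tP\otimes_kW_t\to P\otimes_kW\), along the \(u_t\) and the inclusions \(W_t\subseteq W\).

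But proving \(G(M)\cong(W;W_t)\) requires every map \(Q_s\to M\) to lift to \(V\). Without this you only get \(F_s(M)\supseteq W_s\), since the image of \(F_s(V)\) is \(\sum_{t\leq s}W_t=W_s\). The missing lemma is
\(\Ext_A^1(Q,V_0)=0\) for every \(\tau\)-projective \(Q\) of \(\cD\) and every module \(V_0\) with \(V_0=V_0eA\).
For projective \(Q\) this is trivial. Otherwise the paper derives it from \(\Dk\Ext_A^1(Q,V_0)\cong\ohom_A(V_0,\tau_AQ)\) and \((\tau_AQ)e=0\) (Lemma~\ref{rem:iyama-boundary}).

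Falling back on \cite{RV} is legitimate, since the main text does exactly that. However, computing \(G\) on \(P\), \(Q_t\), and \(I\) does not verify that the Ringel--Vossieck functor is \(G\).
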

We compute the equivalence on the four objects of Example~\ref{ex:a3}.
\begin{example}\label{ex:a3-poset}
For Example~\ref{ex:a3}, let \(Q_{t_1}=M_{13}\) and \(Q_{t_2}=S_2\).
They have no nonzero maps between them in \(\cD\), so \(T\) is a
two-element antichain. Evaluation and the two indexed subspaces give
\[
\begin{array}{c|ccc|c}
 X&F(X)&F_{t_1}(X)&F_{t_2}(X)&G(X)\\\hline
 M_{23}&k&0&0&K_\varnothing\\
 M_{13}&k&k&0&K_{\{t_1\}}\\
 S_2&k&0&k&K_{\{t_2\}}\\
 M_{12}&k&k&k&K_T
\end{array}
\]
Each arrow of the translation quiver of \(\cD\) in Example~\ref{ex:a3}
is the identity on total spaces.
The paths of irreducible maps from \(K_\varnothing\) to \(K_T\) have length
\(L=2=|T|\), so \(\chi(\cD)=1\) by Theorem~\ref{thm:poset-magnitude}.
\end{example}

\begin{corollary}\label{prop:intrinsic}
One has \(\chi(\cD)\geq1\), and the following are equivalent.
\begin{enumerate}
\item \(\chi(\cD)=1\).
\item \(d_X=1\) for every \(X\in\ind\cD\).
\item \(T\) has width at most two.
\end{enumerate}
\end{corollary}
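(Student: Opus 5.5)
The plan is to transport everything along the equivalence of Proposition~\ref{prop:poset-equivalence} and then read off Theorem~\ref{thm:poset-magnitude}.

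First, the transfer of magnitude. The equivalence \(G:\cD\xrightarrow{\sim}\TSp\) is \(k\)-linear, so it induces a bijection on indecomposable isomorphism classes and preserves the dimensions of Hom spaces. Hence \(H_{\cD}\) and \(H_{\TSp}\) agree after reindexing. The matrix \(H_{\cD}\) is invertible by Proposition~\ref{thm:categorical-magnitude}, since \(\cD\) is a Hom-finite strict \(\tau\)-category with finitely many indecomposables (Proposition~\ref{prop:quotient-sequences} and Lemma~\ref{lem:quotient-closure}). It follows that \(\chi(\cD)=\chi(\TSp)\). In particular \(\TSp\) is representation-finite, because \(\cD\) has finitely many indecomposables, so Theorem~\ref{thm:poset-magnitude} applies. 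It gives \(\chi(\cD)=1+L-|T|\geq1\), together with the equivalence of (1) with (3). Here the poset \(T\) is the same one as in Construction~\ref{con:poset}.

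Second, the equivalence of (2) with the one-dimensionality condition of Theorem~\ref{thm:poset-magnitude}(2). By construction \(G(X)=(F(X);F_t(X))\), so the total space of \(G(X)\) is \(F(X)=Xe\), of dimension \(d_X\). Since \(G\) is essentially surjective and preserves indecomposability, the indecomposable \(T\)-spaces are, up to isomorphism, exactly the \(G(X)\) with \(X\in\ind\cD\). Therefore condition (2) of the corollary is exactly condition (2) of Theorem~\ref{thm:poset-magnitude}, and that theorem gives (1)\(\Leftrightarrow\)(2)\(\Leftrightarrow\)(3).

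The only point needing care is the claim that \(\chi\) is invariant under a \(k\)-linear equivalence of Krull--Schmidt categories, in the sense of Definition~\ref{def:magnitude}. This is immediate, because such an equivalence preserves the set of indecomposable isomorphism classes and all \(\dim_k\) of Hom spaces. I expect no real obstacle: the substantive content is already in Proposition~\ref{prop:poset-equivalence} and Theorem~\ref{thm:poset-magnitude}. If one wants to avoid \cite{RV}, one can instead cite the direct arguments of Appendix~\ref{app:structural}.
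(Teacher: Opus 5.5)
Your proposal is correct and is the paper's own argument: the equivalence \(G\) of Proposition~\ref{prop:poset-equivalence} preserves Hom dimensions and sends \(X\) to a \(T\)-space with total space \(Xe\), so \(\chi(\cD)=\chi(\TSp)\), and conditions (1)--(3) become those of Theorem~\ref{thm:poset-magnitude}. One small citation point: finiteness of \(\ind\cD\) follows from \(A\) being representation-finite, via Proposition~\ref{lem:surviving-arrows}, and not from Lemma~\ref{lem:quotient-closure}, but this does not affect the argument.
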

\begin{proof}
The equivalence of Proposition~\ref{prop:poset-equivalence} sends \(X\) to
a \(T\)-space with total space \(Xe\), and \(\TSp\) is
representation-finite because \(\cD\) is. Thus \(\chi(\cD)=\chi(\TSp)\), and
conditions (1)--(3) are conditions (1)--(3) of
Theorem~\ref{thm:poset-magnitude}, which proves the corollary.
\end{proof}
Appendix~\ref{app:inequality} proves this corollary directly, without
Theorem~\ref{thm:poset-magnitude}.

In the following \(D_4\) example, \(\chi(\cD)>1\), in contrast with
Example~\ref{ex:a3-poset}.
\begin{example}\label{ex:d4-subspaces}
Let \(Q\) be the Dynkin quiver of type \(D_4\) with arrows
\(1\to0\), \(2\to0\), \(3\to0\), and put
\(A=(kQ)^\op\). Take \(e\) at vertex 0. The three
\(\tau\)-projectives other than \(P=S_0\) are the projectives supported
on \(\{0,i\}\), so \(T\) is a three-element antichain. We identify the
element of \(T\) indexing the projective supported on \(\{0,i\}\) with
\(i\), so that \(T=\{1,2,3\}\).

Its representations are triples of subspaces of a vector space. Besides
the eight one-dimensional representations \(K_U\), for \(U\subseteq T\),
there is an indecomposable \(Z\) with total space \(k^2\) and indexed lines
\(k(1,0)\), \(k(0,1)\), and \(k(1,1)\).
The \(A\)-module \(K_U\) has space \(k\) at 0 and at the vertices in
\(U\), with identity maps between nonzero spaces. The module \(Z\)
has space \(k^2\) at 0, and its three arrow maps are the inclusions
of the lines \(k(1,0)\), \(k(0,1)\), and \(k(1,1)\). The functor \(G\) of
Proposition~\ref{prop:poset-equivalence} sends these modules to the
\(T\)-spaces with the same names.

Together with the simples \(S_1,S_2,S_3\), these are the indecomposable
\(A\)-modules, and knitting gives the Auslander--Reiten quiver of \(A\):
\[
\begin{tikzcd}[row sep=1.7em,column sep=1.9em]
 & K_{\{1\}} \arrow[dr] && K_{\{2,3\}} \arrow[dr]
   \arrow[ll,dotted,"\tau_A" description] && S_1
   \arrow[ll,dotted,"\tau_A" description] \\
 K_\varnothing \arrow[ur] \arrow[dr] \arrow[ddr]
 && Z \arrow[ur] \arrow[dr] \arrow[ddr]
   \arrow[ll,dotted,"\tau_A" description]
 && K_T \arrow[ur] \arrow[dr] \arrow[ddr]
   \arrow[ll,dotted,"\tau_A" description] & \\
 & K_{\{2\}} \arrow[ur] && K_{\{1,3\}} \arrow[ur]
   \arrow[ll,dotted,"\tau_A" description] && S_2
   \arrow[ll,dotted,"\tau_A" description] \\
 & K_{\{3\}} \arrow[uur] && K_{\{1,2\}} \arrow[uur]
   \arrow[ll,dotted,"\tau_A" description] && S_3
   \arrow[ll,dotted,"\tau_A" description]
\end{tikzcd}
\]
The projectives are \(K_\varnothing,K_{\{1\}},K_{\{2\}},K_{\{3\}}\).
The arrow \(K_{\{i\}}\to Z\) is the inclusion of the \(i\)-th indexed line,
the arrow \(Z\to K_{T\setminus\{i\}}\) is the quotient map with kernel
that line, and the other arrows between the modules \(K_U\) are
identities on total spaces.
The almost split sequences ending at \(Z\), \(K_{\{i,j\}}\), \(K_T\),
and \(S_i\) have middle terms \(\bigoplus_iK_{\{i\}}\), \(Z\),
\(\bigoplus_{|U|=2}K_U\), and \(K_T\), respectively.

The modules \(S_1,S_2,S_3\) become zero in \(\cD\). By
Propositions~\ref{lem:surviving-arrows} and~\ref{prop:quotient-sequences},
the remaining arrows and the images of the translations give the
translation quiver \(\Gamma_{\cD}\):
\[
\begin{tikzcd}[column sep=1.6em,row sep=1.7em]
 &K_{\{1\}}\arrow[dr]&&K_{\{2,3\}}\arrow[dr]
   \arrow[ll,dotted,"\tau_{\cD}^+"']&\\
 K_\varnothing\arrow[ur]\arrow[dr]\arrow[ddr]&&
 Z\arrow[ur]\arrow[dr]\arrow[ddr]\arrow[ll,dotted,"\tau_{\cD}^+"']&&
 K_T\arrow[ll,dotted,"\tau_{\cD}^+"']\\
 &K_{\{2\}}\arrow[ur]&&K_{\{1,3\}}\arrow[ur]
   \arrow[ll,dotted,"\tau_{\cD}^+"']&\\
 &K_{\{3\}}\arrow[uur]&&K_{\{1,2\}}\arrow[uur]
   \arrow[ll,dotted,"\tau_{\cD}^+"']&
\end{tikzcd}
\]
The quotient has nine indecomposables, twelve arrows, and four
\(\tau\)-projectives. Thus \eqref{eq:categorical-magnitude} gives
\[
 \chi(\cD)=2\cdot9-12-4=2,\qquad L=4>|T|=3.
\]
\end{example}
\subsection{Magnitude under deletion of a simple module}\label{subsec:deletion-proof}
We now apply Theorem~\ref{thm:serre-magnitude} and
Corollary~\ref{cor:directed-compensation} to the Serre subcategory
\(\mod B\).

\begin{proposition}\label{thm:directed-deletion}
In Setting~\ref{set:directed}, with \(c,r\) as in
Definition~\ref{def:new-counts}, one has
\begin{equation}\label{eq:directed-deletion}
 \sigma(A)-\sigma(B)=\chi(\cD)-1+c-r\geq0.
\end{equation}
If equality holds in \(\sigma(A)\geq\sigma(B)\), then
\(\dim_k Xe\leq1\) for every indecomposable \(A\)-module \(X\).
\end{proposition}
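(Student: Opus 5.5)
The plan is to start from the identity of Theorem~\ref{thm:serre-magnitude}, $\chi(\mod A)-\chi(\mod B)-\chi(\cD)=c-r-\delta$, and first show that $\delta=0$ in Setting~\ref{set:directed}. Take an almost split sequence $0\to\tau_AY\to\theta_AY\to Y\to0$ of $\mod A$ with exactly one end term in $\mod B$. If $(\tau_AY)e=0$, then $Ye\neq0$, and \eqref{eq:zero-end} gives $d_Y\le1$, so $d_Y=1$. Exactness of $(-)e$ then gives $\dim_k(\theta_AY)e=d_Y=1$, so exactly one indecomposable middle summand has nonzero evaluation at $e$, that is, $m_{\mathrm{out}}=1$. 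The case $Ye=0$ is symmetric, using $d_{\tau_AY}\le1$. Hence $\delta=0$.

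Next, $B$ is obtained by deleting one simple, so $|A|=|B|+1$. Since $\sigma=\chi-|\cdot|$, the identity becomes
\[
 \sigma(A)-\sigma(B)=\chi(\cD)-1+c-r.
\]
This is \eqref{eq:directed-deletion}. Nonnegativity follows because Corollary~\ref{prop:intrinsic} gives $\chi(\cD)\ge1$ and Corollary~\ref{cor:directed-compensation} gives $r\le c$. Both hypotheses are satisfied: $B$ is representation-finite by Lemma~\ref{lem:quotient-closure}, and $A$ is directed and representation-finite.

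For the equality statement, suppose $\sigma(A)=\sigma(B)$. Both summands $\chi(\cD)-1$ and $c-r$ are nonnegative, so each vanishes, and in particular $\chi(\cD)=1$. Corollary~\ref{prop:intrinsic}(2) then gives $d_X=1$ for every $X\in\ind\cD$, that is, for every indecomposable $A$-module with $Xe\neq0$. The remaining indecomposables lie in $\mod B$ and have $d_X=0$. So $\dim_kXe\le1$ for all indecomposable $X$.

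I expect no real obstacle, since the substantive inputs (Proposition~\ref{prop:boundary-one}, Corollary~\ref{prop:intrinsic}, Corollary~\ref{cor:directed-compensation}) are already available. The only point needing care is the $\delta=0$ step: one must check that $m_{\mathrm{out}}$ equals the number of middle summands with nonzero evaluation at $e$, counted with multiplicity. This holds because a summand lies outside $\mod B$ exactly when its evaluation at $e$ is nonzero, and because the total dimension of $(\theta_AY)e$ is $1$.
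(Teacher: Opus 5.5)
Your proposal is correct and follows essentially the same argument as the paper: you use Proposition~\ref{prop:boundary-one}(2) and exactness of \((-)e\) to get \(\delta=0\), then \(|A|=|B|+1\) to rewrite Theorem~\ref{thm:serre-magnitude}, Corollaries~\ref{prop:intrinsic} and~\ref{cor:directed-compensation} for nonnegativity, and Corollary~\ref{prop:intrinsic}\((1)\Rightarrow(2)\) for the equality case. The only cosmetic difference is that the paper invokes Lemma~\ref{lem:quotient-closure} at the start, because Theorem~\ref{thm:serre-magnitude} itself also needs \(B\) to be representation-finite, whereas you cite it only for the compensation step.
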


\begin{proof}
By Lemma~\ref{lem:quotient-closure}, \(\mod B\) is a
representation-finite Serre subcategory, so Section~\ref{sec:quotient}
applies. In an almost split sequence with exactly one end term in \(\mod B\),
Proposition~\ref{prop:boundary-one}(2) gives \(d_X\leq1\) for the other
end term \(X\), and \(d_X\geq1\) since \(X\notin\mod B\). The sequence
stays exact after applying \((-)e\), and its end term in \(\mod B\)
contributes zero. So \(d_X=1\) is the sum of \(d_Z\geq1\) over the middle
summands \(Z\) outside \(\mod B\), counted with multiplicity. Hence there
is exactly one middle summand outside \(\mod B\), with multiplicity one. Thus \(\delta=0\) in
Theorem~\ref{thm:serre-magnitude}. Since \(|A|=|B|+1\), its identity becomes
\[
 \sigma(A)-\sigma(B)=\chi(\cD)-1+c-r.
\]
Corollaries~\ref{prop:intrinsic} and~\ref{cor:directed-compensation}
make the two terms nonnegative. Equality implies \(\chi(\cD)=1\), so
\(d_X=1\) for every \(X\in\ind\cD\) by
Corollary~\ref{prop:intrinsic}\((1)\Rightarrow(2)\); the other
indecomposable \(A\)-modules have \(d_X=0\).
\end{proof}

Deleting a leaf in Example~\ref{ex:d4-subspaces} gives a strict inequality
\(r<c\). We compute \(c\) and \(r\) explicitly.
\begin{example}\label{ex:d4-boundary}
Keep \(A=(kQ)^\op\) with arrows \(i\to0\), \(i=1,2,3\), from
Example~\ref{ex:d4-subspaces}. Write \(e_i\) for the idempotent at vertex
\(i\), and now take \(E=S_3\) and \(B=A/Ae_3A\).
Keep the notation \(K_U,Z\) from that example.
The indecomposable \(B\)-modules are \(K_\varnothing,K_{\{1\}},K_{\{2\}},
K_{\{1,2\}},S_1,S_2\). The Auslander--Reiten quiver \(\Gamma_B\) is
\[
\begin{tikzcd}[column sep=1.8em,row sep=1.8em]
 &K_{\{1\}}\arrow[dr]&&S_2\arrow[ll,dotted,"\tau_B"']\\
 K_\varnothing\arrow[ur]\arrow[dr]&&K_{\{1,2\}}\arrow[ur]\arrow[dr]
   \arrow[ll,dotted,"\tau_B"']&\\
 &K_{\{2\}}\arrow[ur]&&S_1\arrow[ll,dotted,"\tau_B"']
\end{tikzcd}
\]
The two arrows out of \(K_\varnothing\) are irreducible over \(A\).
The other four are not. Their factorizations in \(\mod A\) are shown
in the following diagram of maps:
\[
\begin{tikzcd}[column sep=1.5em,row sep=1.5em]
 K_{\{1\}}\arrow[dr]&&&&S_1\\
 &Z\arrow[r]&K_{\{1,2\}}\arrow[r]&K_T\arrow[ur]\arrow[dr]&\\
 K_{\{2\}}\arrow[ur]&&&&S_2
\end{tikzcd}
\]
Each map in this diagram is irreducible in \(\mod A\). The two composites
through \(Z\) and the two through \(K_T\) give the other four
arrows of \(\Gamma_B\): both intermediate modules have nonzero space at
vertex 3, so they are not \(B\)-modules. Each of these four arrows
contributes one to \(c\), and there are no other arrows in \(\Gamma_B\).
Thus \(c=4\).

The three almost split sequences of \(\mod B\) are
\[
\begin{gathered}
0\to K_\varnothing\to K_{\{1\}}\oplus K_{\{2\}}
  \to K_{\{1,2\}}\to0,\\
0\to K_{\{2\}}\to K_{\{1,2\}}\to S_1\to0,\qquad
0\to K_{\{1\}}\to K_{\{1,2\}}\to S_2\to0.
\end{gathered}
\]
None is almost split over \(A\): the almost split sequences over
\(A\) with the same last terms start at \(K_{\{3\}},K_{\{2,3\}},K_{\{1,3\}}\), respectively.
Hence \(r=3\), and \(c-r=1\).

For the functors of Subsection~\ref{subsec:extension-supports}, the
projective resolution \(0\to S_0\to K_{\{3\}}\to S_3\to0\) gives
\(\mathsf H_E(X)=\Ext_A^1(S_3,X)\cong Xe_0\) for \(X\in\mod B\).
The simple \(S_3\) is injective, so \(\mathsf G_E=0\).
Theorem~\ref{prop:compensation} therefore assigns to each of the three
sequences a pair \((W,Y)\) with \(c_{WY}>0\), where \(Y\) is its last term. The sequence ending at \(K_{\{1,2\}}\)
has two possible choices, from \(K_{\{1\}}\) and \(K_{\{2\}}\);
the other two sequences have one each. This displays the strict inequality
\(r=3<4=c\).
Example~\ref{ex:d4-deletion} combines these counts with those of
\(\cD\) to check \eqref{eq:directed-deletion}.
\end{example}
The two deletions in our \(D_4\) example distinguish the two nonnegative
terms in \eqref{eq:directed-deletion}.
\begin{example}\label{ex:d4-deletion}
The Auslander--Reiten quiver of \(\mod A\), displayed in
Example~\ref{ex:d4-subspaces}, has twelve indecomposables and fifteen
arrows. Thus, by Lemma~\ref{lem:count},
\[
 |\ind A|=12,\quad |A|=4,\quad a(A)=15,\quad
 \chi(\mod A)=5,\quad\sigma(A)=1.
\]
Deleting vertex 0 gives \(B_0\cong k^3\). Deleting vertex 3 gives the
path algebra \(B_3\) of Example~\ref{ex:d4-boundary}.
The complete counts are
\[
\begin{array}{c|ccc|ccc|c}
 e&|\ind B|&|B|&a(B)&\chi(\cD)-1&c&r&\sigma(A)-\sigma(B)\\\hline
 e_0&3&3&0&1&0&0&1\\
 e_3&6&3&6&0&4&3&1
\end{array}
\]
For the second row, the associated translation quiver \(\Gamma_{\cD}\) is
\[
\begin{tikzcd}[column sep=small,row sep=small]
 &&K_{\{1,3\}}\arrow[dr]&&\\
 K_{\{3\}}\arrow[r]&Z\arrow[ur]\arrow[dr]&&
 K_T\arrow[r]\arrow[ll,dotted,"\tau_{\cD}^+"']&S_3\\
 &&K_{\{2,3\}}\arrow[ur]&&
\end{tikzcd}
\]
Thus it has six indecomposables and six solid arrows. Only
\(K_T\) has a nonzero \(\tau_{\cD}^+\)-translate, namely \(Z\).
Hence there are five \(\tau\)-projectives, and
\eqref{eq:categorical-magnitude} gives \(\chi(\cD)=2\cdot6-6-5=1\). The strict increase comes entirely from
\(\chi(\cD)-1\) in the first row and from \(c-r\) in the second.
\end{example}
\subsection{Magnitude of representation-directed algebras}\label{subsec:directed-case}
Applying Proposition~\ref{thm:directed-deletion} to one simple module at a
time bounds the surplus of a representation-directed algebra from below,
and zero surplus leads to biserial algebras. We recall the classes of
algebras that enter the proof of Theorem~A(2).
\begin{definition}\label{def:special-biserial}
A basic algebra is \emph{special biserial} if it has a presentation
\(kQ/\mathfrak r\), with \(R_Q^m\subseteq\mathfrak r\subseteq R_Q^2\)
for the arrow ideal \(R_Q\) and some \(m\geq2\), satisfying two conditions:
at most two arrows enter and at most two leave each vertex; and for each
arrow \(\alpha\), there is at most one arrow \(\beta\) for which
\(\beta\alpha\) is defined and does not belong to \(\mathfrak r\),
and at most one arrow \(\gamma\) with the same property for
\(\alpha\gamma\). These conditions are symmetric under replacing \(Q\) by
its opposite quiver.
\end{definition}

\begin{definition}\label{def:biserial}
A module is \emph{uniserial} if its submodules are totally ordered by
inclusion. An algebra is \emph{biserial} if the radical of every
indecomposable projective, on either side, is a sum of two uniserial
submodules whose intersection is simple or zero. A module over a basic
algebra is \emph{thin} if each simple occurs in it with multiplicity
at most one, equivalently if \(\dim_k Xe\leq1\) at every primitive
idempotent in a complete orthogonal set.
\end{definition}
If a representation-directed algebra has zero surplus, then all its
indecomposable modules are thin (Corollary~\ref{cor:directed} below).
Two results show that such an algebra is special biserial. The first is
due to Pogorza{\l}y and Skowro\'nski.

\begin{proposition}[{\cite[Proposition~1]{PS}}]\label{prop:thin-biserial}
If every indecomposable module over a basic finite-dimensional algebra is
thin, then the algebra is representation-finite and biserial.
\end{proposition}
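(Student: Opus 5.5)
The plan is to prove representation-finiteness first and then biseriality, working with a basic algebra \(A=kQ/\mathfrak r\) over the algebraically closed field \(k\), with \(\mathfrak r\) admissible.

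\textbf{Representation-finiteness.} Thinness bounds dimension vectors: every indecomposable \(X\) has \(\dim_k Xe_i\leq1\) at each vertex \(i\). An indecomposable thin module is determined up to isomorphism by its support together with, for each arrow, whether its action is zero or nonzero. Indeed, it is a representation with one-dimensional spaces on its support, and rescaling bases fixes a nonzero scalar on each arrow of a spanning tree. The remaining scalars lie on arrows closing cycles, so they could still give infinitely many parameters. I would rule this out by first showing that \(Q\) has no loops and no multiple arrows. A loop gives non-thin modules such as \(e_iA/e_i\rad^2A\). Two arrows \(i\to j\) give a Kronecker subquotient, whose indecomposables of dimension \((2,2)\) are non-thin yet pull back to \(A\)-modules. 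For cycles in the underlying graph with several thin one-parameter families, I would take a Brauer--Thrall style argument instead: infinitely many thin indecomposables of bounded length would give, by the first Brauer--Thrall theorem, indecomposables of arbitrarily large length. Since thin modules have length at most \(|Q_0|\), that contradicts thinness. Hence the number of indecomposables is finite. This length-bound route is simpler, and I would lead with it.

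\textbf{Biseriality.} Fix a vertex \(i\) and consider \(\rad P_i\). Step one: at most two arrows leave \(i\), and dually at most two enter. If three arrows \(\alpha_1,\alpha_2,\alpha_3\) left \(i\) with targets \(j_1,j_2,j_3\), I would build a non-thin indecomposable from the \(D_4\)-type configuration of Example~\ref{ex:d4-subspaces}. Take the \(A\)-module \(P_i/\rad^2P_i\), or a suitable quotient of the opposite orientation, and realize the representation with a two-dimensional space at \(i\) and three lines; this needs the \(j_t\) distinct, and coincident targets are excluded by the absence of multiple arrows. The indecomposable \(Z\) of that example then appears as an \(A\)-module with \(\dim Ze_i=2\). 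Its relations vanish because it is annihilated by \(\rad^2\) in the appropriate sense.

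Step two: \(\rad P_i=\sum_\alpha \alpha A\) over the at most two arrows \(\alpha\) starting at \(i\). Each summand \(\alpha A\) must be uniserial. If some \(\alpha A\) is not uniserial, then at some point \(\beta\alpha\) and \(\gamma\alpha\) are both nonzero modulo the relations. I would then construct a non-thin indecomposable from the local configuration of three distinct paths meeting at one vertex, again of \(D_4\) type, so that thinness forbids it. Step three: the intersection \(\alpha A\cap\alpha'A\) is simple or zero. By thinness of \(P_i\) itself, since projectives are indecomposable, the two uniserial pieces can share at most their socle-end: a common composition factor can occur only once. The same applies on the left side using \(\Dk\) and injectives, because thin indecomposable left modules are duals of thin right modules.

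The main obstacle is step two: producing an explicit non-thin indecomposable whenever the uniserial condition fails, uniformly in the relations. I would handle it by passing to a suitable quotient algebra \(A/J\) whose quiver is a \(D_4\)- or \(\tilde A\)-type tree segment with zero relations. Such a quotient is hereditary locally, so its non-thin indecomposable \(Z\) pulls back to \(A\) as an indecomposable.
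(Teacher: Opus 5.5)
The paper does not prove this proposition; it quotes it from Pogorza{\l}y and Skowro\'nski, so your argument has to stand on its own. The representation-finiteness part is correct and standard. Every indecomposable has length at most $|Q_0|$, and the first Brauer--Thrall theorem (Roiter, Auslander) gives finiteness. Step one is also fine: the representation with $k^2$ at $i$ and three distinct lines is annihilated by all paths of length two, hence by the admissible ideal. Note that thinness of the projectives already gives $\dim_k e_xAe_y\leq1$ for all vertices $x,y$, which excludes loops and multiple arrows at once. The biseriality argument has two genuine gaps.

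\textbf{Step two is not proved, and the proposed mechanism fails as stated.} The non-thin representation of $v_0\xrightarrow{\delta}v$, $v\xrightarrow{\beta}w_1$, $v\xrightarrow{\gamma}w_2$ with $k^2$ at $v$ needs both composites $\beta\delta$ and $\gamma\delta$ to act nonzero. So it is an $A$-module only if these composites occur in no relation, and a quotient $A/J$ that is hereditary of type $D_4$ need not exist. For example, in the incidence algebra of the poset $u<v,v'<w_1,w_2$, the module $\delta A$ has top $S_v$ and radical $S_{w_1}\oplus S_{w_2}$. But $\beta\delta=\beta'\delta'$, and $\beta'\delta'$ passes through $v'$, so every quotient whose modules vanish at $v'$ kills $\beta\delta$. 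There an $\tilde A_3$ quotient happens to exist, but you give no argument that a suitable hereditary quotient always does, and this is exactly where the relations matter.

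An intrinsic construction avoids the issue. The first nonsimple radical layer of $\alpha A$ gives $u\neq v$, a nonzero $p\in e_uAe_v$, and distinct arrows $\beta,\gamma$ starting at $v$ with $\beta p\neq0\neq\gamma p$. Let $N$ be the cokernel of the map $\gamma A\to e_uA\oplus e_vA/\beta A$ whose first component is left multiplication by $p$ and whose second is minus the projection. Since $\dim_ke_uAe_v=1$ and $e_v\rad A\,e_v=0$, the space $Ne_v$ has basis the classes of $p$ and $e_v$. Every endomorphism of $N$ preserves three lines in $Ne_v$: the image $(Ne_u)p$, and the kernels of the actions of $\beta$ and of $\gamma$. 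These lines are distinct because $\beta\notin\gamma A$ and $\gamma\notin\beta A$, as arrows do not lie in $\rad^2A$. Hence endomorphisms act on $Ne_v$ by scalars. Now suppose $N=N_1\oplus N_2$ nontrivially. Since $N$ has top $S_u\oplus S_v$, both summands are local, and one of them contains the two-dimensional space $Ne_v$. This is impossible, because a local module is a quotient of some $e_xA$. So $N$ is indecomposable with $\dim_kNe_v=2$.

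\textbf{Step three is false as argued.} Thinness of $P_i$ does not bound $\alpha A\cap\alpha'A$. Take $1\xrightarrow{a}2\xrightarrow{b}4\xrightarrow{f}5$ and $1\xrightarrow{c}3\xrightarrow{d}4$ with the relation $ba=dc$. Then $P_1$ is thin and $\rad P_1=aA+cA$ with both summands uniserial, but $aA\cap cA$ has composition factors $S_4$ and $S_5$. What excludes this example is a non-thin module elsewhere: $k^2$ at vertex $4$, $k$ at $2,3,5$, with $\operatorname{Im}b$, $\operatorname{Im}d$, $\ker f$ pairwise distinct lines. In general, suppose the intersection has length at least two. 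Minimality and $\dim_ke_uAe_y\leq1$ give distinct arrows $\delta_1,\delta_2$ ending at the top vertex $z$ of the intersection, and an arrow $\epsilon$ starting at $z$ with $\epsilon\delta_1\neq0\neq\epsilon\delta_2$. Reversed, this is the configuration of step two for $A^{\op}$, whose indecomposables are thin by $\Dk$-duality.
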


The second is due to Skowro\'nski and Waschb\"usch.
\begin{theorem}[{\cite[p.~173 and Lemma~2]{SW}; see also \cite[Theorem~2]{PS}}]\label{thm:biserial-special}
Every basic representation-finite biserial algebra is special biserial.
\end{theorem}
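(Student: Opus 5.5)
The plan is to start from an admissible presentation \(A\cong kQ/\mathfrak r\) with \(R_Q^m\subseteq\mathfrak r\subseteq R_Q^2\). We then modify the choice of arrows, keeping \(Q\) fixed, until the two conditions of Definition~\ref{def:special-biserial} hold. Rep-finiteness will be used only to rule out one bad configuration that no change of arrows can remove.

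\textbf{Step 1: the degree bound.} Write \(\rad P_i=U_1+U_2\) with \(U_1,U_2\) uniserial and \(U_1\cap U_2\) simple or zero. Then the top \(\rad P_i/\rad^2P_i\) is a quotient of \(\operatorname{top}U_1\oplus\operatorname{top}U_2\), so it has length at most two. Hence at most two arrows leave \(i\). The same argument applied to left projectives shows that at most two arrows enter \(i\). We may also choose the arrows \(\alpha,\alpha'\) leaving \(i\) to be generators of \(U_1\) and \(U_2\).

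\textbf{Step 2: normalizing composites.} Let \(\alpha\) be an arrow with both \(\beta_1\alpha\) and \(\beta_2\alpha\) nonzero in \(A\). Both composites lie in \(\alpha A=U_1\), which is uniserial. Therefore \(\rad U_1/\rad^2U_1\) is one-dimensional, and modulo \(\rad^3\) we have \(\beta_2\alpha\equiv\lambda\beta_1\alpha\).

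If \(\beta_1\) and \(\beta_2\) have the same target, replace \(\beta_2\) by \(\beta_2-\lambda\beta_1\). This keeps the presentation admissible and pushes \(\beta_2\alpha\) into \(\rad^3 U_1\). Iterating, with corrections by longer paths, terminates by nilpotency of the radical (Lemma~\ref{lem:path-expansion}). The end result is \(\beta_2\alpha=0\).

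If the targets differ, the tops of \(\beta_1\alpha A\) and \(\beta_2\alpha A\) are non-isomorphic simples. This contradicts the uniseriality of \(U_1\) unless one of the two composites lies deeper in the radical, in which case the same correction applies. The dual argument handles \(\alpha\gamma\). We must check that corrections made at different vertices do not destroy one another. For this we order them by path length and use a triangular (filtration) argument.

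\textbf{Step 3: the main obstacle.} The uniserial argument fails when the relation \(\beta_2\alpha=\lambda\gamma\alpha'\), with \(\lambda\neq0\), passes through the second uniserial \(U_2\). This happens when \(U_1\cap U_2\neq0\): then \(\beta_2\alpha\) is a nonzero element of the socle-type intersection. No change of arrows removes this commutativity-type relation, and such algebras are biserial but not special biserial.

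Here rep-finiteness must enter. I would take a suitable factor algebra, by killing idempotents and a power of the radical, that contains the commutativity square together with the extra arrows forced by \(\beta_1\alpha\neq0\). On it I would exhibit a one-parameter family of pairwise non-isomorphic indecomposables, for instance via a string-like module wound around the square, or by identifying a tame concealed quotient of type \(\tilde A_n\) or \(\tilde D_n\). Since factor algebras of rep-finite algebras are rep-finite, this is a contradiction.

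The delicate part of the plan is the case analysis in this step, deciding which configurations of \(U_1\cap U_2\neq0\) can occur. I expect it to be the hardest step, and I would follow Skowro\'nski and Waschb\"usch in organizing it by the lengths of \(U_1\) and \(U_2\).
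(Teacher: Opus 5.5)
\textbf{There is a genuine gap.} Step~3 misidentifies the obstruction, and the step where the real obstruction lies is never proved.

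\textbf{Step 3 is not an obstruction.} The configuration you isolate can be removed, so excluding it by a one-parameter family cannot work. Since \(U_1\cap U_2\) is simple, it equals \(\soc U_1\). This socle lies in every nonzero submodule of the uniserial \(U_1\), in particular in the one generated by \(\beta_1\alpha\). Hence \(\beta_2\alpha=\rho\beta_1\alpha\) with \(\rho\) radical; the targets of \(\beta_1,\beta_2\) differ, since a representation-finite algebra has no multiple arrows. Your own Step~2 correction \(\beta_2\mapsto\beta_2-\rho\beta_1\) then kills \(\beta_2\alpha\). What survives is the commutativity relation \(\gamma\alpha'=\lambda^{-1}\rho\beta_1\alpha\), which special biserial algebras allow (as in the commutative square).

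The configuration also occurs over representation-finite algebras. Take arrows \(\alpha:i\to j\), \(\alpha':i\to m\), \(\gamma:m\to l_2\), \(\beta_1:j\to l_1\), \(\eta:l_1\to j\), \(\beta_2:j\to l_2\), with relations \(\beta_2\alpha\), \(\beta_1\eta\), \(\beta_2\eta\beta_1\alpha-\gamma\alpha'\).
\begin{itemize}
\item This algebra is special biserial.
\item It is representation-finite. \(P_i\) is projective-injective, and the string algebra obtained by killing its socle has no bands: every walk through \(m\) contains \(\gamma\alpha'\) or its inverse, and away from \(m\) the only cycle is the oriented one through the zero relation \(\beta_1\eta\).
\item Replace the arrow \(\beta_2\) by \(\beta_2+\beta_2\eta\beta_1\). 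Then \(\alpha\) has the two nonzero continuations \(\beta_1\alpha\) and \((\beta_2+\beta_2\eta\beta_1)\alpha=\gamma\alpha'\in U_1\cap U_2\). This is exactly your Step~3 picture with \(\lambda=1\).
\end{itemize}

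\textbf{The real difficulty is in Step 2.} It is the global compatibility you dispose of in one sentence at the end of that step. Correcting \(\beta_2\) changes every composite through \(\beta_2\). If the correction term runs back through \(\alpha\), or \(\beta_2\) is \(\alpha\) itself, the new error is no deeper than the one removed, and an induction on path length does not close. This genuinely fails for biserial algebras.

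Over a field of characteristic~\(2\), let \(A=k\langle x,y\rangle/(x^2-(xy)^2,\,y^2,\,(xy)^2-(yx)^2)\).
\begin{itemize}
\item \(A\) is biserial, with \(xA\cap yA=k(xy)^2\), and it shows your local configuration with \(\alpha=\beta_2=x\) and \(\beta_1=y\).
\item Comparing products in \(\rad^2A/\rad^3A\), which is spanned by \(xy\) and \(yx\), shows that special biserial generators would have to be \(x+p\) and \(y+q\), up to scalars and order, with \(p,q\in\rad^2A\).
\item Then \((x+p)(y+q)\ne0\). Writing \(p=a\,xy+b\,yx+c\,xyx+d\,yxy+e\,(xy)^2\), one finds \((x+p)^2=(1+2d+a^2+b^2)(xy)^2+(a+b)\,xyx\), which is never zero in characteristic~\(2\).
\end{itemize}
So \(A\) is not special biserial. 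It is representation-infinite, as every local algebra with two loops is.

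\textbf{What is missing.} Representation-finiteness has to be used precisely to control this feedback, and your proposal contains no mechanism for that. Step~3 only announces a case analysis to be taken from Skowro\'nski and Waschb\"usch. Your dual argument also needs arrows that simultaneously generate right uniserials at their sources and left uniserials at their targets, and this choice is not addressed. The paper itself gives no proof of this statement; it imports it from \cite{SW} (see also \cite{PS}). As it stands, only your Step~1 is a complete argument.
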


\begin{corollary}\label{cor:directed}
For a representation-finite representation-directed algebra \(A\),
\(\sigma(A)\geq0\). If \(A\) is basic and \(\sigma(A)=0\), then every
indecomposable \(A\)-module is thin, and \(A\) is biserial and hence
special biserial.
\end{corollary}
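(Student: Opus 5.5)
We argue by induction on the number \(|A|\) of simple modules, using Proposition~\ref{thm:directed-deletion} at each step. By Lemma~\ref{lem:invariance}, magnitude, \(|A|\) and \(\sigma\) are Morita invariant, so we may assume \(A\) basic. Basicness is preserved by primitive deletion. By Lemma~\ref{lem:quotient-closure}, every primitive deletion \(B=A/AeA\) of a basic, representation-finite, representation-directed algebra is again representation-finite and representation-directed, so the hypotheses of Setting~\ref{set:directed} persist under deletion.

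For the inequality, choose a complete set of primitive orthogonal idempotents \(e_1,\dots,e_n\) of \(A\). Delete them one at a time: put \(A_0=A\) and \(A_j=A_{j-1}/A_{j-1}\bar e_jA_{j-1}\), where \(\bar e_j\) is the image of \(e_j\), which remains primitive (or zero, which does not happen since the images of \(e_j,\ldots,e_n\) still form a complete set of primitive idempotents of \(A_{j-1}\)). Proposition~\ref{thm:directed-deletion} gives \(\sigma(A_{j-1})\geq\sigma(A_j)\), so \(\sigma(A)\geq\sigma(A_n)\). Here \(A_n=0\) has no indecomposables and no simples, so \(\sigma(A_n)=0\) by \eqref{eq:count}. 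If one prefers to stop at a nonzero algebra, stop at \(A_{n-1}\cong k\): it has one indecomposable, one simple and no arrows, so \(\sigma(k)=0\).

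For the equality case, suppose \(\sigma(A)=0\), and fix a primitive idempotent \(e\) of \(A\). By the first part applied to \(B=A/AeA\), we have \(\sigma(B)\geq0\). Proposition~\ref{thm:directed-deletion} gives \(\sigma(A)\geq\sigma(B)\), so \(0=\sigma(A)\geq\sigma(B)\geq0\). Hence equality holds in \(\sigma(A)\geq\sigma(B)\), and the same proposition gives \(\dim_k Xe\leq1\) for every indecomposable \(X\). Since \(e\) was an arbitrary primitive idempotent, in particular each member of a complete orthogonal set, every indecomposable \(A\)-module is thin in the sense of Definition~\ref{def:biserial}. Then Proposition~\ref{prop:thin-biserial} shows \(A\) is biserial, and, since \(A\) is basic and representation-finite, Theorem~\ref{thm:biserial-special} shows it is special biserial.

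The main obstacle is already handled by Proposition~\ref{thm:directed-deletion}. What remains is bookkeeping: checking that the hypotheses are inherited under deletion (basicness, representation-directedness, and primitivity of the image idempotents), and that the base case gives surplus zero.
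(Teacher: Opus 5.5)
Your proposal is correct and follows the paper's proof: you reduce to the basic case by Lemma~\ref{lem:invariance}, delete primitive idempotents one at a time down to the zero algebra using Proposition~\ref{thm:directed-deletion} and Lemma~\ref{lem:quotient-closure}, and in the equality case squeeze $0=\sigma(A)\geq\sigma(A/AeA)\geq0$ for each primitive $e$ to obtain thinness, then apply Proposition~\ref{prop:thin-biserial} and Theorem~\ref{thm:biserial-special}. Your parenthetical reason that $\bar e_j\neq0$ is slightly circular; the direct reason is that $S_j$ is annihilated by $A(e_1+\cdots+e_{j-1})A$ because $A$ is basic, and $\bar e_jA_{j-1}\bar e_j$ is a nonzero quotient of the local ring $e_jAe_j$, so $\bar e_j$ is nonzero and primitive.
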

\begin{proof}
By Lemma~\ref{lem:invariance} we may assume that \(A\) is basic. The
algebras obtained by successive primitive deletions remain basic, and by Lemma~\ref{lem:quotient-closure}
they remain representation-finite and representation-directed. They end at
the zero algebra, whose surplus is zero. Proposition~\ref{thm:directed-deletion}
therefore proves nonnegativity. If \(\sigma(A)=0\), delete any primitive
idempotent \(e\) first. Since \(\sigma(B)\geq0\) for \(B=A/AeA\), we have
\(\sigma(A)=\sigma(B)=0\), so \(\dim_k Xe\leq1\) for every indecomposable \(X\).
As \(e\) was arbitrary, all indecomposables are thin.
Proposition~\ref{prop:thin-biserial} shows that \(A\) is biserial, and
Theorem~\ref{thm:biserial-special} that it is special biserial.
\end{proof}
\section{A lower bound for magnitude via graded interval algebras}\label{sec:general}
Corollary~\ref{cor:directed} proves \(\chi(\mod A)\geq|A|\) only for
representation-directed algebras. To remove this hypothesis, we compare a
graded algebra with its interval algebras, whose modules are graded modules
supported in finite degree intervals. We first show that the numbers of
indecomposables and of arrows of the interval algebras grow linearly with
the length of the interval, with slopes the numbers of indecomposables and
of arrows of the algebra. A positive grading of
the Hom spaces between indecomposable modules (Subsection~\ref{subsec:graded-hom})
then makes the interval algebras representation-directed, and standard forms have such a grading.

\subsection{Graded modules supported in finite intervals}
We first fix conventions for graded modules and define interval algebras.
\begin{definition}\label{def:graded-modules}
Let \(R=\bigoplus_{d\geq0}R^{(d)}\) be a nonnegatively graded algebra.
A graded right \(R\)-module is a decomposition
\(M=\bigoplus_{n\in\mathbb Z}M_n\) with
\(M_nR^{(d)}\subseteq M_{n+d}\). Its support consists of the integers
\(n\) with \(M_n\ne0\). Write \(\operatorname{grmod}R\) for the category
of finite-dimensional graded modules and maps of degree zero, and
\(\Hom_{\mathrm{gr}}\) for its morphism spaces.
The shift \(M\langle s\rangle\) is defined by
\((M\langle s\rangle)_n=M_{n-s}\). A homogeneous map of degree \(d\)
sends \(M_n\) to \(N_{n+d}\); it has degree zero as a map
\(M\langle s\rangle\to N\langle t\rangle\) when \(d=s-t\).
\end{definition}

\begin{construction}\label{con:interval}
For \(m\geq0\), define the \emph{interval algebra} \(R_m\) to be the
upper triangular block algebra with blocks indexed by \(0,\ldots,m\),
\begin{equation}\label{eq:interval-algebra}
 (R_m)_{rs}=\begin{cases}R^{(s-r)}&r\leq s,\\0&r>s.\end{cases}
\end{equation}
Multiplication of blocks is induced by multiplication in \(R\).
Thus its diagonal blocks are copies of \(R^{(0)}\).
\end{construction}
\begin{remark}
Viewed as a category with objects \(0,\ldots,m\), the algebra \(R_m\) is
the full subcategory on these objects of the smash product of \(R\) with
\(\mathbb Z\) in the sense of Cibils and Marcos
\cite[Definition~3.1]{CM}. The equivalence of \(\mod R_m\) with graded
modules in Proposition~\ref{prop:interval-modules} below is a case of
\cite[Theorem~6.6]{GG1}.
\end{remark}

\begin{proposition}\label{prop:interval-modules}
The category \(\mod R_m\) is equivalent to the full subcategory of
\(\operatorname{grmod}R\) supported in \([0,m]\), and
\(|R_m|=(m+1)|R|\).
If \(R\) is representation-finite, every indecomposable \(R\)-module
has a graded lift, that is, a graded module whose underlying module is
isomorphic to it; this lift is unique up to shift, and \(R_m\) is
representation-finite.
\end{proposition}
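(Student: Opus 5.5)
The proof splits into three parts: a direct equivalence of categories, the count \(|R_m|=(m+1)|R|\), and the representation-finite statements, which rest on a covering argument.

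\textbf{The equivalence.} Write \(f_r\in R_m\) for the identity of the \(r\)-th diagonal block. Given a right \(R_m\)-module \(N\), put \(M_r=Nf_r\) for \(0\leq r\leq m\) and \(M_n=0\) otherwise. For \(x\in R^{(d)}\), the element \(x\) placed in block \((r,r+d)\) acts as a map \(Nf_r\to Nf_{r+d}\); these maps make \(M=\bigoplus_nM_n\) a graded \(R\)-module. Associativity in \(R\) matches block multiplication in \eqref{eq:interval-algebra}. If \(r+d>m\), the action is zero, consistent with support in \([0,m]\).

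Conversely, a graded module supported in \([0,m]\) gives an \(R_m\)-module \(\bigoplus_{r=0}^mM_r\), with block \((r,s)\) acting by the action of \(R^{(s-r)}\) from \(M_r\) to \(M_s\). Under these constructions, \(R_m\)-linear maps correspond exactly to degree-zero graded maps, and the two constructions are mutually inverse.

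\textbf{Counting simples.} The radical of \(R_m\) contains all off-diagonal blocks, since they generate a nilpotent ideal, as well as \(\rad R^{(0)}\) on each diagonal block. I must check that \(R^{(0)}\) and \(R\) have the same number of simple modules. The ideal \(R^{(\geq1)}\) is nilpotent because \(R\) is finite-dimensional and the grading is bounded. Hence \(R/\rad R\) is a quotient of \(R^{(0)}\) with kernel \(\rad R^{(0)}\), so \(|R^{(0)}|=|R|\). The semisimple quotient of \(R_m\) is then \((m+1)\) copies of \(R^{(0)}/\rad R^{(0)}\), which gives \(|R_m|=(m+1)|R|\).

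\textbf{Graded lifts when \(R\) is representation-finite.} This is the main obstacle. I would use the covering theory of Gabriel and Gordon--Green: the forgetful functor \(\operatorname{grmod}R\to\mod R\) is a Galois covering with group \(\mathbb Z\) acting by shifts.

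The key step, following Gordon and Green, is to show that the radical of every indecomposable module is gradable. One then inducts on dimension, or uses that almost split sequences of gradable modules lift. Since \(\mod R\) has finitely many indecomposables, the Auslander--Reiten component containing the gradable indecomposable projectives is closed under knitting. Finiteness and connectedness of the Auslander--Reiten quiver (Auslander's theorem) then force every indecomposable to be gradable.

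\textbf{Uniqueness up to shift.} Let \(M\) and \(M'\) be graded lifts of the same indecomposable. A graded lift of an indecomposable module is indecomposable, and its graded endomorphism ring is local. Moreover, \(\Hom_R(M,M')=\bigoplus_s\Hom_{\mathrm{gr}}(M,M'\langle s\rangle)\). Lift an isomorphism \(M\to M'\) and its inverse, and decompose both into homogeneous components. Locality of the endomorphism ring of \(M\) then shows that some homogeneous component is an isomorphism \(M\to M'\langle s\rangle\).

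\textbf{Representation-finiteness of \(R_m\).} Every indecomposable \(R_m\)-module is an indecomposable graded module, and it forgets to an indecomposable \(R\)-module. There are finitely many of these, each with a lift that is unique up to shift. Only finitely many shifts keep the support inside \([0,m]\). Hence \(R_m\) has finitely many indecomposables.
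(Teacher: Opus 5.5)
Your proposal is correct. The equivalence, the count \(|R_m|=(m+1)|R|\) via the nilpotent ideals \(R^{(\geq1)}\) and the strictly upper triangular part of \(R_m\), and the finiteness count by shifts are the paper's argument. The difference is in the graded-lift statements. The paper cites Gordon--Green for them (\cite[Theorems~3.2 and~4.1]{GG1} and \cite[Theorem~4.3]{GG2}), whereas you argue them.

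Your uniqueness proof is correct and elementary. In the local ring \(\End_{\mathrm{gr}}(M)\), the identity is the sum of the composites of the components \(M\to M'\langle s\rangle\) of the isomorphism with the matching homogeneous components of its inverse. So some component is a split monomorphism in \(\operatorname{grmod}R\), and it is bijective because \(\dim_kM=\dim_kM'\). This uses only the trivial fact that a lift of an indecomposable is graded indecomposable.

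For existence, drop the remark about radicals of all indecomposables and induction on dimension. That route does not close, since a module with gradable radical need not be gradable. The argument you end with is the standard one:
\begin{enumerate}
\item graded almost split sequences exist and forget to almost split sequences \cite[Theorem~3.5]{GG2};
\item radicals and socle quotients of graded modules are graded, since \(\rad R=\rad R^{(0)}\oplus R^{(\geq1)}\) is graded;
\item hence the gradable indecomposables form a union of components of \(\Gamma_R\);
\item each block contains some \(e_iR\) with \(e_i\in R^{(0)}\), and its Auslander--Reiten quiver is connected by Auslander's theorem.
\end{enumerate}

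The closure step in this argument uses the nontrivial direction of \cite[Theorem~3.2]{GG1}: a graded indecomposable forgets to an indecomposable. You need it to pass from a graded middle term to the indecomposable summands of its underlying module. Your finiteness step uses the same fact, and you assert it without citing it. There your own trick removes the dependence. Let \(N\) be graded indecomposable with underlying module \(\bigoplus_iX_i\), and fix lifts \(\widetilde X_i\). Splitting an isomorphism onto \(\bigoplus_i\widetilde X_i\) into homogeneous components makes \(N\) a graded summand of some \(\bigoplus_i\widetilde X_i\langle s\rangle\), hence a shift of a single \(\widetilde X_i\).

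Your version makes explicit which external inputs are actually needed, and makes uniqueness self-contained. The paper's version is shorter and defers all three facts to Gordon--Green.
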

\begin{proof}
The representation-finiteness of \(R_m\) is also
\cite[Proposition~1.1]{GG2}; we include the short arguments.
A right \(R_m\)-module has spaces \(M_r\) at the diagonal idempotents,
with multiplication maps \(M_rR^{(s-r)}\to M_s\). These give a graded
\(R\)-module by setting its other components to zero. Associativity
holds across the interval boundary because multiplication only increases
degree: if the initial and final degrees lie in \([0,m]\), so does
every intermediate degree. This construction and restriction to the
interval are inverse on modules and maps.
The ideal of elements of positive degree in \(R\) is nilpotent, so
\(|R|=|R^{(0)}|\). The strictly upper triangular ideal of \(R_m\) is
also nilpotent, and its diagonal quotient is \((R^{(0)})^{m+1}\),
which proves \(|R_m|=(m+1)|R|\).

Gordon--Green prove that a finite-dimensional graded module is
indecomposable if and only if its underlying module is indecomposable,
and that two graded lifts of an indecomposable differ by shift
\cite[Theorems~3.2 and~4.1]{GG1}. They also prove that every module over a
representation-finite graded algebra is gradable
\cite[Theorem~4.3]{GG2}. Choose one lift of each indecomposable.
Only finitely many of its shifts are supported in \([0,m]\), so the equivalence
proves representation-finiteness of \(R_m\).
\end{proof}

\begin{theorem}\label{prop:interval-count}
Let \(R\) be a nonnegatively graded representation-finite algebra.
Choose a graded lift \(\widetilde X\) of each \(X\in\ind R\), normalized so that the least
\(n\) with \(\widetilde X_n\ne0\) is zero, and let \(w_X\) be the greatest such
\(n\).
\begin{enumerate}
\item For \(m\geq\max_X w_X\),
\begin{equation}\label{eq:interval-objects}
 |\ind R_m|=\sum_{X\in\ind R}(m+1-w_X).
\end{equation}
\item As \(m\to\infty\),
\begin{align}
 a(R_m)&=(m+1)a(R)+O(1),\label{eq:interval-arrows}\\
 \sigma(R_m)&=(m+1)\sigma(R)+O(1).\label{eq:interval-surplus}
\end{align}
\end{enumerate}
\end{theorem}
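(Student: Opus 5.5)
Part~(1) follows directly from Proposition~\ref{prop:interval-modules}. By that proposition, \(\ind R_m\) is identified with the graded lifts supported in \([0,m]\), taken up to isomorphism. Each \(X\in\ind R\) has a unique lift up to shift. The shift \(\widetilde X\langle s\rangle\) is supported in \([s,s+w_X]\), and nonzero at both ends, so it lies in the interval exactly for \(0\leq s\leq m-w_X\). This gives \(m+1-w_X\) objects when \(m\geq w_X\). Distinct pairs \((X,s)\) give nonisomorphic graded modules, because the underlying modules differ or the supports differ. This proves \eqref{eq:interval-objects}.

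For \eqref{eq:interval-arrows}, I would compare irreducible maps in \(\operatorname{grmod}R\) with those in \(\mod R\).

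\emph{Graded almost split sequences.} A standard fact (Gordon--Green) is that the forgetful functor sends almost split sequences of \(\operatorname{grmod}R\) to almost split sequences of \(\mod R\). Moreover, every almost split sequence of \(\mod R\) lifts, the lift being unique up to shift once the end term \(\widetilde Y\langle s\rangle\) is fixed. This uses the uniqueness of lifts and the fact that \(\Ext^1_R(Y,\tau Y)\) is graded, so its socle element is homogeneous. Consequently, by Proposition~\ref{prop:ar-multiplicity}, \(\dim\Irr_R(X,Y)=\sum_{s,t}\dim\Irr_{\mathrm{gr}}(\widetilde X\langle s\rangle,\widetilde Y\langle t\rangle)\), and only the differences \(t-s\) matter.

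\emph{Restricting to the interval.} The subcategory supported in \([0,m]\) is closed under submodules, quotients and extensions in \(\operatorname{grmod}R\). It is therefore a Serre subcategory, and Proposition~\ref{lem:relative-ar} and Lemma~\ref{lem:new-arrows} apply in this graded setting. For a graded almost split sequence entirely supported in \([0,m]\), the restricted sequence is the same sequence, so the arrows ending at its end term agree with the graded ones.

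\emph{Counting arrows.} Let \(w=\max_X w_X\). The graded almost split sequences ending at \(\widetilde Y\langle t\rangle\) have all terms supported within a window \([t-C,t+w]\), for a constant \(C\) independent of \(t\). So for \(t\in[C,m-w]\) the sequence lies in the interval, and the arrows of \(\Gamma_{R_m}\) ending at \(\widetilde Y\langle t\rangle\) are exactly \(\dim\Irr_R\)-many. By the identity above, summing these over \(Y\) gives \(a(R)\) per degree, so \(a(R_m)=(m+1)a(R)+O(1)\). The \(O(1)\) term collects the arrows at the boundary objects, those with \(t<C\) or \(t>m-w\). There are boundedly many such objects, and each has at most \(\max_Y|\ind R|\cdot\dim\) incoming arrows, which is bounded independently of \(m\) because, by Lemma~\ref{lem:new-arrows}, the restricted middle terms \(i^!\theta\) have bounded size. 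Similarly, \eqref{eq:interval-objects} gives \(|\ind R_m|=(m+1)|\ind R|+O(1)\), and \(|R_m|=(m+1)|R|\). Then \eqref{eq:count} yields \eqref{eq:interval-surplus}.

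The main obstacle is the graded lifting of almost split sequences, specifically that every irreducible map of \(\mod R\) is accounted for by graded irreducible maps between shifts. I would first try to cite Gordon--Green's results that \(\operatorname{grmod}R\) has almost split sequences compatible with forgetting the grading. Failing that, I would prove the decomposition of \(\Irr\) directly: \(\Hom_R(\widetilde X,\widetilde Y)=\bigoplus_d\Hom_{\mathrm{gr}}(\widetilde X,\widetilde Y\langle -d\rangle)\), and the radical and its square respect this decomposition, which requires \(\End\) to be local and graded. A secondary point is the bound on arrows at boundary objects, which only needs a crude uniform bound.
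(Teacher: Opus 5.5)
Your proposal is correct, and its skeleton is the paper's. You count shifts for (1), identify \(\bigoplus_s\Irr_{\operatorname{grmod}R}(\widetilde X\langle s\rangle,\widetilde Y\langle t\rangle)\) with \(\Irr_R(X,Y)\), and show that targets \(\widetilde Y\langle t\rangle\) away from both ends of \([0,m]\) have indegree \(\indeg_R(Y)\) in \(\mod R_m\). You then bound the remaining boundedly many targets and substitute into \eqref{eq:count}.

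The mechanism differs. The paper does not use graded almost split sequences here. It shows directly that \(\rad_R\) and \(\rad_R^2\) split into homogeneous components, using local endomorphism rings and the gradability of intermediate objects. For \(b\leq t\leq m-2b\) it observes that every indecomposable graded module with a nonzero map to \(\widetilde Y\langle t\rangle\) is supported in \([0,m]\). Hence the radical and its square at that target are the same in \(\mod R_m\) as in \(\operatorname{grmod}R\).

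You instead use Gordon--Green's graded almost split sequences, which the paper invokes only later, in Proposition~\ref{prop:ar-transfer}. For interior \(t\) the graded sequence ending at \(\widetilde Y\langle t\rangle\) lies in \(\mod R_m\), so it is almost split there, and its middle term forgets to \(\theta_R Y\). With that citation your route is shorter. The paper's is more self-contained and treats projective targets with no extra remark.

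A few details need fixing, none serious.
\begin{itemize}
\item The identity should read \(\dim_k\Irr_R(X,Y)=\sum_s\dim_k\Irr_{\operatorname{grmod}R}(\widetilde X\langle s\rangle,\widetilde Y\langle t\rangle)\) for each fixed \(t\).
\item The upper end \(t+w\) of your window is not justified. What you need is a window \([t-C,t+C']\) with constants independent of \(t\). This holds because the sequence ending at \(\widetilde Y\langle t\rangle\) is the \(t\)-shift of the one ending at \(\widetilde Y\).
\item Describing the middle term in \(\mod R_m\) as \(i^!\theta\) is Proposition~\ref{lem:relative-ar}, not Lemma~\ref{lem:new-arrows}. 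It also needs a finite-dimensional ambient algebra, such as a longer interval algebra, rather than \(\operatorname{grmod}R\).
\item For the boundary you can skip this entirely. Every indecomposable \(R_m\)-module is a shift of some \(\widetilde X\). So middle terms and radicals of indecomposable projectives in \(\mod R_m\) have dimension at most \(2\max_X\dim_k X\), which bounds all indegrees uniformly. The paper uses the bound \(\sum_X\dim_k\Hom_R(X,Y)\) instead.
\end{itemize}
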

\begin{proof}
(1) The shifts of \(\widetilde X\) in the interval are precisely
\(\widetilde X\langle t\rangle\) with \(0\leq t\leq m-w_X\).
Proposition~\ref{prop:interval-modules} proves \eqref{eq:interval-objects}.

(2) Choose \(b\geq1\) bounding all \(w_X\). For \(X,Y\in\ind R\), every
map \(X\to Y\) is a sum of maps homogeneous with respect to the gradings
of \(\widetilde X\) and \(\widetilde Y\), whose degrees lie in \([-b,b]\)
because both lifts are supported in \([0,b]\).

We first compare irreducible maps in \(\mod R\) and in
\(\operatorname{grmod}R\). Fix \(t\in\mathbb Z\). A homogeneous map
\(X\to Y\) of degree \(d\) is a map of degree zero
\(\widetilde X\langle t+d\rangle\to \widetilde Y\langle t\rangle\), so decomposing maps into
homogeneous components gives (cf. \cite[Lemma~2.1]{GG1})
\begin{equation}\label{eq:graded-hom-sum}
 \Hom_R(X,Y)=\bigoplus_{s\in\mathbb Z}
 \Hom_{\mathrm{gr}}(\widetilde X\langle s\rangle,\widetilde Y\langle t\rangle).
\end{equation}
For distinct underlying indecomposables every map is radical. For an
endomorphism, a homogeneous component of nonzero degree is nilpotent, while
its component of degree zero is radical exactly when its image in the residue
field \(k\) is zero. Hence the radical of \(\mod R\) between the underlying
modules of graded lifts is a graded subspace, that is, it contains the
homogeneous components of its elements. A map of degree zero between graded
indecomposables is an isomorphism in \(\operatorname{grmod}R\) exactly
when its underlying map is an isomorphism, so the radical of
\(\operatorname{grmod}R\) consists of the maps of degree zero in the radical
of \(\mod R\). Therefore, under \eqref{eq:graded-hom-sum}, the space
\(\rad_R(X,Y)\) is the direct sum of the spaces
\(\rad_{\operatorname{grmod}R}(\widetilde X\langle s\rangle,\widetilde Y\langle t\rangle)\). A
radical composite factors through a finite direct sum of indecomposable
\(R\)-modules, each of which has a graded lift by
Proposition~\ref{prop:interval-modules}. Composites of homogeneous maps are
homogeneous, with degrees adding, so decomposing both factors into
homogeneous parts shows that the same holds for the radical squares
\(\rad_R^2\) and \(\rad_{\operatorname{grmod}R}^2\).
Consequently (cf. \cite[Theorem~3.5]{GG2})
\begin{equation}\label{eq:graded-irreducible}
 \bigoplus_{s\in\mathbb Z}
 \Irr_{\operatorname{grmod}R}(\widetilde X\langle s\rangle,\widetilde Y\langle t\rangle)
 \cong\Irr_R(X,Y).
\end{equation}

We now restrict to the interval \([0,m]\) with \(m\geq3b\). Fix \(Y\) and a
shift \(t\) with \(b\leq t\leq m-2b\). By \eqref{eq:graded-hom-sum} and
the bound on degrees, a nonzero map \(\widetilde Z\langle u\rangle\to \widetilde Y\langle t\rangle\) has
\(u\in[t-b,t+b]\), so its source is supported in
\([t-b,t+2b]\subseteq[0,m]\). In a radical factorization of a map into
\(\widetilde Y\langle t\rangle\), we may omit the composites with a zero factor;
each remaining intermediate object has a nonzero map to
\(\widetilde Y\langle t\rangle\), so it is also supported in \([0,m]\). Since
\(\mod R_m\) is a full subcategory of \(\operatorname{grmod}R\), its
radical and radical square at the target \(\widetilde Y\langle t\rangle\) therefore
agree with those of \(\operatorname{grmod}R\). By
\eqref{eq:graded-irreducible}, the indegree of \(\widetilde Y\langle t\rangle\) in
\(\mod R_m\) is \(\operatorname{indeg}_R(Y)\). There are \(m-3b+1\) such
shifts for each \(Y\), and \(\sum_Y\operatorname{indeg}_R(Y)=a(R)\), so
these targets contribute \((m-3b+1)a(R)\) arrows.

For each \(Y\), at most \(3b\) of the shifts \(t\) with
\(\widetilde Y\langle t\rangle\) supported in \([0,m]\) lie outside \([b,m-2b]\).
At such a target, every irreducible space is a subquotient of a Hom space,
and by \eqref{eq:graded-hom-sum} the total incoming Hom dimension is at
most \(\sum_X\dim_k\Hom_R(X,Y)\), independently of \(t\) and \(m\). These
targets therefore contribute \(O(1)\) arrows, which proves
\eqref{eq:interval-arrows}. Finally, Lemma~\ref{lem:count} gives
\(\sigma(R_m)=2(|\ind R_m|-|R_m|)-a(R_m)\). Substituting
\eqref{eq:interval-objects}, \(|R_m|=(m+1)|R|\) from
Proposition~\ref{prop:interval-modules}, and
\eqref{eq:interval-arrows} proves \eqref{eq:interval-surplus}.
\end{proof}

\subsection{Positive gradings of Hom spaces}\label{subsec:graded-hom}
Interval algebras need not be representation-directed. The following
grading of the Hom spaces between indecomposable modules ensures that
they are.
\begin{definition}\label{def:positive-hom-grading}
A \emph{positive Hom grading} on \(\ind R\) is a decomposition
\[
 \Hom_R(X,Y)=\bigoplus_{d\geq0}\Hom^{(d)}(X,Y),\qquad
 \Hom^{(0)}(X,Y)=\begin{cases}k1_X&X=Y,\\0&X\ne Y,\end{cases}
\]
such that composition adds degrees.
\end{definition}

\begin{proposition}\label{prop:graded-classification}
Let \(R\) be basic and representation-finite with a positive Hom grading.
\begin{enumerate}
\item The grading induces a nonnegative grading on \(R=\End_R(R)\) and
graded lifts
\[
 M_X=\Hom_R(R,X),\qquad
 (M_X)_d=\bigoplus_i\Hom^{(d)}(e_iR,X).
\]
\item For these lifts,
\begin{equation}\label{eq:graded-hom}
 \Hom_{\mathrm{gr}}(M_X\langle s\rangle,M_Y\langle t\rangle)
 =\Hom^{(s-t)}(X,Y).
\end{equation}
\item Every graded indecomposable is a shift of one of the \(M_X\), and
\(\operatorname{grmod}R\) and every interval algebra \(R_m\) are directed.
\end{enumerate}
\end{proposition}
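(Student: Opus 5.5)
We work throughout with the positive Hom grading on \(\ind R\), extended additively to all modules: \(\Hom(X,Y)\) for decomposable \(X,Y\) is graded blockwise through a fixed decomposition into the chosen indecomposables. The plan is to transport everything through the functor \(\Hom_R(R,-)\), which is an equivalence \(\mod R\to\mod\End_R(R)\) once modules are written as \(\Hom_R(R,X)\).

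\textbf{Part (1).} Since \(R\) is basic, \(R=\bigoplus_i e_iR\) with the \(e_iR\) pairwise nonisomorphic indecomposables, which we may take among the chosen representatives. Then
\[
 \End_R(R)=\bigoplus_{i,j}\Hom(e_jR,e_iR)
\]
inherits a grading from the Hom grading. It is nonnegative because all degrees are \(\geq0\), and multiplicative because composition adds degrees; we identify \(R\cong\End_R(R)\) by left multiplication. For any \(X\), the module \(M_X=\Hom_R(R,X)\) is a right \(\End_R(R)\)-module via precomposition. Precomposing a degree-\(d\) map \(e_iR\to X\) with a degree-\(d'\) map \(e_jR\to e_iR\) gives a map of degree \(d+d'\), so \((M_X)_d\) as defined is a grading compatible with the action. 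Its underlying module is \(X\cong\Hom_R(R,X)\), so \(M_X\) is a graded lift of \(X\).

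\textbf{Part (2).} A degree-zero graded map \(M_X\langle s\rangle\to M_Y\langle t\rangle\) is a module map \(\Hom(R,X)\to\Hom(R,Y)\) that shifts degree by \(s-t\). By Yoneda, every module map is \(f\circ-\) for a unique \(f\in\Hom_R(X,Y)\). We decompose \(f=\sum_d f_d\) into homogeneous components; then \(f\circ-\) sends \((M_X)_n\) into \(\bigoplus_d (M_Y)_{n+d}\), with \(f_d\circ-\) as the degree-\(d\) part. Thus \(f\circ-\) has pure degree \(s-t\) exactly when \(f_d\circ-=0\) for \(d\ne s-t\). Evaluating at the identity-type elements shows that \(f_d\circ-=0\) forces \(f_d=0\): \(X\) is a quotient of \(R^n\), and choosing a degree-\(0\) surjection, built from the components \(k1\) in \(\Hom^{(0)}\), keeps degrees intact. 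This is the one step where care is needed, and it gives \eqref{eq:graded-hom}.

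\textbf{Part (3).} By Gordon--Green (\cite[Theorems~3.2 and~4.1]{GG1}, as used in Proposition~\ref{prop:interval-modules}), every graded indecomposable is a lift of some \(X\in\ind R\), and lifts are unique up to shift; hence every graded indecomposable is a shift of \(M_X\). For directedness, consider a nonzero nonisomorphism \(M_X\langle s\rangle\to M_Y\langle t\rangle\) between indecomposables. By (2) it lies in \(\Hom^{(s-t)}(X,Y)\). If \(s=t\), then \(\Hom^{(0)}\) vanishes unless \(X=Y\), in which case the map is a scalar multiple of the identity and hence an isomorphism. So every nonzero nonisomorphism has \(s-t>0\), the shift strictly decreases along it, and no cycle can occur. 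The algebra \(R_m\) is directed because \(\mod R_m\) is a full subcategory of \(\operatorname{grmod}R\) by Proposition~\ref{prop:interval-modules}.

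We expect the main obstacle to be the bookkeeping in (2): homogeneity of \(f\circ-\) must be shown to force homogeneity of \(f\) itself, and this is what the degree-zero surjection argument handles.
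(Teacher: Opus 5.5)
Your proposal follows the paper's proof essentially step for step. You grade \(R=\End_R(R)\) blockwise, identify degree-zero maps \(M_X\langle s\rangle\to M_Y\langle t\rangle\) with postcompositions by maps \(X\to Y\) split into homogeneous components, and obtain directedness from the strict decrease of the shift along nonzero nonisomorphisms, passing to \(R_m\) by fullness.

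The step in (2) that you single out as delicate is, however, justified incorrectly. A degree-zero surjection \(R^n\to X\) exists only when \(X\) is one of the \(e_iR\): otherwise \(\Hom^{(0)}(e_iR,X)=0\) for all \(i\), so \((M_X)_0=0\) and there are no ``identity-type'' elements to evaluate at. No degree bookkeeping is needed. The map \(f_d\circ-\) vanishes on all of \(M_X\), not merely on one graded piece, so \(f_d\circ g=0\) for every \(g\colon R\to X\). Taking \(g\) to be \(u\mapsto xu\) gives \(f_d(x)=0\). Equivalently, this is the uniqueness of \(f\) already asserted in your Yoneda step. With that replacement your argument is complete.
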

\begin{proof}
(1) The grading extends to finite direct sums, hence to
\(\End_R(\bigoplus_i e_iR)=R\), where \(a\in R\) acts on \(R_R\) by left
multiplication. Evaluation at 1 identifies \(\Hom_R(R,X)\) with \(X\), with
right action \((f\cdot a)(u)=f(au)\) for \(u\in R\).

(2) A module map \(M_X\langle s\rangle\to M_Y\langle t\rangle\) is
postcomposition by a map \(X\to Y\). Decomposing it into homogeneous components
proves \eqref{eq:graded-hom}.

(3) Proposition~\ref{prop:interval-modules}
gives the classification up to shift. Each graded indecomposable has
endomorphism ring \(k\), and a nonzero map between distinct graded
indecomposables strictly decreases the shift. There can be no cycle.
Fullness of the interval subcategory proves the last assertion.
\end{proof}

\subsection{Standard forms and the lower bound}\label{subsec:graded}
Proposition~\ref{prop:graded-classification} requires a positive Hom
grading. For a basic representation-finite algebra \(A\), we obtain one
on a standard form of \(A\): an algebra with the same Auslander--Reiten
quiver, whose category of indecomposable modules is the mesh category of
that quiver (Definition~\ref{def:mesh}).

\begin{theorem}[{\cite[Sections~2.4, 2.8, 2.9, and~5.1]{BG};
\cite[Section~3.1]{BrG}}]\label{thm:standard}
Let \(A\) be a basic representation-finite algebra. There is a basic
representation-finite algebra \(S\) with the following properties.
\begin{enumerate}
\item The Auslander--Reiten quivers of \(A\) and \(S\)
are isomorphic, preserving arrow multiplicities, translation, and
projective and injective vertices.
\item For a suitable mesh presentation of this quiver, the category of
indecomposable \(S\)-modules is \(k\)-linearly equivalent to its mesh
category, identifying vertices
with the corresponding modules.
\end{enumerate}
\end{theorem}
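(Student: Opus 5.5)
This theorem is cited from Bautista--Gabriel and Bretscher--Gabriel, so the plan is to assemble it from the theory of coverings of the Auslander--Reiten quiver rather than prove it from scratch. Put \(\Gamma=\Gamma_A\), a finite connected translation quiver (per block). For a representation-finite algebra over an algebraically closed field, Riedtmann's theory gives a universal covering \(\widetilde\Gamma\to\Gamma\) of translation quivers. Here \(\widetilde\Gamma\) is simply connected and has a fundamental group \(G\) acting freely.

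\textbf{Step 1.} The mesh category \(k(\widetilde\Gamma)\) is equivalent to the category of indecomposables of a locally representation-finite, simply connected locally bounded category \(\widetilde A\); this is the standard step (Bautista--Gabriel, Sections~2.4 and~2.8). The category \(\ind A\) is a quotient of it. Define \(S\) as the basic algebra whose indecomposable projectives are the projective vertices of \(k(\widetilde\Gamma)/G\), with endomorphism algebra taken in \(k(\Gamma)=k(\widetilde\Gamma)/G\) for a mesh presentation lifted from \(\widetilde\Gamma\). Equivalently, \(S\) is the orbit algebra \(\widetilde A/G\), built from the full subcategory on projective vertices.

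\textbf{Step 2.} Gabriel's covering theorem says the push-down functor \(\mod\widetilde A\to\mod S\) is dense, preserves almost split sequences, and induces \(\Gamma_S\cong\widetilde\Gamma/G\cong\Gamma\). This isomorphism preserves translation, arrow multiplicities, and projective and injective vertices, which gives~(1). The same push-down identifies \(\ind S\) with \(k(\widetilde\Gamma)/G=k(\Gamma)\), which gives~(2) (Bautista--Gabriel 2.9 and 5.1). Bretscher--Gabriel, Section~3.1, supplies the choice of mesh presentation. That choice is needed when \(\Gamma\) has double arrows or loops-free multiplicities, so that the relations become exactly mesh relations.

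\textbf{Main obstacle.} The delicate point is the identification \(\ind A\) versus \(\ind S\). In characteristic two, \(A\) may be nonstandard, so \(\ind A\not\simeq k(\Gamma)\). That is why one passes to \(S\), which is standard by construction, and only uses that \(A\) and \(S\) share \(\Gamma\). Justifying that \(\Gamma_A\) admits a well-behaved universal cover whose mesh category is locally representation-finite relies on the finiteness and connectedness of \(\Gamma_A\) (Auslander's theorem) and on Riedtmann's structure theorem. I would quote these steps rather than reprove them.
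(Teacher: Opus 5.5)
Your outline is correct and takes essentially the paper's route. The paper gives no independent argument here; it cites exactly this covering-theoretic construction: the universal cover $\widetilde\Gamma\to\Gamma_A$ with group $G$, the locally representation-finite category $\widetilde A$ on the projective vertices of $k(\widetilde\Gamma)$ with $\ind\widetilde A\simeq k(\widetilde\Gamma)$, the standard form $S=\widetilde A/G$, and Gabriel's push-down, which gives $\Gamma_S\cong\widetilde\Gamma/G\cong\Gamma_A$ and $\ind S\simeq k(\widetilde\Gamma)/G\cong k(\Gamma_A)$. The only slip is bibliographic: the covering-space paper cited as [BG] is presumably Bongartz and Gabriel's, not a Bautista--Gabriel paper.
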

We call any such algebra \(S\) a \emph{standard form} of \(A\).
We use path length in a mesh presentation to grade the Hom spaces.
\begin{construction}\label{con:hom-grading}
For a basic representation-finite algebra \(A\), choose a standard form
\(S\), and a mesh presentation and an equivalence as in
Theorem~\ref{thm:standard}.
For \(X,Y\in\ind S\), let \(\Hom^{(d)}(X,Y)\) be the image under this
equivalence of the subspace spanned by paths of length \(d\).
Put \(\Hom^{(d)}(X,Y)=0\) for \(d<0\).
\end{construction}

The following lemma records the properties of these subspaces and the
numerical comparison with \(A\); cf. \cite[Section~5.1]{BG}.
\begin{lemma}\label{lem:standard-grading}
\begin{enumerate}
\item The subspaces in Construction~\ref{con:hom-grading} give a grading
\begin{equation}\label{eq:hom-grading}
 \Hom_S(X,Y)=\bigoplus_{d\geq0}\Hom^{(d)}(X,Y),\qquad
 \Hom^{(0)}(X,Y)=
 \begin{cases}k1_X&X=Y,\\0&X\ne Y,\end{cases}
\end{equation}
with composition adding degrees. The part of positive degree is the radical,
and the components of degree at least two give its square. In particular,
\(\Hom^{(1)}(X,Y)\cong\Irr_S(X,Y)\).
\item One has \(|S|=|A|\) and
\begin{equation}\label{eq:standard-surplus}
 \sigma(S)=\sigma(A).
\end{equation}
\end{enumerate}
\end{lemma}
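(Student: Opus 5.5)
The plan is to read everything off the mesh presentation of Theorem~\ref{thm:standard}(2), and then to compare \(S\) with \(A\) through the isomorphism of Auslander--Reiten quivers in Theorem~\ref{thm:standard}(1).

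For part (1), the mesh relations are homogeneous with respect to path length: each relation \(\sum a_{Y,j}b_{Y,j}\) is a sum of paths of length two. Hence the ideal they generate in the path category is spanned by homogeneous elements. The path-length grading of the path category therefore descends to the mesh category, and transporting it along the equivalence gives the decomposition \eqref{eq:hom-grading}, with composition adding degrees. In degree zero only the stationary paths survive, because the mesh ideal lies in degrees at least two. This gives \(k1_X\) for \(X=Y\) and \(0\) otherwise, since the quiver has no loops. Next I identify the radical with the positive part. Indecomposables of \(S\) have local endomorphism rings, and the degree-zero part is \(k1_X\). The positive part is an ideal whose elements are nilpotent on endomorphism rings: powers raise the degree, and Hom spaces are finite-dimensional. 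So it is contained in the radical. It is also an ideal with quotient \(k\) at each object and zero between distinct objects, which forces equality with the radical. Since degrees add under composition, the radical square is spanned by composites of paths of length at least one. It is therefore contained in the part of degree at least two. Conversely, every path of length \(d\geq2\) is a composite of two nonempty paths, so the two coincide. Taking the quotient gives \(\Hom^{(1)}(X,Y)\cong\rad/\rad^2=\Irr_S(X,Y)\).

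For part (2), the isomorphism in Theorem~\ref{thm:standard}(1) preserves vertices, arrow multiplicities, and projective vertices. This gives \(|\ind S|=|\ind A|\), \(a(S)=a(A)\), and equal numbers of indecomposable projectives, which count the simples since both algebras are basic. Hence \(|S|=|A|\). By \eqref{eq:count}, \(\sigma\) depends only on these three numbers, so \(\sigma(S)=\sigma(A)\).

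The main obstacle is the identification of the radical with the positive-degree part. In particular one must check that \(\Irr_S\) computed in \(\mod S\) matches the arrows of the quiver; this is needed to know that the arrow count in \(\Gamma_S\) equals the count in the mesh quiver. This matching is built into Theorem~\ref{thm:standard}(1), since the mesh quiver is \(\Gamma_S\) itself. So the only real check is that the mesh ideal lives in degree \(\geq2\), so that \(\Hom^{(1)}\) is exactly spanned by the arrows. Here I will use the fact that the sum in each mesh relation runs over length-two paths only, and that no relation has a length-one term.
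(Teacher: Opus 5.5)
Your proposal is correct and follows the paper's proof. The degree-two mesh relations are homogeneous, which gives the grading. The positive part is then identified with the radical: you use elementwise nilpotence in endomorphism rings together with codimension one, while the paper uses nilpotence of the positive ideal and semisimplicity of the quotient, and the two arguments are equivalent. Paths of length at least two give the radical square, and part (2) follows from the quiver isomorphism of Theorem~\ref{thm:standard}(1) and \eqref{eq:count}. The appeal to the absence of loops and to basicness is unnecessary but harmless, since length-zero paths exist only at \(X=Y\) and \(|A|\) counts indecomposable projectives for any algebra.
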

\begin{proof}
(1) The mesh relations are homogeneous of degree two, so the path spaces
give the grading \eqref{eq:hom-grading}. The part of positive degree is an
ideal with quotient \(\prod_X k1_X\), which is semisimple. It is
nilpotent: there are finitely many finite-dimensional Hom spaces, so their
degrees are bounded, and a composite of more maps of positive degree than
this bound vanishes. It is therefore the radical. Every path of length at least two is a product of arrows, so
those components give the radical square and the components of degree one
give the irreducible spaces.

(2) Theorem~\ref{thm:standard}(1) preserves the number of indecomposables,
arrows, and indecomposable projectives; in particular \(|S|=|A|\). Lemma~\ref{lem:count} then gives
\eqref{eq:standard-surplus}.
\end{proof}

By Lemma~\ref{lem:standard-grading}(1), the grading \eqref{eq:hom-grading}
is a positive Hom grading in the sense of
Definition~\ref{def:positive-hom-grading}, so
Proposition~\ref{prop:graded-classification} applies to \(R=S\).
\begin{setting}\label{set:grading}
For the remainder of this section and Section~\ref{sec:equality}, choose a standard
form \(S\) of a basic representation-finite algebra \(A\), and fix the grading
\eqref{eq:hom-grading}. Proposition~\ref{prop:graded-classification} gives
\(S\) a nonnegative grading and the graded lifts \(M_X\) of its part~(1).
By Lemma~\ref{lem:standard-grading}(1), \(S^{(0)}=\bigoplus_i ke_i\) for the
primitive idempotents \(e_i\).
There are finitely many finite-dimensional Hom spaces, so we may choose
an integer \(b\geq1\) bounding all nonzero degrees in
\eqref{eq:hom-grading}. In particular \(S^{(d)}=0\)
for \(d>b\).
\end{setting}

By Propositions~\ref{prop:interval-modules}
and~\ref{prop:graded-classification}, all interval algebras \(S_m\) are
representation-finite and representation-directed.

For the dual numbers we can describe the grading and all the interval
algebras explicitly.
\begin{example}\label{ex:dual-numbers}
Let \(S=k[t]/(t^2)\). Its indecomposables are \(P=S\) and \(U=S/(t)\),
with irreducible maps the quotient \(\pi:P\to U\) and the socle inclusion
\(\iota:U\to P\). The cycle \(P\to U\to P\) shows that \(S\) is
not representation-directed, and Lemma~\ref{lem:count} gives
\(\sigma(S)=2(2-1)-2=0\).
Every morphism is a linear combination of identities,
\(\pi\), \(\iota\), and \(\iota\pi\), subject to the mesh relation
\(\pi\iota=0\). Thus the indecomposable category is its mesh category,
and \(S\) may be taken as its own standard form. Since \(\iota\pi\) is
multiplication by \(t\), path length gives \(t\) degree two. Accordingly, \(M_P\) has one-dimensional components in degrees
zero and two, while \(M_U\) is concentrated in degree one.

For \(m\geq2\), let \(Q_m\) be the quiver with vertices
\(0,\ldots,m\) and arrows \(r\to r+2\) for \(0\leq r\leq m-2\). Then
\(S_m\) is the opposite of \(kQ_m\) modulo all paths of length two, so
right \(S_m\)-modules are representations of \(Q_m\), with arrows
\(r\to r+2\), in which all composites of two arrows vanish. Its indecomposables are the \(m+1\)
simples and the \(m-1\) shifts of \(M_P\) supported in degrees \(r\) and
\(r+2\), for \(0\leq r\leq m-2\). Each nonsimple indecomposable has an irreducible map
from its socle and one to its top. Hence
\[
 |S_m|=m+1,\qquad |\ind S_m|=2m,\qquad a(S_m)=2(m-1),
 \qquad\sigma(S_m)=0.
\]
\end{example}
\begin{theorem}\label{thm:lower-bound}
Every representation-finite algebra \(A\) satisfies
\(\chi(\mod A)\geq|A|\).
\end{theorem}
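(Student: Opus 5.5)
The plan is to reduce to the basic case and then pass to a standard form, where the representation-directed result applies to the interval algebras. By Lemma~\ref{lem:invariance}, magnitude, \(|A|\), and \(\sigma(A)\) are Morita invariant, so we may assume that \(A\) is basic. We must show \(\sigma(A)\geq0\), since \(\sigma(A)=\chi(\mod A)-|A|\).

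Choose a standard form \(S\) of \(A\) as in Setting~\ref{set:grading}. By Lemma~\ref{lem:standard-grading}(2), \(\sigma(S)=\sigma(A)\), so it suffices to prove \(\sigma(S)\geq0\). The grading \eqref{eq:hom-grading} is a positive Hom grading. Proposition~\ref{prop:graded-classification} therefore makes \(S\) a nonnegatively graded basic representation-finite algebra whose interval algebras \(S_m\) are all directed. By Proposition~\ref{prop:interval-modules}, each \(S_m\) is representation-finite, so Corollary~\ref{cor:directed} gives \(\sigma(S_m)\geq0\) for every \(m\geq0\).

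Next apply Theorem~\ref{prop:interval-count}(2) to \(R=S\), which is legitimate because \(S\) is nonnegatively graded and representation-finite. It gives \(\sigma(S_m)=(m+1)\sigma(S)+O(1)\) as \(m\to\infty\). Dividing by \(m+1\) and letting \(m\to\infty\) yields
\[
 \sigma(S)=\lim_{m\to\infty}\frac{\sigma(S_m)}{m+1}\geq0.
\]
Hence \(\sigma(A)=\sigma(S)\geq0\), that is, \(\chi(\mod A)\geq|A|\).

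All of the substantive work sits in the earlier results. These are the deletion inequality behind Corollary~\ref{cor:directed}, the linear growth estimate of Theorem~\ref{prop:interval-count}, and the existence of standard forms. The one point to check here is that the hypotheses line up. \(S\) must be basic so that Proposition~\ref{prop:graded-classification} applies, and this is guaranteed by Theorem~\ref{thm:standard}. The \(S_m\) must be both representation-finite and directed, which follows from Propositions~\ref{prop:interval-modules} and~\ref{prop:graded-classification}(3). I expect no real obstacle beyond this bookkeeping.
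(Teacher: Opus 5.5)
Your proposal is correct and follows essentially the same argument as the paper's proof. Both reduce to the basic case, pass to a standard form \(S\) with \(\sigma(S)=\sigma(A)\), obtain \(\sigma(S_m)\geq0\) from Corollary~\ref{cor:directed} for the representation-finite, representation-directed interval algebras, and take the limit of \(\sigma(S_m)/(m+1)\) using Theorem~\ref{prop:interval-count}(2).
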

\begin{proof}
By Lemma~\ref{lem:invariance}, we may assume that \(A\) is basic.
Choose a standard form \(S\).
Since the interval algebras \(S_m\) are representation-finite and
representation-directed, Corollary~\ref{cor:directed} gives
\(\sigma(S_m)\geq0\) for every \(m\). Divide
\eqref{eq:interval-surplus} by \(m+1\) and let \(m\to\infty\).
This gives \(\sigma(S)\geq0\), and \eqref{eq:standard-surplus} gives
\(\sigma(A)\geq0\).
\end{proof}

\section{Equality in the lower bound and special biserial algebras}\label{sec:equality}
Theorem~\ref{thm:lower-bound} proves \(\chi(\mod A)\geq|A|\), but its
asymptotic argument does not by itself determine when equality holds. We first bound, on each finite interval, a nonnegative invariant of
interval algebras that does not increase under primitive deletion. We then
show that the almost split sequences of a graded algebra occur in long
interval algebras.

\subsection{Monotone invariants of interval algebras}\label{subsec:interval-equality}
The following lemma applies to any invariant with properties (a)--(c)
below; we use it for the
surplus in Proposition~\ref{prop:interval-equality}. For an interval
algebra \(R_m\), call the algebras \(R_m/R_m\eta R_m\), for
idempotents \(\eta\) of the diagonal subalgebra
\((R^{(0)})^{m+1}\), its \emph{idempotent quotients}.
\begin{lemma}\label{lem:separated-intervals}
Let \(R\) be a nonnegatively graded finite-dimensional algebra with
\(R^{(d)}=0\) for \(d>b\). Let \(f\) be a real-valued invariant defined
on the idempotent quotients of the interval algebras of \(R\) and on
finite products of these algebras, and let \(\lambda\) be a real number,
with the following properties.
\begin{enumerate}[label={\upshape(\alph*)}]
\item \(f\) is additive on products.
\item For every idempotent quotient \(C\) of an interval algebra and every
primitive idempotent \(\varepsilon\) of the image in \(C\) of the diagonal
subalgebra, \(f(C/C\varepsilon C)\leq f(C)\).
\item \(f(R_m)\geq0\) for every \(m\), and
\(f(R_m)=(m+1)\lambda+o(m)\) as \(m\to\infty\).
\end{enumerate}
Then, for every \(n\geq0\),
\[
 0\leq f(R_n)\leq(n+b+1)\lambda.
\]
In particular, \(\lambda=0\) forces \(f(R_n)=0\) for every \(n\).
\end{lemma}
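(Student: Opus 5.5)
The plan follows the sketch in the introduction: embed many disjoint copies of \(R_n\) into one long interval algebra \(R_M\), separated by gaps of length \(b\), and compare. Fix \(n\) and \(q\geq1\), and put \(M=q(n+b+1)-b-1\). Inside \([0,M]\) place the \(q\) intervals \(J_j=[j(n+b+1),\,j(n+b+1)+n]\) for \(0\leq j<q\). Consecutive intervals are separated by gaps \(G_j\) consisting of \(b\) consecutive degrees. Let \(\eta\) be the sum of the diagonal idempotents \(e_i\) in the blocks indexed by the degrees of the gaps, and let \(C=R_M/R_M\eta R_M\).

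\textbf{Step 1: identify \(C\).} We show \(C\cong\prod_{j}R_n\). The quotient by \(R_M\eta R_M\) kills every block \((R_M)_{rs}\) with \(r\) or \(s\) in a gap. It also kills every product passing through a gap degree. The blocks \((R_M)_{rs}\) with \(r\in J_j\), \(s\in J_{j'}\) and \(j<j'\) have \(s-r>b\), so \(R^{(s-r)}=0\). Hence the surviving blocks are exactly the diagonal ones \(r,s\in J_j\). No element of \(R_M\eta R_M\) lies in such a block, since a path through a gap would have to leave \(J_j\) and return, which is impossible in an upper triangular algebra. Each surviving block algebra is a copy of \(R_n\). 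This identification is routine but must be checked carefully.

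\textbf{Step 2: monotonicity.} Kill the gap idempotents one primitive idempotent at a time. Each intermediate algebra is an idempotent quotient \(C'\) of \(R_M\), and the next primitive idempotent lies in the image of the diagonal subalgebra. Moreover \(C'/C'\varepsilon C'\) is again an idempotent quotient, because \(R_M(\eta'+\varepsilon)R_M\) maps onto \(C'\varepsilon C'\). Property (b) then gives \(f(C)\leq f(R_M)\), and property (a) gives \(f(C)=q\,f(R_n)\). Thus
\[
 q\,f(R_n)\leq f(R_M)=(M+1)\lambda+o(M).
\]

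\textbf{Step 3: pass to the limit.} Divide by \(q\) and let \(q\to\infty\). Since \((M+1)/q\to n+b+1\) and \(o(M)/q\to0\), we get \(f(R_n)\leq(n+b+1)\lambda\). The lower bound \(f(R_n)\geq0\) is part of (c). If \(\lambda=0\), both bounds force \(f(R_n)=0\).

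The main obstacle is Step 1, in particular ensuring that the ideal generated by the gap idempotents contains no nonzero element of a block inside \(J_j\). This holds because such an element would be a sum of products \(x\,e\,y\) with \(e\) at a gap degree \(g\). If \(g\) is outside \(J_j\), then \(x\) or \(y\) would lie in a block going backwards, which is zero, or would pass beyond the interval and not return.
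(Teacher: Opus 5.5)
Your proof is correct and matches the paper's argument: the same \(M=q(n+b+1)-b-1\), the same \(q\) intervals separated by gaps of \(b\) degrees, the identification of the quotient with \(\prod_j R_n\) by degree monotonicity, and the same limit as \(q\to\infty\). Your check that each intermediate quotient is again an idempotent quotient of \(R_M\) (via \(R_M(\eta'+\varepsilon)R_M\)) fills in a step the paper only asserts.
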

\begin{proof}
Fix \(n\), put \(\ell=n+b+1\), and for \(q\geq1\) put
\(M=q\ell-b-1\). Consider the \(q\) intervals
\[
 [j\ell,j\ell+n],\qquad 0\leq j<q,
\]
inside \([0,M]\), and delete the degrees in the gaps between them, that is, pass to
the idempotent quotient by the sum of the diagonal idempotents of these
degrees.
Distinct intervals among them are separated by \(b\) degrees, so all
blocks between them vanish. No map within one of these intervals factors
through a deleted degree, since degrees are monotone under multiplication.
The quotient is therefore \(\prod_{j=0}^{q-1}R_n\).
Deleting the corresponding primitive idempotents of the diagonal
subalgebra one at a time passes through idempotent quotients of \(R_M\).
Monotonicity and additivity therefore give
\[
 qf(R_n)\leq f(R_M)=(q\ell-b)\lambda+o(q).
\]
Divide by \(q\) and let \(q\to\infty\). Nonnegativity gives the other bound.
\end{proof}

We apply this lemma with \(f=\sigma\) to the standard form \(S\) and
the grading in Setting~\ref{set:grading}.
\begin{proposition}\label{prop:interval-equality}
If \(\sigma(S)=0\), then \(\sigma(S_n)=0\) and \(S_n\) is biserial for
every \(n\geq0\).
\end{proposition}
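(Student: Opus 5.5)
The plan is to apply Lemma~\ref{lem:separated-intervals} with \(R=S\), the grading of Setting~\ref{set:grading}, the bound \(b\) chosen there, \(f=\sigma\), and \(\lambda=\sigma(S)=0\). Once \(\sigma(S_n)=0\) is known, biseriality follows from Corollary~\ref{cor:directed}. That corollary applies because each \(S_n\) is basic, representation-finite and representation-directed, by Propositions~\ref{prop:interval-modules} and~\ref{prop:graded-classification}; it gives that every indecomposable \(S_n\)-module is thin and \(S_n\) is biserial. So the work is to check hypotheses (a)--(c).

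For (a), additivity of \(\sigma\) on products is Lemma~\ref{lem:invariance}. For (c), nonnegativity \(\sigma(S_m)\geq0\) is Corollary~\ref{cor:directed}. The asymptotic statement \(\sigma(S_m)=(m+1)\sigma(S)+O(1)\) is Theorem~\ref{prop:interval-count}, and \(O(1)\) is in particular \(o(m)\). Both results require \(S_m\) to be representation-directed, which we already have.

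The main step is (b). Let \(C\) be an idempotent quotient of \(S_m\) and \(\varepsilon\) a primitive idempotent of the image of the diagonal subalgebra. I would proceed as follows.

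\begin{enumerate}
\item Identify \(\mod C\) with the Serre subcategory of \(\mod S_m\) of modules killed by the deleted diagonal idempotents. This inclusion is full and preserves indecomposables and Hom spaces.
\item Conclude, as in Lemma~\ref{lem:quotient-closure}, that \(C\) is representation-finite and representation-directed, since its indecomposables and the maps between them are those of \(S_m\).
\item Note that \(C\) is basic, because \(S^{(0)}=\bigoplus_i ke_i\) makes the diagonal subalgebra of \(S_m\) a product of copies of \(k\). Hence the image of \(\varepsilon\) in \(C\) is a primitive idempotent of the basic algebra \(C\).
\item Apply Proposition~\ref{thm:directed-deletion} to \(C\) and \(\varepsilon\) to get \(\sigma(C/C\varepsilon C)\leq\sigma(C)\).
\end{enumerate}

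Two small points need checking. First, \(\varepsilon\) must be nonzero in \(C\). Second, \(C/C\varepsilon C\) must again be an idempotent quotient of \(S_m\), namely the quotient by the sum of the deleted idempotents and a lift of \(\varepsilon\). Both are immediate from the definitions. Finite products of such algebras need no further monotonicity, since (b) is only asked for single idempotent quotients.

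Lemma~\ref{lem:separated-intervals} then yields \(0\leq\sigma(S_n)\leq(n+b+1)\cdot0\), so \(\sigma(S_n)=0\) for every \(n\geq0\). By Corollary~\ref{cor:directed}, each \(S_n\) is biserial. I expect the only delicate point to be confirming that the idempotent quotients are basic and directed, so that Proposition~\ref{thm:directed-deletion} applies; everything else is bookkeeping.
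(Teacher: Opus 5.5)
Your proposal is correct and matches the paper's proof: apply Lemma~\ref{lem:separated-intervals} with \(f=\sigma\) and \(\lambda=\sigma(S)\), and check its hypotheses. You verify (a) by Lemma~\ref{lem:invariance}, (b) by Proposition~\ref{thm:directed-deletion} once the idempotent quotients are basic, representation-finite and directed, and (c) by Corollary~\ref{cor:directed} and Theorem~\ref{prop:interval-count}, then get biseriality from Corollary~\ref{cor:directed}. One harmless slip is that Theorem~\ref{prop:interval-count} needs only a nonnegatively graded representation-finite algebra, not representation-directedness.
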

\begin{proof}
All the interval algebras are basic, since their diagonal quotients are
products of copies of \(S^{(0)}=\bigoplus_i ke_i\). They are
representation-finite and representation-directed by
Propositions~\ref{prop:interval-modules}
and~\ref{prop:graded-classification},
and so are their idempotent quotients, by repeated application of
Lemma~\ref{lem:quotient-closure}; a primitive deletion of a basic
algebra is basic.
The hypotheses of Lemma~\ref{lem:separated-intervals} hold with
\(\lambda=\sigma(S)\): (a) by Lemma~\ref{lem:invariance},
(b) by Proposition~\ref{thm:directed-deletion}, and (c) by
Corollary~\ref{cor:directed} and Theorem~\ref{prop:interval-count}(2).
That lemma gives \(0\leq\sigma(S_n)\leq(n+b+1)\sigma(S)=0\), and
Corollary~\ref{cor:directed} shows that \(S_n\) is biserial.
\end{proof}

\subsection{Almost split sequences of long interval algebras}\label{subsec:ar-equality}
We first define the number \(\beta(C)\) that appears in the
characterization of biserial algebras of Theorem~\ref{thm:biserial-ar},
then prove that the almost split sequences of a graded algebra occur in
finite intervals.
\begin{definition}\label{def:beta}
For a representation-finite algebra \(C\), let \(\beta(C)\) be the
maximum number of nonprojective indecomposable summands in the middle
term of an almost split sequence of right \(C\)-modules, counting
multiplicities. An empty maximum is zero.
\end{definition}

\begin{theorem}[{\cite[Theorem~4.7]{ARuni}; \cite[p.~172 and Corollary to Theorem~1, p.~173]{SW}}]\label{thm:biserial-ar}
A representation-finite algebra \(C\) is biserial if and only if
\(\beta(C)\leq2\).
\end{theorem}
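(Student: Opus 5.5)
Since the theorem is quoted from \cite{ARuni} and \cite{SW}, I would not reprove it from scratch. Instead I would organize the argument so that each direction reduces to a statement about the functors \(\mathsf P_X\) and \(\mathsf S_X\) on \(\mod C\), or on \(\mod C^{\op}\), as in Section~\ref{sec:ar}. The starting point is Lemma~\ref{lem:tau-multiplicity} and Proposition~\ref{prop:ar-multiplicity}. For a nonprojective indecomposable \(X\), the nonprojective summands of \(\theta_C X\) are exactly the targets of the arrows of \(\Gamma_C\) out of \(\tau_C X\) that are not projective. Equivalently, they are the summands of the top of the functor \(\rad_C(\tau_CX,-)\) modulo maps factoring through projectives. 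By Auslander--Reiten duality (Proposition~\ref{prop:ar-duality}) this functor is the radical of \(\Dk\Ext_C^1(X,-)\) restricted to the stable category. So \(\beta(C)\leq2\) is the statement that for every indecomposable \(X\), the functor \(\rad\Ext_C^1(-,\tau_CX)\), or dually \(\rad\Dk\Ext^1_C(X,-)\), has top of length at most two.

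\textbf{Biserial implies \(\beta(C)\leq2\).} Here I would follow \cite{ARuni}: the radical of an indecomposable projective being a sum of two uniserials whose intersection is simple or zero translates, by Yoneda, into the radical of each indecomposable projective functor \(\mathsf P^{P}\) being a sum of two uniserial functors. The plan is to show this property propagates from the projectives \(e_iC\) to all the functors \(\rad\Ext^1_C(X,-)\). To do this, I would present \(\Ext_C^1(X,-)\) as a quotient of \(\Hom_C(\Omega X,-)\). Then \(\Omega X\subseteq\rad P(X)\), and the biserial decomposition of \(\rad P(X)\) bounds the top of \(\rad\Hom_C(\Omega X,-)\) modulo projectives by two. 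An alternative for the representation-finite case is to invoke Theorem~\ref{thm:biserial-special}, pass to the special biserial presentation, and read off from the string combinatorics of Butler--Ringel that every almost split sequence has at most two indecomposable middle summands besides a possible projective--injective one. I would use whichever is shorter, probably the string route, since Theorem~\ref{thm:biserial-special} is already available.

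\textbf{\(\beta(C)\leq2\) implies biserial.} Fix an indecomposable projective \(P\). If \(P\) is also injective, the almost split sequence \(0\to\rad P\to P\oplus\rad P/\soc P\to P/\soc P\to0\) and \(\beta\leq2\) give that \(\rad P/\soc P\) has at most two summands. One then argues by induction along the radical series, using the sequences starting at the summands, that these summands are uniserial. For general \(P\), I would use representation-finiteness and the arrow-counting description above. Each indecomposable summand of \(\rad P/\rad^2P\) gives an arrow into \(P\). If \(\rad P\) were not a sum of two uniserials, one could find an indecomposable \(X\) such that \(\tau_CX\) has three nonprojective successors in \(\Gamma_C\). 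Typically \(X\) is a suitable quotient of \(\rad P\) by a submodule containing the socle, chosen so that three distinct ``branches'' survive. This would contradict \(\beta\leq2\). The two-sided requirement in Definition~\ref{def:biserial} follows by applying the same argument to \(C^{\op}\), since \(\beta(C^{\op})=\beta(C)\) via the duality \(\Dk\).

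I expect the main obstacle to be this converse direction, namely producing the module \(X\) witnessing three nonprojective middle summands from a failure of biseriality. This is the content of the Corollary to Theorem~1 in \cite{SW}. My first attempt would be the three-uniserial-functors argument of \cite{ARuni}: translate the failure into the statement that the top of \(\rad\mathsf P^{P}\) has three summands that are not killed modulo projectives, and realize the corresponding simple functor's projective resolution as coming from an almost split sequence.
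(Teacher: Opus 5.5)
\textbf{Gap 1: the direction \(\beta(C)\leq2\Rightarrow\) biserial is not argued.} You never produce the module \(X\) with three nonprojective middle summands from a failure of biseriality; you say so yourself. In the projective--injective case, \(\beta\leq2\) only bounds the number of \emph{nonprojective} summands of \(\rad P/\soc P\). It says nothing about their uniseriality, so the ``induction along the radical series'' has nothing to run on. Also, the arrows into \(P\) in \(\Gamma_C\) come from the summands of \(\rad P\), not of \(\rad P/\rad^2P\). This direction is the whole content of \cite[Theorem~4.7]{ARuni}, and the paper simply cites it. You have the attributions reversed: the paper credits \cite{ARuni} with \(\beta(C)\leq2\Rightarrow\) biserial and \cite{SW} with the converse.

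Your functorial starting point is also wrong. First, \(\Dk\Ext_C^1(X,-)\cong\ohom_C(-,\tau_CX)\) is contravariant; it is not \(\rad_C(\tau_CX,-)\) modulo projectives. Second, the latter loses nonprojective middle summands whenever \(\tau_CX\) is projective. In Example~\ref{ex:a3}, \(\tau_AM_{12}=M_{23}\) is projective, so \(\uhom_A(M_{23},-)=0\), yet \(S_2\) is a nonprojective summand of \(\theta_AM_{12}\).

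For biserial \(\Rightarrow\beta(C)\leq2\), the \(\Omega X\) and Yoneda arguments do not work as stated. Your string route is sound, however: Theorem~\ref{thm:biserial-special}, the Butler--Ringel description of almost split sequences over representation-finite special biserial algebras, and the extra sequences \(0\to\rad P\to P\oplus\rad P/\soc P\to P/\soc P\to0\) at projective--injectives. It is a legitimate alternative to quoting \cite{SW} for that direction.

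\textbf{Gap 2: the passage to left modules fails.} The equality \(\beta(C^{\op})=\beta(C)\) is \emph{not} a consequence of \(\Dk\). The duality sends the almost split sequence ending at \(X\) to the one starting at \(\Dk X\), and it exchanges projectives with injectives. So it only identifies \(\beta(C^{\op})\) with the maximal number of \emph{noninjective} middle summands over \(C\).

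The two numbers already differ for \(A=(k(1\to2\leftarrow3))^{\op}\) of Example~\ref{ex:fork-identity}. Over \(A\) the middle terms are \(P_1\oplus P_3\), \(I_2\), \(I_2\), so \(\beta(A)=1\). Over \(A^{\op}\), whose modules are representations of \(1\leftarrow2\to3\), the dual of the sequence ending at \(I_2\) is \(0\to P_2\to I_1\oplus I_3\to S_2\to0\), now of \(A^{\op}\)-modules. It has two nonprojective middle summands, so \(\beta(A^{\op})=2\). Hence your argument does not give the left-module half of Definition~\ref{def:biserial}.

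What you need is that \(\beta(C)\leq2\) forces \(\beta(C^{\op})\leq2\). This is Gabriel's observation recorded in \cite[p.~172]{SW}: \(\beta(C)=\beta(C^{\op})\) whenever one of them is at least three. It is exactly the point the paper's treatment addresses, since \cite{SW} state the equivalence with bounds on both \(\beta(C)\) and \(\beta(C^{\op})\), and the paper uses Gabriel's observation to reduce to the one-sided bound.
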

The implication that \(C\) is biserial if \(\beta(C)\leq2\) is due to Auslander
and Reiten \cite[Theorem~4.7]{ARuni}, and Skowro\'nski and Waschb\"usch
prove the converse \cite[p.~173]{SW}. In \cite{SW} the equivalence is
stated with the bound on both left and right modules, that is, on
\(\beta(C)\) and \(\beta(C^\op)\). By an observation of Gabriel recorded
there \cite[p.~172]{SW}, \(\beta(C)=\beta(C^\op)\) whenever one of them is
at least three, so the bound on \(\beta(C)\) suffices.

\begin{proposition}\label{prop:ar-transfer}
Let \(R\) be a nonnegatively graded representation-finite algebra.
For all sufficiently large \(m\),
\[
 \beta(R)\leq\beta(R_m).
\]
\end{proposition}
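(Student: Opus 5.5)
The plan is to realize a maximizing almost split sequence of \(\mod R\) as an almost split sequence of \(\mod R_m\) placed in the middle of the interval.

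\textbf{Setup.} If \(\beta(R)=0\) there is nothing to prove. Otherwise choose a nonprojective indecomposable \(Y\) whose almost split sequence
\[
0\to\tau_R Y\to\theta_R Y\to Y\to0
\]
has \(\beta(R)\) nonprojective middle summands. Fix graded lifts as in Theorem~\ref{prop:interval-count}, with supports in \([0,b]\). Gordon and Green show that almost split sequences lift to graded almost split sequences \cite[Theorem~3.5]{GG2}. I would rederive what is needed from \eqref{eq:graded-irreducible} and \eqref{eq:graded-hom-sum}, as follows.

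\textbf{Step 1: the graded sequence.} For a shift \(t\), the graded middle term at \(\widetilde Y\langle t\rangle\) should be
\[
\bigoplus_{Z,s}\widetilde Z\langle s\rangle^{\dim\Irr_{\mathrm{gr}}(\widetilde Z\langle s\rangle,\widetilde Y\langle t\rangle)}.
\]
By \eqref{eq:graded-irreducible}, its underlying module is \(\theta_R Y\). The homogeneous components of \(\mu_Y^+\) give a graded right almost split map. Its kernel is graded, with underlying module \(\tau_R Y\), hence a shift of \(\widetilde{\tau_R Y}\). This produces a graded sequence that is almost split in \(\operatorname{grmod}R\): any degree-zero radical map into \(\widetilde Y\langle t\rangle\) is a radical \(R\)-map, so it factors through \(\mu_Y^+\), and taking the degree-zero component of the factorization gives a graded factorization. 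Its shifts of degree are bounded by \(b\), since nonzero maps between lifts have degrees in \([-b,b]\). So all terms are supported in \([t-2b,t+3b]\).

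\textbf{Step 2: it stays almost split in \(\mod R_m\).} Take \(m\geq 6b\) and \(t=3b\), so every term lies in \([0,m]\). Since \(\mod R_m\) is a full, extension-closed subcategory of \(\operatorname{grmod}R\) containing all three terms, the sequence is almost split in \(\mod R_m\). A non-retraction into \(\widetilde Y\langle t\rangle\) inside \(\mod R_m\) is one in \(\operatorname{grmod}R\), so it factors through the middle map, and the factorization is a map in the full subcategory. In particular \(\widetilde Y\langle t\rangle\) is nonprojective in \(\mod R_m\). By Gordon--Green, the middle term has exactly as many indecomposable summands as \(\theta_R Y\).

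\textbf{Step 3: projectivity of the summands (the main obstacle).} It remains to show that a middle summand \(\widetilde Z\langle s\rangle\) with \(Z\) nonprojective over \(R\) is nonprojective in \(\mod R_m\). Projectivity is not inherited naively: a module can become projective in an interval because truncation cuts off its would-be cover. I would argue with the projective cover. The graded projective cover of \(\widetilde Z\langle s\rangle\) in \(\operatorname{grmod}R\) is a sum of shifts of \(e_iR\), each supported within \(b\) degrees of the support of \(\widetilde Z\langle s\rangle\). For \(t=3b\) and \(m\geq 6b\), these covers lie inside \([0,m]\).

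The indecomposable projectives of \(R_m\) are the truncations of \(e_iR\langle r\rangle\) to \([0,m]\). They coincide with the untruncated shifts whenever those are supported in \([0,m]\). So if \(\widetilde Z\langle s\rangle\) were \(R_m\)-projective, it would be such a truncation. Choose \(m\) and \(t\) so that the whole support of \(e_iR\langle r\rangle\) fits in the interval; then \(\widetilde Z\langle s\rangle\) is a shift of \(e_iR\), and so \(Z\) is projective over \(R\), a contradiction.

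Hence the number of nonprojective middle summands in \(\mod R_m\) is at least \(\beta(R)\). Taking \(m\geq 6b\) larger than all bounds involved gives \(\beta(R)\leq\beta(R_m)\).
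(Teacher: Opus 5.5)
Your proof is correct. It differs from the paper's in how it shows that the middle summands stay nonprojective over \(R_m\).

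\textbf{The paper's route.} It cites Gordon--Green (Theorem~3.5 and Lemma~3.6) for graded almost split sequences with bounded support. For each nonprojective middle summand, it also places the graded almost split sequence ending at that summand inside \([0,m]\). The nonsplit epimorphism that sequence provides shows the summand is not projective over \(R_m\).

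\textbf{Your route.} You rebuild the graded sequence from \eqref{eq:graded-irreducible} and the degree bound \([-b,b]\), which gives the explicit choice \(m\geq 6b\), \(t=3b\). You then identify the indecomposable projective \(R_m\)-modules as the truncations of \((e_iR)\langle r\rangle\) to degrees \(\leq m\). A projective middle summand would therefore be an untruncated shift of \(e_iR\), hence projective over \(R\), a contradiction.

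\textbf{What each buys.} Your argument is more self-contained and gives an explicit bound on \(m\). The paper's avoids describing the projectives of \(R_m\), and its only bookkeeping is to fit finitely many supports into one interval.

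\textbf{Points to tighten.}
\begin{enumerate}
\item In Step~1, ``the homogeneous components of \(\mu_Y^+\)'' has no meaning before \(\theta_RY\) is graded. Instead, choose degree-zero representatives of bases of the spaces \(\Irr_{\operatorname{grmod}R}(\widetilde Z\langle s\rangle,\widetilde Y\langle t\rangle)\). By \eqref{eq:graded-irreducible} these give bases of each \(\Irr_R(Z,Y)\), so the resulting map is minimal right almost split in \(\mod R\) by Lemma~\ref{lem:basis-almost-split}. Your degree-zero-component argument then applies to this map.
\item In Step~3, you need the primitive idempotents to lie in \(R^{(0)}\). This holds because the positive part of \(R\) is nilpotent. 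It ensures that the truncation of \((e_iR)\langle r\rangle\) has least degree exactly \(r\).
\item An isomorphism of that truncation with \(\widetilde Z\langle s\rangle\) then forces \(r=s\). Since \(\widetilde Z\langle s\rangle\) has a nonzero degree-zero map to \(\widetilde Y\langle 3b\rangle\), we get \(s\in[2b,4b]\). So \((e_iR)\langle s\rangle\) is supported in \([2b,5b]\subseteq[0,m]\), and \(m\geq 6b\) already suffices; no further enlargement of \(m\) is needed.
\item The remark about graded projective covers is never used and can be dropped.
\end{enumerate}
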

\begin{proof}
By \cite[Theorem~3.5]{GG2}, the category \(\operatorname{grmod}R\) has
almost split sequences, and forgetting the grading sends them to almost
split sequences of \(\mod R\). By Proposition~\ref{prop:interval-modules},
every indecomposable \(R\)-module has a graded lift, so every almost split
sequence of \(\mod R\) arises in this way. A graded almost split sequence
whose terms are supported in \([0,m]\) remains almost split in the full
subcategory of graded modules supported in \([0,m]\), that is, in
\(\mod R_m\); by \cite[Lemma~3.6]{GG2}, each graded almost split sequence
has such a shift for \(m\) large.

For each nonprojective middle summand, take the graded almost split
sequence ending at that particular shift of the summand. It gives a
nonsplit epimorphism onto the summand, so the summand is nonprojective in
every interval containing the support of this sequence. There are only
finitely many almost split sequences and summands to consider. Shift each
almost split sequence of \(R\), together with the sequences ending at its
nonprojective middle summands, by the same integer so that the least degree
occurring in this family is zero, and choose
\(m_0\) such that \([0,m_0]\) contains all their supports. For every \(m\geq m_0\), every nonprojective middle summand
counted over \(R\) is counted over \(R_m\) with the same multiplicity.
Taking the maximum proves the inequality.
\end{proof}

\subsection{Algebras attaining the lower bound}
We now show that an algebra with zero surplus is special biserial, by passing from
the interval algebras of a standard form back to the algebra.

\begin{proposition}\label{prop:equality-forward}
If \(A\) is basic and representation-finite and \(\sigma(A)=0\), then
\(A\) is special biserial.
\end{proposition}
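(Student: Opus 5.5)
The plan is to transfer biseriality from the interval algebras of a standard form back to \(A\) through the invariant \(\beta\) of Definition~\ref{def:beta}. Choose a standard form \(S\) of \(A\) as in Setting~\ref{set:grading}. By Lemma~\ref{lem:standard-grading}(2), \(\sigma(S)=\sigma(A)=0\). Proposition~\ref{prop:interval-equality} then says that every interval algebra \(S_n\) is biserial. Since each \(S_n\) is representation-finite (Proposition~\ref{prop:interval-modules}), Theorem~\ref{thm:biserial-ar} gives \(\beta(S_n)\leq2\) for all \(n\).

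Next apply Proposition~\ref{prop:ar-transfer} to the nonnegatively graded representation-finite algebra \(R=S\). For \(m\) large this gives \(\beta(S)\leq\beta(S_m)\leq2\). Now compare \(S\) with \(A\). By Theorem~\ref{thm:standard}(1), \(\Gamma_A\) and \(\Gamma_S\) are isomorphic as translation quivers, with arrow multiplicities and projective vertices preserved. By Proposition~\ref{prop:ar-multiplicity}, the middle term of the almost split sequence ending at a nonprojective \(X\) has multiplicity \(\dim_k\Irr(Y,X)\) at each \(Y\). This is the number of arrows \(Y\to X\) in the Auslander--Reiten quiver. So the number of nonprojective middle summands, counted with multiplicity, can be read off the quiver with its marked projective vertices. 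Hence \(\beta(A)=\beta(S)\leq2\). Applying Theorem~\ref{thm:biserial-ar} to \(A\) shows that \(A\) is biserial. Since \(A\) is basic and representation-finite, Theorem~\ref{thm:biserial-special} shows that it is special biserial.

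The substantive work is already contained in Propositions~\ref{prop:interval-equality} and~\ref{prop:ar-transfer}, so what remains is bookkeeping. The one point to check is that \(\beta\) is determined by the Auslander--Reiten quiver, including which vertices are projective. This is exactly what the isomorphism of Theorem~\ref{thm:standard}(1) preserves, together with the multiplicity formula \eqref{eq:ar-multiplicity}. A second check is that Theorem~\ref{thm:biserial-ar} needs only right modules. This is covered by the remark after that theorem (Gabriel's observation), so no separate bound for \(A^{\op}\) is required.
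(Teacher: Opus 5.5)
Your proposal is correct and follows the paper's proof essentially step for step: you use a standard form \(S\) with \(\sigma(S)=0\), get \(\beta(S_m)\leq2\) from Proposition~\ref{prop:interval-equality} and Theorem~\ref{thm:biserial-ar}, and transfer the bound to \(S\) by Proposition~\ref{prop:ar-transfer}. You then pass to \(A\) via \(\Gamma_A\cong\Gamma_S\), and conclude with Theorems~\ref{thm:biserial-ar} and~\ref{thm:biserial-special}. Your extra justification that \(\beta\) is determined by the Auslander--Reiten quiver with its projective vertices marked, via \eqref{eq:ar-multiplicity}, is exactly what the paper's appeal to the quiver isomorphism relies on.
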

\begin{proof}
Choose a standard form \(S\) of \(A\). Then \(\sigma(S)=0\) by
\eqref{eq:standard-surplus}. By Proposition~\ref{prop:interval-equality},
\(C=S_m\) is biserial for every \(m\), so Theorem~\ref{thm:biserial-ar} gives
\(\beta(C)\leq2\). Choose \(m\) sufficiently large for Proposition~\ref{prop:ar-transfer}; it gives the same
bound for \(S\). The isomorphism \(\Gamma_A\cong\Gamma_S\) in
Theorem~\ref{thm:standard} preserves projective vertices and arrow
multiplicities, so \(\beta(A)=\beta(S)\leq2\). Another application
of Theorem~\ref{thm:biserial-ar} shows that \(A\) is biserial.
It is therefore special biserial by Theorem~\ref{thm:biserial-special}.
\end{proof}

\begin{proposition}[{\cite[Theorem~3.5]{BHK}}]\label{prop:equality-converse}
If \(A\) is representation-finite and special biserial, then
\(\chi(\mod A)=|A|\).
\end{proposition}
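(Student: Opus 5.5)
Since this is \cite[Theorem~3.5]{BHK}, one option is simply to cite it. To keep the paper self-contained, I would instead prove \(\sigma(A)=0\) with the count \eqref{eq:count}, \(\sigma(A)=2(|\ind A|-|A|)-a(A)\). By Lemma~\ref{lem:invariance} we may assume \(A\) is basic.

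\textbf{Step 1: localize the count.} Each arrow of \(\Gamma_A\) ends at a unique vertex. Hence \(a(A)=\sum_{X}\indeg(X)\). By Proposition~\ref{prop:ar-multiplicity}, \(\indeg(X)\) is the number of indecomposable summands of \(\theta_A X\), counted with multiplicity. For projective \(X\) this is \(|\rad X|\); for nonprojective \(X\) it is the number of middle summands of the almost split sequence ending at \(X\). So the claim becomes
\[
 \sum_{X\ \text{nonproj}}\bigl(2-\indeg(X)\bigr)=\sum_{P\ \text{proj}}|\rad P|.
\]

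\textbf{Step 2: use the structure of special biserial algebras.} Over a representation-finite special biserial algebra, the indecomposables are string modules and the non-uniserial projective-injectives \(P\), for which \(\rad P/\soc P\) is a sum of two strings. By the Butler--Ringel description, almost split sequences between string modules have one or two middle summands. The exceptions are the sequences \(0\to\rad P\to P\oplus \rad P/\soc P\to P/\soc P\to0\) at the projective-injectives \(P\). Every middle summand of such a sequence except \(P\) itself is a string module obtained by adding or deleting hooks and cohooks at the two ends of the string.

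\textbf{Step 3: balance the terms.} I would argue by string combinatorics that the sequences with exactly one middle term are in bijection with the contributions of the projectives. First, \(|\rad P|\) is \(0\), \(1\), or \(2\) according to whether the two uniserial "legs" of \(\rad P\) are nonzero. Second, a sequence with a single middle term ending at \(M\) corresponds to a string end where a hook cannot be added, and each such failure is attributed to a leg of a projective. Third, the projective-injective meshes, which have \(\indeg=|\rad P|=3\) counted appropriately, are compensated by the \(P\) term. Summing the two sides then gives equality.

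The main obstacle is Step~3: making the bijection precise, including degenerate strings and the projective-injectives. Because this is exactly the content of \cite[Theorem~3.5]{BHK}, the honest plan is to state the reduction of Steps~1--2 and then invoke that theorem for the combinatorial bijection, rather than reproduce it.
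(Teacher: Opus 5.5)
Your fallback is what the paper does: it gives no argument of its own and simply cites \cite[Theorem~3.5]{BHK}, so once you invoke that theorem, Steps~1--2 are unnecessary. If you want the sketch to stand on its own, note two things.

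\textbf{Step~3 misstates the projective-injective case.} For a non-uniserial projective-injective \(P\) one has \(\indeg(P)=1\), since \(\rad P\) has simple socle and is therefore indecomposable. It is the mesh at \(P/\soc P\) that has three middle summands.

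\textbf{A clean repair.} Pass to \(A/\soc P\). This deletes \(P\) together with its two arrows, so \(\sigma\) is unchanged, and doing this for every such \(P\) reduces the problem to representation-finite string algebras. There, Butler and Ringel's indexing of the almost split sequences with indecomposable middle term by the arrows of the quiver gives precisely the balance you wrote in Step~1.
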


Theorem~\ref{thm:lower-bound} and
Propositions~\ref{prop:equality-forward} and~\ref{prop:equality-converse}
prove \cite[Conjecture~A]{BHK} over an algebraically closed field.
\begin{theorem}\label{thm:main}
Let \(A\) be a finite-dimensional representation-finite algebra over an
algebraically closed field. Then the following hold.
\begin{enumerate}
\item \(\chi(\mod A)\geq |A|\).
\item \(\chi(\mod A)=|A|\) if and only if \(A\) is special biserial.
\end{enumerate}
\end{theorem}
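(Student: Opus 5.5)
The plan is to assemble Theorem~\ref{thm:main} from the three results just established; no new argument is needed beyond the reductions already recorded. Part~(1) is exactly Theorem~\ref{thm:lower-bound}, which already covers an arbitrary representation-finite algebra (it reduces to the basic case by Lemma~\ref{lem:invariance}, passes to a standard form $S$, and uses the asymptotic formula \eqref{eq:interval-surplus} together with $\sigma(S_m)\geq0$ from Corollary~\ref{cor:directed}).

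For part~(2), I will first reduce to the case that $A$ is basic. By Lemma~\ref{lem:invariance}, $\chi(\mod A)$ and $|A|$ are Morita invariant. By the convention in the introduction, a non-basic algebra is special biserial precisely when its basic algebra is. Both sides of the equivalence therefore depend only on the basic algebra of $A$, which is again representation-finite.

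For the forward implication, suppose $\chi(\mod A)=|A|$, that is, $\sigma(A)=0$. Then Proposition~\ref{prop:equality-forward} gives that $A$ is special biserial. That proposition carries the real content: $\sigma(S)=0$ for a standard form, hence $\sigma(S_n)=0$ for all $n$ by Lemma~\ref{lem:separated-intervals} and Proposition~\ref{prop:interval-equality}. Consequently each $S_n$ is biserial, so $\beta(S_n)\leq2$. By Proposition~\ref{prop:ar-transfer} this gives $\beta(A)=\beta(S)\leq2$, and Theorems~\ref{thm:biserial-ar} and~\ref{thm:biserial-special} conclude.

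For the converse, if $A$ is representation-finite and special biserial, Proposition~\ref{prop:equality-converse} (the result of B{\o}rve, Horiatakis, and Kalck) gives $\chi(\mod A)=|A|$.

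The only point requiring care is the reduction to basic algebras, and that is immediate from Lemma~\ref{lem:invariance}. Everything else is citation, so I expect no real obstacle at this stage.
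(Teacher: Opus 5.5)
Your proposal is correct and matches the paper's proof: reduce to the basic case by Morita invariance, then cite Theorem~\ref{thm:lower-bound} for (1) and Propositions~\ref{prop:equality-forward} and~\ref{prop:equality-converse} for the two directions of (2). One slip in your recap, which does not affect the argument: the equality $\beta(A)=\beta(S)$ comes from the isomorphism $\Gamma_A\cong\Gamma_S$ of Theorem~\ref{thm:standard}, while Proposition~\ref{prop:ar-transfer} only supplies $\beta(S)\leq\beta(S_m)$.
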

\begin{proof}
Magnitude, \(|A|\), and being special biserial are invariant under
Morita equivalence. We may therefore assume that \(A\)
is basic.
\begin{enumerate}
\item Theorem~\ref{thm:lower-bound} proves the inequality.
\item The implication \(\Rightarrow\) is
Proposition~\ref{prop:equality-forward}, and \(\Leftarrow\) is
Proposition~\ref{prop:equality-converse}.
\end{enumerate}
\end{proof}

\appendix
\section{Direct proofs for ideal quotients of representation-directed algebras}\label{app:structural}
We work in Setting~\ref{set:directed}. This appendix proves directly the
three results listed at the start of Section~\ref{sec:deletion}.
Subsection~\ref{app:multiplicity} proves Proposition~\ref{prop:boundary-one}.
Subsection~\ref{app:heights} determines the arrows at \(\tau\)-projectives
and \(\tau\)-injectives and constructs a function on the indecomposables
of \(\cD\) as in Lemma~\ref{lem:height-arrows}, which
Subsection~\ref{app:inequality} uses.
Subsection~\ref{app:poset} proves Proposition~\ref{prop:poset-equivalence},
and Subsection~\ref{app:inequality} proves Corollary~\ref{prop:intrinsic}.
The
proofs use the evaluation functor
\(F(X)=Xe\), the \(\tau\)-sequences of
Proposition~\ref{prop:quotient-sequences}, and Auslander--Reiten duality,
and do not use \cite{RV}.

\subsection{A bound on composition multiplicities}\label{app:multiplicity}
By \eqref{eq:evaluation}, \(d_Z=\dim_k\Hom_A(Z,I)\). We therefore
first bound \(\dim_k\Hom_A(Z,I)\) when \(\Ext_A^1(U,Z)=0\) for every
submodule \(U\subseteq I\).
The idea of counting isomorphism classes of kernels is taken from the
argument in \cite[Section~6]{RV}. Under this vanishing hypothesis it
gives the following lemma.

\begin{lemma}\label{lem:finite-kernel}
Let \(A\) be representation-finite and \(I_0\) an injective \(A\)-module.
Suppose \(Z\in\mod A\) satisfies \(\End_A(Z)=\End_A(I_0)=k\) and
\(\Ext_A^1(U,Z)=0\) for every submodule \(U\subseteq I_0\).
Then \(\dim_k\Hom_A(Z,I_0)\leq1\).
\end{lemma}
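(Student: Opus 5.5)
Suppose, to the contrary, that \(\dim_k\Hom_A(Z,I_0)\geq2\). The plan is to show that a nonzero map \(Z\to I_0\) is determined, up to a scalar, by the isomorphism class of its kernel. Representation-finiteness then leaves only finitely many classes of maps, while a two-dimensional space of maps over the infinite field \(k\) has infinitely many.

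\emph{Step 1: a kernel is determined by its isomorphism class.} For a nonzero \(f\in\Hom_A(Z,I_0)\), write \(K_f=\ker f\), \(U_f=\operatorname{Im}f\subseteq I_0\), and \(\iota_f:K_f\hookrightarrow Z\) for the inclusion. This gives an exact sequence \(0\to K_f\to Z\to U_f\to0\). Apply \(\Hom_A(-,Z)\). Since \(\Ext_A^1(U_f,Z)=0\) by hypothesis, restriction \(\End_A(Z)\to\Hom_A(K_f,Z)\) is surjective. As \(\End_A(Z)=k\), every map \(K_f\to Z\) is a scalar multiple of \(\iota_f\). Now let \(f,g\) be nonzero maps with an isomorphism \(\varphi:K_f\xrightarrow{\sim}K_g\). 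Then \(\iota_g\varphi=\lambda\iota_f\) for some \(\lambda\in k\), and \(\lambda\neq0\) because \(\iota_g\varphi\) is injective (if \(K_f=0\), then \(K_g=0\) too). Hence \(K_f=\operatorname{Im}(\iota_g\varphi)=K_g\) as submodules of \(Z\). Since \(A\) is representation-finite, there are only finitely many isomorphism classes of modules of dimension at most \(\dim_k Z\). So only finitely many submodules of \(Z\) occur as kernels of nonzero maps \(Z\to I_0\).

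\emph{Step 2: the pigeonhole.} Choose linearly independent \(f,g\in\Hom_A(Z,I_0)\). The maps \(f+\lambda g\) for \(\lambda\in k\) are nonzero and pairwise non-proportional. Since \(k\) is algebraically closed, hence infinite, Step 1 yields \(\lambda\neq\mu\) with \(h_1=f+\lambda g\) and \(h_2=f+\mu g\) having the same kernel \(K\). These two maps are linearly independent.

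\emph{Step 3: use injectivity and \(\End_A(I_0)=k\).} Both \(h_1\) and \(h_2\) factor as monomorphisms \(\bar h_1,\bar h_2:Z/K\hookrightarrow I_0\). The map \(\bar h_2\) is defined on \(Z/K\), which \(\bar h_1\) embeds into \(I_0\). By injectivity of \(I_0\), it extends along \(\bar h_1\) to some \(\psi\in\End_A(I_0)\) with \(\psi\bar h_1=\bar h_2\), and hence \(\psi h_1=h_2\). Since \(\End_A(I_0)=k\), \(\psi\) is a scalar, so \(h_2\) is a multiple of \(h_1\). This contradicts Step 2, so \(\dim_k\Hom_A(Z,I_0)\leq1\).

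The main point is Step 1. There the Ext-vanishing, applied to the image \(U_f\), turns the abstract isomorphism of kernels into an equality of submodules. Without it, isomorphic kernels could sit differently inside \(Z\), and counting isomorphism classes would not bound the number of kernels. The remaining steps are formal.
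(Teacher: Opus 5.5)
Your proof is correct and follows the paper's argument. In both, vanishing of \(\Ext_A^1\) on the image makes isomorphic kernels equal as submodules of \(Z\); injectivity of \(I_0\) with \(\End_A(I_0)=k\) makes maps with equal kernels proportional; and finitely many kernel classes contradict the infinitely many lines of a space of dimension at least two. The only difference is cosmetic: you apply the pigeonhole to the pencil \(f+\lambda g\) rather than to all lines of \(\Hom_A(Z,I_0)\).
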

\begin{proof}
Let \(f,g:Z\to I_0\) be nonzero maps with isomorphic kernels.
If the kernels are nonzero, apply \(\Hom_A(-,Z)\) to
\(0\to\ker f\to Z\to\operatorname{Im}f\to0\).
Since \(\Ext_A^1(\operatorname{Im}f,Z)=0\), an isomorphism
\(\ker f\to\ker g\), followed by inclusion in \(Z\), extends
to an endomorphism of \(Z\).
The extension is a nonzero scalar, so the two kernels are equal as
submodules. They are also equal if both are zero. Then
\(\operatorname{Im}f\cong Z/\ker f=Z/\ker g\cong\operatorname{Im}g\), and
this isomorphism extends to a nonzero scalar
endomorphism of \(I_0\) by injectivity. Thus \(g\) is proportional to \(f\).

Different lines in \(\Hom_A(Z,I_0)\) therefore have nonisomorphic kernels.
There are only finitely many kernel isomorphism classes: their lengths are
bounded by the length of \(Z\), and there are finitely many indecomposables.
Since \(k\) is algebraically closed, it is infinite. A space of dimension
at least two over \(k\) has infinitely many
lines, a contradiction.
\end{proof}

At the \(\tau\)-injectives of \(\cD\), Auslander--Reiten duality gives
the vanishing of \(\Ext_A^1(U,Z)\) required in
Lemma~\ref{lem:finite-kernel}; the \(\tau\)-projectives follow by duality.

\begin{proof}[Direct proof of Proposition~\ref{prop:boundary-one}]
Let \(Z\) be an indecomposable \(A\)-module. If \(Z\) is injective, or if \(\tau_A^-Z\) belongs to \(\mod B\),
then \(\Ext_A^1(U,Z)=0\) for every \(U\subseteq I\).
In the second case, apply Auslander--Reiten duality
\eqref{eq:ar-duality}; ordinary Hom already vanishes:
\[
 \Hom_A(\tau_A^-Z,U)\hookrightarrow\Hom_A(\tau_A^-Z,I)
 \cong\Dk((\tau_A^-Z)e)=0.
\]
Since \(I\) is the indecomposable injective envelope of \(E\),
Lemma~\ref{lem:evaluation}(1) gives \(\End_A(Z)=\End_A(I)=k\).
Lemma~\ref{lem:finite-kernel} therefore
applies and gives \(d_Z=\dim_k\Hom_A(Z,I)\leq1\).
The same bound holds when \(Z\) is projective or \((\tau_A Z)e=0\).
To see this, apply the argument to \(A^\op\), the same idempotent \(e\), and
the right \(A^\op\)-module \(\Dk Z\). The duality \(\Dk\) satisfies
\((\Dk Z)e\cong\Dk(Ze)\), so it preserves dimension at \(e\) and sends
\(\mod B\) to \(\mod B^\op\). It preserves directedness, exchanges
projectives with injectives, and satisfies
\(\Dk\tau_A Z\cong\tau_{A^\op}^-\Dk Z\). Hence \(Z\) is projective or has
\((\tau_A Z)e=0\) exactly when \(\Dk Z\) is injective or has
\((\tau_{A^\op}^-\Dk Z)e=0\).

(1) By Lemma~\ref{rem:iyama-boundary}, every \(\tau\)-injective of \(\cD\)
is either injective over \(A\) or has its inverse translate in \(\mod B\),
and dually for \(\tau\)-projectives. The bound \(d_Z\leq1\) for such \(Z\),
and its dual, therefore give
\(d_X\leq1\), and \(d_X\geq1\) for objects of \(\cD\) by
Lemma~\ref{lem:evaluation}(1).

(2) Suppose \((\tau_A Y)e=0\). If \(Ye\ne0\), then \(Y\) is
\(\tau\)-projective in \(\cD\) by Lemma~\ref{rem:iyama-boundary}, so (1)
gives \(d_Y=1\); if \(Ye=0\), then \(d_Y=0\). The second implication is
dual.
\end{proof}

\subsection{Arrows at \texorpdfstring{\(\tau\)}{tau}-projectives and the height function}\label{app:heights}
The \(\tau\)-sequences of \(\cD\) give the following identities. Ringel and
Vossieck use the equations in part~(2) \cite[Section~1, p.~217]{RV}.
\begin{lemma}\label{lem:quotient-identities}
In Setting~\ref{set:directed}, the following hold.
\begin{enumerate}
\item If \(\tau_{\cD}^+X\ne0\), then for every \(Y\in\ind\cD\),
\begin{equation}\label{eq:paired-middle}
 \dim_k\Irr_{\cD}(Y,X)=\dim_k\Irr_{\cD}(\tau_{\cD}^+X,Y).
\end{equation}
\item For \(X\in\ind\cD\), evaluation at \(e\) satisfies
\begin{equation}\label{eq:dimension-defects}
 d_X-d_{\theta_{\cD}^+X}+d_{\tau_{\cD}^+X}=\indicator_{\{X=P\}},\qquad
 d_X-d_{\theta_{\cD}^-X}+d_{\tau_{\cD}^-X}=\indicator_{\{X=I\}}.
\end{equation}
Here \(\indicator_{\{X=P\}}\) is one if \(X\cong P\) and zero
otherwise, and similarly for \(I\).
\end{enumerate}
\end{lemma}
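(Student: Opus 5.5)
Part (1) follows from the $\tau$-sequence description and Lemma~\ref{lem:tau-multiplicity}. Since $\cD$ is a strict $\tau$-category (Proposition~\ref{prop:quotient-sequences}), if $\tau_{\cD}^+X\ne0$ then Proposition~\ref{rem:tau-properties}(2) says that the right $\tau$-sequence ending at $X$ is also the left $\tau$-sequence starting at $\tau_{\cD}^+X$. Hence $\theta_{\cD}^+X\cong\theta_{\cD}^-\tau_{\cD}^+X$. Taking multiplicities of an indecomposable $Y$ and applying the two formulas of Lemma~\ref{lem:tau-multiplicity} gives \eqref{eq:paired-middle}. This is the same argument that showed $\Gamma_{\cD}$ is a translation quiver.

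For part (2), I would apply the exact functor $(-)e$ to the ambient sequences of Proposition~\ref{prop:quotient-sequences}. Let $X\in\ind\cD$ be the chosen $A$-module with $Xe\ne0$.

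\textbf{Nonprojective $X$.} Here $0\to\tau_AX\to\theta_AX\to X\to0$ is exact, so $d_X-d_{\theta_AX}+d_{\tau_AX}=0$. Now $\pi$ kills exactly the summands lying in $\mod B$, and these have $d=0$. So $d_{\theta_{\cD}^+X}=d_{\theta_AX}$ and $d_{\tau_{\cD}^+X}=d_{\tau_AX}$, with $d$ of a zero object equal to $0$, and the left side of \eqref{eq:dimension-defects} is $0$. It remains to see that $X\not\cong P$, which holds since $P$ is projective.

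\textbf{Projective $X$.} The sequence is $0\to\rad X\to X$ with $\tau=0$. Then $d_X-d_{\rad X}=\dim_k(X/\rad X)e$, the multiplicity of $E$ in the top of the indecomposable projective $X$. This equals $1$ if $X\cong P=eA$ and $0$ otherwise, because $A$ is basic, so $X/\rad X$ is the simple $E$ exactly when $X\cong P$.

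The second identity is dual. For noninjective $X$, apply $(-)e$ to the almost split sequence starting at $X$. For injective $X$, use $X\to X/\soc X$, which gives $\dim_k(\soc X)e=\indicator_{\{X\cong I\}}$ since $I$ is the injective envelope of $E$.

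The only delicate point is matching the $\cD$-objects $\theta_{\cD}^\pm X$ and $\tau_{\cD}^\pm X$ with their $A$-module lifts. This is a bookkeeping issue rather than a genuine obstacle. By Proposition~\ref{lem:surviving-arrows}, $d$ is well defined on $\cD$ because isomorphic images in $\cD$ come from isomorphic modules. Deleting the summands in $\mod B$ changes no value of $d$. This justifies passing from the ambient sequences to the $\cD$-sequences.
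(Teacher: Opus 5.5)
Your proof is correct. Part (1) is the paper's argument. Part (2) reaches the identities by a different route.

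\textbf{The paper's route for (2).} The paper does not return to $\mod A$. It evaluates the resolution $0\to\mathsf P_{\tau_{\cD}^+X}\to\mathsf P_{\theta_{\cD}^+X}\to\mathsf P_X\to\mathsf S_X\to0$ of Lemma~\ref{lem:strict-resolution} at the object $P$. It then uses $\cD(P,Z)\cong Ze$ from \eqref{eq:evaluation} to convert Hom dimensions into the numbers $d$. The right-hand side is read off as $\dim_k\mathsf S_X(P)$, which is $1$ exactly when $X\cong P$ by Lemma~\ref{lem:evaluation}(1). The second identity is the same computation at $I$. This argument is uniform in $X$, with no split into projective and nonprojective cases. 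It is literally the row indexed by $P$ of the identity $(H_{\cD}M)_{Y,X}=\indicator_{\{X=Y\}}$ in the proof of Proposition~\ref{thm:categorical-magnitude}.

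\textbf{Your route for (2).} You apply the exact functor $(-)e$ to the ambient almost split sequences. At the boundary you compute $[X/\rad X:E]$ and $[\soc X:E]$. This uses only the description of the $\tau$-sequences in Proposition~\ref{prop:quotient-sequences}. It does not need strictness, Lemma~\ref{lem:strict-resolution}, or the representability of $(-)e$ by $P$. The cost is the case split and the bookkeeping for summands in $\mod B$. The two computations produce the same exact sequence: under $\cD(P,-)\cong(-)e$, the evaluated resolution is your sequence, and in the projective case $\mathsf S_X(P)$ plays the role of $(X/\rad X)e$.

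\textbf{A small correction to the bookkeeping.} It is not true in general that isomorphic objects of $\cD$ come from isomorphic modules. Adding a summand from $\mod B$ changes the module but not its image. So Proposition~\ref{lem:surviving-arrows} gives well-definedness of $d$ only on indecomposables. The cleanest justification is that $d_Z=\dim_k F(Z)$ for the functor $F$ of Lemma~\ref{lem:quotient-evaluation}, so $d$ is an isomorphism invariant on all of $\cD$. Your accompanying remark that summands in $\mod B$ have $d=0$ also suffices.
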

\begin{proof}
(1) By Proposition~\ref{rem:tau-properties}(2), the right \(\tau\)-sequence at
\(X\) is also the left \(\tau\)-sequence at \(\tau_{\cD}^+X\), so
\(\theta_{\cD}^-(\tau_{\cD}^+X)\cong\theta_{\cD}^+X\).
Lemma~\ref{lem:tau-multiplicity}, applied to both sequences, gives
\eqref{eq:paired-middle}.

(2) The category \(\cD\) is a strict \(\tau\)-category by
Proposition~\ref{prop:quotient-sequences}. Evaluate the first exact
sequence of Lemma~\ref{lem:strict-resolution} at \(P\). Since
\(\cD(P,Z)\cong Ze\) by \eqref{eq:evaluation}, this gives
\(d_X-d_{\theta_{\cD}^+X}+d_{\tau_{\cD}^+X}=\dim_k\mathsf S_X(P)\). By
Lemma~\ref{lem:evaluation}(1), \(\mathsf S_X(P)\) is \(k\) if \(X\cong P\) and zero otherwise. The
second identity in \eqref{eq:dimension-defects} follows in the same way,
by evaluating the second sequence of Lemma~\ref{lem:strict-resolution} at
\(I\) and using \(\cD(Z,I)\cong\Dk(Ze)\).
\end{proof}

With Proposition~\ref{prop:boundary-one}(1), proved in
Subsection~\ref{app:multiplicity}, these identities determine the arrows
at \(\tau\)-projectives and \(\tau\)-injectives. Ringel and Vossieck also
observe this \cite[Section~1, p.~218]{RV}.
\begin{corollary}\label{cor:boundary-arrows}
In \(\Gamma_{\cD}\), each \(\tau\)-projective other than \(P\) has exactly one incoming
arrow, with multiplicity one; \(P\) has none. Dually, each
\(\tau\)-injective other than \(I\) has exactly one outgoing arrow,
and \(I\) has none.
\end{corollary}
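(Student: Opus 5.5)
We combine the first identity of \eqref{eq:dimension-defects} with the bound of Proposition~\ref{prop:boundary-one}(1). Let \(X\) be a \(\tau\)-projective of \(\cD\), so \(\tau_{\cD}^+X=0\) and \(d_{\tau_{\cD}^+X}=0\). The identity then reads
\[
 d_{\theta_{\cD}^+X}=d_X-\indicator_{\{X=P\}}.
\]
Proposition~\ref{prop:boundary-one}(1) gives \(d_X=1\). Hence \(d_{\theta_{\cD}^+X}=0\) if \(X\cong P\), and \(d_{\theta_{\cD}^+X}=1\) otherwise.

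Next we translate this dimension count into a count of arrows. Every indecomposable summand \(Y\) of \(\theta_{\cD}^+X\) is an object of \(\cD\). By Lemma~\ref{lem:evaluation}(1), such a \(Y\) has \(d_Y\geq1\). Since \(d\) is additive on direct sums, \(d_{\theta_{\cD}^+X}\) is at least the number of indecomposable summands of \(\theta_{\cD}^+X\), counted with multiplicity. By Lemma~\ref{lem:tau-multiplicity}, that number is \(\indeg_{\cD}(X)\). For \(X=P\) this forces \(\theta_{\cD}^+P=0\), so \(P\) has no incoming arrow; this also follows from Lemma~\ref{lem:evaluation}(2). For \(X\ne P\), the bound gives \(\theta_{\cD}^+X\) at most one indecomposable summand, with multiplicity one. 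It has at least one: otherwise \(d_{\theta_{\cD}^+X}=0\ne1\). Alternatively, Lemma~\ref{lem:evaluation}(2) gives an incoming arrow at every \(X\ne P\). So \(X\) has exactly one incoming arrow, with multiplicity one.

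The \(\tau\)-injective statement follows in the same way from the second identity of \eqref{eq:dimension-defects}. For a \(\tau\)-injective \(X\), the same argument gives \(d_{\theta_{\cD}^-X}=1-\indicator_{\{X=I\}}\). The second formula of Lemma~\ref{lem:tau-multiplicity} then identifies the number of summands of \(\theta_{\cD}^-X\) with \(\outdeg_{\cD}(X)\).

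There is no real obstacle, since all the input is already available. The only points needing care are that the summands of the \(\tau\)-sequence middle terms in \(\cD\) are taken as objects of \(\cD\), that is, modules outside \(\mod B\), so \(d_Y\geq1\) applies to each of them, and that multiplicities are counted correctly.
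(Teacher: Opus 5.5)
Your proposal is correct and follows the paper's proof: the first identity of \eqref{eq:dimension-defects}, with \(d_X=1\) from Proposition~\ref{prop:boundary-one}(1), gives \(d_{\theta_{\cD}^+X}=1-\indicator_{\{X=P\}}\) at a \(\tau\)-projective \(X\). Positivity of \(d\) on indecomposables of \(\cD\) then bounds the number of summands, Lemma~\ref{lem:tau-multiplicity} turns summands into arrows, and the \(\tau\)-injective case is dual. The only cosmetic difference is that you obtain \(d_P=1\) from Proposition~\ref{prop:boundary-one}(1), since \(P\) is projective and hence \(\tau\)-projective, whereas the paper reads it off from \(d_P=\dim_k\End_A(P)=1\) via Lemma~\ref{lem:evaluation}.
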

\begin{proof}
For a \(\tau\)-projective \(X\ne P\), equation
\eqref{eq:dimension-defects} and Proposition~\ref{prop:boundary-one}(1)
give \(d_{\theta_{\cD}^+X}=d_X=1\).
Every indecomposable summand of \(\theta_{\cD}^+X\) has positive integral
dimension at \(e\), so there is exactly one, occurring once. Lemma~\ref{lem:tau-multiplicity}
then shows that \(X\) has exactly one incoming arrow, of multiplicity one. For \(P\), Lemma~\ref{lem:evaluation}(1),(2)
gives \(d_P=\dim_k\End_A(P)=1\), so equation
\eqref{eq:dimension-defects} gives \(d_{\theta_{\cD}^+P}=0\). Thus there are no
incoming arrows. The second formula in \eqref{eq:dimension-defects}
gives the outgoing counts by the same argument.
\end{proof}

The following proposition constructs on \(\Gamma_{\cD}\) a function as
in Lemma~\ref{lem:height-arrows}(2), and describes the nonzero maps of
\(\cD\) by powers of the radical; Subsection~\ref{app:inequality} uses
both. Its proof uses Corollary~\ref{cor:boundary-arrows} and the arrow
pairing \eqref{eq:paired-middle}.
\begin{proposition}\label{prop:height}
There is a unique function \(h:\ind\cD\to\mathbb Z_{\geq0}\), called
the \emph{height}, with \(h(P)=0\)
such that every arrow \(X\to Y\) satisfies \(h(Y)=h(X)+1\), and
\(h(X)=h(\tau_{\cD}^+X)+2\) whenever \(\tau_{\cD}^+X\ne0\).
For distinct \(X,Y\) with \(\cD(X,Y)\ne0\), put \(\ell=h(Y)-h(X)\).
Then \(\ell>0\) and
\begin{equation}\label{eq:height-radical}
 \cD(X,Y)=\rad_{\cD}^{\ell}(X,Y),\qquad
 \rad_{\cD}^{\ell+1}(X,Y)=0.
\end{equation}
Put \(L=h(I)\). All heights lie in \([0,L]\), with \(P,I\) the unique
objects of heights \(0,L\), respectively.
\end{proposition}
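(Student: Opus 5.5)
We must construct the height function without RV's Lemma~\ref{lem:rv-grading}, using only Corollary~\ref{cor:boundary-arrows}, the pairing \eqref{eq:paired-middle}, and directedness of \(\cD\) (Lemma~\ref{lem:evaluation}). The plan is to define \(h\) by induction along a linear extension of the order on \(\ind\cD\) given by nonzero maps; this order is a partial order by directedness. The induction goes from the source \(P\), where we put \(h(P)=0\). At a step \(X\ne P\), all immediate predecessors of \(X\) have already been assigned heights, and we must show they are equal and put \(h(X)\) to be that value plus one.

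If \(X\) is \(\tau\)-projective, then by Corollary~\ref{cor:boundary-arrows} it has a unique incoming arrow, so there is nothing to check. Otherwise put \(Z=\tau_{\cD}^+X\); it precedes \(X\), since the middle term is nonzero by strictness. By \eqref{eq:paired-middle}, the predecessors of \(X\) are exactly the successors of \(Z\). So we need an auxiliary inductive claim: \emph{every successor of an already-treated vertex \(Z\) that is itself already treated has height \(h(Z)+1\)}. Arrows out of \(Z\) land in vertices \(Y\) whose own height was fixed by their predecessor sets, and \(Z\) is one of those predecessors. Hence \(h(Y)=h(Z)+1\) automatically once every vertex's height equals one more than the common height of its predecessors. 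Thus all predecessors of \(X\) have height \(h(Z)+1\), and we set \(h(X)=h(Z)+2\), consistently. So the invariant to carry through the induction is: for every treated vertex, all its immediate predecessors have a common height, and its own height is one more. Uniqueness is immediate, since every \(X\ne P\) has an incoming arrow (Lemma~\ref{lem:evaluation}(2)) and \(h\) is forced along a path from \(P\). Nonnegativity follows for the same reason.

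For \eqref{eq:height-radical}, take \(X\ne Y\) with \(\cD(X,Y)\ne0\). A nonzero map is radical, so by Lemma~\ref{lem:path-expansion} it is a combination of paths of irreducible maps. Every such path from \(X\) to \(Y\) has length exactly \(\ell=h(Y)-h(X)\), so \(\ell>0\), and the map lies in \(\rad^{\ell}_{\cD}\). Conversely, an element of \(\rad_{\cD}^{\ell+1}(X,Y)\) expands (again by Lemma~\ref{lem:path-expansion}, applied factorwise with sufficiently long expansions) into paths of length at least \(\ell+1\) from \(X\) to \(Y\). No such paths exist, so it vanishes. The main subtlety here is that \(\rad^{\ell+1}\) expands into paths of length \(\geq\ell+1\), not just lengths bounded below by the number of factors; I would handle it by writing an element of \(\rad^{\ell+1}\) as a sum of composites of \(\ell+1\) radical maps and expanding each factor into nonempty paths.

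Finally, every \(X\) admits a nonzero map from \(P\) and to \(I\) by \eqref{eq:evaluation}, since \(d_X\geq1\). Thus \(h(P)\le h(X)\le h(I)\), with strict inequalities unless \(X=P\), respectively \(X=I\), by \(\ell>0\). This gives heights in \([0,L]\) with \(P\) and \(I\) the unique objects of heights \(0\) and \(L\). I expect the main obstacle to be setting up the simultaneous induction cleanly; in particular, checking that \(Z=\tau_{\cD}^+X\) and all successors of \(Z\) other than \(X\)'s relevant predecessors are treated before \(X\). This holds because they are predecessors of \(X\) and hence earlier in the linear extension.
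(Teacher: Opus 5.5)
Your proposal is correct and follows essentially the same route as the paper. You induct along a topological order, using Corollary~\ref{cor:boundary-arrows} at $\tau$-projectives and the pairing \eqref{eq:paired-middle} at non-$\tau$-projectives, then get \eqref{eq:height-radical} from path lengths via Lemma~\ref{lem:path-expansion}, and the range of heights from the maps out of $P$ and into $I$. One cosmetic point: the relation ``$\cD(X,Y)\ne0$'' need not be transitive, so take a linear extension of its transitive closure, or, as the paper does, simply of the order generated by arrows; either is a partial order by directedness.
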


\begin{proof}
Order the vertices so that the source of every arrow precedes its target;
this is possible by directedness. We define the heights in that order. At \(P\),
set \(h(P)=0\).
For a \(\tau\)-projective \(Q\ne P\), its unique incoming arrow
\(Y\to Q\) gives \(h(Q)=h(Y)+1\).
If \(X\) is not \(\tau\)-projective, then for each incoming arrow
\(Y\to X\), equation~\eqref{eq:paired-middle} gives
\(\Irr_{\cD}(\tau_{\cD}^+X,Y)\ne0\), so there is an arrow \(\tau_{\cD}^+X\to Y\).
Both \(\tau_{\cD}^+X\) and \(Y\) precede \(X\), so the arrow
\(\tau_{\cD}^+X\to Y\) gives \(h(Y)=h(\tau_{\cD}^+X)+1\) by induction. Set \(h(X)=h(\tau_{\cD}^+X)+2\). Induction proves the asserted arrow
and translation identities.

Choose maps representing bases of the irreducible spaces. By Lemma~\ref{lem:path-expansion}, every nonisomorphism is a linear
combination of composites of these maps. Every path from \(X\)
to \(Y\) has length \(h(Y)-h(X)\), so every map \(X\to Y\) lies in
\(\rad_{\cD}^{h(Y)-h(X)}\) and no nonzero map lies in the next power; this
proves \eqref{eq:height-radical}. The nonzero maps \(P\to X\) and \(X\to I\)
in \eqref{eq:evaluation} place all heights in the stated interval and give
uniqueness at its endpoints.
There is a path from \(P\) to every indecomposable. Since \(h(P)=0\)
and height increases by one along each arrow, the height of each
indecomposable is the length of any path from \(P\) to it. This proves
uniqueness of \(h\).
\end{proof}

\subsection{The equivalence with poset representations}\label{app:poset}
Construction~\ref{con:poset} describes the subspaces of \(Xe\) using
maps from the \(\tau\)-projectives. We prove fullness by induction and
realize an arbitrary poset representation by taking a quotient of an
explicit \(A\)-module.

\begin{proof}[Direct proof of Proposition~\ref{prop:poset-equivalence}]
We divide the proof into three steps.

\textbf{Step 1: The functor and its values on \(\tau\)-projectives.}
Since \(F\) is faithful and \(F(Q_t),F(Q_s)\) are one-dimensional,
\(\dim_k\cD(Q_t,Q_s)\leq1\).
Composites of nonzero maps between these objects evaluate to composites of
isomorphisms between lines, so they are nonzero. Thus the relation is
transitive; identity maps give reflexivity, and directedness gives
antisymmetry. Hence \(T\) is a poset.
For \(s\leq t\), normalize \(w:Q_t\to Q_s\) so that \(wu_t=u_s\).
Then \(fu_s=(fw)u_t\), proving \(F_s(X)\subseteq F_t(X)\).
Postcomposition by \(a:X\to Y\) sends \(fu_t\) to \((af)u_t\), so
\(G\) is a functor and is faithful.

Directedness gives \(\cD(Q_t,P)=0\). Also \(F_s(Q_t)=k\) exactly
when \(t\leq s\), giving the stated values on \(P,Q_t\).
Finally, \(F(I)=Ie\cong\Dk(eAe)=k\) and \(\cD(Q_t,I)\cong\Dk(Q_te)\ne0\), so
\(G(I)=K_T\).
A map from \(K_{\{s:t\leq s\}}\) to a \(T\)-space is determined by
an arbitrary vector in its \(t\)-subspace. The map
\(\cD(Q_t,X)\to F_t(X)\) is injective because \(F(u_t)\) is an
isomorphism and \(F\) is faithful, and it is surjective by definition.
Maps from \(K_\varnothing\) to \(G(X)\) are arbitrary vectors of
\(F(X)=\cD(P,X)\). Thus every map with source \(G(P)\) or \(G(Q_t)\)
is induced by a map in \(\cD\), proving fullness for these sources.

\textbf{Step 2: Fullness.}
Induct over sources in a topological order. For a non-\(\tau\)-projective
\(X\), use its right \(\tau\)-sequence
\(\tau_{\cD}^+X\xrightarrow{f}\theta_{\cD}^+X\xrightarrow{g}X\).
Every summand of \(\theta_{\cD}^+X\) precedes \(X\). Moreover, \(F(g)\)
is surjective: every map \(P\to X\) is a non-retraction and factors
through \(g\). Given \(a:G(X)\to G(Y)\), induction lifts \(aG(g)\)
to \(b:\theta_{\cD}^+X\to Y\). Faithfulness gives \(bf=0\).
Since \(\tau_{\cD}^+X\ne0\), this is also a left \(\tau\)-sequence,
so \(g\) is a weak cokernel of \(f\). Thus \(b=vg\) for
some \(v:X\to Y\). Surjectivity of \(F(g)\) gives \(G(v)=a\).
This proves fullness on indecomposables, and hence on direct sums.

\textbf{Step 3: Essential surjectivity.}
For each \(\tau\)-projective \(Q\) of \(\cD\) and each \(A\)-module \(V_0\)
with \(V_0=V_0eA\), equivalently a quotient of a finite sum of copies of \(P\),
\begin{equation}\label{eq:generated-ext}
 \Ext_A^1(Q,V_0)=0.
\end{equation}
Indeed this is immediate if \(Q\) is projective over \(A\). Otherwise
\(\tau_A Q\) is annihilated by \(e\), by Lemma~\ref{rem:iyama-boundary}. Since \(V_0=V_0eA\), every map
from \(V_0\) to \(\tau_A Q\) is zero: the generators in \(V_0e\) must map
to \((\tau_A Q)e=0\). Auslander--Reiten duality \eqref{eq:ar-duality} gives
\(\Dk\Ext_A^1(Q,V_0)=\ohom_A(V_0,\tau_A Q)=0\).

Take a \(T\)-space \((W;W_t)\), and form the \(A\)-module
\[
 V=(P\otimes_k W)\oplus\bigoplus_{t\in T}(Q_t\otimes_k W_t).
\]
Here \(A\) acts on the module factor. Our chosen bases identify
\(Ve=W\oplus\bigoplus_tW_t\). Let \(\rho:Ve\to W\) be the sum of
the identity on \(W\) and the inclusions \(W_t\hookrightarrow W\),
and put \(N=\ker\rho\). Define
\[
 \alpha:P\otimes_k N\longrightarrow V,\qquad
 \alpha(ea\otimes x)=xa,
 \qquad V_0=\operatorname{Im}\alpha,\quad M=V/V_0.
\]
The map \(\alpha\) is well defined since \(x\in Ve\). Since \(eAe=k\),
\(\alpha_e\) is the inclusion \(N\hookrightarrow Ve\), and exact
evaluation gives \(V_0e=N\). Thus \(\rho\) identifies \(Me=Ve/N\) with
\(W\), and the quotient map \(Ve\to Me\) with \(\rho\).
Since \(V_0=V_0eA\), we have \(\Ext_A^1(Q_s,V_0)=0\)
by \eqref{eq:generated-ext}. Applying \(\Hom_A(Q_s,-)\) to
\(0\to V_0\to V\to M\to0\) shows that every map \(Q_s\to M\)
lifts to \(V\). Representing morphisms of \(\cD\) by module maps
therefore also gives a surjection \(\cD(Q_s,V)\to\cD(Q_s,M)\).
By Step 1, the \(P\)-summand contributes zero to \(F_s(V)\), and the
\(Q_t\)-summand contributes \(W_t\) exactly when \(t\leq s\). Hence
\[
 F_s(V)=\bigoplus_{t\leq s}W_t\ \subseteq\ Ve,
 \qquad F_s(M)=\rho(F_s(V))=\sum_{t\leq s}W_t=W_s.
\]
The equality \(\sum_{t\leq s}W_t=W_s\) holds because every \(W_t\) in the sum is contained in
\(W_s\), and the term \(t=s\) is present. Thus \(G(M)\) is isomorphic to the given
\(T\)-space \((W;W_t)\).
Steps 1 and 2 prove that \(G\) is fully faithful, and Step 3 proves
essential surjectivity. Thus \(G\) is an equivalence. By its definition,
\(T\) has one element for each \(\tau\)-projective other than \(P\),
so \(|T|=|\ind_1^+\cD|-1\).
\end{proof}

\subsection{Lower bound and equality for the magnitude of the quotient}\label{app:inequality}
By Lemma~\ref{lem:height-arrows}, the height function of
Proposition~\ref{prop:height} determines the number of arrows of \(\cD\),
and hence its magnitude.
\begin{lemma}\label{lem:height-count}
With \(L=h(I)\), one has
\begin{align}
 a(\cD)&=2|\ind\cD|-L-2,\label{eq:height-count}\\
 \chi(\cD)-1&=L-(|\ind_1^+\cD|-1).\label{eq:height-chi}
\end{align}
\end{lemma}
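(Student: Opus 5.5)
The plan is to apply Lemma~\ref{lem:height-arrows} to \(\cC=\cD\), with \(X_0=P\), \(X_1=I\), and the height function \(h\) of Proposition~\ref{prop:height}. The lemma's conclusion gives \(a(\cD)=2|\ind\cD|-L-2\) and \(\chi(\cD)=1+L-(|\ind_1^+\cD|-1)\) with \(L=h(X_1)=h(I)\). These are exactly \eqref{eq:height-count} and \eqref{eq:height-chi}, after subtracting one from the second. So the work is checking the lemma's hypotheses.

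First, \(\cD\) is Hom-finite, since its Hom spaces are quotients of Hom spaces of \(\mod A\). It is a strict \(\tau\)-category by Proposition~\ref{prop:quotient-sequences}. It has finitely many indecomposables by Proposition~\ref{lem:surviving-arrows}, because \(A\) is representation-finite.

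Hypothesis~(1) has four parts, each taken from earlier results:
\begin{enumerate}
\item \(P\) is \(\tau\)-projective: it is projective over \(A\), so this follows from Lemma~\ref{rem:iyama-boundary}.
\item \(I\) is \(\tau\)-injective: it is injective over \(A\), so the dual part of Lemma~\ref{rem:iyama-boundary} applies.
\item \(P\) has no incoming arrow and \(I\) no outgoing arrow, by Lemma~\ref{lem:evaluation}(2) or Corollary~\ref{cor:boundary-arrows}.
\item Every other \(\tau\)-projective has exactly one incoming arrow, and every other \(\tau\)-injective exactly one outgoing arrow, both of multiplicity one. This is precisely Corollary~\ref{cor:boundary-arrows}.
\end{enumerate}

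Hypothesis~(2) is the content of Proposition~\ref{prop:height}. That proposition gives \(h(P)=0\), gives \(h(Y)=h(X)+1\) along every arrow \(X\to Y\), and gives \(h(X)=h(\tau_{\cD}^+X)+2\) whenever the translate is nonzero.

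There is no real obstacle here: all the substance lies in Corollary~\ref{cor:boundary-arrows} and Proposition~\ref{prop:height}, which were proved without \cite{RV}. The only point needing care is that the lemma's \(L\) is \(h(X_1)\), which matches the definition \(L=h(I)\) in Proposition~\ref{prop:height}. Also, \(|\ind_1^-\cD|=|\ind_1^+\cD|\) is used inside the proof of Lemma~\ref{lem:height-arrows} but is established there, so nothing further is needed.
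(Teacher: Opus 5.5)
Your proposal is correct and matches the paper's proof: the paper also gets strictness from Proposition~\ref{prop:quotient-sequences}, condition~(1) of Lemma~\ref{lem:height-arrows} with \(X_0=P\), \(X_1=I\) from Lemma~\ref{lem:evaluation}(2) and Corollary~\ref{cor:boundary-arrows}, and condition~(2) from Proposition~\ref{prop:height}. Your explicit check via Lemma~\ref{rem:iyama-boundary} that \(P\) is \(\tau\)-projective and \(I\) is \(\tau\)-injective is a detail the paper leaves implicit, and it is correct.
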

\begin{proof}
The category \(\cD\) is a strict \(\tau\)-category by
Proposition~\ref{prop:quotient-sequences}.
Lemma~\ref{lem:evaluation}(2) and Corollary~\ref{cor:boundary-arrows} give
condition~(1) of Lemma~\ref{lem:height-arrows} with \(X_0=P\) and
\(X_1=I\), and Proposition~\ref{prop:height} gives condition~(2).
Lemma~\ref{lem:height-arrows} now gives both equations.
\end{proof}

The proof of Corollary~\ref{prop:intrinsic} below also uses two general
facts. The first decomposes representations of posets of width at most
two. It is well known; see
\cite[Section~8, Remark, p.~237]{RV} and
\cite[Section~5.2, Corollary]{GR}.
\begin{lemma}\label{lem:two-chains}
Let \(T\) be a finite poset whose antichains have at most two elements.
Then every \(T\)-space is a direct sum of \(T\)-spaces with
one-dimensional total space.
\end{lemma}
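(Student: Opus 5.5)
We argue by induction on the dimension of the total space \(W\) of a \(T\)-space \((W;W_t)\). The goal is to split off a summand with one-dimensional total space. By Dilworth's theorem, a poset of width at most two is a union of two chains \(C_1\) and \(C_2\), possibly empty. So the family of subspaces consists of two chains of subspaces \(\{W_t\}_{t\in C_1}\) and \(\{W_t\}_{t\in C_2}\) in \(W\), together with the inclusions forced by the order relations between the two chains.

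The key linear-algebra input is the classical fact that two flags in a finite-dimensional space admit a common adapted basis. Concretely, there is a basis \(w_1,\ldots,w_n\) of \(W\) such that every \(W_t\) with \(t\in C_1\cup C_2\) is spanned by a subset of the basis. We would prove this by induction on \(\dim W\), in three steps.
\begin{enumerate}
\item Refine both chains to complete flags \(F_\bullet\) and \(G_\bullet\), keeping the given subspaces among their terms.
\item Take the smallest \(i\) with \(F_1\subseteq G_i\). Choose \(w_1\) spanning \(F_1\) and lying in \(G_i\setminus G_{i-1}\).
\item Pass to \(W/kw_1\) with the induced flags, apply induction there, and lift the resulting basis.
\end{enumerate}
This is essentially the Bruhat decomposition, and the lifting in step~(3) is routine.

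Given such a basis, each line \(kw_j\) carries subspaces \((kw_j)_t=kw_j\) if \(w_j\in W_t\) and \(0\) otherwise. Then \(W=\bigoplus_j kw_j\) as vector spaces. Since each \(W_t\) is spanned by the \(w_j\) it contains, we have \(W_t=\bigoplus_j (kw_j\cap W_t)\). So this is a decomposition as \(T\)-spaces. Each summand is a \(T\)-space: if \(s\leq t\) and \(w_j\in W_s\), then \(w_j\in W_t\). In fact each summand is \(K_U\) with \(U=\{t:w_j\in W_t\}\), and \(U\) is upward-closed.

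The main obstacle is the common adapted basis for two chains, since everything else is formal. We do not need compatibility with cross-chain relations: those are inclusions already satisfied by the given subspaces, and they are respected automatically by the splitting because it only uses membership of basis vectors.
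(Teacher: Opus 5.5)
Your proof is correct and follows the paper's route: Dilworth's theorem, then a basis of \(W\) adapted to both chains of subspaces, then splitting \(W\) into the lines of that basis. The only difference is how you produce the adapted basis. The paper chooses complements \(C_{ij}\) of \((W'_{i-1}\cap W''_j)+(W'_i\cap W''_{j-1})\) in \(W'_i\cap W''_j\) and proves by induction on \(i+j\) that they split every intersection, whereas you use a Bruhat-style induction on \(\dim W\). That induction works, but the lifting in your step~(3) is not arbitrary: since \(G_j\cap kw_1=0\) for \(j<i\), you must lift each \(\bar w_l\) to its unique preimage in \(G_{j_0}\), where \(j_0<i\) is least with \(\bar w_l\in(G_{j_0}+kw_1)/kw_1\) (any lift works if no such \(j_0\) exists).
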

\begin{proof}
We recall the usual argument. By Dilworth's theorem \cite{Dilworth}, \(T\)
is a union of at most two chains. The subspaces of a \(T\)-space \(W\)
along these chains, together with \(0\) and \(W\), form two filtrations
\(0=W'_0\subseteq\cdots\subseteq W'_r=W\) and
\(0=W''_0\subseteq\cdots\subseteq W''_q=W\). Choose a complement
\(C_{ij}\) of \((W'_{i-1}\cap W''_j)+(W'_i\cap W''_{j-1})\) in
\(W'_i\cap W''_j\). We show by induction on \(i+j\) that
\(W'_i\cap W''_j=\bigoplus_{a\leq i,\,b\leq j}C_{ab}\). By induction,
\(W'_{i-1}\cap W''_j\), \(W'_i\cap W''_{j-1}\), and their intersection
\(W'_{i-1}\cap W''_{j-1}\) are the direct sums of the \(C_{ab}\) over the
corresponding index sets. Comparing dimensions shows that the sum of the
first two is the direct sum of the \(C_{ab}\) with \(a\leq i\),
\(b\leq j\), and \((a,b)\ne(i,j)\); adding \(C_{ij}\) completes the
induction. Hence bases of the
\(C_{ij}\) form a basis of \(W\) such that every indexed subspace is
spanned by a subset of it. Each basis line, with its intersections with
the indexed subspaces, is a \(T\)-space with one-dimensional total space,
and \(W\) is their direct sum.
\end{proof}

The second shows that a map formed from bases of the irreducible maps
into an object is minimal right almost split. For module categories it
follows from \cite[VII, Lemma~1.2 and Proposition~1.3]{ARS}.
\begin{lemma}\label{lem:basis-almost-split}
Let \(\cC\) be a Hom-finite Krull--Schmidt \(k\)-linear category with
finitely many indecomposable isomorphism classes. Let
\(Y\) be indecomposable and \(g:M\to Y\) a radical map such that, for each
indecomposable \(Z\), the components of \(g\) on the summands of \(M\)
isomorphic to \(Z\) represent a basis of \(\Irr_{\cC}(Z,Y)\). Then \(g\) is
minimal right almost split.
\end{lemma}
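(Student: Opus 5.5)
The plan is to follow the module-category argument of \cite[VII, Lemma~1.2 and Proposition~1.3]{ARS}, replacing almost split sequences by the nilpotence of the radical from Lemma~\ref{lem:path-expansion}. There are two things to prove: that \(g\) is right almost split, and that it is right minimal.

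\textbf{Right almost split.} Since \(g\) is radical and \(Y\) is indecomposable, \(g\) is not a retraction. For the factorization property, it suffices to treat a radical map \(h:Z\to Y\) from an indecomposable \(Z\): a non-retraction into the indecomposable \(Y\) is radical, and it splits into components on indecomposable summands. By the basis hypothesis, \(h\) agrees modulo \(\rad_{\cC}^2(Z,Y)\) with \(g\circ u_1\) for some \(u_1:Z\to M\). Since \(\Irr\) only detects radical maps modulo the radical square, we may take \(u_1\) to be a scalar combination of the canonical inclusions into the summands of \(M\) isomorphic to \(Z\). The difference \(h-gu_1\) lies in \(\rad^2\), so it is a sum of composites \(ab\) with \(b:Z\to W\) and \(a:W\to Y\) radical, where \(W\) is indecomposable. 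Applying the same step to each \(a\) gives \(a\equiv gu_W\) modulo \(\rad^2(W,Y)\). Then \(h-g(u_1+\sum u_Wb)\) lies in \(\rad^3(Z,Y)\). Iterating, \(h\) is congruent modulo \(\rad^n\) to \(g\circ u\) for some \(u\), for every \(n\). Since the radical is nilpotent by Lemma~\ref{lem:path-expansion}, taking \(n\) large gives an exact factorization \(h=gu\).

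\textbf{Right minimal.} Suppose \(g\varphi=g\) for some \(\varphi\in\End(M)\); we must show \(\varphi\) is invertible. Decompose \(M=\bigoplus_Z Z^{n_Z}\) and consider the component of \(\varphi\) from \(Z^{n_Z}\) to itself. Reduced modulo the radical, it is a matrix \(\bar\varphi_Z\in M_{n_Z}(k)\), using that \(\End(Z)/\rad(Z,Z)=k\) as in Definition~\ref{def:simple-functors}. Now compare the images of \(g\varphi\) and \(g\) in \(\Irr(Z,Y)\) on these summands. The components of \(\varphi\) between nonisomorphic summands, and the radical parts of the diagonal components, are radical. Composing them with the radical map \(g\) lands in \(\rad^2\), so they contribute nothing in \(\Irr\). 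Hence the basis vectors \(\bar g_1,\dots,\bar g_{n_Z}\) of \(\Irr(Z,Y)\) satisfy \((\bar g_1,\dots,\bar g_{n_Z})\bar\varphi_Z=(\bar g_1,\dots,\bar g_{n_Z})\). By linear independence, \(\bar\varphi_Z\) is the identity matrix. So \(\varphi\) is congruent to an invertible map modulo the radical of \(\End(M)\), and is therefore invertible, since the radical of a finite-dimensional algebra is nilpotent.

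The main obstacle is the bookkeeping in the iteration of the almost split step. At each stage the new correction terms have to be collected into a single map \(u:Z\to M\), and the error term has to be shown to genuinely move one power of the radical deeper. This requires writing elements of \(\rad^n\) as sums of composites of radical maps between indecomposables, which is the expansion provided by Lemma~\ref{lem:path-expansion}. Once that is organized, termination follows from nilpotence.
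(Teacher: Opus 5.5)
Your proof is correct and is essentially the paper's argument written out elementwise. The paper shows $\rad_{\cC}(-,Y)=\mathsf J+\rad_{\cC}^n(-,Y)$ for the image $\mathsf J$ of $\mathsf P_g$ by the same one-step-deeper iteration and concludes by nilpotence. It gets minimality by evaluating the isomorphism $\mathsf P_M/\rad_{\cC}(-,M)\cong\rad_{\cC}(-,Y)/\rad_{\cC}^2(-,Y)$ at $M$, which is your matrix and linear-independence computation.

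One minor point: writing elements of $\rad_{\cC}^n$ as sums of composites through indecomposables is just the definition of $\rad_{\cC}^n$ plus the Krull--Schmidt property. Lemma~\ref{lem:path-expansion} is needed only for nilpotence.
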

\begin{proof}
Evaluating at each indecomposable \(Z\) shows that \(\mathsf P_g\) induces
an isomorphism
\[
 \mathsf P_M/\rad_{\cC}(-,M)\longrightarrow
 \rad_{\cC}(-,Y)/\rad_{\cC}^2(-,Y).
\]
Let \(\mathsf J\subseteq\rad_{\cC}(-,Y)\) be the image of \(\mathsf P_g\). Then
\(\rad_{\cC}(-,Y)=\mathsf J+\rad_{\cC}^2(-,Y)\). Composing with radical maps gives
\(\rad_{\cC}^2(-,Y)\subseteq \mathsf J+\rad_{\cC}^3(-,Y)\), and inductively
\(\rad_{\cC}(-,Y)=\mathsf J+\rad_{\cC}^n(-,Y)\) for every \(n\). The radical is
nilpotent by Lemma~\ref{lem:path-expansion}, so \(\mathsf J=\rad_{\cC}(-,Y)\):
every radical map into \(Y\) factors through \(g\). Since \(g\) is radical,
it is not a retraction, so \(g\) is right almost split. If \(gu=g\) for
some \(u\in\End_{\cC}(M)\), then \(\mathsf P_u\) induces the identity on
\(\mathsf P_M/\rad_{\cC}(-,M)\). Evaluating at \(M\) gives
\(1_M-u\in\rad_{\cC}(M,M)\), so \(u\) is invertible.
\end{proof}

\begin{proof}[Direct proof of Corollary~\ref{prop:intrinsic}]
We divide the proof into four steps.

\textbf{Step 1: \(L\geq|T|\).}
By Lemma~\ref{lem:height-count} and the equality
\(|T|=|\ind_1^+\cD|-1\) of Proposition~\ref{prop:poset-equivalence},
\(\chi(\cD)-1=L-|T|\), so it remains to show \(L\geq|T|\).
Each \(K_U\) is indecomposable, since its endomorphism ring is \(k\).
We use the equivalence \(G\) to regard it as an object of \(\cD\) and
write \(h(K_U)\) for the height of that object.
For upward-closed \(U,V\), the space \(\Hom(K_U,K_V)\) is \(k\) when
\(U\subseteq V\), and zero otherwise. In the nonzero case take the map
that is the identity on total spaces. Every upward-closed subset occurs
in a chain from \(\varnothing\) to \(T\) adding one element at a time.
The inclusions in such a chain are nonzero nonisomorphisms, so each raises
height by at least one by Proposition~\ref{prop:height}. There are
\(|U|\) steps before \(K_U\) and \(|T|-|U|\) after it. Since
\(K_\varnothing=G(P)\) and \(K_T=G(I)\), we obtain
\[
 h(K_U)\geq|U|,\qquad L-h(K_U)\geq|T|-|U|.
\]
Thus \(L\geq|T|\), and hence \(\chi(\cD)\geq1\).
If \(L=|T|\), then
\begin{equation}\label{eq:upset-height}
 h(K_U)=|U|\quad\text{for every upward-closed }U.
\end{equation}
In particular inclusions adding one element give irreducible maps, by
\eqref{eq:height-radical}.

\textbf{Step 2: (1)\(\Rightarrow\)(3).}
Assume (1), that is, \(L=|T|\), so that \eqref{eq:upset-height} holds.
We show that \(T\) has no three-element antichain.
We use the following observation.

\emph{Claim.} If indecomposables \(X,Z_1,Z_2,Y\), with \(Z_1\not\cong Z_2\),
form a commutative square of irreducible maps as below, then
\(X\cong\tau_{\cD}^+Y\).

\[
\begin{tikzcd}
 X\arrow[r,"a"]\arrow[d,"c"']&Z_1\arrow[d,"b"]\\
 Z_2\arrow[r,"d"']&Y
\end{tikzcd}
\qquad ba=dc.
\]
To prove the claim, complete the classes of \(b,d\) to bases of the
incoming irreducible spaces at \(Y\); their sources are nonisomorphic,
so these classes lie in different spaces. By
Lemma~\ref{lem:basis-almost-split}, representatives of these bases form a
minimal right almost split map.
Since minimal right almost split maps ending at \(Y\) are unique up to
isomorphism, and \(\mu_Y^+\) is one of them by
Proposition~\ref{rem:tau-properties}(1), we may take \(b,d\) as components
of \(g=\mu_Y^+:\theta_{\cD}^+Y\to Y\). The nonzero tuple \((a,-c,0,\ldots)\)
is killed by \(g\), so it factors through its weak kernel \(\tau_{\cD}^+Y\).
Hence \(\tau_{\cD}^+Y\ne0\) and \(\cD(X,\tau_{\cD}^+Y)\ne0\).
Both objects have height \(h(Y)-2\). By Proposition~\ref{prop:height},
a nonzero map between distinct indecomposables strictly raises the
height, so
\begin{equation}\label{eq:square-translate}
 X\cong\tau_{\cD}^+Y.
\end{equation}

Suppose \(t_1,t_2,t_3\) are pairwise incomparable in \(T\), and put
\(U=\{s:t_1<s\text{ or }t_2<s\text{ or }t_3<s\}\).
All sets \(U\cup S\), \(S\subseteq\{t_1,t_2,t_3\}\), are upward-closed.
The chosen inclusion maps are identities on total spaces, so their squares
commute. For \(j=2,3\), the four objects
\[
 K_U,\quad K_{U\cup\{t_1\}},\quad K_{U\cup\{t_j\}},\quad
 K_{U\cup\{t_1,t_j\}}
\]
form such a square, with distinct middle objects. The two squares have
common source \(K_U\) and distinct targets.
Equation~\eqref{eq:upset-height} makes their arrows irreducible, and
\eqref{eq:square-translate} gives the same translate \(K_U\) for their
distinct targets. This contradicts the injectivity of \(\tau_{\cD}^+\)
(Proposition~\ref{rem:tau-properties}(3)).

\textbf{Step 3: (3)\(\Rightarrow\)(2).}
If \(T\) has width at most two, Lemma~\ref{lem:two-chains} decomposes
every \(T\)-space into one-dimensional summands, and the equivalence of
Proposition~\ref{prop:poset-equivalence} gives \(d_X=1\) for every
indecomposable of \(\cD\).

\textbf{Step 4: (2)\(\Rightarrow\)(1).}
If all \(d_X=1\), equation~\eqref{eq:dimension-defects}
gives two middle summands at each non-\(\tau\)-projective and one at
each \(\tau\)-projective other than \(P\). Hence
\(a(\cD)=2(|\ind\cD|-|\ind_1^+\cD|)+|\ind_1^+\cD|-1\), and
Proposition~\ref{thm:categorical-magnitude} gives \(\chi(\cD)=1\).
\end{proof}

\end{document}